\numberwithin{equation}{section}
\newtheorem{lemma}{Lemma}[section]
\newtheorem{coro}[lemma]{Corollary}
\newtheorem{definition}[lemma]{Definition}
\newtheorem{teo}[lemma]{Theorem}
\newtheorem{proposition}[lemma]{Proposition}
\newtheorem{corollary}[lemma]{Corollary}
\theoremstyle{remark}
\newtheorem{remark}[lemma]{Remark}
\newcommand{\ve} {\varepsilon}
\newcommand{\R}{{\mathbb R}}
\newcommand{\be}{\begin{equation}}
\newcommand{\ben}{\begin{equation*}}
\newcommand{\ee}{\end{equation}}
\newcommand{\een}{\end{equation*}}
\newcommand{\BL}{\begin{lemma}}
\newcommand{\EL}{\end{lemma}}
\newcommand{\BT}{\begin{theorem}}
\newcommand{\ET}{\end{theorem}}
\newcommand{\BP}{\begin{proposition}}
\newcommand{\EP}{\end{proposition}}
\newcommand{\BC}{\begin{corollary}}
\newcommand{\EC}{\end{corollary}}
\def\bs{\begin{split}}
\def\es{\end{split}}
\def\s{\mathbb S}
\def\n{\mathbb N}
\DeclareMathOperator{\Hyperg}{\mbox{ }_2 F_{1}}
\DeclareMathOperator{\divergence}{div}
\DeclareMathOperator{\dist}{dist}
\DeclareMathOperator{\Real}{Re}
\DeclareMathOperator{\Imag}{Im}
\DeclareMathOperator{\Res}{Res}
\DeclareMathOperator{\Ker}{Ker}
\DeclareMathOperator{\rango}{Rg}
\newcommand{\COMMENT}[1]{}
\DeclarePairedDelimiter\abs{\lvert}{\rvert}
\let\oldabs\abs
\def\abs{\@ifstar{\oldabs}{\oldabs*}}
\newcommand{\eps}{\varepsilon}
\newcommand{\p}{\partial}
\newcommand{\pnu}[1]{\frac{\partial{#1}}{\partial\nu}}
\newcommand{\norm}[2][]{\left\|{#2}\right\|_{#1}}
\newcommand{\set}[1]{\left\{#1\right\}}
\begin{document}

\title[A fractional Mazzeo-Pacard Program]
{On higher dimensional singularities for the fractional Yamabe problem: a non-local Mazzeo-Pacard program}

\author[W. Ao]{Weiwei Ao}

\address{Weiwei Ao
\hfill\break\indent
Wuhan University
\hfill\break\indent
Department of Mathematics and Statistics, Wuhan, 430072, PR China}
\email{wwao@whu.edu.cn}

\author[H. Chan]{Hardy Chan}

\address{Hardy Chan
\hfill\break\indent
University of British Columbia,
\hfill\break\indent
Department of Mathematics, Vancouver, BC V6T1Z2, Canada}
\email{hardy@math.ubc.ca}

\author[A. DelaTorre]{Azahara DelaTorre}

\address{Azahara DelaTorre
\hfill\break\indent
Albert-Ludwigs-Universit\"{a}t Freiburg
\hfill\break\indent
Mathematisches Institut, Ernst-Zermelo-Str. 1,  D-79104 Freiburg im Breisgau, Germany.}
\email{azahara.de.la.torre@math.uni-freiburg}

\author[M. Fontelos]{Marco A. Fontelos}

\address{Marco A. Fontelos
\hfill\break\indent
ICMAT,
\hfill\break\indent
Campus de Cantoblanco, UAM, 28049 Madrid, Spain}
\email{marco.fontelos@icmat.es}

\author[M.d.M. Gonz\'alez]{Mar\'ia del Mar Gonz\'alez}

\address{Mar\'ia del Mar Gonz\'alez
\hfill\break\indent
Universidad Aut\'onoma de Madrid
\hfill\break\indent
Departamento de Matem\'aticas, Campus de Cantoblanco, 28049 Madrid, Spain}
\email{mariamar.gonzalezn@uam.es}

\author[J. Wei]{Juncheng Wei}
\address{Juncheng Wei
\hfill\break\indent
University of British Columbia
\hfill\break\indent
 Department of Mathematics, Vancouver, BC V6T1Z2, Canada} \email{jcwei@math.ubc.ca}

\begin{abstract}
We consider the problem of constructing solutions to the fractional Yamabe problem that are singular at a given smooth sub-manifold, for which we establish the classical gluing method of Mazzeo and Pacard (\cite{Mazzeo-Pacard}) for the scalar curvature in the fractional setting. This proof is based on the analysis of the model linearized operator,
which amounts to the study of a fractional order ODE,
and thus our main contribution here is the development of new methods coming from conformal geometry and scattering theory for the study of non-local ODEs. Note, however, that no traditional phase-plane analysis is available here. Instead, first, we provide a rigorous construction of radial fast-decaying solutions by a blow-up argument and a bifurcation method. Second, we use conformal geometry to rewrite this non-local ODE, giving a hint of what a non-local phase-plane analysis should be. Third, for the linear theory, we use complex analysis and some non-Euclidean harmonic analysis to  examine a fractional Schr\"{o}dinger equation with a Hardy type critical potential. We construct its Green's function, deduce Fredholm properties, and analyze its asymptotics at the singular points in the spirit of  Frobenius method. Surprisingly enough, a fractional linear ODE may still have a two-dimensional kernel as in the second order case.
\end{abstract}

\date{}\maketitle


\centerline{AMS subject classification:  35J61, 35R11, 53A30}

\section{Introduction}\label{section:introduction}

We construct singular solutions to the following non-local semilinear problem
\begin{equation}\label{equation00}
(-\Delta_{\mathbb R^n})^\gamma u=u^p \mbox{ in }\R^n,\quad u>0,
\end{equation}
for $\gamma\in(0,1)$, $n\geq 2$, where the fractional Laplacian is defined by
\begin{equation}\label{Laplacian-introduction}
(-\Delta_{\mathbb R^n})^\gamma u(z)=k_{n,\gamma}P.V.\int_{\mathbb R^n}  \frac{u(z)-u(\tilde z)}{|z-\tilde z|^{n+2\gamma}}\,d\tilde z, \quad\text{for}\quad k_{n,\gamma}=\pi^{-n/2}2^{2\gamma}
\frac{\Gamma\left(\frac{n}{2}+\gamma\right)}{\Gamma(1-\gamma)}\gamma.
\end{equation}
Equation \eqref{equation00} for the critical power $p=\frac{n+2\gamma}{n-2\gamma}$ corresponds to the fractional Yamabe problem in conformal geometry,  which asks to find a constant fractional curvature metric in a given conformal class (see \cite{Gonzalez-Qing,Gonzalez-Wang,Fang-Gonzalez,Kim-Musso-Wei,Mayer-Ndiaye}). In particular, for $\gamma=1$ the fractional curvature coincides with the scalar curvature modulo a multiplicative constant, so \eqref{equation00} reduces to the classical  Yamabe problem. However, classical methods for local equations do not generally work here and one needs to develop new ideas.

Non-local equations have attracted a great deal of interest in the community since they are of central importance in many fields, from the points of view of both pure analysis and applied modeling. By the substantial effort made in the past decade by many authors, we have learned that non-local elliptic equations do enjoy good PDE properties such as uniqueness, regularity and maximum principle. However,
not so much is known when it comes to the study of an integro-differential equation such as \eqref{equation00} from an ODE perspective since most of the ODE theory relies on local properties and phase-plane analysis; our first achievement is the development of a suitable theory for the  fractional order ODE \eqref{ODE-introduction}, that arises when studying radial singular solutions to  \eqref{equation00}.

On the one hand, we construct singular radial solutions for \eqref{equation00} directly with a completely different argument. On the other hand, using ideas from conformal geometry and scattering theory we replace phase-plane analysis by a global study to obtain that solutions of the nonlocal ODE \eqref{ODE-introduction} do have a behavior similar in spirit to a classical second-order autonomous ODE, and initiate the study of a non-local phase portrait. In particular, we show that a linear  non-local ODE has a two-dimensional kernel. This is surprising since this non-local ODE has an infinite number of indicial roots at the origin and at infinity, which is very different from the local case where the solution to a homogeneous linear second order problem can be written as a linear combination of two particular solutions and thus, its asymptotic behavior is governed by two pairs of indicial roots.

Then, with these tools at hand, we arrive at our second accomplishment: to develop a Mazzeo-Pacard gluing program \cite{Mazzeo-Pacard} for the construction of singular solutions to \eqref{equation00} in the non-local setting. This gluing method is indeed local by definition; so one needs to rethink the theory from a fresh perspective in order to adapt it for
such
non-local equation. More precisely, the program relies on the fact that the linearization to \eqref{equation00} has good  properties. In the classical case, this linearization has been well studied applying  microlocal analysis (see \cite{Mazzeo:edge}, for instance), and it reduces to the understanding of a second order ODE with two regular singular points.  In the fractional case this is obviously not possible. Instead, we use conformal geometry, complex analysis and some non-Euclidean harmonic analysis coming from representation theory in order to provide a new proof.

Thus conformal geometry is the central core in this paper, but we provide an interdisciplinary approach in order to approach the following analytical problem:

\begin{teo}\label{teo}
Let $\Sigma=\bigcup_{i=1}^K \Sigma_i$ be a disjoint union of smooth, compact sub-manifolds $\Sigma_i$ without boundary of dimensions $k_i$, $i=1,\ldots,K$. Assume, in addition to $n-k_i\geq 2$,  that
\begin{equation*}
\frac{n-k_i}{n-k_i-2\gamma}<p<\frac{n-k_i+2\gamma}{n-k_i-2\gamma},
\end{equation*}
or equivalently,
\begin{equation*}
n-\frac{2p\gamma+2\gamma}{p-1}<k_i<n-\frac{2p\gamma}{p-1}
\end{equation*}
for all $i=1,\ldots,K$. Then there exists a positive solution for the problem
\begin{equation}\label{problem-introduction}
(-\Delta_{\mathbb R^n})^\gamma u=u^p \mbox{ in }\R^n\setminus \Sigma
\end{equation}
that blows up exactly at $\Sigma$.
\end{teo}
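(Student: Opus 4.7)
The plan is to execute a Mazzeo-Pacard gluing scheme, re-engineered to accommodate the non-locality of $(-\Delta_{\R^n})^\gamma$. In a preliminary stage, for each codimension $n-k_i$, I would construct a one-parameter family of radial, $\R^{k_i}$-translation-invariant singular solutions of \eqref{problem-introduction} on $\R^n\setminus\R^{k_i}$. These solve a fractional ODE in the radial normal variable $r=|x^\perp|$, and by the tools announced in the abstract (blow-up/bifurcation construction of fast-decay solutions plus a conformal rewriting that exhibits a Delaunay-type periodic structure in $\log r$) one obtains a family $u_{L_i}$ parametrized by a neck size $L_i$. The numerical range $\frac{n-k_i}{n-k_i-2\gamma}<p<\frac{n-k_i+2\gamma}{n-k_i-2\gamma}$ is precisely what places the Hardy-critical singularity $A\,r^{-2\gamma/(p-1)}$ strictly between the two indicial roots associated with the trivial end and the singular end, which is the non-local incarnation of the Mazzeo-Pacard window.

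Next I would assemble a global approximate solution $\bar u$ on $\R^n\setminus\Sigma$. Using Fermi coordinates in a tubular neighborhood of each $\Sigma_i$, I would transplant $u_{L_i}$ on the normal slice at each base point $y\in\Sigma_i$, allowing both the neck parameter $L_i(y)$ and the Delaunay phase $\phi_i(y)$ to vary slowly along $\Sigma_i$. The different pieces are glued through smooth cutoffs on the boundary of the tubular neighborhoods. Since the $\Sigma_i$ are compact and disjoint, the interaction terms created by the non-local kernel between distinct components decay as negative powers of the distance and can be absorbed into the error.

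The core of the proof is the linear theory for $\mathcal L:=(-\Delta_{\R^n})^\gamma-p\,\bar u^{p-1}$, and this is where I expect the main obstacle to lie. After decomposing a test function in the tubular neighborhood of $\Sigma_i$ into tangential Fourier/eigenfunction modes on $\Sigma_i$ and radial profiles in the normal variable, $\mathcal L$ reduces—modulo curvature errors and a controllable non-local remainder coupling distinct modes—to a family of non-local radial operators indexed by the tangential frequency $\lambda$, each a fractional Schrödinger operator with Hardy-type potential of the type studied earlier in the paper. Using the explicit Green's function, the Frobenius-type indicial analysis, and the surprising fact that the homogeneous radial problem has a two-dimensional kernel, I would prove Fredholm-type right invertibility on weighted Hölder spaces whose weights select the admissible indicial root at $r=0$ and the correct decay at infinity, after projecting out the finite-dimensional cokernel spanned by variations with respect to $(L_i,\phi_i)$. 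The delicate point is that the infinitely many indicial roots, the absence of a true phase-plane picture, and the mixing of tangential modes by the non-local kernel must be handled simultaneously; uniform estimates in $\lambda$ are required so the modes can be reassembled into a genuine right inverse on $\R^n\setminus\Sigma$.

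With this at hand, I would write $u=\bar u+\phi$ and recast the equation as the fixed-point problem $\mathcal L\phi=E+N(\phi)$, where $E=(-\Delta_{\R^n})^\gamma\bar u-\bar u^p$ is the error of the approximation and $N(\phi)=(\bar u+\phi)^p-\bar u^p-p\bar u^{p-1}\phi$ is the superquadratic remainder. Composing the right inverse from the previous step with this identity and applying the Banach fixed-point theorem in the weighted space gives a small solution $\phi$, provided the modulation functions $(L_i(\cdot),\phi_i(\cdot))$ on $\Sigma_i$ are adjusted so that the projection of $E+N(\phi)$ onto the finite-dimensional cokernel vanishes; this last adjustment is a second, finite-dimensional contraction on the modulation parameters. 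Positivity of $u=\bar u+\phi$ follows from $\bar u\gg|\phi|$ near $\Sigma$ and from the fractional maximum principle in the regular region, while the prescribed blow-up on $\Sigma$ is built into $\bar u$ and preserved by the smallness of $\phi$ in the weighted norm.
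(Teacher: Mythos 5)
Your proposal imports the Delaunay-modulation machinery that is appropriate for the \emph{critical} exponent $p=\frac{N+2\gamma}{N-2\gamma}$ (as in the earlier work \cite{gluing}), but the theorem here lives in the strictly subcritical window $\frac{N}{N-2\gamma}<p<\frac{N+2\gamma}{N-2\gamma}$, and this changes the structure of the argument in an essential way. The paper's building block is not a periodic Delaunay profile with a neck parameter $L$ and a phase $\phi$; it is the \emph{fast-decay} radial singular solution $u_1$ of Proposition~\ref{existence}, which blows up like $|x|^{-\frac{2\gamma}{p-1}}$ at the origin and decays like $|x|^{-(N-2\gamma)}$ at infinity. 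That decay at infinity is precisely what lets one cut the profile off in a tubular neighborhood and glue to the trivial solution: a Delaunay-type profile oscillates in $\log r$ and does not decay, so the cut-off would generate an $O(1)$ error rather than the $O(\ve^{N-\frac{2p\gamma}{p-1}})$ error obtained in Lemma~\ref{lemma:error2}. Moreover, in the subcritical range the Hamiltonian of Theorem~\ref{thctthamiltonian} is strictly decreasing along trajectories, which is incompatible with a two-parameter family of periodic orbits with a free phase; your proposed modulation functions $(L_i(\cdot),\phi_i(\cdot))$ do not exist in the form you describe.

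The second gap is in the linear theory. You project out a finite-dimensional cokernel spanned by variations in $(L_i,\phi_i)$ and run a two-stage (infinite-dimensional $+$ finite-dimensional) fixed point. The paper does no such reduction: after choosing weights $(\tilde\mu,\tilde\nu)$ between the appropriate indicial roots (Lemma~\ref{indicial}), the operator $L_\ve$ is shown to be \emph{injective} in the dual weighted space (Lemma~\ref{lemma:apriori-estimate}) and, by the Fredholm duality of Proposition~\ref{prop:Fredholm} and Corollary~\ref{cor:surjectivity}, \emph{surjective} in the target space, yielding a right inverse $G_\ve$ with norm uniformly bounded in $\ve$ (Lemma~\ref{lemma-uniform-surjectivity}). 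There is no cokernel to clear and no modulation equation on $\Sigma_i$; the conclusion is a single Banach fixed point in a weighted H\"older ball of radius $\beta\ve^{q}$. Your instinct that the subcritical interval places $-\frac{2\gamma}{p-1}$ strictly inside the indicial window is correct and is exactly what the paper exploits, but the consequence is direct invertibility of $L_\ve$, not Fredholm index $-\dim(\text{cokernel})$. If you insist on a Lyapunov--Schmidt reduction you would first have to exhibit the cokernel as genuine bounded-state solutions of the adjoint problem, and in the subcritical regime that kernel is trivial precisely because the only decaying homogeneous solution $w_*=r^{\frac{N-2\gamma}{2}}(r\partial_r u_1+\frac{p-1}{2\gamma}u_1)$ is excluded by the weight restriction $\mu>\Real(\gamma_0^+)$ in \eqref{restriction-mu-nu-0}.

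Finally, a smaller but still real issue: you describe the tangential reduction as a decomposition into Fourier modes along $\Sigma_i$ with ``uniform estimates in $\lambda$'' to be proved. The paper instead diagonalizes the model operator on $\mathbb S^{N-1}\times\mathbb H^{k+1}$ under spherical harmonics and the \emph{Fourier--Helgason} transform on hyperbolic space (Theorem~\ref{thm:symbol}), for which the symbol $\Theta_\gamma^m(\lambda)$ is explicit and the required estimates follow at once from Stirling's formula. Your sketch does not say how to control the infinitely many indicial roots or how to re-sum the tangential modes, and for a non-local operator this step cannot be waved through: the paper spends Sections~\ref{section:Hardy} and \ref{section:Fredholm} on exactly this, carefully bounding the tails created by localization in the non-local kernel.
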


 As a consequence of the previous theorem we obtain:

\begin{coro} Assume that the dimensions $k_i$ satisfy
\begin{equation}\label{dim-estimate}
0<k_i<\frac{n-2\gamma}{2}.
\end{equation}
Then there exists a positive solution to the fractional Yamabe equation
\begin{equation}\label{problem-Yamabe}
(-\Delta_{\mathbb R^n})^\gamma u=u^{\frac{n+2\gamma}{n-2\gamma}} \mbox{ in }\R^n\setminus \Sigma
\end{equation}
that blows up exactly at $\Sigma$.
\end{coro}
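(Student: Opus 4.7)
The plan is to observe that the corollary is a direct specialization of Theorem~\ref{teo} to the conformally critical Yamabe exponent $p = \frac{n+2\gamma}{n-2\gamma}$, and to verify that for this particular choice of $p$ the admissible range for $k_i$ imposed by the theorem collapses exactly onto the condition \eqref{dim-estimate}.

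First I would compute
\begin{equation*}
p - 1 = \frac{4\gamma}{n-2\gamma},\qquad p + 1 = \frac{2n}{n-2\gamma},
\end{equation*}
and substitute into the quantities appearing in the second (equivalent) form of the hypothesis of Theorem~\ref{teo}:
\begin{equation*}
\frac{2p\gamma}{p-1} = \frac{2\gamma(n+2\gamma)}{4\gamma} = \frac{n+2\gamma}{2},\qquad \frac{2p\gamma + 2\gamma}{p-1} = \frac{2(p+1)\gamma}{p-1} = \frac{4n\gamma}{4\gamma} = n.
\end{equation*}
Therefore the two endpoints of the admissible interval simplify to
\begin{equation*}
n - \frac{2p\gamma+2\gamma}{p-1} = 0,\qquad n - \frac{2p\gamma}{p-1} = \frac{n-2\gamma}{2},
\end{equation*}
so the constraint $n - \frac{2p\gamma+2\gamma}{p-1} < k_i < n - \frac{2p\gamma}{p-1}$ becomes precisely $0 < k_i < \frac{n-2\gamma}{2}$, i.e.\ \eqref{dim-estimate}.

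Next I would verify the remaining hypothesis $n - k_i \geq 2$. From \eqref{dim-estimate} one has $n - k_i > \frac{n+2\gamma}{2}$; the very existence of a positive integer $k_i$ inside the open interval $(0, \tfrac{n-2\gamma}{2})$ forces $n > 2 + 2\gamma$, whence $n - k_i > \frac{n+2\gamma}{2} > 2$ in every admissible configuration (the borderline integer cases, e.g.\ $n = 3$ with $k_i = 1$, still yield $n - k_i = 2$, so the bound holds).

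Having matched every hypothesis of Theorem~\ref{teo} for the critical exponent $p = \frac{n+2\gamma}{n-2\gamma}$, I would simply invoke the theorem on the given $\Sigma = \bigcup_{i=1}^K \Sigma_i$. The positive solution it produces of \eqref{problem-introduction} that blows up exactly on $\Sigma$ is, by definition of $p$, a solution of \eqref{problem-Yamabe}. There is no genuine obstacle — the corollary is a clean algebraic consequence of the main theorem, and the only step requiring care is the dimensional bookkeeping above confirming that the critical exponent saturates the upper end of the allowed $p$-range, whose width shrinks to the interval $(0, (n-2\gamma)/2)$ for $k_i$.
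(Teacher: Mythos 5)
Your proposal is correct and follows the same route the paper intends: the corollary is stated immediately after Theorem~\ref{teo} as a direct consequence, and you verify by substituting $p = \tfrac{n+2\gamma}{n-2\gamma}$ into the equivalent form of the dimension constraint that the admissible window for $k_i$ becomes exactly $(0, \tfrac{n-2\gamma}{2})$; the algebra on the endpoints is right.

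One small slip in the verification of $n - k_i \geq 2$: from $n > 2 + 2\gamma$ you cannot conclude $\tfrac{n+2\gamma}{2} > 2$, since that would require $n > 4 - 2\gamma$, which fails e.g.\ for $n = 3$ and any $\gamma < \tfrac12$. The correct chain is that $n > 2 + 2\gamma > 2$ with $n$ an integer forces $n \geq 3$, hence $n - k_i > \tfrac{n+2\gamma}{2} > \tfrac32$, and since $n - k_i$ is an integer this gives $n - k_i \geq 2$. Your parenthetical about the borderline case $n = 3$, $k_i = 1$ (where $n - k_i = 2$) shows you were aware that equality can occur, so the conclusion stands, but the inequality $\tfrac{n+2\gamma}{2} > 2$ as written is not valid and should be replaced by the integrality argument.
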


The dimension estimate \eqref{dim-estimate} is sharp in some sense. Indeed, it was proved by Gonz\'{a}lez, Mazzeo and Sire \cite{Gonzalez-Mazzeo-Sire} that, if such $u$ blows up at a smooth sub-manifold of dimension $k$ and is polyhomogeneous, then $k$ must satisfy the restriction
\begin{equation*}
\Gamma\left(\frac{n}{4} - \frac{k}{2} + \frac{\gamma}{2}\right) \Big/ \Gamma\left(\frac{n}{4} - \frac{k}{2} - \frac{\gamma}{2}\right)\geq 0,
\label{eq:dimrest}
\end{equation*}
which in particular, includes \eqref{dim-estimate}. Here, and for the rest of the paper, $\Gamma$ denotes the Gamma function. In addition, the asymptotic behavior of  solutions to \eqref{problem-Yamabe} when the singular set has fractional capacity zero has been considered in \cite{Jin-Queiroz-Sire-Xiong}.

\bigskip


Let us describe our methods in detail. First, note that it is enough to let $\Sigma$  be a single sub-manifold of dimension $k$, and we will restrict to this case for the rest of the paper. We denote $N=n-k$.

The first step is to construct the building block, i.e, a solution to \eqref{problem-introduction} in $\mathbb R^n\setminus \mathbb R^k$ that blows up exactly at $\mathbb R^k$. For this, we write $\mathbb R^n\setminus \mathbb R^k=(\mathbb R^N\setminus\{0\})\times \mathbb R^k$, parameterized with coordinates $z=(x,y)$, $x\in\mathbb R^N\setminus\{0\}$, $y\in \mathbb R^k$, and construct a solution $u_1$ that only depends on the radial variable $r=|x|$. Then $u_1$ is also a radial solution to
\begin{equation*}\label{equation-introduction-isolated}
(-\Delta_{\mathbb R^N})^\gamma u=A_{N,p,\gamma}u^p \quad\text{in }\mathbb R^N\setminus\{0\},\quad u>0.
\end{equation*}
We write $u=r^{-\frac{2\gamma}{p-1}}v$, $r=e^{-t}$. Then, in the radially symmetric case, this equation can be written as the integro-differential ODE
\begin{equation}\label{ODE-introduction}
P.V. \int_{\mathbb R}K(t-t')[v(t)-v(t')]\,dt'+A_{N,p,\gamma}v(t)=A_{N,p,\gamma}v^p\quad\text{in }\mathbb R,\quad v>0,
\end{equation}
where the kernel $K$ is given precisely in \eqref{kernel-K}. However, in addition to having the right blow up rate at the origin, $u_1$ must decay fast as $r\to\infty$ in order to perform the Mazzeo-Pacard gluing argument later. The existence of such fast-decaying singular solutions in the case of $\gamma=1$ is an easy consequence  of phase-plane analysis as \eqref{ODE-introduction} is reduced to a second order autonomous ODE (see Proposition 1 of \cite{Mazzeo-Pacard}). The analogue in the fractional case turns out to be quite non-trivial.    To show the existence, we first use Kelvin transform to reduce our  problem for entire solutions to a supercritical one \eqref{entire}. Then we consider an auxiliary non-local problem \eqref{problem}, for which we show that the minimal solution $w_\lambda$ is unique using Schaaf's argument as in \cite{Esposito-Ghoussoub} and a fractional Poho\v{z}aev identity \cite{RosOton-Serra1}. A blow up argument, together with a Crandall-Rabinowitz bifurcation scheme yields the existence of this $u_1$. This is the content of Section \ref{section:fast-decaying-solution}.

Then, in Section \ref{section:conformal}, we exploit the conformal properties of the equation  to produce a geometric interpretation for \eqref{problem-introduction} in terms of scattering theory and conformally covariant operators.
Singular solutions for the standard fractional Laplacian in $\mathbb R^n\setminus\mathbb R^k$ can be better understood by considering the conformal metric $g_k$ from \eqref{metric-gk}, that is the product of a sphere $\mathbb S^{N-1}$ and a half-space $\mathbb H^{k+1}$. Inspired by the arguments by DelaTorre and Gonz\'{a}lez \cite{DelaTorre-Gonzalez}, our point of view is to rewrite the well known extension problem in $\mathbb R^{n+1}_+$ for the fractional Laplacian in $\mathbb R^n$ due to \cite{Caffarelli-Silvestre}, as a different, but equivalent, extension problem and to consider the corresponding Dirichlet-to-Neumann operator $P_\gamma^{g_k}$, defined in  $\mathbb S^{N-1}\times \mathbb H^{k+1}$. Here $\mathbb R^{n+1}_+$ is replaced by anti-de Sitter (AdS) space, but the arguments run in parallel.

This $P_\gamma^{g_k}$ turns out to be a conjugate operator for $(-\Delta_{\mathbb R^n})^\gamma$, (see \eqref{relation-conjugation}), and behaves well when the nonlinearity in \eqref{problem-introduction} is the conformal power. However, the problem \eqref{problem-introduction} is not conformal for a general $p$, so we need  to perform a further conjugation \eqref{tilde-P} and to consider the new operator $\tilde P_\gamma^{g_k}$. Then the original equation \eqref{problem-introduction} in $\mathbb R^n\setminus\mathbb R^k$ is equivalent to
\begin{equation}\label{equation-introduction-tilde-P}
\tilde P_\gamma^{g_k}(v)=v^{p}\quad \text{in }\mathbb S^{N-1}\times \mathbb H^{k+1},\quad v=r^{\frac{2\gamma}{p-1}}u, \quad v>0 \text{ and smooth.}
\end{equation}
Rather miraculously, both
$P_\gamma^{g_k}$ and $\tilde P_\gamma^{g_k}$ diagonalize under the spherical harmonic decomposition of $\mathbb S^{N-1}$. In fact, they can be understood as pseudo-differential operators on hyperbolic space $\mathbb H^{k+1}$, and we calculate their symbols in  Theorem \ref{thm:symbol} and Proposition \ref{thm:symbolV}, respectively, under the
Fourier-Helgason transform
(to be denoted by \,$\widehat{\cdot}$\,)
on the hyperbolic space understood as the symmetric space  $M=G/K$ for $G=SO(1,k+1)$ and $K=SO(k+1)$ (see the Appendix for a short introduction to the subject). This is an original approach that yields new results even in the classical case $\gamma=1$, simplifying some of the arguments in \cite{Mazzeo-Pacard}.
The precise knowledge of their symbols allows, as a consequence, for
the development of the linear theory for our problem, as we will comment below.

Section  \ref{section:ODE-methods} collects these ideas in order to develop new methods for the study of the non-local ODE \eqref{ODE-introduction}, which is precisely the projection  of equation \eqref{equation-introduction-tilde-P} for $k=0$, $n=N$, over the zero-eigenspace when projecting over spherical harmonics of $\mathbb S^{N-1}$. The advantage of shifting from $u$ to $v$ is that we obtain a new equation that behaves very similarly to a second order autonomous ODE. This includes the existence of a Hamiltonian quantity along trajectories.

Moreover, one can take the spherical harmonic decomposition of $\mathbb S^{N-1}$ and consider all projections $m=0,1,\ldots$. In Proposition \ref{prop:all-kernels} we are able to write every projected equation as an integro-differential equation very similar to the $m=0$ projection \eqref{ODE-introduction}. This formulation immediately yields regularity and maximum principles for the solution of \eqref{equation-introduction-tilde-P} following the arguments in \cite{DelaTorre-delPino-Gonzalez-Wei}.

Now, to continue with the proof of Theorem \ref{teo}, one takes the fast decaying solution in $\mathbb R^n\setminus \mathbb R^k$ we have just constructed and,  after some rescaling by $\ve$, glues it to the background Euclidean space in order to have a global approximate solution $\bar u_\ve$ in $\mathbb R^n\setminus\Sigma$. Even though the fractional Laplacian is a non-local operator, one is able to perform this gluing just by carefully estimating the tail terms that appear in the integrals after localizaton. This is done in Section \ref{section:function-spaces} and, more precisely, Lemma \ref{lemma:error2},
where we show
that the error we generate when approximating a  true solution by $\bar u_\ve$,
given by
\begin{equation*}
f_\ve:=(-\Delta_{\mathbb R^n})^\gamma \bar{u}_\ve-\bar{u}_\ve^p,
\end{equation*}
is indeed small in suitable weighted H\"older spaces.

Once we have an approximate solution, we define the linearized operator around it,
\begin{equation*}
L_\ve \phi:=(-\Delta_{\mathbb R^n})^\gamma \phi-p\bar{u}_\ve^{p-1}\phi.
\end{equation*}
The general scheme of Mazzeo-Pacard's method is to  set $u=\bar u_\ve+\phi$ for an unknown perturbation $\phi$ and to rewrite equation \eqref{problem-introduction} as
\begin{equation*}
L_{\ve}(\phi)+Q_{\ve}(\phi)+f_{\ve}=0,
\end{equation*}
where $Q_\ve$ contains the remaining nonlinear terms. If $L_\ve$ is invertible, then we can write
\begin{equation*}
\phi=(L_\ve)^{-1} (-Q_\ve(\phi)-f_\ve),
\end{equation*}
and a standard fixed point argument for small $\ve$ will yield the existence of such $\phi$, thus completing the proof of Theorem \ref{teo} (see Section \ref{section:conclusion}).

Thus, a central argument here is the study of the linear theory for $L_\ve$ and, in particular, the analysis of its indicial roots, injectivity and Fredholm properties. However, while the behaviour of a second order ODE is governed by two boundary conditions (or behavior at the singular points using Frobenius method), this may not be true in general for a non-local operator.

We first consider the model operator $\mathcal L_1$ defined in \eqref{model-linearization} for an isolated singularity at the origin. Near the singularity $\mathcal L_1$ behaves
like
\begin{equation}\label{Hardy-introduction}
(-\Delta_{\mathbb R^N})^\gamma-\frac{\kappa}{r^{2\gamma}}
\end{equation}
or, after conjugation, like $P_\gamma^{g_0}-\kappa$, which is a fractional Laplacian  operator with a Hardy potential of critical type.

The central core of the linear theory deals with the operator \eqref{Hardy-introduction}. In Section \ref{section:Hardy} we perform a delicate study of the Green's function by inverting its Fourier symbol $\Theta_\gamma^m$ (see \eqref{symbol-isolated}). This requires a very careful analysis of the poles of the symbol,
in both the stable and unstable cases. Contrary to the local case $\gamma=1$,
in which there are
only  two indicial roots for each projection $m$, here we find an infinite sequence for each $m$. But in any case, these are controlled. It is also interesting to observe that, even though we have a non-local operator, the first pair of indicial roots governs the asymptotic behavior of the operator and thus, its kernel is two-dimensional in some sense (see, for instance, Proposition \ref{two-dimensional} for a precise statement).

For the interested reader, we have prepared a separate survey paper \cite{survey-ODE} where we summarize the new ODE methods for non-local equations that we have developed here.

Then, in Section \ref{section:linear-theory} we complete the calculation of the indicial roots (see Lemma \ref{indicial}). Next, we show the injectivity for $\mathcal L_1$ in weighted H\"older spaces, and an \emph{a priori} estimate (Lemma \ref{lemma:apriori-estimate}) yields the injectivity for $L_\ve$.

In addition, in Section \ref{section:Fredholm} we work with weighted Hilbert spaces and we prove Fredholm properties for $L_\ve$ in the spirit of the results by Mazzeo \cite{Mazzeo:edge,Mazzeo:edge2} for edge type operators by constructing a suitable parametrix with compact remainder. The difficulty lies precisely in the fact that we are working with a non-local operator, so the localization with a cut-off is the non-trivial step. However, by working with suitable weighted spaces we are able to localize the problem near the singularity; indeed, the tail terms are small. Then we conclude that $L_\ve$ must be surjective by purely functional analysis reasoning. Finally, we construct a right inverse for $L_\ve$, with norm uniformly bounded independently of $\ve$, and this concludes the proof of Theorem \ref{teo}.

The Appendix contains some well known results on special functions and the Fourier-Helgason transform.

\bigskip

As a byproduct of the proof of Theorem \ref{teo}, we will obtain the existence of solutions with isolated singularities in the subcritical regime (note the shift from $n$ to $N$ in the spatial dimension, which will fit better our purposes).

\begin{teo}\label{teo:points}
Let $\gamma\in(0,1)$, $N\geq 2$ and
\begin{equation}\label{exponent-p}
\frac{N}{N-2\gamma}< p< \frac{N+2\gamma}{N-2\gamma}.
\end{equation}
Let $\Sigma$ be a finite number of points, $\Sigma=\{q_1,\ldots,q_K\}$. Then equation
\begin{equation*}\label{problem-points}
(-\Delta_{\mathbb R^N})^\gamma u=A_{N,p,\gamma}u^p \mbox{ in }\R^N\setminus \Sigma
\end{equation*}
 has positive solutions that blow up exactly at $\Sigma$.
\end{teo}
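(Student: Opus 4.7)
Theorem \ref{teo:points} is precisely the specialization of Theorem \ref{teo} to the case in which every component of $\Sigma$ is a point, i.e., $k_i=0$ for all $i$. Indeed, substituting $k_i=0$ into the dimensional window of Theorem \ref{teo} gives exactly \eqref{exponent-p} with $N=n$, and the $0$-dimensional submanifold hypothesis is automatic. Thus the plan is simply to rerun the Mazzeo-Pacard gluing scheme of the paper in this simplified geometric situation, where the lack of an ``edge'' direction makes every step strictly easier than in the general case.

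First, I would invoke the existence of the fast-decaying radial building block $u_1$ constructed in Section \ref{section:fast-decaying-solution}: a positive radial solution of $(-\Delta_{\R^N})^\gamma u_1 = A_{N,p,\gamma}u_1^p$ in $\R^N\setminus\{0\}$ that blows up at $0$ at the expected rate $r^{-2\gamma/(p-1)}$ and decays fast at infinity. For each $i$ and a small parameter $\ve>0$ I would place a rescaled, translated copy at $q_i$ and glue these copies to the zero function by means of smooth cut-offs $\chi_i$ supported in disjoint small balls $B_{r_0}(q_i)$:
\begin{equation*}
\bar u_\ve(z)\;=\;\sum_{i=1}^K \chi_i(z)\,\ve^{-\frac{2\gamma}{p-1}}\,u_1\!\left(\frac{z-q_i}{\ve}\right).
\end{equation*}
Because the singular set is a finite collection of isolated points, the supports can be chosen disjoint, so no interaction between singular components has to be controlled (in stark contrast to the submanifold gluing, where curvature of $\Sigma$ enters).

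Second, using exactly the argument of Lemma \ref{lemma:error2}, I would estimate the error
\begin{equation*}
f_\ve := (-\Delta_{\R^N})^\gamma \bar u_\ve - A_{N,p,\gamma}\bar u_\ve^{\,p}
\end{equation*}
in the weighted Hölder spaces of Section \ref{section:function-spaces}. The only non-trivial point is the non-locality: one must verify that the tail contributions to $(-\Delta)^\gamma \bar u_\ve$ coming from outside the support of each $\chi_i$ are dominated by the weight, which follows from the fast decay of $u_1$ at infinity.

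Third, and this is the hard part, I would invoke the linear theory of Sections \ref{section:Hardy}--\ref{section:Fredholm} for the operator $L_\ve \phi = (-\Delta_{\R^N})^\gamma \phi - p A_{N,p,\gamma} \bar u_\ve^{\,p-1}\phi$. Near each $q_i$, $L_\ve$ is modelled on the fractional Schrödinger operator with Hardy potential \eqref{Hardy-introduction}; the indicial root analysis of Lemma \ref{indicial}, the Proposition \ref{two-dimensional}-type two-dimensional kernel statement, the weighted a priori estimate of Lemma \ref{lemma:apriori-estimate}, and the Fredholm parametrix construction of Section \ref{section:Fredholm} all apply verbatim since they are local in nature around each isolated singular point. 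The conclusion is that $L_\ve$ admits a right inverse whose norm in the relevant weighted space is uniformly bounded in $\ve$.

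Finally, writing $u = \bar u_\ve + \phi$ reduces \eqref{problem-introduction} to the fixed point equation $\phi = L_\ve^{-1}\!\left(-Q_\ve(\phi) - f_\ve\right)$, where $Q_\ve$ gathers the superlinear remainder. A standard contraction argument in a ball of radius $O(\ve^\sigma)$ in the weighted Hölder space yields a solution for all $\ve$ small enough, and positivity is preserved since the correction is of strictly lower order than the leading radial profile $u_1$ near each $q_i$. This completes the proof, the principal obstacle being, as in Theorem \ref{teo}, the uniform invertibility of $L_\ve$, which is the main content of the linear theory developed earlier in the paper.
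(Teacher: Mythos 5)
Your proposal is correct and follows essentially the same Mazzeo–Pacard gluing scheme the paper uses: fast-decaying radial block from Proposition \ref{existence}, rescaled copies glued at each $q_i$, error estimate in weighted H\"older spaces, linear theory with a uniform right inverse via the Hardy-operator analysis, and a contraction argument. Two small imprecisions worth noting: for the isolated-singularity error estimate the relevant bound is \eqref{error1} (the point-singularity lemma in Section \ref{section:function-spaces}) rather than Lemma \ref{lemma:error2}, which treats the full submanifold case, and Proposition \ref{two-dimensional} is explicitly flagged in the paper as \emph{not} used in the proof (injectivity rests instead on Lemma \ref{lemma:apriori-estimate} and Proposition \ref{injectivity}); also, positivity away from $\Sigma$ requires the maximum-principle argument in Section \ref{section:conclusion}, not just the comparison with $u_1$ near the singular points.
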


\begin{remark}\label{remark:constants}
The constant $A_{N,p,\gamma}$ is chosen so
that the model
function
$u_\gamma(x)=|x|^{-\frac{2\gamma}{p-1}}$ is a singular solution to \eqref{problem-introduction} that blows up exactly at the origin. In particular,
\begin{equation}\label{Apn}
A_{N,p,\gamma}=\Lambda\big(\tfrac{N-2\gamma}{2}-\tfrac{2\gamma}{p-1}\big)\quad\text{for}\quad
\Lambda(\alpha)=2^{2\gamma}
\frac{\Gamma(\frac{N+2\gamma+2\alpha}{4})\Gamma(\frac{N+2\gamma-2\alpha}{4})}
{\Gamma(\frac{N-2\gamma-2\alpha}{4})\Gamma(\frac{N-2\gamma+2\alpha}{4})}.
\end{equation}
Note that, for the critical exponent $p=\frac{N+2\gamma}{N-2\gamma}$, the constant  $A_{N,p,\gamma}$ coincides with $\Lambda_{N,\gamma}=\Lambda(0)$, the sharp constant  in the fractional Hardy inequality in $\mathbb R^N$. Its precise value is given in \eqref{Hardy-constant}.
\end{remark}

Let us make some comments on the bibliography. First, the Brezis-Nirenberg problem for the fractional Laplacian has been considered in \cite{Servadei-Valdinoci} through variational techniques, which is one of the first papers on semi-linear equations with critical power non-linearity for the fractional Laplacian. In addition, the problem of uniqueness and non-degeneracy for some fractional ODE has been considered in \cite{Frank-Lenzmann,Frank-Lenzmann-Silvestre,dps}, for instance.

The construction of singular solutions in the range of exponents for which the problem is stable, i.e.,
$\frac{N}{N-2\gamma}<p<p_1$ for $p_1<\frac{N+2\gamma}{N-2\gamma}$ defined in \eqref{p1}, was studied in the previous paper by Ao, Chan, Gonz\'{a}lez and Wei \cite{Ao-Chan-Gonzalez-Wei}. In addition, for the critical case $p=\frac{N+2\gamma}{N-2\gamma}$, solutions with a  finite number of isolated singularities were obtained in the article by Ao, DelaTorre, Gonz\'{a}lez and Wei \cite{gluing} using a gluing method. The difficulty there was the presence of a non-trivial kernel for the linearized operator. With all these results, together with Theorem \ref{teo}, we have successfully developed a complete fractional Mazzeo-Pacard program for the construction of singular solutions of the fractional Yamabe problem.

Gluing methods for fractional problems are starting to be developed. A finite dimensional reduction has been applied in \cite{Davila-DelPino-Wei} to construct standing-wave solutions to a fractional nonlinear Schr\"odinger equation and in \cite{Du-Gui-Sire-Wei} to construct layered solutions for a fractional inhomogeneous Allen-Cahn equation.

The next development came in \cite{gluing} for the fractional Yamabe problem with isolated singularities, that we have just mentioned. There the model for an isolated singularity is a Delaunay-type metric  (see also \cite{Mazzeo-Pacard:Delaunay-ends,Mazzeo-Pacard-Pollack,Schoen:isolated-singularities} for the construction of constant mean curvature surfaces with Delaunay ends and \cite{mp,Mazzeo-Pollack-Uhlenbeck} for the scalar curvature case). However, in order to have enough freedom parameters at the perturbation step, for the non-local gluing in \cite{gluing} the authors replace the Delaunay-type solution by a bubble tower (an infinite, but countable, sum of bubbles). As a consequence, the reduction method becomes infinite dimensional.
Nevertheless, it can still be
treated with the tools available in the finite dimensional case and one reduces the PDE to an infinite dimensional Toda type system. The most recent works related to gluing are
\cite{Chan-Liu-Wei,Chan-Davila-DelPino-Liu-Wei} for the construction of counterexamples to the fractional De Giorgi conjecture. This reduction is fully infinite dimensional.

For the fractional De Giorgi conjecture with $\gamma\in[\frac12,1)$ we refer to \cite{CabreSola-Morales, CC, Savin2} and the most recent striking paper \cite{FS}. Related to this conjecture, in the case $\gamma\in(0,\frac12)$ there exists a notion of non-local mean curvature for hypersurfaces in $\mathbb R^n$, see \cite{Caffarelli-Roquejoffre-Savin} and the survey \cite{Valdinoci:review}. Much effort has been made regarding regularity \cite{Caffarelli-Valdinoci, Cabre-Cinti-Serra, Barrios-Figalli-Valdinoci} and various qualitative properties \cite{DSV1, DSV2}.  More recent work on stability of non-local minimal surfaces can be found in \cite{CSV}. Delaunay surfaces for this curvature have been constructed in \cite{Cabre-Fall-Sola-Weth,Cabre-Fall-Weth}. After the appearance of \cite{Davila-delPino-Dipierro-Valdinoci}, Cabr\'e has pointed out that this paper also constructs Delaunay surfaces with constant nonlocal mean curvature.

\section{The fast decaying solution}\label{section:fast-decaying-solution}

We aim to construct a fast-decay singular solution to the fractional Lane--Emden equation
\begin{equation}\label{Lane-Emden}
(-\Delta_{\mathbb R^N})^\gamma{u}=A_{N,p,\gamma}u^p\quad\text{in}~\R^N\setminus \{0\}.
\end{equation}
for $\gamma\in(0,1)$ and $p$  in the range \eqref{exponent-p}.

We consider the exponent $p_1=p_1(N,\gamma)\in(\frac{N}{N-2\gamma},\frac{N+2\gamma}{N-2\gamma})$  defined below by \eqref{p1} such that the singular solution $u_\gamma(x)=\abs{x}^{-\frac{2\gamma}{p-1}}$ is stable if and only if $\frac{N}{N-2\gamma}<p<p_1$.
In the notation of Remark \ref{remark:constants}, $p_1$ as defined as the root of
\begin{equation}\label{p1}
pA_{N,p,\gamma}=\Lambda(0).
\end{equation}

The main result in this section is:

\begin{proposition}\label{existence}
For any $\eps\in(0,\infty)$ there exists a fast-decay entire singular solution $u_\eps$ of \eqref{Lane-Emden} such that
\begin{equation*}
u_\eps(x)\sim
\begin{cases}
\abs{x}^{-\frac{2\gamma}{p-1}}&\quad\text{as}~\abs{x}\to0,\\
\eps\abs{x}^{-(N-2\gamma)}&\quad\text{as}~\abs{x}\to\infty.
\end{cases}
\end{equation*}
\end{proposition}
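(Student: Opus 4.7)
The plan is to carry out the Crandall--Rabinowitz scheme outlined in the introduction. First, apply the Kelvin transform $\bar u(y)=|y|^{-(N-2\gamma)}u(y/|y|^2)$ and use the conformal covariance of $(-\Delta_{\mathbb R^N})^\gamma$ under inversion. The equation \eqref{Lane-Emden} becomes
\[
(-\Delta_{\mathbb R^N})^\gamma\bar u=A_{N,p,\gamma}|y|^{\sigma}\bar u^p\quad\text{in }\mathbb R^N\setminus\{0\},\qquad \sigma:=p(N-2\gamma)-(N+2\gamma)<0,
\]
which in the subcritical range \eqref{exponent-p} is supercritical relative to the H\'enon-type critical scaling. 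The target asymptotics for $u_\eps$ translate into $\bar u_\eps(0)=\eps$ (a regular value) and $\bar u_\eps(y)\sim|y|^{-(N-2\gamma)+2\gamma/(p-1)}$ at infinity.

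Second, I would embed the problem in a one-parameter family $(-\Delta_{\mathbb R^N})^\gamma w=\lambda\,|y|^{\sigma}(1+w)_+^p$ on a large ball with homogeneous exterior data. Monotone iteration yields a minimal positive solution $w_\lambda$ for every $\lambda\in(0,\lambda^*)$, with $\lambda^*<\infty$ the extremal parameter. I would then prove uniqueness along the minimal branch by Schaaf's argument as in \cite{Esposito-Ghoussoub}, combined with the fractional Pohozaev identity of Ros-Oton--Serra \cite{RosOton-Serra1} to handle the boundary and tail integrals produced by $(-\Delta)^\gamma$. Uniqueness forces the first eigenvalue of the linearization to cross zero precisely at $\lambda=\lambda^*$, simply and transversally.

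Third, apply the Crandall--Rabinowitz theorem at $(\lambda^*,w_{\lambda^*})$ to obtain a second branch of positive solutions bifurcating from the minimal one. Along this branch select a sequence with $M_n:=\|w_n\|_{L^\infty}\to\infty$, rescale about a maximum point by $\mu_n=M_n^{-(p-1)/(2\gamma)}$ and pass to the limit using uniform regularity for $(-\Delta)^\gamma$, producing an entire positive solution $\bar u$ of the Kelvin-transformed equation with $\bar u(0)>0$. Undoing the Kelvin transform yields a singular solution of \eqref{Lane-Emden} with the prescribed behaviors at $0$ and at infinity. Finally, the scaling invariance $u\mapsto\mu^{2\gamma/(p-1)}u(\mu\,\cdot)$ multiplies the coefficient of the tail $|x|^{-(N-2\gamma)}$ by $\mu^{-[N-2\gamma-2\gamma/(p-1)]}$, and since this exponent is strictly positive in the subcritical range, a choice of $\mu>0$ realizes any prescribed value $\eps\in(0,\infty)$.

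The main obstacle is the uniqueness of the minimal branch in the non-local setting. Schaaf's trick rests on a Pohozaev-type identity; the fractional version of \cite{RosOton-Serra1} produces non-local tail contributions whose sign must be controlled along the branch in order to conclude that every bounded positive solution is extremal. A secondary technical difficulty is the blow-up step: one must propagate uniform H\"older bounds near the origin (compatible with the weight $|y|^\sigma$) so that the rescaled solutions converge to a non-degenerate limit with a strictly positive value at $y=0$, and not to a purely singular profile.
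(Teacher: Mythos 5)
Your high-level scheme matches the paper's: Kelvin transform, an auxiliary weighted Dirichlet problem with nonlinearity $\lambda|x|^\beta A(1+w)^p$ on the unit ball, uniqueness of the minimal solution via Schaaf's trick plus the Ros-Oton--Serra fractional Poho\v{z}aev identity, a bifurcation argument, and a blow-up/rescaling leading to an entire solution that is then inverted back. However, there are two substantive problems.

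First, your bifurcation step diverges from the paper in a way that leaves a gap. You claim that uniqueness of the minimal branch implies the first eigenvalue of the linearization crosses zero ``precisely at $\lambda=\lambda^*$, simply and transversally,'' and then invoke the local Crandall--Rabinowitz theorem at $(\lambda^*,w_{\lambda^*})$. This requires establishing that the kernel of the linearized operator at the extremal solution is one-dimensional and that the transversality condition holds, neither of which follows from Schaaf's estimate; moreover, Schaaf's argument (as in Proposition~\ref{uniqueness}) only yields uniqueness for $\lambda<\lambda_0$, where $\lambda_0$ may be strictly less than $\lambda^*$, so it does not determine the behaviour of the linearization at $\lambda^*$. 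The paper avoids all of this: it proves uniqueness only for small $\lambda$ and then applies Rabinowitz's \emph{global} bifurcation theorem to the continuation $\mathscr C$ of the minimal branch starting at $\lambda_0$; since $\mathscr C$ cannot re-enter the region $\lambda<\lambda_0$ by uniqueness, nor exit $\lambda>\lambda^*$ by nonexistence, and since $\mathscr C$ is unbounded in $[\lambda_0,\lambda^*]\times E$, one extracts directly a sequence with $\|w_j\|_{L^\infty}\to\infty$ (Lemma~\ref{bifurcation}). This route is more robust precisely because it never needs any spectral information at the extremal solution. If you want to use your local approach instead, you would have to supply the missing spectral/transversality analysis at $\lambda^*$.

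Second, the rescaling exponent is incorrect for the weighted (H\'enon-type) equation. The Kelvin-transformed equation is $(-\Delta)^\gamma w=A|x|^\beta w^p$ with $\beta=p(N-2\gamma)-(N+2\gamma)$, and the scaling that leaves it invariant is $w\mapsto\mu^{\frac{\beta+2\gamma}{p-1}}w(\mu\,\cdot)$; accordingly the paper sets $R_j=m_j^{\frac{p-1}{\beta+2\gamma}}=m_j^{\frac{p-1}{p(N-2\gamma)-N}}$. Your $\mu_n=M_n^{-\frac{p-1}{2\gamma}}$ is the exponent for the unweighted equation ($\beta=0$, i.e.\ the critical case), which is excluded in this section. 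With your exponent the rescaled family would not converge to a solution of the correctly-weighted limit equation. (Also, since the solutions are radially non-increasing the maximum is at the origin, so one should rescale about the origin, not an arbitrary maximum point; this matters because the weight $|x|^\beta$ is anchored there.) The final observation that the dilation $u\mapsto\mu^{\frac{2\gamma}{p-1}}u(\mu\,\cdot)$ realizes any prescribed coefficient $\eps$ for the tail is correct and coincides with the paper.
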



The proof in the stable case $\frac{N}{N-2\gamma}<p<p_1<\frac{N+2\gamma}{N-2\gamma}$ is already contained in the paper \cite{Ao-Chan-Gonzalez-Wei}, so we will assume for the rest of the section that we are in the unstable regime
\begin{equation*}
\frac{N}{N-2\gamma}<p_1\leq{p}<\frac{N+2\gamma}{N-2\gamma}.
\end{equation*}

We first prove  uniqueness of minimal solutions for the non-local problem \eqref{problem} using Schaaf's argument and a fractional Poho\v{z}aev identity obtained by Ros-Oton and Serra (Proposition \ref{uniqueness} below). Then we perform a blow-up argument on an unbounded bifurcation branch. An application of Kelvin's transform yields an entire solution of the Lane--Emden equation with the desired asymptotics.

Set  $A=A_{N,p,\gamma}$.
Note that the Kelvin transform  $w(x)=\abs{x}^{-(N-2\gamma)}u\left(\frac{x}{\abs{x}^2}\right)$ of $u$ satisfies
\begin{equation}
\label{entire}
(-\Delta)^\gamma{w}(x)=A\abs{x}^{\beta}w^p(x),
\end{equation}
where $\beta=:p(N-2\gamma)-(N+2\gamma)\in(-2\gamma,0)$.

Consider the following non-local Dirichlet problem in the unit ball $B_1=B_1(0)\subset\R^N$,
\begin{equation}\label{problem}\begin{cases}
(-\Delta)^\gamma{w}(x)=\lambda\abs{x}^{\beta}A(1+w(x))^p~&\quad\text{in}~B_1,\\
w=0~&\quad\text{in}~\R^N\setminus{B_1}.
\end{cases}\end{equation}
Since $(-\Delta)^\gamma\abs{x}^{\beta+2\gamma}=c_0\abs{x}^\beta$ and $(-\Delta)^\gamma(1-\abs{x}^2)_+^\gamma=c_1$ for some positive constants $c_0$ and $c_1$, we have that $\abs{x}^{\beta+2\gamma}+(1-\abs{x}^2)_+^\gamma$ is a positive super-solution of \eqref{problem} for small $\lambda$. Then one can follow classical arguments (we refer to \cite{RosOton-Serra2}, for instance), to show that there exists a minimal solution $w_\lambda$ for small $\lambda$. Moreover, $w_\lambda$ is non-decreasing in $\lambda$. Thus one can find a $\lambda^*>0$ such that: (i) the minimal solution $w_\lambda$ exists for each $\lambda\in(0,\lambda^*)$, and $w_\lambda$ is radially symmetric and non-increasing in the radial variable; (ii) for $\lambda>\lambda^*$, \eqref{problem} has no solutions.

  We will show that $w_\lambda$ is the unique solution of \eqref{problem} for all small $\lambda$.

\begin{proposition}\label{uniqueness}
There exists a small $\lambda_0>0$ depending only on $N\geq2$ and $\gamma\in(0,1)$ such that for any $0\leq\lambda<\lambda_0$, $w_\lambda$ is the unique solution to \eqref{problem} among the class
$$\tilde{\mathcal C}^{2}_{0}(B_1)=\set{w\in{\mathcal C^2}(B_1)\cap \mathcal{C}(\R^N):w=0 \text{ in } \R^N\setminus B_1}.$$

\end{proposition}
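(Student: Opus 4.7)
The plan is to combine Schaaf's Pohožaev-type uniqueness argument (as in \cite{Esposito-Ghoussoub}) with the fractional Pohožaev identity of Ros-Oton and Serra \cite{RosOton-Serra1}, exploiting the strict subcriticality $p<\frac{N+2\gamma}{N-2\gamma}$ and the smallness of $\lambda$. The core scheme is a dichotomy: any solution of \eqref{problem} is either close to $0$ (and hence equal to $w_\lambda$ by the implicit function theorem on the minimal branch) or uniformly large, and the second possibility is ruled out by an a priori bound coming from Pohožaev.

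First I would establish two facts about the minimal branch. (i) The supersolution $|x|^{\beta+2\gamma}+(1-|x|^2)_+^\gamma$ used just before the statement forces $\|w_\lambda\|_{L^\infty(B_1)}\to 0$ as $\lambda\to 0^+$. (ii) The linearized operator
\begin{equation*}
\mathcal L_\lambda := (-\Delta)^\gamma - p\lambda A|x|^\beta(1+w_\lambda)^{p-1}\quad\text{on }B_1
\end{equation*}
has strictly positive first Dirichlet eigenvalue $\mu_1(\lambda)$ for $\lambda$ small, converging to $\mu_1((-\Delta)^\gamma;B_1)>0$. By the implicit function theorem in $\tilde{\mathcal C}^2_0(B_1)$, $w_\lambda$ is then the unique solution of \eqref{problem} in a fixed $L^\infty$-neighborhood of $0$, so any hypothetical second solution $w\neq w_\lambda$ must satisfy $\|w\|_{L^\infty(B_1)}\geq \eta_0>0$ uniformly in $\lambda\in(0,\lambda_0)$.

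Next I would apply the Ros-Oton-Serra Pohožaev identity to $w$. With $f(x,w)=\lambda A|x|^\beta(1+w)^p$ and primitive $F(x,w)=\lambda A|x|^\beta\bigl((1+w)^{p+1}-1\bigr)/(p+1)$, and using $x\cdot\nabla_x|x|^\beta=\beta|x|^\beta$, the identity reads
\begin{equation*}
\lambda A\int_{B_1}|x|^\beta H(w)\,dx = -\frac{\Gamma(1+\gamma)^2}{2}\int_{\partial B_1}(w/\delta^\gamma)^2\,d\sigma \leq 0,
\end{equation*}
where
\begin{equation*}
H(w):=\frac{N-2\gamma}{2}\,w(1+w)^p-\frac{N+\beta}{p+1}\bigl((1+w)^{p+1}-1\bigr).
\end{equation*}
Strict subcriticality is equivalent to $\frac{N-2\gamma}{2}>\frac{N+\beta}{p+1}$, which yields $H(w)\sim c_0\,w^{p+1}$ as $w\to\infty$ with $c_0>0$. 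Combining this Pohožaev identity with the energy identity $\int_{B_1}|(-\Delta)^{\gamma/2}w|^2\,dx=\lambda A\int_{B_1}|x|^\beta w(1+w)^p\,dx$ and the fractional Hardy-Sobolev inequality, I expect to extract a uniform a priori bound $\|w\|_{L^\infty(B_1)}\leq C(N,p,\gamma)\,\lambda^\theta$ for some $\theta>0$ and $C$ independent of $w$ and $\lambda$. Choosing $\lambda_0$ so that $C\lambda_0^\theta<\eta_0$ contradicts the lower bound from the first step, forcing $w=w_\lambda$.

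The main obstacle is the a priori bound. The integrand $H(w)$ is typically negative for small $w$, so its bulk behaviour requires a delicate splitting: on $\{w\leq M\}$ with $M$ small the negative part is controlled by the weighted $L^{p+1}$-norm of $w$ via the Sobolev embedding, while on $\{w\geq M\}$ the leading positive term dominates thanks to the subcriticality gap. The non-local boundary contribution $\int_{\partial B_1}(w/\delta^\gamma)^2\,d\sigma$ is handled through the boundary regularity and trace inequalities available in \cite{RosOton-Serra1,RosOton-Serra2}, which are essential to close the estimate uniformly in $\lambda$.
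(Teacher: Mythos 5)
Your dichotomy strategy (local uniqueness near the minimal branch via the implicit function theorem, combined with a global a priori bound) is a valid scheme in principle, but the way you propose to get the a priori bound does not close, and it is genuinely different from—and weaker than—what the paper does.

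The central gap is in the Pohožaev step. Applying the Ros--Oton--Serra identity directly to $w$ with the exact exponent $\sigma=\frac{N-2\gamma}{2N}$ gives, as you wrote,
$$\lambda A\int_{B_1}|x|^{\beta}H(w)\,dx=-\frac{\Gamma(1+\gamma)^2}{2}\int_{\partial B_1}\Bigl(\frac{w}{\delta^{\gamma}}\Bigr)^2\,d\sigma\leq 0,$$
where $H(w)$ is \emph{negative} for small $w$ (its derivative at $0$ is $\frac{N-2\gamma}{2}-(N+\beta)=\frac{N-2\gamma}{2}-p(N-2\gamma)+2\gamma<0$ since $p>\frac{N}{N-2\gamma}$) and $\sim c_0 w^{p+1}>0$ for large $w$. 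This only says the negative contribution from $\{w\ \text{small}\}$ beats the positive one from $\{w\ \text{large}\}$; it is not an $L^\infty$-bound, and in particular carries no $\lambda$-dependence since $\lambda$ multiplies both sides (the boundary term also scales with the source, hence with $\lambda$). Your proposed estimate $\|w\|_{L^\infty}\leq C\lambda^{\theta}$ cannot be derived this way. Combining with the energy identity and the Hardy--Sobolev inequality does not rescue it: you would need to bound the linear-in-$w$ quantity $\int|x|^{\beta}w(1+w)^p\,dx$ (which for small $w$ behaves like $\int|x|^\beta w$) by a power $\geq 1$ of $\bigl(\int|x|^\beta w^{2\eta}\bigr)^{1/\eta}$, which scales like $\|w\|^2$; this reverse inequality fails precisely in the small-$w$ regime. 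In short, the argument does not close into a \enquote{$1\leq C\lambda$} contradiction.

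What the paper does instead—and what you should adopt—is to apply the Pohožaev \emph{inequality} (Corollary after Theorem~\ref{pohozaev}, with a free parameter $\sigma$) to the difference $u=w-w_\lambda\geq 0$, with nonlinearity $g_\lambda(x,u)=(1+w_\lambda+u)^p-(1+w_\lambda)^p$. Two effects make the scheme close: (a) the primitive $G_\lambda(x,u)$ is \emph{quadratic} in $u$ near $0$, so the small-$u$ region contributes $\lambda C_M\int|x|^\beta u^2$ with a visible factor of $\lambda$; and (b) the free $\sigma$ can be chosen with $\bigl(1+\frac{\beta}{N}\bigr)\frac{1+\epsilon}{p+1}<\sigma<\frac{N-2\gamma}{2N}$—possible exactly by strict subcriticality—so that the large-$u$ tail is nonpositive while the Dirichlet energy of $u$ appears with a positive coefficient on the left. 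Hardy--Sobolev plus Hölder then turns the left-hand Dirichlet energy into a lower bound by $\int|x|^\beta u^2$, and one arrives at $\int|x|^\beta u^2\leq C\lambda\int|x|^\beta u^2$, hence $u\equiv 0$ for $\lambda$ small. The subtraction of $w_\lambda$ (upgrading linear to quadratic smallness) and the $\sigma$-freedom are the two ingredients your proposal is missing.
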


The idea of the proof of this Proposition follows from \cite{Esposito-Ghoussoub} and similar arguments can be found in \cite{Schaaf}, \cite{Guo-Wei1} and \cite{Guo-Wei2}.

\subsection{Useful inequalities}

The first ingredient is the Poho\v{z}aev identity for the fractional Laplacian. Such identities for integro-differential operators have been recently studied in \cite{RosOton-Serra1}, \cite{RosOton-Serra-Valdinoci} and \cite{Grubb}.

\begin{teo}[Proposition 1.12 in \cite{RosOton-Serra1}]\label{pohozaev}
Let $\Omega$ be a bounded $\mathcal C^{1,1}$ domain, $f\in{\mathcal C}_{loc}^{0,1}(\overline{\Omega}\times\R)$, $u$ be a bounded solution of
\begin{equation}\begin{cases}
(-\Delta)^\gamma{u}=f(x,u)~&\text{in}~\Omega,\\
u=0~&\text{in}~\R^N\setminus\Omega,
\end{cases}\end{equation}
and $\delta(x)=\dist(x,\p\Omega)$. Then
$$u/\delta^\gamma\mid_\Omega\in{\mathcal C}^{\alpha}(\overline\Omega)\qquad\text{for}~\text{some}~\alpha\in(0,1),$$
and there holds
\begin{equation*}
\int_{\Omega}\left(F(x,u)+\dfrac{1}{N}x\cdot\nabla_{x}F(x,u)-\dfrac{N-2\gamma}{2N}uf(x,u)\right)\,dx
=\dfrac{\Gamma(1+\gamma)^2}{2N}\int_{\p\Omega}\left(\dfrac{u}{\delta^\gamma}\right)^2(x\cdot\nu)\,d\sigma
\end{equation*}
where $F(x,t)=\int_0^{t}f(x,\tau)\,d\tau$ and $\nu$ is the unit outward normal to $\p\Omega$ at $x$.
\end{teo}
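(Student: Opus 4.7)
The plan is to derive the identity by testing the equation against the infinitesimal generator of dilations $x\cdot\nabla u$ and comparing two different expressions for $\int_\Omega f(x,u)\,(x\cdot\nabla u)\,dx$. The proof naturally splits into three stages: boundary regularity for the quotient $u/\delta^\gamma$, a nonlocal ``star-shaped'' identity for the bilinear pairing of $(-\Delta)^\gamma u$ with $x\cdot\nabla u$, and an interior divergence-theorem computation on the nonlinear side.

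First I would establish the boundary regularity $u/\delta^\gamma\in\mathcal C^\alpha(\overline\Omega)$. The model is the torsion function $\phi_0$ solving $(-\Delta)^\gamma \phi_0=1$ in $\Omega$, $\phi_0=0$ outside, which satisfies $\phi_0\asymp\delta^\gamma$ and $\phi_0/\delta^\gamma\in\mathcal C^\alpha(\overline\Omega)$. This is verified on the ball via the explicit solution $(1-|x|^2)_+^\gamma$, then transported to general $\mathcal C^{1,1}$ domains by flattening the boundary and running a blow-up/compactness scheme. Since $f(\cdot,u)$ is bounded and Lipschitz near the graph of $u$, one transfers the regularity to $u$ itself by perturbative comparison with $\phi_0$ and a boundary Schauder-type iteration, using a Krylov-style oscillation-decay argument for the quotient.

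Second, and most crucially, I would establish the nonlocal star-shaped identity
\begin{equation*}
\int_\Omega(-\Delta)^\gamma u\,(x\cdot\nabla u)\,dx=\frac{2\gamma-N}{2}\int_\Omega u\,(-\Delta)^\gamma u\,dx-\frac{\Gamma(1+\gamma)^2}{2}\int_{\partial\Omega}\!\Bigl(\frac{u}{\delta^\gamma}\Bigr)^{\!2}(x\cdot\nu)\,d\sigma.
\end{equation*}
The strategy is to study the rescalings $u_\lambda(x)=u(\lambda x)$ (defined on $\lambda^{-1}\Omega$), use the elementary scaling identity $\int u_\lambda (-\Delta)^\gamma u_\lambda=\lambda^{2\gamma-N}\int u(-\Delta)^\gamma u$, and analyze the one-parameter family
\begin{equation*}
\mathcal I(\lambda):=\int_{\R^N} u_\lambda(x)\,(-\Delta)^\gamma u(x)\,dx,
\end{equation*}
whose derivative at $\lambda=1$ reproduces the left-hand side. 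Because $u_\lambda$ does not vanish on $\Omega\setminus\lambda^{-1}\Omega$ for $\lambda<1$, differentiating at $\lambda=1$ produces a genuine boundary defect in addition to the scaling term $\tfrac{2\gamma-N}{2}\int u(-\Delta)^\gamma u$. Localizing near $\partial\Omega$, straightening the boundary with a $\mathcal C^{1,1}$ chart, and rescaling around a boundary point $\sigma$, this defect collapses onto a one-dimensional integral of $(x_N)_+^\gamma$ against its dilation; this 1D integral can be evaluated explicitly using the Mellin/Beta identities to produce exactly the constant $\Gamma(1+\gamma)^2/2$.

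Finally, the interior side comes from the chain rule and the divergence theorem applied to $g(x)=F(x,u(x))$: since $\nabla\cdot(x\,g)=Ng+x\cdot\nabla_x F+f(x,u)(x\cdot\nabla u)$, and $F(x,0)$ may be normalized to vanish on $\partial\Omega$, one obtains
\begin{equation*}
\int_\Omega f(x,u)\,(x\cdot\nabla u)\,dx=-\int_\Omega\bigl(NF(x,u)+x\cdot\nabla_x F(x,u)\bigr)\,dx.
\end{equation*}
Substituting $(-\Delta)^\gamma u=f(x,u)$ into the star-shaped identity, inserting this divergence formula and dividing by $N$, a direct algebraic rearrangement yields the stated Pohozaev identity. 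The main obstacle is the second stage: pinning down the precise constant $\Gamma(1+\gamma)^2$ demands a careful blow-up at $\partial\Omega$ together with a rigorous justification that the $\lambda\to 1$ limit of the boundary defect collapses to the one-dimensional half-space model. Both the $\mathcal C^{1,1}$ regularity of $\partial\Omega$ and the $\mathcal C^\alpha$ regularity of $u/\delta^\gamma$ from the first stage are essential here, because they are what allow one to pass to the limit inside the nonlocal integral and extract a well-defined boundary trace of $(u/\delta^\gamma)^2$.
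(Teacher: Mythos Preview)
The paper does not prove this theorem; it is quoted verbatim as Proposition~1.12 of Ros-Oton and Serra \cite{RosOton-Serra1} and used as a black box in the subsequent uniqueness argument. So there is no ``paper's own proof'' to compare against.

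That said, your sketch is an accurate outline of the proof in \cite{RosOton-Serra1}. The three stages you describe---boundary regularity of $u/\delta^\gamma$, the scaling identity obtained by differentiating $\lambda\mapsto \int u_\lambda(-\Delta)^\gamma u$ at $\lambda=1$ and extracting a boundary defect, and the interior divergence computation on $F(x,u)$---are exactly the architecture of that paper. Your identification of the delicate point (the blow-up at $\partial\Omega$ that produces the constant $\Gamma(1+\gamma)^2$ from a one-dimensional half-space model, and the need for both $\mathcal C^{1,1}$ regularity of $\partial\Omega$ and $\mathcal C^\alpha$ regularity of $u/\delta^\gamma$ to justify the limit) is also correct. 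One small quibble: you write ``$F(x,0)$ may be normalized to vanish on $\partial\Omega$'', but in fact $F(x,0)=\int_0^0 f(x,\tau)\,d\tau=0$ automatically, so no normalization is needed.
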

Using integration by parts (see, for instance, equation (1.5) in \cite{RosOton-Serra1}), it is clear that
$$\int_{\Omega}uf(x,u)\,dx=\int_{\R^N}\abs{(-\Delta)^{\frac{\gamma}{2}}u}^2\,dx,$$
which yields our fundamental inequality:
\begin{coro}
Under the assumptions of Theorem \ref{pohozaev}, we have for any star-shaped domain $\Omega$ and any $\sigma\in\R$,
\begin{equation}\label{poho1}
\int_{\Omega}\left(F(x,u)+\dfrac{1}{N}x\cdot\nabla_{x}F(x,u)-\sigma uf(x,u)\right)\,dx
\geq\left(\dfrac{N-2\gamma}{2N}-\sigma \right)\int_{\R^N}\abs{(-\Delta)^{\frac{\gamma}{2}}u}^2\,dx
\end{equation}
\end{coro}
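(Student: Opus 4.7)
The corollary is essentially a one-line algebraic rearrangement of the Pohožaev identity from Theorem \ref{pohozaev} combined with the integration-by-parts identity already quoted just before the statement, so the plan is quite short.

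The plan is to start from the identity in Theorem \ref{pohozaev} and simply rewrite the coefficient $\tfrac{N-2\gamma}{2N}$ in front of $uf(x,u)$ as $\sigma+\bigl(\tfrac{N-2\gamma}{2N}-\sigma\bigr)$. This produces
\begin{equation*}
\int_{\Omega}\!\left(F(x,u)+\tfrac{1}{N}x\cdot\nabla_{x}F(x,u)-\sigma u f(x,u)\right)dx
-\left(\tfrac{N-2\gamma}{2N}-\sigma\right)\!\int_{\Omega}uf(x,u)\,dx
=\tfrac{\Gamma(1+\gamma)^2}{2N}\!\int_{\p\Omega}\!\left(\tfrac{u}{\delta^\gamma}\right)^2(x\cdot\nu)\,d\sigma.
\end{equation*}

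Next I would invoke the star-shapedness of $\Omega$ (say with respect to the origin, after a translation if necessary), which gives $x\cdot\nu\geq 0$ on $\p\Omega$; hence the boundary term on the right-hand side is non-negative. Then I would replace $\int_{\Omega} u f(x,u)\,dx$ by $\int_{\R^N}|(-\Delta)^{\gamma/2}u|^2\,dx$ using the integration-by-parts formula recalled right before the statement (equation (1.5) in \cite{RosOton-Serra1}). Rearranging the resulting inequality yields exactly \eqref{poho1}.

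The only subtle point worth a remark is that the sign of the factor $\tfrac{N-2\gamma}{2N}-\sigma$ is not assumed: the inequality simply transfers a non-negative boundary contribution to the left-hand side and is valid for every $\sigma\in\R$. No real obstacle arises because the regularity $u/\delta^\gamma\in\mathcal C^{\alpha}(\overline\Omega)$ provided by Theorem \ref{pohozaev} makes all the boundary integrals well defined, and the star-shapedness is used only through the pointwise sign $x\cdot\nu\geq 0$. Thus this corollary is a direct consequence of the Pohožaev identity together with the fractional integration-by-parts formula, with no further analysis needed.
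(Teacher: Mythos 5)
Your argument is correct and is exactly the (implicit) argument the paper intends: rewrite $\tfrac{N-2\gamma}{2N}=\sigma+(\tfrac{N-2\gamma}{2N}-\sigma)$ in the Pohožaev identity, use $x\cdot\nu\geq 0$ on $\partial\Omega$ for a star-shaped domain to discard the non-negative boundary term, and replace $\int_{\Omega}uf(x,u)\,dx$ by $\int_{\R^N}|(-\Delta)^{\gamma/2}u|^2\,dx$ via the quoted integration-by-parts formula. The paper gives no separate proof for the corollary, treating it as an immediate consequence along precisely these lines.
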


The second ingredient is the fractional Hardy--Sobolev inequality which, via H\"older inequality, is an interpolation of fractional Hardy inequality and fractional Sobolev inequality:

\begin{teo}[Lemma 2.1 in \cite{Ghoussoub-Shakerian}]
Assume that $0\leq\alpha<2\gamma<\min\{2,N\}$. Then there exists a constant $c$ such that
\begin{equation}\label{HS1}
c\int_{\R^N}\abs{(-\Delta)^{\frac{\gamma}{2}}{u}}^2\,dx
\geq\left(\int_{\R^N}\abs{x}^{-\alpha}\abs{u}^{\frac{2(N-\alpha)}{N-2\gamma}}
\right)^{\frac{N-2\gamma}{N-\alpha}}.
\end{equation}
\end{teo}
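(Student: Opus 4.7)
The plan is to derive this Hardy--Sobolev inequality purely by H\"older interpolation between the two endpoint inequalities: the fractional Hardy inequality
\begin{equation*}
\int_{\R^N}\frac{|u|^2}{|x|^{2\gamma}}\,dx \leq C_H \int_{\R^N}|(-\Delta)^{\gamma/2}u|^2\,dx,
\end{equation*}
corresponding to $\alpha=2\gamma$, and the fractional Sobolev inequality
\begin{equation*}
\left(\int_{\R^N}|u|^{\frac{2N}{N-2\gamma}}\,dx\right)^{\frac{N-2\gamma}{N}} \leq C_S \int_{\R^N}|(-\Delta)^{\gamma/2}u|^2\,dx,
\end{equation*}
corresponding to $\alpha=0$. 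Both of these are classical and will be quoted as known. The hypothesis $2\gamma<\min\{2,N\}$ is precisely what is needed to guarantee $\gamma\in(0,1)$ with $N>2\gamma$, so that both endpoints are available and the exponent $\frac{2(N-\alpha)}{N-2\gamma}$ is finite and larger than $2$.

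The key algebraic step is to choose exponents $a,b\ge 0$ with $a+b=1$ that produce the correct powers of $|x|$ and $|u|$ when applying H\"older to the pointwise factorization
\begin{equation*}
|x|^{-\alpha}|u|^{\frac{2(N-\alpha)}{N-2\gamma}} = \left(\frac{|u|^2}{|x|^{2\gamma}}\right)^{a}\cdot \bigl(|u|^{\frac{2N}{N-2\gamma}}\bigr)^{b}.
\end{equation*}
Matching powers of $|x|$ forces $a=\alpha/(2\gamma)$, and then matching powers of $|u|$ forces $b=(2\gamma-\alpha)/(2\gamma)$. The condition $0\le \alpha<2\gamma$ guarantees that both $a$ and $b$ are nonnegative with $a+b=1$, so that $\tfrac{1}{a}$ and $\tfrac{1}{b}$ are conjugate H\"older exponents (with the case $a=0$, i.e.\ $\alpha=0$, reducing to the pure Sobolev inequality).

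With this choice, H\"older's inequality gives
\begin{equation*}
\int_{\R^N}|x|^{-\alpha}|u|^{\frac{2(N-\alpha)}{N-2\gamma}}\,dx \leq \left(\int_{\R^N}\frac{|u|^2}{|x|^{2\gamma}}\,dx\right)^{a}\left(\int_{\R^N}|u|^{\frac{2N}{N-2\gamma}}\,dx\right)^{b}.
\end{equation*}
Substituting the Hardy inequality into the first factor and the Sobolev inequality into the second factor bounds the right-hand side by a constant times
\begin{equation*}
\left(\int_{\R^N}|(-\Delta)^{\gamma/2}u|^2\,dx\right)^{a + \frac{bN}{N-2\gamma}}.
\end{equation*}
A direct computation, using $a=\alpha/(2\gamma)$ and $b=(2\gamma-\alpha)/(2\gamma)$, collapses the exponent to
\begin{equation*}
a+\frac{bN}{N-2\gamma} = \frac{\alpha(N-2\gamma)+(2\gamma-\alpha)N}{2\gamma(N-2\gamma)} = \frac{N-\alpha}{N-2\gamma}.
\end{equation*}
Raising both sides to the reciprocal power $(N-2\gamma)/(N-\alpha)$ yields \eqref{HS1}.

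There is essentially no analytic obstacle: the only thing to verify carefully is the bookkeeping that makes the H\"older exponents admissible and that produces the correct final exponent on the gradient term, which is exactly the computation sketched above. The constant $c$ in \eqref{HS1} is not sharp by this method, but sharpness is not claimed in the statement. For $\alpha=0$ the inequality is literally the Sobolev one, so the only substantive content is the interpolation when $0<\alpha<2\gamma$.
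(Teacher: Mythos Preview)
Your proof is correct and follows precisely the approach the paper indicates: the paper does not actually prove this statement but merely cites it from \cite{Ghoussoub-Shakerian}, noting in the sentence immediately preceding the theorem that the inequality ``via H\"older inequality, is an interpolation of fractional Hardy inequality and fractional Sobolev inequality.'' Your argument carries out exactly this interpolation with the correct choice of exponents $a=\alpha/(2\gamma)$, $b=(2\gamma-\alpha)/(2\gamma)$, and the algebra checks out.
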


\subsection{Proof of Proposition \ref{uniqueness}}

We are now in a position to prove the uniqueness of solutions of \eqref{problem} with small parameter.

\begin{proof}
Suppose $w$ and $w_\lambda$ are solutions to \eqref{problem}. Then $u=w-w_\lambda$ is a positive solution to the Dirichlet problem
\begin{equation*}\label{problem-difference}\begin{cases}
(-\Delta)^\gamma {u}=\lambda A\abs{x}^{\beta}g_{\lambda}(x,u)~&\quad\text{in}~B_1(0),\\
u=0~&\quad\text{in}~\R^N\setminus{B}_1(0),
\end{cases}\end{equation*}
where $g_{\lambda}(x,u)=(1+w_\lambda(x)+u)^{p}-(1+w_\lambda(x))^{p}\geq0$ for $u\geq0$. Denoting
\begin{equation*}\begin{split}
G_\lambda(x,u)
&=\int_0^{u}g_\lambda(x,t)\,dt,\\
\end{split}\end{equation*}
we apply \eqref{poho1} with $f(x,u)=\lambda A\abs{x}^{\beta}g_\lambda(x,u)$ over $\Omega=B_1$ to obtain
\begin{equation}\label{poho2}\begin{split}
&\left(\dfrac{N-2\gamma}{2N}-\sigma \right)\int_{\R^N}\abs{(-\Delta)^{\frac{\gamma}{2}}{u}}^2\,dx\\
&\quad\leq\lambda A\int_{B_1}\left(\abs{x}^{\beta}G_\lambda(x,u)+\dfrac{1}{N}x\cdot\nabla_{x}\left(\abs{x}^{\beta}G_\lambda(x,u)\right)
-\sigma \abs{x}^{\beta}ug_\lambda(x,u)\right)\,dx\\
&\quad=\lambda A\int_{B_1}\abs{x}^{\beta}\left(\left(1+\dfrac{\beta}{N}\right)
G_\lambda(x,u)+\dfrac{1}{N}x\cdot\nabla_{x}G_\lambda(x,u)-\sigma ug_\lambda(x,u)\right)\,dx.
\end{split}\end{equation}
Note that
\begin{equation}\label{G}
G_\lambda(x,u)
=u^2\int_0^{1}\int_0^{1}pt(1+w_\lambda(x)+\tau{t}u)^{p-1}\,d{\tau}dt
\end{equation}
and
\begin{equation*}
\nabla_{x}G_\lambda(x,u)
=u^2\int_0^{1}\int_0^{1}p(p-1)t(1+w_\lambda(x)+\tau{t}u)^{p-2}\,d{\tau}dt\cdot\nabla{w}_\lambda(x).\\
\end{equation*}
Since $w_\lambda$ is radially decreasing, $x\cdot\nabla{w}_\lambda(x)\leq0$ and hence $x\cdot\nabla_{x}G_\lambda(x,u)\leq0$. Then \eqref{poho2} becomes
\begin{equation}\label{poho3}\begin{split}
\quad\,\left(\dfrac{N-2\gamma}{2N}-\sigma \right)\int_{\R^N}\abs{(-\Delta)^{\frac{\gamma}{2}}{u}}^2\,dx
\leq\lambda A\int_{B_1}\abs{x}^{\beta}\left(\left(1+\dfrac{\beta}{N}\right)G_\lambda(x,u)-\sigma ug_\lambda(x,u)\right)\,dx.
\end{split}\end{equation}

Now, since for any $\lambda\in\left[0,\frac{\lambda^*}{2}\right]$ and any $x\in{B_1}$,
\begin{equation*}\begin{split}
\lim_{t\to\infty}\dfrac{G_\lambda(x,t)}{tg_\lambda(x,t)}
=\lim_{t\to\infty}\dfrac{\frac{1}{p+1}
\left((1+w_\lambda(x)+t)^{p+1}-(1+w_\lambda(x))^{p+1}\right)-(1+w_\lambda(x))^{p}t}
{t\left((1+w_\lambda(x)+t)^{p}-(1+w_\lambda(x))^p\right)}
=\dfrac{1}{p+1},
\end{split}\end{equation*}
we deduce that for any $\epsilon>0$ there exists an $M=M(\epsilon)>0$ such that
$$G_\lambda(x,t)\leq\dfrac{1+\epsilon}{p+1}ug_\lambda(x,t)$$
whenever $t\geq{M}$.
From this we estimate the tail of the right hand side of \eqref{poho3} as
\begin{equation*}\begin{split}
\int_{B_1\cap\set{u\geq{M}}}&\abs{x}^{\beta}\left(\left(1+\dfrac{\beta}{N}\right)G_\lambda(x,u)-\sigma ug_\lambda(x,u)\right)\,dx\\
&\leq\int_{B_1\cap\set{u\geq{M}}}\abs{x}^{\beta}\left(\left(1+\frac{\beta}{N}\right)\dfrac{1+\epsilon}{p+1}-\sigma \right)ug_\lambda(x,u)\,dx.
\end{split}\end{equation*}
We wish to choose $\epsilon$ and $\sigma$ such that
$$\left(1+\dfrac{\beta}{N}\right)\dfrac{1+\epsilon}{p+1}<\sigma <\dfrac{N-2\gamma}{2N},$$
so that the above integral is non-positive. Indeed we observe that
\begin{equation*}\begin{split}
\left(\dfrac{N+\beta}{N}\right)\dfrac{1}{p+1}-\dfrac{N-2\gamma}{2N}
&=\dfrac{2(p(N-2\gamma)-2\gamma)-(N-2\gamma)(p+1)}{2N(p+1)}\\
&=\dfrac{(p-1)(N-2\gamma)-4\gamma}{2N(p+1)}\\
&<0
\end{split}\end{equation*}
as $p-1\in\left(\frac{2\gamma}{N-2\gamma},\frac{4\gamma}{N-2\gamma}\right)$. Then there exists a small $\epsilon>0$ such that
$$\left(1+\dfrac{\beta}{N}\right)\dfrac{1+\epsilon}{p+1}<\dfrac{N-2\gamma}{2N},$$
from which the existence of such $\sigma$ follows. With this choice of $\epsilon$ and $\sigma$, \eqref{poho3} gives
\begin{equation*}\begin{split}
&\quad\,\left(\dfrac{N-2\gamma}{2N}-\sigma\right)\int_{\R^N}\abs{(-\Delta)^{\frac{\gamma}{2}}{u}}^2\,dx\\
&\leq\lambda A\int_{B_1\cap\set{u<M}}\abs{x}^{\beta}\left(\left(1+\dfrac{\beta}{N}\right)G_\lambda(x,u)-\sigma ug_\lambda(x,u)\right)dx\\
&\leq\lambda A\left(1+\dfrac{\beta}{N}\right)\int_{B_1\cap\set{u<M}}\abs{x}^{\beta}G_\lambda(x,u)\,dx.
\end{split}\end{equation*}
Recalling the expression \eqref{G} for $G_\lambda(x,u)$, we have
\begin{equation*}\begin{split}
\left(\dfrac12-\dfrac{\sigma}{N}\right)\int_{\R^N}\abs{(-\Delta)^{\frac{\gamma}{2}}{u}}^2\,dx
&\leq\lambda A{C_M}\int_{B_1\cap\set{u<M}}\abs{x}^{\beta}u^2\,dx,
\end{split}\end{equation*}
where
\begin{equation}\label{CM}
C_M=\dfrac{p}{2}\left(1+w_{\frac{\lambda^*}{2}}(0)+M\right)^{p-1}
\end{equation}
by the monotonicity properties of $w_\lambda$.

On the other hand, since $p>\frac{N}{N-2\gamma}$,
$$-\beta=-p(N-2\gamma)+(N+2\gamma)=2\gamma-(N-2\gamma)\left(p-\dfrac{N}{N-2\gamma}\right)<2\gamma,$$
and thus the fractional Hardy--Sobolev inequality \eqref{HS1} implies
\begin{equation*}\begin{split}
c\int_{\R^N}\abs{(-\Delta)^{\frac{\gamma}{2}}{u}}^2\,dx
&\geq\left(\int_{\R^N}\abs{x}^{\beta}u^{2\eta}\,dx\right)^{\frac{1}{\eta}}
=\left(\int_{B_1}\abs{x}^{\beta}u^{2\eta}\,dx\right)^{\frac{1}{\eta}},
\end{split}\end{equation*}
where
$$\eta=\dfrac{N+\beta}{N-2\gamma}=\dfrac{p(N-2\gamma)-2\gamma}{N-2\gamma}=1+\left(p-\dfrac{N}{N-2\gamma}\right)>1.$$
Hence,
$$\left(\int_{B_1}\abs{x}^{\beta}u^{2\gamma}\,dx\right)^{\frac{1}{\gamma}}
\leq\dfrac{2N}{N-2\gamma}
c\,C_{M}\lambda A\int_{B_1}\abs{x}^{\beta}u^2\,dx.$$
However, by H\"older's inequality, we have
\begin{equation*}\begin{split}
\int_{B_1}\abs{x}^{\beta}u^2\,dx
&=\int_{B_1}\abs{x}^{\frac{\beta}{\eta}}u^2\cdot\abs{x}^{\beta\left(1-\frac{1}{\eta}\right)}\,dx
\leq\left(\int_{B_1}\abs{x}^{\beta}u^{2\eta}\,dx\right)^{\frac{1}{\eta}}\left(\int_{B_1}\abs{x}^{\beta}\,dx\right)^{1-\frac{1}{\eta}}\\
&\leq(N+\beta)^{-\frac{N+2\gamma}{N+\beta}}\left(\int_{B_1}\abs{x}^{\beta}u^{2\eta}\,dx\right)^{\frac{1}{\eta}}.
\end{split}\end{equation*}
Therefore, we have
$$\left(\int_{B_1}\abs{x}^{\beta}u^{2\eta}\,dx\right)^{\frac{1}{\eta}}
\leq\dfrac{2Nc A C_M}{(N-2\gamma)(N+\beta)^{\frac{N+2\gamma}{N+\beta}}}
\lambda\left(\int_{B_1}\abs{x}^{\beta}u^{2\eta}\,dx\right)^{\frac{1}{\eta}},$$
which forces $u\equiv0$ for any
\begin{equation}\label{lambda0}
\lambda<\lambda_0=\left(\dfrac{2NcAC_M}{(N-2\gamma)(N+\beta)^{\frac{N+2\gamma}{N+\beta}}}\right)^{-1}.
\end{equation}
\end{proof}

\subsection{Existence of a fast-decay singular solution}

Consider the Banach space of bounded, continuous, non-negative and radially non-increasing functions supported in the unit ball,
$$E=\set{w\in{\mathcal C(\R^n)}:w(x)
=\tilde{w}(\abs{x}),~\tilde{w}(r_1)\leq\tilde{w}(r_2)\text{ for } r_1\geq r_2,~w\geq0~\text{in}~B_1~\text{and}~w\equiv0~\text{in}~\R^N\setminus{B_1}}.$$
We begin with an {\em a priori} estimate followed by a compactness result, from which a bifurcation argument follows.

In the following an equivalent integral formulation of \eqref{problem} will be useful. Using the Green's function for the Dirichlet problem in the unit ball (\cite{Riesz,Bucur}), we see that \eqref{problem} is equivalent to
\begin{equation}\label{eq:T}
w(x)=T(\lambda,w)(x):=\int_{B_1}G(x,y)\lambda A\abs{y}^{\beta}(1+w(y))^p\,dy,
\quad x\in B_1,
\end{equation}
where
$$G(x,y)=C(N,\gamma)\dfrac{1}{\abs{x-y}^{N-2\gamma}}\int_0^{r_0(x,y)}\dfrac{r^{\gamma-1}}{(r+1)^{\frac{N}{2}}}\,dr$$
with
$$r_0(x,y)=\dfrac{(1-\abs{x}^2)(1-\abs{y}^2)}{\abs{x-y}^{2}}.$$
Here $C(N,\gamma)$ is some normalizing constant. $T$ is a continuous operator from $\R\times{E}$ to $E$.

\begin{lemma}[Uniform bound]\label{uniformbound}
There exists a universal constant $C_0=C_0(N,\gamma,p,\lambda^*)$ such that for any function $w\in{E}$, solving \eqref{problem} and for any $x\in B_{1/2}(0)\setminus\set{0}$,
$$w(x)\leq{C_0}\abs{x}^{-\frac{\beta+2\gamma}{p-1}}=C_0\abs{x}^{-\frac{p(N-2\gamma)-N}{p-1}}.$$
\end{lemma}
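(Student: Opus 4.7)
The plan is a blow-up by contradiction via the Pol\'a\v{c}ik--Quittner--Souplet doubling lemma. Assume the bound fails, so there are $\lambda_k\in(0,\lambda^*]$, solutions $w_k\in E$ of \eqref{problem}, and points $x_k\in B_{1/2}\setminus\{0\}$ with $M_k := w_k(x_k)^{1/\alpha}|x_k| \to \infty$, where $\alpha := \frac{\beta+2\gamma}{p-1}$ satisfies $\alpha(p-1)=\beta+2\gamma$ -- the exponent for which $w(x)\mapsto \mu^{\alpha} w(\mu x)$ is a symmetry of the scale-invariant model equation $(-\Delta)^\gamma u = A|x|^\beta u^p$.

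First I would apply the doubling lemma to $x\mapsto w_k(x)^{1/\alpha}$ on the punctured ball $B_{1/2}\setminus\{0\}$ with an enlarging parameter $R_k\to\infty$. This yields modified points $\tilde x_k$ with $w_k(\tilde x_k)^{1/\alpha}|\tilde x_k|\ge M_k$ and the two-sided control $w_k(z)\le 2^\alpha w_k(\tilde x_k)$ whenever $|z-\tilde x_k|\le R_k\mu_k$, where $\mu_k := w_k(\tilde x_k)^{-1/\alpha}\to 0$. Rescaling by $\tilde w_k(y):=\mu_k^\alpha w_k(\tilde x_k+\mu_k y)$ and writing $\xi_k:=\tilde x_k/\mu_k$, one has $\tilde w_k(0)=1$, $\tilde w_k\le 2^\alpha$ on $B_{R_k}$, $|\xi_k|\to\infty$, and
$$(-\Delta)^\gamma\tilde w_k(y) = \lambda_k A\,|\xi_k+y|^{\beta}\bigl(\mu_k^\alpha+\tilde w_k(y)\bigr)^p$$
on the expanding domain $\{|y+\xi_k|<1/\mu_k\}$. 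Interior fractional regularity \cite{Caffarelli-Silvestre} upgrades this to local uniform H\"older bounds, so along a subsequence $\tilde w_k\to\tilde w$ locally uniformly on $\R^N$, with $0\le\tilde w\le 2^\alpha$ and $\tilde w(0)=1$.

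Since $\beta<0$ and $|\xi_k|\to\infty$, the weight $|\xi_k+y|^{\beta}\to 0$ uniformly on compacts, so the limit equation reduces to $(-\Delta)^\gamma\tilde w=0$ in $\R^N$, and the Liouville theorem for bounded $\gamma$-harmonic functions forces $\tilde w\equiv 1$. To close the contradiction, I would either quantitatively exploit the support condition $\tilde w_k\equiv 0$ outside $B(-\xi_k,1/\mu_k)$ by comparing the two representations of $(-\Delta)^\gamma\tilde w_k(0)$ -- the pointwise value $\lambda_k A|\xi_k|^{\beta}(\mu_k^\alpha+1)^p$ versus the tail-sensitive singular integral -- or else perform a secondary rescaling at the outer scale $|\tilde x_k|$, which produces a sequence of Lane--Emden solutions on $\R^N\setminus\{0\}$ concentrating at a point of the unit sphere and thereby contradicts interior regularity for the fractional Lane--Emden equation in the strictly subcritical regime $p<\frac{N+2\gamma}{N-2\gamma}$. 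The principal obstacle is precisely this closing step: because the intrinsic rescaling annihilates the weight and leaves the trivial limit PDE $(-\Delta)^\gamma\tilde w=0$, the argument cannot close by a direct fractional Lane--Emden Liouville theorem and must invoke either the nonlocal tail information inherited from the Dirichlet condition or the secondary blow-up.
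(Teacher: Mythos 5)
Your approach is genuinely different from the paper's and it has a gap you yourself identify: the intrinsic rescaling exponent $\alpha=\frac{\beta+2\gamma}{p-1}$ puts the doubling point $\tilde x_k$ at distance $|\xi_k|=|\tilde x_k|/\mu_k\to\infty$ (in rescaled units) from the weight singularity, so $|\xi_k+y|^\beta\to0$ on compacts, the limit PDE degenerates to $(-\Delta)^\gamma\tilde w=0$, and Liouville only returns the harmless conclusion $\tilde w\equiv1$. Neither of the two repairs you sketch is on solid ground. The tail-integral comparison would require lower control on $w_k(0)$ relative to $w_k(\tilde x_k)$ to quantify $\tilde w_k$ near $z=-\xi_k$, and no such control is available; the contribution from the complement of the support is of size $O(\mu_k^{2\gamma})$ and also vanishes, so there is no mismatch of orders to exploit. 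A secondary blow-up at the outer scale $|\tilde x_k|$ would have to contend with the same difficulty and is not developed far enough to evaluate. In short, the blow-up machinery as set up cannot close because the relevant scales (size of $w_k$ vs. distance to the weight singularity) are not locked together by this choice of exponent.

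The paper's proof is much more elementary and avoids blow-up entirely. Writing the Dirichlet problem \eqref{problem} in the Green's function form \eqref{eq:T} and using that $w\in E$ is nonnegative and radially non-increasing, one integrates only over the small ball $B_{|x|/4}\bigl(\tfrac{3x}{4}\bigr)\subset B_{|x|/2}(x)\cap B_{|x|}(0)$, where the Green's function is bounded below by $c\,|x|^{-(N-2\gamma)}$ (because $r_0(x,y)\geq5$) and both $|y|^\beta\geq|x|^\beta$ and $w(y)\geq w(x)$ by monotonicity. This yields the pointwise inequality $w(x)\geq C_0^{-(p-1)}|x|^{\beta+2\gamma}w(x)^p$ directly, which rearranges to the claimed upper bound $w(x)\leq C_0|x|^{-\frac{\beta+2\gamma}{p-1}}$. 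The key ingredient you did not use is precisely this one-shot lower bound on the convolution integral coming from monotonicity; it replaces the whole contradiction/blow-up apparatus. If you want to salvage a rescaling argument here, you would need the doubling lemma applied with a scale that preserves the weight (rescaling by $|\tilde x_k|^\beta$ together with the amplitude), together with a Liouville theorem for the resulting constant-coefficient fractional Lane--Emden equation in $\R^N$ in the strictly subcritical range, and a separate argument to rule out the case where $\tilde x_k$ stays comparable to $\mu_k$ (blow-up at the singularity). That is considerably more work than the direct Green's function estimate.
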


\begin{proof}
The maximum principle implies that $w>0$ in $B_1$. Let
$$y\in{B_{\frac{\abs{x}}{4}}\left(\dfrac{3x}{4}\right)}
\subset{B_{\frac{\abs{x}}{2}}(x)}\cap{B_{\abs{x}}(0)}\subset{B_1(0)}.$$
From $y\in{B_{\frac{\abs{x}}{2}}(x)}$, we have
$$\abs{x-y}\leq\dfrac{\abs{x}}{2}\leq\dfrac14\quad\text{and}\quad\abs{y}\leq\dfrac{3\abs{x}}{2}\leq\dfrac34$$
and so
$$r_0(x,y)\geq\dfrac{\left(1-\frac14\right)\left(1-\frac{9}{16}\right)}{\frac{1}{16}}\geq\dfrac{21}{4}>5.$$
On the other hand, since $y\in{B_{\abs{x}}(0)}$ and $w$ is radially non-increasing, we have
$$\abs{y}^{\beta}\geq\abs{x}^{\beta}\quad\text{and}\quad{w(y)}\geq{w(x)}.$$
Therefore, we may conclude
$$G(x,y)\geq{C(N,\gamma)}\left(\dfrac{2}{\abs{x}}\right)^{N-2\gamma}\int_0^5\dfrac{r^{\gamma-1}}{(r+1)^{\frac{N}{2}}}\,dr$$
and
\begin{equation*}\begin{split}
w(x)
&\geq A\int_{B_{\frac{\abs{x}}{4}}\left(\frac{3x}{4}\right)}C(N,\gamma)\dfrac{2^{N-2\gamma}}{\abs{x}^{N-2\gamma}}
\left(\int_0^5\dfrac{r^{\gamma-1}}{(r+1)^{\frac{N}{2}}}\,dr\right)\lambda_0\abs{x}^{\beta}w(x)^p\,dy\\
&\geq{C(N,\gamma)}A 2^{N-2\gamma}\left(\int_0^5\dfrac{r^{\gamma-1}}{(r+1)^{\frac{N}{2}}}\,dr\right)\lambda_0
\cdot\dfrac{\abs{x}^{\beta}}{\abs{x}^{N-2\gamma}}w(x)^p
\cdot\abs{B_1}\left(\dfrac{\abs{x}}{4}\right)^N\\
&\geq{C_0^{-(p-1)}}\abs{x}^{\beta+2\gamma}w(x)^p,
\end{split}\end{equation*}
where
\[C_0^{-(p-1)}=\dfrac{C(N,\gamma)\abs{B_1}A\lambda_0}{2^{N+2\gamma}}\int_0^5\dfrac{r^{\gamma-1}}{(r+1)^{\frac{N}{2}}}\,dr.\]
The inequality clearly rearranges to
\[w(x)\leq{C_0}\abs{x}^{-\frac{\beta+2\gamma}{p-1}},\]
as desired. The dependence of the constant $C_0$ follows from \eqref{lambda0} and \eqref{CM}.
\end{proof}

\begin{lemma}[Compactness]\label{compactness}
The non-linear operator $T:\R\times E\to E$ defined in \eqref{eq:T} is compact, i.e. it maps bounded sets to relatively compact sets.
\end{lemma}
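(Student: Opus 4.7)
The strategy is the Arzel\`a--Ascoli theorem in $C(\overline{B_1})$. Take a bounded sequence $(\lambda_n,w_n)\in\R\times E$, so that $|\lambda_n|\leq\Lambda$ and $\|w_n\|_\infty\leq M$ uniformly, and set $u_n:=T(\lambda_n,w_n)$. The source
\begin{equation*}
f_n(y):=\lambda_n A\,|y|^\beta(1+w_n(y))^p
\end{equation*}
satisfies $|f_n(y)|\leq C\,|y|^\beta$ uniformly in $n$, and since $\beta\in(-2\gamma,0)$ it is uniformly bounded in $L^q(B_1)$ for every $q<N/|\beta|$; in particular we may choose $q>N/(2\gamma)$.

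For the uniform $L^\infty$ bound, combine $G(x,y)\leq C\,|x-y|^{-(N-2\gamma)}$ (which is immediate from the explicit formula stated just before the lemma, since the $r$-integral is bounded by a universal constant) with the elementary estimate
\begin{equation*}
\sup_{x\in\overline{B_1}}\int_{B_1}|x-y|^{-(N-2\gamma)}|y|^\beta\,dy<\infty,
\end{equation*}
valid because $N-2\gamma<N$ and $\beta>-N$. This yields $\|u_n\|_\infty\leq C$ uniformly in $n$.

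The key step, and the main obstacle, is equicontinuity up to $\partial B_1$, where $u_n$ must match the exterior value $0$, while the source $f_n$ carries a singular factor $|y|^\beta$ at the origin. I would invoke boundary H\"older regularity for the Dirichlet problem $(-\Delta)^\gamma u_n=f_n$ in $B_1$ with $u_n\equiv 0$ in $\R^N\setminus B_1$: since $f_n$ is uniformly bounded in $L^q(B_1)$ with $q>N/(2\gamma)$, standard fractional potential/Calder\'on--Zygmund estimates (as recorded for example in \cite{RosOton-Serra2}) give a uniform bound $\|u_n\|_{C^\alpha(\overline{B_1})}\leq C$ for some $\alpha\in(0,\gamma)$, with constant depending only on $N,\gamma,p,\Lambda,M$. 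Alternatively, one can argue directly from the Green's representation by splitting $T(\lambda,w)(x_1)-T(\lambda,w)(x_2)$ into a near-diagonal piece on $B_\rho(x_1)\cup B_\rho(x_2)$, which is $O(\rho^{2\gamma})$ by the Riesz bound uniformly in $w_n$, and a far piece on which $G(\cdot,y)$ is smooth in $x$ and admits a uniform modulus of continuity away from the diagonal and the boundary.

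With the uniform H\"older estimate in hand, Arzel\`a--Ascoli extracts a subsequence $u_{n_k}\to u$ uniformly on $\overline{B_1}$, and therefore uniformly on $\R^N$ after extension by zero. The limit $u$ is continuous, non-negative, supported in $\overline{B_1}$, and radially non-increasing: each $f_n$ is radially non-increasing as the product of the radially decreasing factors $|y|^\beta$ and $(1+w_n(y))^p$, and the Dirichlet Green's operator on the ball preserves radial monotonicity, so each $u_n$ is radially non-increasing and this property passes to the uniform limit. Hence $u\in E$, proving that $T$ sends bounded sets to relatively compact sets.
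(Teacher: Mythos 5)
Your proof is correct, but it reaches equicontinuity by a genuinely different route than the paper. The paper's argument is elementary and self-contained: it establishes, by an explicit polar-coordinate change of variables, the uniform kernel bound
\[
\sup_{|x|<1}\int_{B_1}\frac{|y|^{\beta}}{|x-y|^{N-2\gamma}}\,dy\leq C,
\]
and then derives equiboundedness and equicontinuity of the image family directly from the Green's representation, closing with a Lipschitz estimate for $T$ in $(\lambda,w)$. You instead note that the source $f_n=\lambda_n A|y|^{\beta}(1+w_n)^{p}$ is uniformly bounded in $L^q(B_1)$ for some $q>N/(2\gamma)$ (valid since $|\beta|<2\gamma$) and then invoke the $L^q\to C^{\alpha}(\overline{B_1})$ boundary regularity for the fractional Dirichlet problem from \cite{RosOton-Serra2} to get a uniform H\"older bound on $u_n=T(\lambda_n,w_n)$. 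That is exactly the regularity tool the paper itself uses later, in the proof of Proposition \ref{existence}, to extract a locally uniformly convergent subsequence of $W_j$; so you are borrowing machinery the authors reserve for the blow-up step and applying it already here. The trade-off: your argument is shorter and delegates the hard estimate to an external theorem, while the paper's is self-contained and gives an explicit constant, but needs the fairly involved kernel computation. Your "alternative direct argument" sketch (splitting near-diagonal and far pieces) is closer in spirit to the paper's approach, though you leave the modulus-of-continuity estimate at the level of a claim. Two minor remarks: the verification that the limit lies in $E$ is superfluous here, because $E$ is closed in $L^\infty$ and the paper has already asserted that $T$ maps $\R\times E$ into $E$; and the step "the Dirichlet Green's operator preserves radial monotonicity" is true but not completely elementary (moving planes or a symmetrization argument), so it is cleaner to simply quote the already-established fact that $T$ takes values in $E$.
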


\begin{proof}
In a bounded set of $\R\times E$, it suffices to find a convergent subsequence via Arzel\`{a}-Ascoli theorem. Since $p>1$, equi-boundedness and equi-continuity follow immediately once we have the bound

\[
\int_{B_1}\dfrac{|y|^{\beta}}{|x-y|^{N-2\gamma}}\,dy
    \leq{C},
\]
for any $|x|<1$. Indeed, it is true because $2\gamma+\beta>0$. We prove it for the case where $N\geq2$; the lower dimensional case is easier and omitted. Using polar coordinates with $r=|y|$ and $\theta_1\in(0,\pi)$, and then changing variables to $r=|x|\rho$, $t=t_0\sin\frac{\theta_1}{2}$ with $t_0=\frac{2\sqrt{r|x|}}{\abs{r-|x|}}=\frac{2\sqrt{\rho}}{|\rho-1|}$, we compute
\[\begin{split}
\int_{B_1}\dfrac{|y|^{\beta}}{|x-y|^{N-2\gamma}}\,dy
&=\abs{\mathbb{S}^{N-2}}\int_0^1r^{N-1+\beta}
    \int_0^\pi\dfrac{\sin^{N-2}\theta_1\,d\theta_1}
        {\left((r-|x|)^2+2r|x|(1-\cos\theta_1)\right)^{\frac{N-2\gamma}{2}}}\,dr\\
&=\abs{\mathbb{S}^{N-2}}
    \int_0^1\dfrac{r^{N-1+\beta}}
        {\abs{r-|x|}^{N-2\gamma}}
    \int_0^{t_0}\dfrac{
        2^{N-2}\left(\frac{t}{{t_0}}\right)^{N-2}
        \left(1-\frac{t^2}{t_0^2}\right)^{\frac{N-3}{2}}
        \,\left(\frac{2}{t_0}dt\right)}
    {\left(1+t^2\right)^{\frac{N-2\gamma}{2}}}\,dr\\
&=\abs{\mathbb{S}^{N-2}}|x|^{2\gamma+\beta}
    \int_0^{\frac{1}{|x|}}
        \dfrac{\rho^{N-1+\beta}}{\abs{\rho-1}^{N-2\gamma}}
        \left(\dfrac{2}{t_0}\right)^{N-1}
        \int_0^{t_0}\dfrac{t^{N-2}}{(1+t^2)^{\frac{N-2\gamma}{2}}}
            \left(1-\frac{t^2}{t_0^2}\right)^{\frac{N-3}{2}}\,dtd\rho\\
&\leq{C}|x|^{2\gamma+\beta}
    \int_{0}^{\frac{1}{|x|}}
        \dfrac{\rho^{N-1+\beta}}{|\rho-1|^{N-2\gamma}}
        \min\set{1,\left(\frac{\sqrt{\rho}}{|\rho-1|}\right)^{1-N}
        	\int_1^{\frac{\sqrt{\rho}}{|\rho-1|}}
				t^{2\gamma-2}\,dt
		}
    \,d\rho
    \\
&\leq{C}|x|^{2\gamma+\beta}
    \left(\int_{0}^{\frac12}
        \rho^{N-1+\beta}
    \,d\rho
    +\int_{\frac12}^2
        |\rho-1|^{2\gamma-1}
        \left[\int_{1}^{\frac{1}{|\rho-1|}}t^{2\gamma-2}\,dt\right]
    \,d\rho
    +\int_2^{\frac{1}{|x|}}
        \rho^{2\gamma-1+\beta}\,d\rho
    \right)
    \\
&\leq{C},
\end{split}\]
provided that $|x|\leq\frac12$. When $\frac12<|x|<1$, we simply use
\[
\dfrac{|y|^{\beta}}{|x-y|^{N-2\gamma}}
\leq{C}
	\left(
		|y|^{\beta}+|x-y|^{-(N-2\gamma)}+1
	\right).
\]
This proves the claim.

Therefore, if $(\lambda,u)$ and $(\mu,v)$ belong to a bounded set of $\R\times E$, then
\begin{equation*}\begin{split}
\norm[L^\infty]{T(\lambda,u)-T(\mu,v)}
&\leq
	|\lambda-\mu|\int_{B_1}
		\dfrac{A|y|^{\beta}(1+\norm[L^\infty]{u})^p}{|x-y|^{N-2\gamma}}\,dy\\
&\quad\;
	+\norm[L^{\infty}]{u-v}
	\int_{B_1}
		|\mu|Ap\big(1+\norm[L^\infty]{u}+\norm[L^\infty]{v}\big)^{p-1}
		\dfrac{|y|^{\beta}}{|x-y|^{N-2\gamma}}\,dy\\
&\leq{C}\left(|\lambda-\mu|+\norm[L^\infty]{u-v}\right).
\end{split}\end{equation*}
Hence, Arzel\`{a}-Ascoli theorem applies and the proof is complete.
\end{proof}

\begin{lemma}[Bifurcation]\label{bifurcation}
There exists a sequence of solutions $(\lambda_j,w_j)$ of \eqref{problem} in $(0,\lambda^*]\times{E}$ such that
$$\lim_{j\to\infty}\lambda_j=\lambda_\infty\in[\lambda_0,\lambda^*]\quad\text{and}\quad
\lim_{t\to\infty}\norm[L^\infty]{w_j}
=\infty,$$
where $\lambda_0$ is given in Proposition \ref{uniqueness}.
\end{lemma}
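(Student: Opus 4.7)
The plan is to apply a global (Rabinowitz-type) bifurcation argument to the compact fixed-point equation $w = T(\lambda, w)$ from \eqref{eq:T}, combined with the a priori constraints given by Proposition \ref{uniqueness} and the definition of $\lambda^\ast$. Compactness of $T$ is provided by Lemma \ref{compactness}, and the minimal branch $\{(\lambda, w_\lambda) : \lambda \in (0, \lambda^\ast)\}$ already supplies a connected, monotone curve of solutions in $(0, \lambda^\ast) \times E$.

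I split on the behaviour of the minimal branch as $\lambda \nearrow \lambda^\ast$. If $\|w_\lambda\|_{L^\infty} \to \infty$ along the minimal branch, then the choice $\lambda_j = \lambda^\ast - 1/j$, $w_j = w_{\lambda_j}$ directly produces the desired sequence with $\lambda_\infty = \lambda^\ast$. Otherwise the minimal solutions remain uniformly bounded, and compactness of $T$ together with Arzel\`a--Ascoli upgrade the monotone family $\{w_\lambda\}$ to a uniform limit $w^\ast \in E$ satisfying $w^\ast = T(\lambda^\ast, w^\ast)$, i.e.\ the extremal solution exists. Since \eqref{problem} has no solutions for $\lambda > \lambda^\ast$, the Leray--Schauder index of $w^\ast$ must change as $\lambda$ crosses $\lambda^\ast$; a Crandall--Rabinowitz / Rabinowitz global bifurcation theorem then produces a continuum $\mathcal C \subset (0, \lambda^\ast] \times E$ of non-minimal solutions emanating from $(\lambda^\ast, w^\ast)$, with $\mathcal C$ unbounded in $\mathbb{R} \times E$.

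To extract the desired sequence I use a $\lambda$-trapping: along $\mathcal C$ one has $\lambda \leq \lambda^\ast$ because no solution exists for $\lambda > \lambda^\ast$, and $\lambda \geq \lambda_0$ away from the bifurcation endpoint because the solutions on $\mathcal C$ are non-minimal whereas Proposition \ref{uniqueness} gives uniqueness for $\lambda < \lambda_0$. More quantitatively, if $\lambda_j \to \lambda_\infty < \lambda_0$ along $\mathcal C$, then for $j$ large $w_j$ coincides with the minimal $w_{\lambda_j}$, which is dominated by the fixed bounded function $w_{(\lambda_0+\lambda_\infty)/2}$ by monotonicity, contradicting $\|w_j\|_{L^\infty} \to \infty$. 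Consequently the unboundedness of $\mathcal C$ occurs in the $E$-direction, yielding $(\lambda_j, w_j) \in \mathcal C$ with $\|w_j\|_{L^\infty} \to \infty$ and $\lambda_j \in [\lambda_0, \lambda^\ast]$; passing to a subsequence, $\lambda_j \to \lambda_\infty \in [\lambda_0, \lambda^\ast]$.

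The main obstacle I anticipate is the secondary bifurcation step in the case that the minimal branch does not blow up. In the classical local setting the Leray--Schauder index drop at the extremal value is standard (crossing of a simple eigenvalue, Morse index jumping by one), but here the linearised operator is non-local and its spectral analysis is less transparent. Compactness of the nonlinear part is granted by Lemma \ref{compactness}, so the remaining task is a careful index/parity check at $\lambda^\ast$ so that Rabinowitz's global alternative applies. Fortunately, in the unstable regime $p \geq p_1$ of interest the extremal solution is typically already singular, in which case we are in the first alternative and the bifurcation step can be bypassed altogether.
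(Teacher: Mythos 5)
The paper takes a different and notably simpler route: it never splits on whether the minimal branch blows up as $\lambda\nearrow\lambda^*$. Instead it considers the continuation $\mathscr{C}$ of the minimal branch starting from the point $(\lambda_0,w_{\lambda_0})$ and applies Rabinowitz's global continuation theorem (Theorem 6.2 in \cite{Rabinowitz}) directly to this branch. The confinement $\mathscr{C}\subset[\lambda_0,\lambda^*]\times E$ follows from Proposition \ref{uniqueness}, which rules out non-minimal solutions below $\lambda_0$, and from the definition of $\lambda^*$; unboundedness of $\mathscr{C}$ then forces $\norm[L^\infty]{w_j}\to\infty$. In this formulation no degree or index computation at $\lambda^*$ is needed, and no information about the extremal solution (its existence or boundedness) is required.

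Your Case 1 is fine. Case 2, however, has a genuine gap. You assert that ``the Leray--Schauder index of $w^*$ must change as $\lambda$ crosses $\lambda^*$'' purely from the nonexistence of solutions past $\lambda^*$, but nonexistence alone does not give an index change: you would need to prove that the linearization $I-T_w(\lambda^*,w^*)$ has a simple zero eigenvalue (e.g., via Krein--Rutman using the positivity of the nonlocal Green's function) and that the eigenvalue crosses zero transversally at the fold. Neither step is supplied, and for the operator $(-\Delta)^\gamma$ with the singular weight $|x|^\beta$ both require real work. Your closing remark that in the unstable regime $p\geq p_1$ the extremal solution is ``typically already singular,'' so Case 2 can be bypassed, is not a proof and is not reliable: $p\geq p_1$ is the threshold for \emph{instability of the explicit singular solution} $u_\gamma$, which is strictly weaker than the (Joseph--Lundgren-type) threshold that governs whether the extremal solution is unbounded. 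Hence Case 2 is not a priori vacuous and the argument there is incomplete. The paper's choice to launch the continuation at $\lambda_0$ rather than attempt secondary bifurcation at $\lambda^*$ is precisely what sidesteps this obstacle.
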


\begin{proof}
Consider the continuation $$\mathscr{C}=\set{(\lambda(t),w(t)):t\geq0}$$
of the branch of minimal solutions $\set{(\lambda,w_\lambda):0\leq\lambda\leq\lambda_0}$, where $(\lambda(0),w(0))=(\lambda_0,w_{\lambda_0})$. By Proposition \ref{uniqueness} and the fact that no solutions exist if $\lambda>\lambda^*$, we see that $\mathscr{C}\subset[\lambda_0,\lambda^*]\times{E}$. By the classical bifurcation theory \cite[Theorem 6.2]{Rabinowitz}, $\mathscr{C}$ is unbounded in $[\lambda_0,\lambda^*]\times{E}$ and the existence of the desired sequence of pairs $(\lambda_j,w_j)$ follows.
\end{proof}

We are ready to establish the existence of a fast-decay singular solution.

\begin{proof}[Proof of Proposition \ref{existence}]
Let $(\lambda_j,w_j)$ be as in Lemma \ref{bifurcation}. Define
\begin{equation*}
m_j=\norm[L^\infty(B_1)]{w_j}=w_j(0)\quad\text{and}\quad{R_j}=m_j^{\frac{p-1}{\beta+2\gamma}}=m_j^{\frac{p-1}{p(N-2\gamma)-N}}
\end{equation*}
so that $m_j$, $R_j\to\infty$ as $j\to\infty$. Set also
\begin{equation*}
W_j(x)=\lambda^{\frac{1}{p-1}}m_j^{-1}w_{\lambda_j}\Big(\frac{x}{R_j}\Big).
\end{equation*}
Then $0\leq W_j\leq 1$ and $W_j$ is a bounded solution to
\begin{equation*}\begin{cases}
(-\Delta)^\gamma{W_j}=\lambda_{j}^{\frac{1}{p-1}+1}\lambda_j^{-\frac{p}{p-1}}m_j^{p-1}R_j^{-\beta-2\gamma}A|x|^\beta
\left(\lambda_j^{\frac{1}{p-1}}m_j^{-1}+W_j\right)^p&\text{in}~B_{{R_j}}(0),\\
W_j=0 &\text{in}~\R^N\setminus{B}_{R_j}(0),
\end{cases}\end{equation*}
that is,
\begin{equation*}\begin{cases}
(-\Delta)^\gamma{W_j}=A\abs{x}^\beta\left(\lambda_j^{\frac{1}{p-1}}m_j^{-1}+W_j\right)^p&\text{in}~B_{{R_j}}(0),\\
W_j=0 &\text{in}~\R^N\setminus{B}_{R_j}(0).
\end{cases}\end{equation*}
Recall that by Lemma \ref{uniformbound},
$$w_j(x)\leq{C_0}\abs{x}^{-\frac{\beta+2\gamma}{p-1}}
\quad \text{ for } x\in B_{\frac12}(0).$$
Thus in $B_{\frac{R_j}{2}}(0)$, $W_j(x)$ has the upper bound
\begin{equation}\label{Vjbound}\begin{split}
W_j(x)
&\leq\lambda^{\frac{1}{p-1}}m_j^{-1}\cdot{C_0}\left(\dfrac{x}{R_j}\right)^{-\frac{\beta+2\gamma}{p-1}}
\leq{C_0}\left(\lambda_0^{\frac{1}{p-1}}+\left(\lambda^*\right)^{\frac{1}{p-1}}\right)\abs{x}^{-\frac{\beta+2\gamma}{p-1}}\\
&={C_1}\abs{x}^{-\frac{\beta+2\gamma}{p-1}}=C_1\abs{x}^{\frac{2\gamma}{p-1}-(N-2\gamma)}.
\end{split}\end{equation}
Note that $|x|^{\beta}\in L^q(B_{R_j}(0))$ for any $\frac{N}{2\gamma}<q<\frac{N}{-\beta}$. Hence, for such $q$, by the regularity result in \cite{RosOton-Serra2}, $W_j\in \mathcal C_{loc}^\eta(\R^N)$ for $\eta=\min\left\{\gamma,2\gamma-\frac{N}{q}\right\}\in(0,1)$. Therefore, by passing to a subsequence, $W_j$ converges uniformly on compact sets of $\R^N$ to a radially symmetric and non-increasing function $w$ which satisfies
\begin{equation*}\begin{cases}
(-\Delta)^\gamma{w}=A\abs{x}^{\beta}w^p&\text{in}~\R^N,\\
w(0)=1,\\
w(x)\leq{C_1}\abs{x}^{\frac{2\gamma}{p-1}-(n-2\gamma)},
\end{cases}\end{equation*}
in view of \eqref{Vjbound}.

Now the family of rescaled solutions $w_\eps(x)=\eps{w}\left(\eps^{\frac{p-1}{\beta+2\gamma}}x\right)$ solves
\begin{equation*}\begin{cases}
(-\Delta)^\gamma{w_\eps}=A\abs{x}^{\beta}w_\eps^p&\quad\text{in}~\R^N,\\
w_\eps(0)=\eps,\\
w_\eps(x)\leq{C_1}\abs{x}^{\frac{2\gamma}{p-1}-(N-2\gamma)}&\quad\text{in}~\R^N\setminus\set{0}.
\end{cases}\end{equation*}
Finally, its Kelvin transform $u_\eps(x)=\abs{x}^{-(N-2\gamma)}w_\eps\left(\frac{x}{\abs{x}^2}\right)$ satisfies
\begin{equation*}\begin{cases}
(-\Delta)^\gamma{u_\eps}=Au_\eps^p&\quad\text{in}~\R^N\setminus\set{0},\\
u_\eps(x)\leq{C_1}\abs{x}^{-\frac{2\gamma}{p-1}}&\quad\text{in}~\R^N\setminus\set{0},\\
u_\eps(x)\sim\eps\abs{x}^{-(N-2\gamma)}&\quad\text{as}~\abs{x}\to\infty,
\end{cases}\end{equation*}
as desired.
\end{proof}

\section{The conformal fractional Laplacian in the presence of $k$-dimensional singularities}\label{section:conformal}

\subsection{A quick review on the conformal fractional Laplacian}
Here we review some basic facts on the conformal fractional Laplacian that will be needed in the next sections
(see \cite{Chang-Gonzalez,Gonzalez:survey} for the precise definitions and details).

If $(X,g^+)$ is a $(n+1)$-dimensional conformally compact Einstein manifold (which, in particular, includes the hyperbolic space), one can define a one-parameter family of operators $P_\gamma$ of order $2\gamma$ on its conformal infinity $M^n=\partial_\infty X^{n+1}$. $P_\gamma$ is known as the conformal fractional Laplacian and it can be understood as a Dirichlet-to-Neumann operator on $M$. In the particular case that $X$ is the hyperbolic space $\mathbb H^{n+1}$, whose conformal infinity is $M=\mathbb R^n$ with the Euclidean metric, $P_\gamma$ coincides with the standard fractional Laplacian $(-\Delta_{\mathbb R^n})^\gamma$.

Let us explain this definition in detail. It is known that, having fixed a metric $g_0$ in the conformal infinity $M$,  it is possible to write the metric $g^+$ in the normal form
$g^+=\rho^{-2}(d\rho^2+g_\rho)$ in  a tubular neighborhood $M\times(0,\delta]$. Here $g_\rho$ is a one-parameter family of metrics on $M$ satisfying $g_\rho|_{\rho=0}=g_0$ and $\rho$ is a defining function in $\overline{X}$ for the boundary $M$ (i.e., $\rho$ is a non-degenerate function such that $\rho>0$ in $X$ and $\rho=0$ on $M$).

Fix $\gamma\in(0,n/2)$ not an integer such that $n/2+\gamma$ does not belong to the set of $L^2$-eigenvalues of $-\Delta_{g^+}$. Assume also that the first eigenvalue for $-\Delta_{g^+}$ satisfies  $\lambda_1(-\Delta_{g^+})>n^2/4-\gamma^2$.
It is well known from scattering theory \cite{Graham-Zworski,Guillarmou} that, given $w\in \mathcal C^{\infty}(M)$, the eigenvalue problem
\begin{equation}\label{equation-scattering}
-\Delta_{g^+}\mathcal W-\big(\tfrac{n^2}{2}-\gamma^2\big)\mathcal W=0 \text{ in } X,
\end{equation}
has a unique solution with the asymptotic expansion
\begin{equation}\label{formaw}
\mathcal W=\mathcal W_1 \rho^{\frac{n}{2}-\gamma}+ \mathcal W_2 \rho^{\frac{n}{2}+\gamma}, \quad \mathcal W_1,\mathcal W_2 \in \mathcal C^{\infty}(\overline{X})
\end{equation}
and Dirichlet condition  on $M$
\begin{equation}\label{Dirichlet-condition}
\mathcal W_1|_{\rho=0}=w.
\end{equation}
The conformal fractional Laplacian (or scattering operator, depending on the normalization constant) on $(M,g_0)$ is defined taking the Neumann data
\begin{equation}\label{definition-P}
 P^{g_0}_{\gamma}w=d_{\gamma}\mathcal W_2|_{\rho=0},\quad\text{where }\ d_{\gamma}=2^{2\gamma}\frac{\Gamma(\gamma)}{\Gamma(-\gamma)},
\end{equation}
and the fractional curvature as
$Q^{g_0}_\gamma:=P^{g_0}_\gamma(1).$

$P_\gamma^{g_0}$ is a self-adjoint pseudodifferential operator of order $2\gamma$ on $M$ with the same principal symbol as $(-\Delta_M)^\gamma$. In the case that the order of the operator is an even integer  we recover the conformally invariant GJMS operators on $M$. In addition, for any $\gamma\in(0,\frac{n}{2})$, the operator is conformal. Indeed,
\begin{equation}\label{conformal-property}
 P_{\gamma}^{g_{w}}f=w^{-\frac{n+2\gamma}{n-2\gamma}}P_{\gamma}^{g_0}(wf), \quad \forall f\in \mathcal C^{\infty}(M),
\end{equation}
for a change of metric $$g_{w}:=w^{\frac{4}{n-2\gamma}}g_0,\ w>0.$$
Moreover, \eqref{conformal-property} yields the $Q_\gamma$ curvature equation
\begin{equation*}\label{confQ}
 P_{\gamma}^{g_0}(w)=w^{\frac{n+2\gamma}{n-2\gamma}}Q_{\gamma}^{g_{w}}.
\end{equation*}
Explicit formulas for $P_{\gamma}$ are not known in general. The formula for the cylinder will be given in Section \ref{section:isolated-singularity}, and it is one of the main ingredients for the linear theory arguments of Section \ref{section:linear-theory}.

\bigskip

The extension \eqref{equation-scattering} takes a more familiar form under a conformal change of metric.

\begin{proposition}[\cite{Chang-Gonzalez}]\label{prop:extension}
Fix $\gamma\in(0,1)$ and $\bar g=\rho^2 g^+ $. Let $\mathcal W$ be the solution to the scattering problem \eqref{equation-scattering}-\eqref{formaw} with Dirichlet data \eqref{Dirichlet-condition} set to $w$.  Then $W = \rho^{-n/2+\gamma}  \mathcal W$ is the unique solution to the extension problem
\begin{equation}\label{div}
\left\{\begin{array}{r@{}l@{}l}
-\divergence \left( \rho^{1-2\gamma} \nabla W\right) + E_{\bar g}(\rho) W&\,=0\quad &\mbox{in }(X,\bar g), \medskip\\
W|_{\rho=0}&\,=w\quad &\mbox{on }M,
\end{array}\right.
\end{equation}
where the derivatives are taken with respect to the metric $\bar g$, and the zero-th order term is given by
\begin{equation}\label{Erho}
\begin{split}
E_{\bar g}(\rho)&= -\Delta_{\bar{g}}(\rho^{\frac{1-2\gamma}{2}})\rho^{\frac{1-2\gamma}{2}}
+\left(\gamma^2-\tfrac{1}{4}\right)\rho^{-(1+2\gamma)}+\tfrac{n-1}{4n}R_{\bar {g}}\rho^{1-2\gamma}\\
&=\rho^{-\frac{n}{2}-\gamma-1}\left\{-\Delta_{g^+} -\big(\tfrac{n^2}{4}-\gamma^2\big)\right\}\big(\rho^{\frac{n}{2}-\gamma}\big).
\end{split}\end{equation}
Moreover, we recover the conformal fractional Laplacian as
\begin{equation*}
P_\gamma^{g_0} w=
-\tilde d_\gamma\lim_{\rho\to 0}\rho^{1-2\gamma} \partial_\rho W,
\end{equation*}
where
\begin{equation}\label{tilde-d}\tilde d_\gamma =-\frac{d_\gamma}{2\gamma}=-
\frac{2^{2\gamma-1}\Gamma(\gamma)}{\gamma\Gamma(-\gamma)}.
\end{equation}
\end{proposition}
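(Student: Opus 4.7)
The plan is to verify the proposition by a direct substitution, using only the conformal relation $g^+=\rho^{-2}\bar g$, the product rule for the Laplacian, and the definition \eqref{Erho} of $E_{\bar g}$. First I would set $\mathcal W=\rho^{n/2-\gamma}W$ in the scattering equation $-\Delta_{g^+}\mathcal W-(n^2/4-\gamma^2)\mathcal W=0$ and expand via the product rule
\begin{equation*}
-\Delta_{g^+}(fW)=(-\Delta_{g^+}f)W+f(-\Delta_{g^+}W)-2\langle\nabla_{g^+}f,\nabla_{g^+}W\rangle_{g^+},
\end{equation*}
applied to $f=\rho^{n/2-\gamma}$. The ``zero-th order'' term $[(-\Delta_{g^+}-(n^2/4-\gamma^2))\rho^{n/2-\gamma}]\,W$ is, by the very definition \eqref{Erho}, equal to $\rho^{n/2+\gamma+1}E_{\bar g}(\rho)\,W$.

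The next step is to convert the remaining $g^+$-quantities into $\bar g$-quantities using $g^+=\rho^{-2}\bar g$, which gives $\langle\nabla_{g^+}\cdot,\nabla_{g^+}\cdot\rangle_{g^+}=\rho^{2}\langle\nabla_{\bar g}\cdot,\nabla_{\bar g}\cdot\rangle_{\bar g}$ and
\begin{equation*}
\Delta_{g^+}u=\rho^{n+1}\,\divergence_{\bar g}\!\bigl(\rho^{-(n-1)}\nabla_{\bar g}u\bigr)=\rho^{2}\Delta_{\bar g}u-(n-1)\rho\,\langle\nabla_{\bar g}\rho,\nabla_{\bar g}u\rangle_{\bar g}.
\end{equation*}
A short computation (the coefficient of $\langle\nabla_{\bar g}\rho,\nabla_{\bar g}W\rangle_{\bar g}$ is $(n-1)-(n-2\gamma)=2\gamma-1$) shows that the two remaining $g^+$-terms collapse exactly into $\rho^{n/2+\gamma+1}\bigl[-\divergence_{\bar g}(\rho^{1-2\gamma}\nabla_{\bar g}W)\bigr]$. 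Dividing through by the common factor $\rho^{n/2+\gamma+1}$ produces the extension equation in \eqref{div}. The boundary identity $W|_{\rho=0}=w$ is immediate from the expansion \eqref{formaw}, since $\rho^{-n/2+\gamma}\mathcal W=\mathcal W_1+\mathcal W_2\rho^{2\gamma}$ restricts to $\mathcal W_1|_{\rho=0}=w$.

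For the Neumann-type recovery formula, I would differentiate $\rho^{-n/2+\gamma}\mathcal W=\mathcal W_1+\mathcal W_2\rho^{2\gamma}$ in $\rho$ to obtain
\begin{equation*}
\rho^{1-2\gamma}\partial_\rho W=\rho^{1-2\gamma}\partial_\rho\mathcal W_1+2\gamma\,\mathcal W_2+\rho\,\partial_\rho\mathcal W_2,
\end{equation*}
so that $\lim_{\rho\to 0}\rho^{1-2\gamma}\partial_\rho W=2\gamma\,\mathcal W_2|_{\rho=0}$. Combining with \eqref{definition-P} and the definition $\tilde d_\gamma=-d_\gamma/(2\gamma)$ in \eqref{tilde-d} yields exactly $P_\gamma^{g_0}w=-\tilde d_\gamma\lim_{\rho\to 0}\rho^{1-2\gamma}\partial_\rho W$.

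Uniqueness for \eqref{div} follows because the map $\mathcal W\mapsto W=\rho^{-n/2+\gamma}\mathcal W$ is a bijection between solutions of the two problems with prescribed Dirichlet data: any $W$ solving \eqref{div} can be reversed to produce $\mathcal W=\rho^{n/2-\gamma}W$ solving the scattering problem with Dirichlet condition \eqref{Dirichlet-condition}, to which the uniqueness statement recalled just above \eqref{formaw} applies. The main obstacle is purely bookkeeping: tracking powers of $\rho$ and the correct coefficient $2\gamma-1$ in the cross term, so that everything collects into a single factor $\rho^{n/2+\gamma+1}$ that can be divided out. No deeper analytic input is needed beyond the scattering existence/uniqueness already cited.
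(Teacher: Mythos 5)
Your direct verification is correct and is essentially the argument used in the cited reference \cite{Chang-Gonzalez}; the paper itself merely cites the result, so there is no in-paper proof to compare against. The conformal conversion $\Delta_{g^+}u=\rho^2\Delta_{\bar g}u-(n-1)\rho\langle\nabla_{\bar g}\rho,\nabla_{\bar g}u\rangle_{\bar g}$ is right, the cross-term coefficient does collapse to $2\gamma-1$, and after multiplying $-\divergence_{\bar g}(\rho^{1-2\gamma}\nabla_{\bar g}W)$ by $\rho^{n/2+\gamma+1}$ one recovers exactly the non-$E$ terms, so the bookkeeping works.

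One small point worth flagging: in the Neumann computation you silently discard $\rho^{1-2\gamma}\partial_\rho\mathcal W_1$ in the limit $\rho\to 0$. For $\gamma<1/2$ this is harmless, but for $\gamma\in(1/2,1)$ the prefactor blows up, and you need the fact that $\mathcal W_1$ has an expansion in \emph{even} powers of $\rho$ near the boundary (so $\partial_\rho\mathcal W_1=O(\rho)$, giving $\rho^{1-2\gamma}\partial_\rho\mathcal W_1=O(\rho^{2-2\gamma})\to 0$). This is a standard consequence of the scattering expansion in normal form — the indicial exponents are $n/2\pm\gamma$ and the even structure of $g_\rho$ forces only even-order corrections into $\mathcal W_1$ — but it should be invoked explicitly rather than left implicit, since the statement "$\mathcal W_1\in C^\infty(\overline X)$" alone does not rule out an $O(\rho)$ term.
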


We also recall the following result, which allows us to rewrite \eqref{div} as a pure divergence equation with no zeroth order term. The more general statement can be found in Lemma \ref{cg-modified}, and it will be useful in the calculation of the Hamiltonian from Section \ref{subsection:Hamiltonian}.

\begin{proposition}[\cite{Chang-Gonzalez,Case-Chang}]\label{prop:new-defining-function} Fix $\gamma\in(0,1)$.
Let $\mathcal W^0$ be the solution to \eqref{equation-scattering}-\eqref{formaw} with Dirichlet data \eqref{Dirichlet-condition} given by $w\equiv 1$, and set $\rho^* = (\mathcal W^0)^\frac 1{n/2-\gamma}$. The function $\rho^*$ is a defining function of $M$ in $X$ such that, if we define the metric $\bar g^*=(\rho^*)^2 g^+$, then
$E_{\bar g^*}(\rho^*)\equiv 0$. Moreover, $\rho^*$ has the asymptotic expansion near the conformal infinity
$$\rho^*(\rho)=\rho\Big[ 1+\frac{Q^{g_0}_\gamma}{(n/2-\gamma)d_\gamma}  \rho^{2\gamma}+O(\rho^2)\Big].$$
By construction, if $W^*$ is the solution to
\begin{equation*}
\left\{
\begin{array}{r@{}l@{}l}
-\divergence \left( (\rho^*)^{1-2\gamma} \nabla W^*\right)&\,=0 \quad &\mbox{in }(X, \bar g^*),\medskip\\
W^*&\,=w \quad&\mbox{on }(M,g_0),
\end{array}
\right.
\end{equation*}
with respect to the metric $\bar g^*$, then
 \begin{equation*}
  P_\gamma^{g_0} w=
-\tilde d_\gamma \lim_{{\rho^*}\to 0} (\rho^*)^{1-2\gamma}\,\partial_{\rho^*} W^*+w\,
Q_\gamma^{g_0}.\end{equation*}
\end{proposition}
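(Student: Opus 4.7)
The strategy exploits the second, more geometric, expression for $E_{\bar g}(\rho)$ in \eqref{Erho}, which rephrases the zeroth order term in terms of the hyperbolic Laplacian acting on $\rho^{n/2-\gamma}$. Since that formula holds for \emph{any} defining function, I would simply substitute $\rho \mapsto \rho^* = (\mathcal W^0)^{1/(n/2-\gamma)}$ and $\bar g \mapsto \bar g^* = (\rho^*)^2 g^+$. By construction $(\rho^*)^{n/2-\gamma} = \mathcal W^0$, and $\mathcal W^0$ solves the scattering equation \eqref{equation-scattering}. Consequently
$$E_{\bar g^*}(\rho^*) = (\rho^*)^{-n/2-\gamma-1}\bigl\{-\Delta_{g^+} - \bigl(\tfrac{n^2}{4} - \gamma^2\bigr)\bigr\}\mathcal W^0 \equiv 0,$$
so the extension equation \eqref{div} written in the metric $\bar g^*$ degenerates to a pure divergence equation, as stated.

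For the asymptotic expansion of $\rho^*$, I would insert $w = 1$ into \eqref{formaw} and use \eqref{definition-P} to identify the subleading coefficient as $\mathcal W^0_2|_{\rho = 0} = d_\gamma^{-1} P_\gamma^{g_0}(1) = d_\gamma^{-1} Q_\gamma^{g_0}$. This yields
$$\mathcal W^0 = \rho^{n/2-\gamma}\bigl[1 + d_\gamma^{-1} Q_\gamma^{g_0}\, \rho^{2\gamma} + O(\rho^2)\bigr],$$
and the claimed expansion of $\rho^*$ then follows from the Taylor series of $(1+x)^{1/(n/2-\gamma)}$. This expansion also certifies that $\rho^*$ is a genuine defining function near $M$.

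For the final identity, the clean observation is that the new extension admits the explicit form $W^* = (\rho^*)^{\gamma-n/2}\mathcal W = \mathcal W/\mathcal W^0$, where $\mathcal W$ is the scattering solution with Dirichlet datum $w$. Dividing the scattering expansions of $\mathcal W$ and $\mathcal W^0$ one finds
$$W^* = w + d_\gamma^{-1}\bigl[P_\gamma^{g_0} w - w\, Q_\gamma^{g_0}\bigr](\rho^*)^{2\gamma} + o\bigl((\rho^*)^{2\gamma}\bigr),$$
using $\rho^* = \rho(1 + o(1))$ to absorb the change of variable at leading fractional order. Differentiating in $\rho^*$ and applying $\tilde d_\gamma = -d_\gamma/(2\gamma)$ from \eqref{tilde-d} yields
$$-\tilde d_\gamma \lim_{\rho^* \to 0}(\rho^*)^{1-2\gamma}\partial_{\rho^*} W^* = P_\gamma^{g_0} w - w\, Q_\gamma^{g_0},$$
which rearranges to the asserted boundary formula. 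The main technical obstacle is the careful bookkeeping of the polyhomogeneous expansions: one must verify that no intermediate power of $\rho$ between $\rho^{2\gamma}$ and $\min\{\rho^2, \rho^{4\gamma}\}$ contaminates the limit, and that the $\mathcal C^\infty(\overline X)$ regularity of $\mathcal W^0$ (guaranteed by the spectral hypothesis on $-\Delta_{g^+}$ that rules out scattering resonances) legitimises the term-by-term differentiation of the expansion.
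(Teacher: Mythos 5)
Your proposal is correct and is essentially the standard argument of \cite{Chang-Gonzalez,Case-Chang}, which the paper cites for this proposition without reproving it; it also mirrors exactly the paper's own proofs of the more general Lemma~\ref{cg-modified} (where the scattering equation for $\mathcal V^0 = (\rho^*)^{N/2-\gamma}$ is plugged into the second formula in \eqref{Erho}) and of Proposition~\ref{prop:divV*} (where $V^* = \mathcal V/\mathcal V^0$ plays the role of your $W^* = \mathcal W/\mathcal W^0$ and the Neumann data is read off from the coefficient of $\rho^{2\gamma}$ in the quotient of expansions, with $-\tilde d_\gamma \cdot 2\gamma d_\gamma^{-1} = 1$ closing the computation). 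The caveat you flag at the end about intermediate powers of $\rho$ for $\gamma>1/2$ is indeed the one subtlety; it is handled by the parity of the polyhomogeneous expansion of $\mathcal W_1^0$, and in any case the paper only ever uses the weaker form $\rho^* = \rho\,[1+O(\rho^{2\gamma})+O(\rho^2)]$ recorded in \eqref{asymptotics-rho*}.
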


\begin{remark}
In the particular case that  $X=\mathbb R^{n+1}_+=\{(x,\ell):x\in\mathbb R^n,\ell>0\}$ is hyperbolic space $\mathbb H^{n+1}$ with the metric $g^+=\frac{d\ell^2+|dx|^2}{\ell^2}$ and $M=\mathbb R^n$, this is just the construction for the fractional Laplacian $(-\Delta_{\mathbb R^n})^\gamma$ as a Dirichlet-to-Neumann operator for a degenerate elliptic extension problem from \cite{Caffarelli-Silvestre}. Indeed, let $U$ be the solution to
\begin{equation}\label{CS1}
\left\{\begin{array}{r@{}l@{}l}
\partial_{\ell\ell}U+\dfrac{1-2\gamma}{\ell}\partial_\ell U+\Delta_{\mathbb R^n} U&\,=0\quad &\mbox{in }\mathbb R^{n+1}_+, \medskip\\
U|_{\ell=0}&\,=u\quad &\mbox{on }\mathbb R^n,
\end{array}\right.
\end{equation}
then
\begin{equation}\label{CS2}
(-\Delta_{\mathbb R^n})^\gamma u=
-\tilde d_\gamma\lim_{\ell\to 0}\ell^{1-2\gamma} \partial_\ell U.
\end{equation}
From now on, $(X,g^+)$ will be fixed to be hyperbolic space with its standard metric. Our point of view in this paper is to rewrite this extension problem \eqref{CS1}-\eqref{CS2} using different coordinates for the hyperbolic metric in $X$, such as \eqref{hyperbolic-metric-rho0}.
\end{remark}

\subsection{An isolated singularity}\label{section:isolated-singularity}

Before we go to the general problem, let us look at positive  solutions to
\begin{equation}\label{equation-conformal-power}
(-\Delta_{\mathbb R^N})^{\gamma} u=\Lambda_{N,\gamma} u^{\frac{N+2\gamma}{N-2\gamma}} \quad\text{in}\quad \mathbb R^N\setminus\{0\}
\end{equation}
that have an isolated singularity at the origin. It is known (\cite{CaffarelliJinSireXiong}) that such solutions have the asymptotic behavior near the origin like $r^{-\frac{N-2\gamma}{2}}$, for $r=|x|$. Thus it is natural to write
\begin{equation}\label{uw}
u=r^{-\frac{N-2\gamma}{2}}w.
\end{equation}

Note that the power of the nonlinearity in the right hand side of \eqref{equation-conformal-power} is chosen so that the equation has good conformal properties. Indeed, let $r=e^{-t}$ and $\theta\in \mathbb S^{N-1}$ and write the Euclidean metric in $\mathbb R^N$ as
$$|dx|^2=dr^2+r^2g_{\mathbb S^{N-1}}$$
in polar coordinates. We use conformal geometry to rewrite equation \eqref{equation-conformal-power}. For this, consider the conformal change
$$g_0:=\frac{1}{r^2} |dx|^2=dt^2+g_{\mathbb S^{N-1}},$$
which is a complete metric defined on the cylinder $M_0:=\mathbb R\times \mathbb S^{N-1}$.
The advantage of using $g_0$ as a background metric instead of the Euclidean one on $\mathbb R^N$ is the following: since the two metrics are conformally related, any conformal change  may be rewritten as
$$\tilde g=u^{\frac{4}{N-2\gamma}}|dx|^2=w^{\frac{4}{N-2\gamma}}g_0,$$
where we have used relation \eqref{uw}. Then, looking at the conformal transformation property \eqref{conformal-property} for the conformal fractional Laplacian $P_\gamma$, it is clear that
\begin{equation}\label{conjugation1}
P_\gamma^{g_0}(w)=r^{\frac{N+2\gamma}{2}} P^{|dx|^2}_{\gamma}(r^{-\frac{N-2\gamma}{2}} w)=r^{\frac{N+2\gamma}{2}}
(-\Delta_{\mathbb R^N})^{\gamma} u,
\end{equation}
and thus equation \eqref{equation-conformal-power} is equivalent to
\begin{equation*}
P_\gamma^{g_0}(w)=\Lambda_{N,\gamma}w^{\frac{N+2\gamma}{N-2\gamma}}\quad\text{in}\quad \mathbb R\times \mathbb S^{N-1}.
\end{equation*}
The operator $P_\gamma^{g_0}$ on  $\mathbb R\times \mathbb S^{N-1}$ is explicit. Indeed, in \cite{DelaTorre-Gonzalez} the authors calculate its principal symbol using the spherical harmonic decomposition for $\mathbb S^{N-1}$. With some abuse of notation, let $\mu_m$, $m=0,1,2,\dots$ be the eigenvalues of $\Delta_{\mathbb S^{N-1}}$, repeated according to multiplicity (this is, $\mu_0=0$, $\mu_1,\ldots,\mu_N=N-1$,\dots). Then any function on $\mathbb R\times \mathbb S^{N-1}$ may be decomposed as $\sum_{m} w_m(t) E_m$, where $\{E_m(\theta)\}$ is the corresponding basis of eigenfunctions. The operator $P_\gamma^{g_0}$ diagonalizes under such eigenspace decomposition, and moreover, it is possible to calculate the Fourier symbol of each projection. Let
\begin{equation}\label{fourier}
\hat{w}(\xi)=\frac{1}{\sqrt{2\pi}}\int_{\mathbb R}e^{-i\xi \cdot t} w(t)\,dt
\end{equation}
be our normalization for the one-dimensional Fourier transform.

\begin{proposition}[\cite{DelaTorre-Gonzalez}]\label{prop:symbol}
 Fix $\gamma\in (0,\tfrac{N}{2})$ and let $P^m_{\gamma}$ be the projection of the operator $P^{g_0}_\gamma$ over each eigenspace $\langle E_m\rangle$. Then
$$\widehat{P_\gamma^m (w_m)}=\Theta^m_\gamma(\xi) \,\widehat{w_m},$$
and this Fourier symbol is given by
\begin{equation}\label{symbol-isolated}
\Theta^m_{\gamma}(\xi)=2^{2\gamma}\frac{\Big|\Gamma\Big(\frac{1}{2}+\frac{\gamma}{2}
+\frac{1}{2}\sqrt{(\frac{N}{2}-1)^2+\mu_m}+\frac{\xi}{2}i\Big)\Big|^2}
{\Big|\Gamma\Big(\frac{1}{2}-\frac{\gamma}{2}+\frac{1}{2}\sqrt{(\frac{N}{2}-1)^2+\mu_m}
+\frac{\xi}{2}i\Big)\Big|^2}.
\end{equation}
\end{proposition}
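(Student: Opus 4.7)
The proof strategy is to exploit the conformal covariance of $P_\gamma$ to reduce the symbol computation to the action of the Euclidean fractional Laplacian on a separated-variable complex mode. Since $g_0 = r^{-2}|dx|^2$ with $r = e^{-t}$, relation \eqref{conjugation1} is precisely the conformal conjugation of $P_\gamma^{|dx|^2} = (-\Delta_{\mathbb R^N})^\gamma$. Spherical harmonic decomposition of $\mathbb S^{N-1}$ diagonalizes the angular direction, so it suffices to test $P_\gamma^{g_0}$ on the Fourier--spherical mode $w(t,\theta) = e^{i\xi t} E_m(\theta) = r^{-i\xi} E_m(\theta)$, which gives
\[
P_\gamma^{g_0}(e^{i\xi t}E_m) = r^{\frac{N+2\gamma}{2}}(-\Delta_{\mathbb R^N})^\gamma\bigl(r^{-\sigma}E_m(\theta)\bigr), \qquad \sigma = \tfrac{N-2\gamma}{2} + i\xi.
\]

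The core analytic ingredient is the classical identity
\[
(-\Delta_{\mathbb R^N})^\gamma\bigl(r^{-\sigma}E_m(\theta)\bigr) = c(\sigma,m)\,r^{-\sigma-2\gamma}E_m(\theta), \qquad c(\sigma,m) = 2^{2\gamma}\,\frac{\Gamma\bigl(\tfrac{\sigma+2\gamma+m}{2}\bigr)\Gamma\bigl(\tfrac{N-\sigma+m}{2}\bigr)}{\Gamma\bigl(\tfrac{\sigma+m}{2}\bigr)\Gamma\bigl(\tfrac{N-\sigma+m-2\gamma}{2}\bigr)},
\]
valid for real $\sigma$ in a convergence strip and extended to the oscillatory exponent $\sigma = \tfrac{N-2\gamma}{2} + i\xi$ by analytic continuation (both sides are meromorphic in $\sigma$). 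The real-$\sigma$ identity follows from Bochner's formula for the Fourier transform of a radial--spherical-harmonic product combined with the Mellin/Hankel identity for $r^{-\sigma}$; equivalently, one separates variables in the Caffarelli--Silvestre extension \eqref{CS1}--\eqref{CS2} with ansatz $U(x,\ell) = r^{-\sigma}\Phi(\ell/r)E_m(\theta)$, obtaining an ODE in the quotient variable whose two power-type solutions encode the Gamma arguments above.

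Substituting back, the $r$-factors collapse via $r^{\frac{N+2\gamma}{2} - \sigma - 2\gamma} = r^{-i\xi} = e^{i\xi t}$, so $\Theta^m_\gamma(\xi) = c(\sigma,m)$. Since $\mu_m = m(m+N-2)$ gives $\sqrt{(N/2-1)^2 + \mu_m} = \tfrac{N-2}{2} + m$, together with $|\Gamma(a+bi)|^2 = \Gamma(a+bi)\Gamma(a-bi)$, the four Gamma arguments of $c(\sigma,m)$ rearrange into $\tfrac12 \pm \tfrac{\gamma}{2} + \tfrac12\sqrt{(N/2-1)^2 + \mu_m} \pm \tfrac{i\xi}{2}$, which is exactly \eqref{symbol-isolated}.

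The main obstacle is justifying the pointwise identity on the oscillatory exponent $\sigma = \tfrac{N-2\gamma}{2} + i\xi$, since $r^{-\sigma}E_m$ is merely bounded and the principal-value integral \eqref{Laplacian-introduction} does not converge absolutely. I would handle this either by interpreting everything distributionally on tempered test functions, or by bypassing Euclidean Fourier theory and carrying out scattering theory directly on $\mathbb H^{N+1}$ in coordinates adapted to the cylindrical boundary: writing $g^+ = \sin^{-2}\varphi\,(d\varphi^2 + dt^2 + \cos^2\varphi\,g_{\mathbb S^{N-1}})$ with defining function $\sin\varphi$ whose conformal infinity at $\varphi = 0$ is precisely $(M_0, g_0)$, the separated ansatz $\Phi(\varphi) e^{i\xi t} E_m(\theta)$ reduces \eqref{equation-scattering} to a hypergeometric ODE whose two fundamental solutions exhibit indicial behaviors $\varphi^{N/2 \pm \gamma}$, and the ratio of their coefficients in the asymptotic expansion \eqref{formaw} delivers \eqref{symbol-isolated} directly.
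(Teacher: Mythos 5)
Your main argument is correct and takes a genuinely different route from the paper. The paper derives \eqref{symbol-isolated} by scattering theory: it writes $g^+ = d\sigma^2 + (\cosh\sigma)^2 dt^2 + (\sinh\sigma)^2 g_{\mathbb S^{N-1}}$, projects \eqref{equation-scattering} onto spherical harmonics, Fourier-transforms in $t$, solves the resulting hypergeometric ODE \eqref{varphi0} explicitly, and reads off the Neumann coefficient via \eqref{prop4}. Your route instead computes the symbol directly from the classical Riesz-potential identity
\[
(-\Delta_{\mathbb R^N})^\gamma\bigl(r^{-\sigma}E_m\bigr)=2^{2\gamma}\,\frac{\Gamma\bigl(\tfrac{\sigma+m}{2}+\gamma\bigr)\Gamma\bigl(\tfrac{N+m-\sigma}{2}\bigr)}{\Gamma\bigl(\tfrac{\sigma+m}{2}\bigr)\Gamma\bigl(\tfrac{N+m-\sigma}{2}-\gamma\bigr)}\,r^{-\sigma-2\gamma}E_m,
\]
via Bochner's relation, setting $\sigma=\tfrac{N-2\gamma}{2}+i\xi$; I checked that the $r$-powers cancel to $e^{i\xi t}$ and the four Gamma arguments become exactly the $A_m\pm\tfrac{\xi}{2}i$, $B_m\pm\tfrac{\xi}{2}i$ of \eqref{Am}, matching \eqref{symbol-isolated}. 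Your concern about the principal-value integral on the oscillatory exponent is legitimate but not fatal: with $\mathrm{Re}(\sigma)=\tfrac{N-2\gamma}{2}\in(0,N)$, the function $r^{-\sigma}E_m$ is a tempered distribution, Bochner's relation holds distributionally, and both sides are meromorphic in $\sigma$, so the identity continues from the real convergence strip as you say. The two approaches have complementary merits: your computation is more elementary, using only Euclidean Fourier analysis and the known formula for the fractional Laplacian of a homogeneous function times a spherical harmonic; the scattering route pays off in Theorem~\ref{thm:symbol} (the $\mathbb S^{N-1}\times\mathbb H^{k+1}$ case), where a direct Euclidean computation would no longer separate and one must instead combine spherical harmonics with the Fourier--Helgason transform, for which the reduction of \eqref{equation-scattering} to \eqref{problemphi} is the natural mechanism. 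Your alternative suggestion at the end --- writing $g^+=\sin^{-2}\varphi(d\varphi^2+dt^2+\cos^2\varphi\,g_{\mathbb S^{N-1}})$ and separating variables in \eqref{equation-scattering} --- is, after the change $\sin\varphi=1/\cosh\sigma$, precisely the proof the paper gives, so you have in effect supplied both the published argument and a shorter independent one.
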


\begin{proof} Let us give some ideas in the proof because they will be needed in the next subsections. It is inspired in the calculation of the Fourier symbol for the conformal fractional Laplacian on the sphere $\mathbb S^n$  (see the survey \cite{Gonzalez:survey}, for instance). The method is, using spherical harmonics, to reduce the scattering equation \eqref{equation-scattering} to an ODE. For this, we go back to the scattering theory definition for the fractional Laplacian and use different coordinates for the hyperbolic metric $g^+$. More precisely,
\begin{equation}\label{hyperbolic-metric-rho0}
g^+=\rho^{-2}\Big\{\rho^2+\Big(1+\tfrac{\rho^2}{4}\Big)^2 dt^2+\Big(1-\tfrac{\rho^2}{4}\Big)^2 g_{\s^{N-1}}\Big\},\quad\bar g=\rho^2 g^+,
\end{equation}
where $\rho\in(0,2)$, $t\in\mathbb R$. The conformal infinity $\{\rho=0\}$ is precisely the cylinder $(\mathbb R\times \mathbb S^{N-1},g_0)$. Actually, for the particular calculation here it is better to use the new variable $\sigma=-\log(\rho/2)$, and write
\begin{equation}\label{hyperbolic-metric-sigma-k0}
g^+=d\sigma^2+(\cosh \sigma)^2 dt^2+(\sinh \sigma)^2 g_{\mathbb S^{N-1}}.
\end{equation}
Using this metric, the scattering equation \eqref{equation-scattering} is
\begin{equation}\label{equation30}
\partial_{{\sigma}{\sigma}}\mathcal W+R(\sigma)\partial_{\sigma}\mathcal W+
(\cosh{\sigma})^{-2}\partial_{tt}\mathcal W+(\sinh \sigma)^{-2}\Delta_{\s^{N-1}}\mathcal W+\big(\tfrac{N^2}{4}-\gamma^2\big)\mathcal W=0,
\end{equation}
where $\mathcal W=\mathcal W({\sigma},t,\theta)$, $\sigma\in(0,\infty)$, $t\in\mathbb R$, $\theta\in\mathbb S^{N-1}$, and $$R(\sigma)=\frac{\partial_{\sigma}\big(\cosh{\sigma}\sinh^{N-1}{\sigma}\big)}
{\cosh{\sigma}\sinh^{N-1}{\sigma}}.$$
After projection over spherical harmonics, and Fourier transform in $t$, the solution to equation \eqref{equation30} maybe written as
\begin{equation*}
\widehat{\mathcal W_m}=\widehat{w_m}\,\varphi(\tau),
\end{equation*}
where we have used the change of variable $\tau=\tanh(\sigma)$ and $\varphi:=\varphi^{(m)}$
is a solution to the boundary value problem
\begin{equation*}
\begin{cases}
(1-\tau^2)\partial_{\tau\tau}\varphi+
\left(\tfrac{N-1}{\tau}-\tau\right)\partial_\tau\varphi+\Big[-\mu_m\tfrac{1}{\tau^2}
+(\tfrac{n^2}{4}-\gamma^2)\tfrac{1}
{1-\tau^2}-\xi^2\Big]\varphi =0,\medskip\\
\text{has the expansion \eqref{formaw} with }w\equiv 1\text{ near the conformal infinity }\set{\tau=1},  \medskip\\
\varphi \text{ is regular at }\tau=0.
\end{cases}
\end{equation*}
This is an ODE that can be explicitly solved in terms of hypergeometric functions, and indeed,
\begin{equation}\label{varphi0}
\begin{split}
\varphi(\tau)
&=(1+\tau)^{\frac{N}{4}-\frac{\gamma}{2}}(1-\tau)^{\frac{N}{4}-\frac{\gamma}{2}}\tau^{1-\frac{N}{2}
+\sqrt{\big(\tfrac{N}{2}-1\big)^2+\mu_m}}
\cdot\Hyperg(a,b;a+b-c+1;1-\tau^2)\\
&+S (1+\tau)^{\frac{N}{4}+\frac{\gamma}{2}}(1-\tau)^{\frac{N}{4}+\frac{\gamma}{2}}\tau^{1-\frac{N}{2}
+\sqrt{\big(\tfrac{N}{2}-1\big)^2+\mu_m}}
\cdot\Hyperg(c-a,c-b;c-a-b+1;1-\tau^2),
\end{split}
\end{equation}
where
\begin{equation*}
S(\xi)=\frac{\Gamma(-\gamma)}{\Gamma(\gamma)}
\frac{\Big|\Gamma\Big(\frac{1}{2}+\frac{\gamma}{2}
+\frac{1}{2}\sqrt{(\frac{N}{2}-1)^2+\mu_m}+\frac{\xi}{2}i\Big)\Big|^2}
{\Big|\Gamma\Big(\frac{1}{2}-\frac{\gamma}{2}+\frac{1}{2}\sqrt{(\frac{N}{2}-1)^2+\mu_m}
+\frac{\xi}{2}i\Big)\Big|^2},
\end{equation*}
and
\begin{equation*} a=\tfrac{-\gamma}{2}+\tfrac{1}{2}+\tfrac{1}{2}\sqrt{(\tfrac{N}{2}-1)^2+\mu_m}+i\tfrac{\xi}{2},\
b=\tfrac{-\gamma}{2}+\tfrac{1}{2}+\tfrac{1}{2}\sqrt{(\tfrac{N}{2}-1)^2+\mu_m}-i\tfrac{\xi}{2},\
c=1+\sqrt{(\tfrac{N}{2}-1)^2+\mu_m}.
\end{equation*}
The Proposition follows by looking at the Neumann condition in the expansion \eqref{formaw}.
\end{proof}

The interest of this proposition will become clear in Section \ref{section:linear-theory}, where we calculate the indicial roots for the linearized problem. It is also the crucial ingredient in the calculation of the Green's function for the fractional Laplacian with Hardy potential in Section \ref{section:Hardy}.

We finally recall the fractional Hardy's inequality in $\mathbb R^{N}$ (\cite{Prehistory-Hardy,Yafaev,Beckner:Pitts,Frank-Lieb-Seiringer:Hardy})
\begin{equation}\label{Hardy-inequality}
\int_{\R^{N}}u(-\Delta_{\mathbb R^N})^\gamma u  \,dx\geq \Lambda_{N,\gamma}\int_{\R^{N}}\frac{u^2}{r^{2\gamma}}\,dx,
\end{equation}
where $\Lambda_{N,\gamma}$ is the Hardy constant given by
\begin{equation}\label{Hardy-constant}
\Lambda_{N,\gamma}=2^{2\gamma}\frac{\Gamma^2(\frac{N+2\gamma}{4})}{\Gamma^2(\frac{N-2\gamma}{4})}
=\Theta_\gamma^0(0).
\end{equation}
Under the conjugation \eqref{uw}, inequality \eqref{Hardy-inequality} is written as
\begin{equation*}\label{Hardy-inequality2}
\int_{\R\times\mathbb S^{N-1}}wP_\gamma^{g_0} w  \,dtd\theta\geq \Lambda_{N,\gamma}\int_{\R\times\mathbb S^{N-1}}w^2\,dtd\theta.
\end{equation*}

\subsection{The full symbol} \label{subsection:full-symbol}

Now we consider the singular Yamabe problem \eqref{problem-Yamabe} in $\mathbb R^n\setminus\mathbb R^k$. This particular case is important because it is the model for a general higher dimensional singularity  (see \cite{Jin-Queiroz-Sire-Xiong}).

As in the introduction, set $N:=n-k$. We define the coordinates $z=(x,y)$, $x\in\mathbb R^N$, $y\in\mathbb R^k$ in the product space $\mathbb R^n\setminus\mathbb R^k=(\mathbb R^{n-k}\setminus\{0\})\times\mathbb R^k$. Sometimes we will consider polar  coordinates for $x$, which are
$$r=|x|=\dist(\cdot,\mathbb R^k)\in\mathbb R_+,\,\,\theta\in\mathbb S^{N-1}.$$
We write the Euclidean metric in $\mathbb R^n$ as
$$|dz|^2=|dx|^2+|dy|^2=dr^2+r^2g_{\mathbb S^{N-1}}+|dy|^2.$$
Our model manifold $M$ is going to be given by the conformal change
\begin{equation}\label{metric-gk}
g_k:=\frac{1}{r^2} |dz|^2=g_{\mathbb S^{N-1}}+\frac{dr^2+|dy|^2}{r^2}=g_{\mathbb S^{N-1}}+g_{\mathbb H^{k+1}},
\end{equation}
which is a complete metric, singular along $\mathbb R^k$. In particular, $M:=\mathbb S^{N-1}\times \mathbb H^{k+1}$.
As in the previous case, any conformal change  may be rewritten as
$$\tilde g=u^{\frac{4}{n-2\gamma}}|dz|^2=w^{\frac{4}{n-2\gamma}}g_k,$$
where we have used relation
\begin{equation*}\label{wv}
u=r^{-\frac{n-2\gamma}{2}}w,
\end{equation*}
so we may just use $g_k$ as our background metric.
As a consequence, arguing as in the previous subsection, the conformal transformation property \eqref{conformal-property} for the conformal fractional Laplacian yields that
\begin{equation}\label{relation-conjugation}
P_\gamma^{g_k}(w)=r^{\frac{n+2\gamma}{2}} P^{|dz|^2}_{\gamma}(r^{-\frac{n-2\gamma}{2}} w)=r^{\frac{n+2\gamma}{2}}
(-\Delta_{\mathbb R^n})^{\gamma} u,
\end{equation}
and thus the original Yamabe problem \eqref{problem-Yamabe} is equivalent to the following:
\begin{equation*}\label{equation2}P_\gamma^{g_k}(w)=\Lambda_{n,\gamma} w^{\frac{n+2\gamma}{n-2\gamma}}\quad \text{on}\quad M.\end{equation*}

Moreover, the expression for $P_\gamma^{g_k}$ in the metric $g_k$ (with respect to the standard extension to hyperbolic space $X=\mathbb H^{n+1}$) is explicit, and this is the statement of the following theorem. For our purposes, it will be more convenient to write the standard hyperbolic metric as
\begin{equation}\label{hyperbolic-metric-rho}
g^+=\rho^{-2}\left\{d\rho^2+\big(1+\tfrac{\rho^2}{4}\big)^2g_{\mathbb H^{k+1}}+\big(1-\tfrac{\rho^2}{4}\big)^2 g_{\s^{N-1}}\right\},
\end{equation}
for $\rho\in(0,2)$, so its conformal infinity $\{\rho=0\}$ is precisely $(M,g_k)$.

Consider the spherical harmonic decomposition for $\mathbb S^{N-1}$ as in Section \ref{section:isolated-singularity}. Then any function $w$ on $M$ may be decomposed as $w=\sum_{m} w_m E_m$, where $w_m=w_m(\zeta)$ for $\zeta\in\mathbb H^{k+1}$. We show that the operator $P^{g_k}_\gamma$ diagonalizes under such eigenspace decomposition, and moreover, it is possible to calculate the Fourier symbol for each projection. Let
$\widehat{\cdot}$ denote the Fourier-Helgason transform on $\mathbb H^{k+1}$, as described in the Appendix (section \ref{subsection:transform}).

\begin{teo}\label{thm:symbol}
 Fix $\gamma\in (0,\tfrac{n}{2})$ and let $P^m_{\gamma}$ be the projection of the operator $P_\gamma^{g_k}$ over each eigenspace $\langle E_m\rangle$. Then
$$\widehat{P_\gamma^m (w_m)}=\Theta^m_\gamma(\lambda) \,\widehat{w_m},$$
and this Fourier symbol is given by
\begin{equation}\label{symbol}
\Theta^m_{\gamma}(\lambda)=2^{2\gamma}\frac{\Big|\Gamma
\Big(\tfrac{1}{2}+\tfrac{\gamma}{2}
+\tfrac{1}{2}\sqrt{(\tfrac{N}{2}-1)^2+\mu_m}+\tfrac{\lambda}{2}i\Big)\Big|^2}
{\Big|\Gamma\Big(\tfrac{1}{2}-\tfrac{\gamma}{2}+\tfrac{1}{2}\sqrt{(\tfrac{N}{2}-1)^2+\mu_m}
+\tfrac{\lambda}{2}i\Big)\Big|^2}.
\end{equation}
\end{teo}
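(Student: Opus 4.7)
The plan is to adapt the strategy of Proposition \ref{prop:symbol} by performing two successive diagonalizations of the scattering equation \eqref{equation-scattering}: first by spherical harmonics on $\mathbb{S}^{N-1}$, and then by the Fourier-Helgason transform on $\mathbb{H}^{k+1}$. The starting point is the hyperbolic metric \eqref{hyperbolic-metric-rho}. After the substitution $\sigma=-\log(\rho/2)\in(0,\infty)$ it takes the product-compatible form
\begin{equation*}
g^+=d\sigma^2+(\cosh\sigma)^2\,g_{\mathbb{H}^{k+1}}+(\sinh\sigma)^2\,g_{\mathbb{S}^{N-1}},
\end{equation*}
with the conformal infinity $(M,g_k)$ located at $\sigma=\infty$, and the Laplace-Beltrami operator decomposes as
\begin{equation*}
-\Delta_{g^+}=-\partial_{\sigma\sigma}-R(\sigma)\partial_\sigma-(\cosh\sigma)^{-2}\Delta_{\mathbb{H}^{k+1}}-(\sinh\sigma)^{-2}\Delta_{\mathbb{S}^{N-1}},
\end{equation*}
where $R(\sigma)=(k+1)\tanh\sigma+(N-1)\coth\sigma$ is the logarithmic derivative of the volume density $(\cosh\sigma)^{k+1}(\sinh\sigma)^{N-1}$.

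Writing the Dirichlet datum as $w=\sum_m w_m(\zeta)E_m(\theta)$ with $\zeta\in\mathbb{H}^{k+1}$ and looking for $\mathcal{W}=\sum_m \mathcal{W}_m(\sigma,\zeta)E_m(\theta)$, I would apply the Fourier-Helgason transform in the $\zeta$ variable. Recalling from the Appendix that this transform diagonalizes $-\Delta_{\mathbb{H}^{k+1}}$ with multiplier $\lambda^2+k^2/4$, the scattering equation reduces to a one-parameter family of ODEs for $\varphi^{(m)}(\sigma;\lambda)$ defined by $\widehat{\mathcal{W}_m}(\sigma,\lambda,\omega)=\widehat{w_m}(\lambda,\omega)\,\varphi^{(m)}(\sigma;\lambda)$, namely
\begin{equation*}
\partial_{\sigma\sigma}\varphi+R(\sigma)\partial_\sigma\varphi-\frac{\lambda^2+k^2/4}{(\cosh\sigma)^2}\varphi-\frac{\mu_m}{(\sinh\sigma)^2}\varphi+\Big(\frac{n^2}{4}-\gamma^2\Big)\varphi=0,
\end{equation*}
subject to regularity at $\sigma=0$ and to the scattering normalization: at $\sigma=\infty$, the coefficient of the Dirichlet mode $\rho^{n/2-\gamma}$ must equal one in accordance with \eqref{formaw}.

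The substitution $\tau=\tanh\sigma$ transforms this ODE into a Gauss hypergeometric equation with three regular singular points at $\tau\in\{0,\pm 1\}$, which can be solved explicitly in terms of $\Hyperg$, producing a formula directly analogous to \eqref{varphi0}. Expanding $\varphi^{(m)}$ near the conformal infinity $\tau=1$ via the standard connection formulae for $\Hyperg$, one extracts the coefficients of the two asymptotic modes $\rho^{n/2\pm\gamma}$; their ratio, via the definition \eqref{definition-P} of the scattering operator, yields a quotient of four Gamma functions which, after the identity $\Gamma(z)\Gamma(\bar z)=|\Gamma(z)|^2$, collapses into \eqref{symbol}.

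The hard part is tracking how the $k$-dependent ingredients, namely the additional $(k+1)\tanh\sigma$ term in $R(\sigma)$ and the shift $k^2/4$ from the Fourier-Helgason symbol, conspire so that the final answer depends on $\lambda$ alone in exactly the same functional form as in the isolated-singularity case; this is the nontrivial algebraic content of the theorem. A reassuring consistency check is the limit $k=0$: then $\mathbb{H}^{k+1}=\mathbb{R}$, the Fourier-Helgason transform becomes the ordinary Fourier transform, the multiplier $\lambda^2+k^2/4$ reduces to $\xi^2$, the formula for $R(\sigma)$ matches the one used in Proposition \ref{prop:symbol}, and \eqref{symbol} recovers \eqref{symbol-isolated}.
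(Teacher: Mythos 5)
Your proposal follows the paper's proof essentially verbatim: pass to $\sigma=-\log(\rho/2)$ coordinates for $g^+$, project over spherical harmonics, diagonalize $\Delta_{\mathbb{H}^{k+1}}$ via the Fourier–Helgason transform (the only new ingredient relative to Proposition \ref{prop:symbol}), solve the resulting hypergeometric ODE in $\tau=\tanh\sigma$, and read off the symbol from the connection formula at $\tau=1$. The only cosmetic difference is ordering (the paper substitutes $\tau=\tanh\sigma$ before projecting and transforming, you do it after), which changes nothing; your reduced ODE, your expression for $R(\sigma)$, and your $k=0$ sanity check all match the paper's computation.
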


\begin{proof}
We follow the arguments in Proposition \ref{prop:symbol}, however, the additional ingredient here is to use Fourier-Helgason transform to handle the extra term $\Delta_{\mathbb H^{k+1}}$ that will appear.

For the calculations below it is better to use the new variable
\begin{equation*}\label{sigma_rho}
\sigma=-\log(\rho/2), \quad \rho\in(0,2),
\end{equation*}
and to
rewrite the hyperbolic metric  in $\mathbb H^{n+1}$ from \eqref{hyperbolic-metric-rho} as
\begin{equation*}\label{hyperbolic-metric-sigma}
g^+=d\sigma^2+(\cosh \sigma)^2 g_{\mathbb H^{k+1}}+(\sinh \sigma)^2 g_{\mathbb S^{N-1}},
\end{equation*}
for the variables  $\sigma\in(0,\infty)$, $\zeta\in\mathbb H^{k+1}$ and  $\theta\in\s^{N-1}$. The conformal infinity  is now $\{\sigma=+\infty\}$
and the scattering equation \eqref{equation-scattering} is written as
\begin{equation} \label{eqs}
\partial_{{\sigma}{\sigma}}\mathcal W+R(\sigma)\partial_{\sigma}\mathcal W+
(\cosh{\sigma})^{-2}\Delta_{\mathbb H^{k+1}}\mathcal W+(\sinh \sigma)^{-2}\Delta_{\s^{N-1}}\mathcal W+\big(\tfrac{n^2}{4}-\gamma^2\big)\mathcal W=0,
\end{equation}
where $\mathcal W=\mathcal W({\sigma},\zeta,\theta)$, and $$R(\sigma)=\frac{\partial_{\sigma}\big((\cosh{\sigma})^{k+1}(\sinh{\sigma})^{N-1}\big)}
{(\cosh{\sigma})^{k+1}(\sinh{\sigma})^{N-1}}.$$
The change of variable
\begin{equation}\label{cambioz}
\tau=\tanh(\sigma),\end{equation}
transforms equation \eqref{eqs} into
\begin{equation*}\label{eqz}
\begin{split}
(1-\tau^2)^2\partial_{\tau\tau}\mathcal W+\left(\tfrac{n-k-1}{\tau}+(k-1)\tau\right)(1-\tau^2)\partial_\tau \mathcal W+(1-\tau^2)\Delta_{\mathbb H^{k+1}}\mathcal W&
\\+\left(\tfrac{1}{\tau^2}-1\right)\Delta_{\s^{N-1}}\mathcal W+\big(\tfrac{n^2}{4}-\gamma^2\big)\mathcal W&=0.
\end{split}
\end{equation*}
Now we project onto spherical harmonics. This is, let $\mathcal W_m(\tau,\zeta)$ be the projection of $\mathcal W$ over the eigenspace $\langle E_m\rangle$. Then each $\mathcal W_m$ satisfies
\begin{equation}\label{equ-m}
(1-\tau^2)\partial_{\tau\tau}\mathcal W_m+\left(\tfrac{n-k-1}{\tau}+(k-1)\tau\right)\partial_\tau \mathcal W_m+\Delta_{\mathbb H^{k+1}}\mathcal W_m-\mu_m\tfrac{1}{\tau^2}\mathcal W_m
+\tfrac{\tfrac{n^2}{4}-\gamma^2}{1-\tau^2}\mathcal W_m=0.
\end{equation}
Taking the Fourier-Helgason transform in $\mathbb H^{k+1}$ we obtain
\begin{equation*}\label{extension2}
(1-\tau^2)\partial_{\tau\tau}\widehat{\mathcal W_m}+\left(\tfrac{n-k-1}{\tau}+(k-1)\tau\right)\partial_\tau \widehat{\mathcal W_m}
+\Big[-\mu_m\tfrac{1}{\tau^2}
+(\tfrac{n^2}{4}-\gamma^2)\tfrac{1}{1-\tau^2}-(\lambda^2+\tfrac{k^2}{4})\Big]\widehat{\mathcal W_m}=0
\end{equation*}
for $\widehat{\mathcal W_m}=\widehat{\mathcal W_m}(\tau, \lambda,\omega)$. Fixed $m=0,1,\ldots$, $\lambda\in\mathbb R$ and $\omega\in\mathbb S^k$,  we know that
\begin{equation*}\label{u_k}
\widehat{\mathcal W_m}=\widehat{w_m}\,\varphi_k^{\lambda}(\tau),
\end{equation*}
where $\varphi:=\varphi_k^{\lambda}(\tau)$ is the solution to the following boundary value problem:
\begin{equation}\label{problemphi}
\left\{\begin{array}{@{}l}
(1-\tau^2)\partial_{\tau\tau}\varphi+
\left(\tfrac{n-k-1}{\tau}+(k-1)\tau\right)\partial_\tau\varphi+\Big[-\mu_m\tfrac{1}{\tau^2}
+(\tfrac{n^2}{4}-\gamma^2)\tfrac{1}
{1-\tau^2}-(\lambda^2+\tfrac{k^2}{4})\Big]\varphi =0,\medskip\\
\text{has the expansion (\ref{formaw}) with }w\equiv 1\text{ near the conformal infinity }\set{\tau=1},  \medskip\\
\varphi \text{ is regular at }\tau=0.
\end{array}\right.
\end{equation}
 This is an ODE in $\tau$ that has only regular singular points, and can be explicitly solved. Indeed, from the first equation in \eqref{problemphi} we obtain
\begin{equation}\label{varphi1}
\begin{split}
\varphi(\tau)=&A(1-\tau^2)^{\frac{n}{4}-\frac{\gamma}{2}}
\tau^{1-\frac{n}{2}+\frac{k}{2}+\sqrt{(\tfrac{n-k}{2}-1)^2+\mu_m}}
\Hyperg(a,b;c;\tau^2)\\
+&B(1-\tau^2)^{\frac{n}{4}-\frac{\gamma}{2}}\tau^{1-\frac{n}{2}-\sqrt{(\tfrac{n-k}{2}-1)^2+\mu_m}}
\Hyperg(\tilde{a},\tilde{b};\tilde{c};,\tau^2),
\end{split}
\end{equation}
for any real constants $A,B$, where
\begin{eqnarray*}
&a=\tfrac{-\gamma}{2}+\tfrac{1}{2}+\tfrac{1}{2}\sqrt{(\tfrac{n-k}{2}-1)^2+\mu_m}
            +i\tfrac{\lambda}{2},
            \quad &\tilde{a}=\tfrac{-\gamma}{2}+\tfrac{1}{2}
            -\tfrac{1}{2}\sqrt{(\tfrac{n-k}{2}-1)^2+\mu_m}+i\tfrac{\lambda}{2},\\
&b=\tfrac{-\gamma}{2}+\tfrac{1}{2}+\tfrac{1}{2}\sqrt{(\tfrac{n-k}{2}-1)^2+\mu_m}
            -i\tfrac{\lambda}{2},
            \quad &\tilde{b}=\tfrac{-\gamma}{2}+\tfrac{1}{2}
            -\tfrac{1}{2}\sqrt{(\tfrac{n-k}{2}-1)^2+\mu_m}-i\tfrac{\lambda}{2},\\
&c=1+\sqrt{(\tfrac{n-k}{2}-1)^2+\mu_m},\quad \quad\qquad\,\,\,\qquad&\tilde{c}=1-\sqrt{(\tfrac{n-k}{2}-1)^2+\mu_m},
\end{eqnarray*}
and $\Hyperg$ denotes the standard hypergeometric function described in Lemma \ref{propiedadeshiperg}. Note that we can write $\lambda$ instead of $|\lambda|$ in the arguments of the hypergeometric functions because $a=\overline{b}$, $\tilde{a}=\overline{\tilde{b}}$ and property \eqref{prop5}.

The regularity at the origin $\tau=0$ implies $B=0$ in \eqref{varphi1}.
Moreover, using \eqref{prop4} we can write
\begin{equation*}\label{varphiz}
\begin{split}
\varphi(\tau)
=A&\left[\alpha(1-\tau^2)^{\frac{n}{4}-\frac{\gamma}{2}}\tau^{1-\frac{n}{2}+\frac{k}{2}
+\sqrt{(\tfrac{n-k}{2}-1)^2+\mu_m}} \right.\Hyperg(a,b;a+b-c+1;1-\tau^2)\\
&+\beta (1-\tau^2)^{\frac{n}{4}+\frac{\gamma}{2}}\tau^{1-\frac{n}{2}+\frac{k}{2}
+\sqrt{(\tfrac{n-k}{2}-1)^2+\mu_m}}
\left.\Hyperg(c-a,c-b;c-a-b+1;1-\tau^2)\right],
\end{split}
\end{equation*}
where \begin{align*}\label{alpha}
&\alpha
=\tfrac{\Gamma\Big(1+\sqrt{(\tfrac{n-k}{2}-1)^2+\mu_m}\Big)\Gamma(\gamma)}
{\Gamma\Big(\tfrac{1}{2}
+\tfrac{\gamma}{2}+\tfrac{1}{2}\sqrt{(\tfrac{n-k}{2}-1)^2+\mu_m}
-i\tfrac{\lambda}{2}\Big)\Gamma\Big(\tfrac{1}{2}
+\tfrac{\gamma}{2}+\tfrac{1}{2}\sqrt{(\tfrac{n-k}{2}-1)^2+\mu_m}+i\tfrac{\lambda}{2}\Big)},\\
&\beta=
\tfrac{\Gamma\Big(1+\sqrt{(\tfrac{n-k}{2}-1)^2+\mu_m}\Big)
\Gamma(-\gamma)}{\Gamma\left(\tfrac{1}{2}-\tfrac{\gamma}{2}
+\tfrac{1}{2}\sqrt{(\tfrac{n-k}{2}-1)^2+\mu_m}
+i\tfrac{\lambda}{2}\right)\Gamma\left(
\tfrac{1}{2}-\tfrac{\gamma}{2}+\tfrac{1}{2}\sqrt{(\tfrac{n-k}{2}-1)^2+\mu_m}
-i\tfrac{\lambda}{2}\right)}.
\end{align*}
Note that our changes of variable give
\begin{equation}\label{tau_rho}
\tau=\tanh(\sigma)=\frac{4-\rho^2}{4+\rho^2}=1-\frac{1}{2}\rho^2+\cdots,
\end{equation}
which yields, as $\rho \to 0$,
\begin{equation*}
\begin{split}
\varphi(\rho)\sim
 A\left[\alpha\rho^{\frac{n}{2}-\gamma}+\beta\rho^{\frac{n}{2}+\gamma}+\ldots\right] .
\end{split}
\end{equation*}
Here we have used \eqref{prop2} for the hypergeometric function.

Looking at the expansion for the scattering solution \eqref{formaw} and the definition of the conformal fractional Laplacian \eqref{definition-P}, we must have
\begin{equation}\label{A}
A=\alpha^{-1},\quad \text{and}\quad
\Theta^m_{\gamma}(\lambda)=d_\gamma\beta \alpha^{-1}.
\end{equation}
Property \eqref{prop1g} yields \eqref{symbol} and completes the proof of Theorem \ref{thm:symbol}.
\end{proof}

\subsection{Conjugation}\label{subsection:conjugation}
We now go back to the discussion in Section \ref{section:isolated-singularity} for an isolated singularity but we allow any subcritical power $p\in(\frac{N}{N-2\gamma},\frac{N+2\gamma}{N-2\gamma})$ in the right hand side of \eqref{equation-conformal-power}; this is,
\begin{equation}\label{problem10}
(-\Delta_{\mathbb R^N})^\gamma u=A_{N,p,\gamma}u^p\quad\text{in }\mathbb R^N\setminus\{0\}.
\end{equation}
This equation does not have good conformal properties. But, given
 $u\in \mathcal C^{\infty}(\R^{N}\setminus\{0\})$, we can consider
\begin{equation*}
u=r^{-\frac{N-2\gamma}{2}}w=r^{-\frac{2\gamma}{p-1}}v,\quad r=e^{-t},
\end{equation*}
and
define the conjugate operator
\begin{equation}\label{tilde-P}
\tilde P_\gamma^{g_0}(v):=
r^{-\frac{N-2\gamma}{2}+\frac{2\gamma}{p-1}}P_\gamma^{g_0}\big(r^{\frac{N-2\gamma}{2}-\frac{2\gamma}{p-1}}v\big)
=r^{\frac{2\gamma}{p-1}p}(-\Delta_{\mathbb R^N})^\gamma u.
\end{equation}
Then problem \eqref{problem10} is equivalent to
\begin{equation*}
\tilde P_\gamma^{g_0} (v) =A_{N,p,\gamma}v^{p}\quad\text{in }\mathbb R\times\mathbb S^{N-1},
\end{equation*}
for some $v=v(t,\theta)$ smooth, $t\in\mathbb R$, $\theta\in\mathbb S^{N-1}$.

This $\tilde P_\gamma^{g_0}$ can then be seen from the perspective of scattering theory, and thus be characterized
as a Dirichlet-to-Neumann operator for a special extension problem in Proposition \ref{prop:divV*}, as inspired by the paper of Chang and Gonz\'alez \cite{Chang-Gonzalez}. Note the Neumann condition \eqref{Neumann-V*}, which differs from the one of the standard fractional Laplacian.

In the notation of Section \ref{section:isolated-singularity}, we set $X=\mathbb H^{N+1}$ with the metric given by \eqref{hyperbolic-metric-rho0}. Its conformal infinity is $M=\mathbb R\times\mathbb S^{N-1}$ with the metric $g_0$. We would like to repeat the arguments of Section \ref{section:conformal} for the conjugate operator $\tilde P_\gamma^{g_0}$. But this operator does not have good conformal properties. In any case, we are able to define a new eigenvalue problem that replaces \eqref{equation-scattering}-\eqref{formaw}.

More precisely, let $\mathcal W$ be the unique solution to the scattering problem \eqref{equation-scattering}-\eqref{formaw}
with Dirichlet data \eqref{Dirichlet-condition} set to $w$. We define the function $\mathcal V$ by the following relation
\begin{equation}\label{changeWV}
r^{Q_0}\mathcal{W}=\mathcal{V},\quad
Q_0:=-\tfrac{N-2\gamma}{2}+\tfrac{2\gamma}{p-1},\end{equation}
Substituting into \eqref{equation-scattering}, the new scattering problem is
\begin{equation}\label{equation-scattering-modified}
-\Delta_{g^+}\mathcal V+\left(\tfrac{4+\rho^2}{4\rho}\right)^{-2}\left[-2Q_0\,\partial_t V-Q_0^2\mathcal V\right]-\big(\tfrac{N^4}{2}-\gamma^2\big)\mathcal V=0 \text{ in } X,
\end{equation}
Moreover, if we set
\begin{equation}
\label{formav}\mathcal V=\rho^{\frac{N}{2}-\gamma}\mathcal V_1+\rho^{\frac{N}{2}+\gamma}\mathcal V_2,
\end{equation}
the Dirichlet condition \eqref{Dirichlet-condition} will turn into
\begin{equation}\label{Dirichlet-V}
\mathcal V_1|_{\rho=0}=v,
\end{equation}
and the Neumann one \eqref{definition-P} into
\begin{equation}\label{Neumann-V}
d_\gamma\mathcal V_2|_{\rho=0}=\tilde P^{g_0}_\gamma(v).
\end{equation}

The following proposition is the analogous to Proposition \ref{prop:symbol} for $\tilde P_{\gamma}^{g_0}$:

\begin{proposition}\label{thm:symbolV}
 Fix $\gamma\in (0,\tfrac{n}{2})$ and let $\tilde{P}^m_{\gamma}$ be the projection of the operator $\tilde{P}_\gamma^{g_0}$ over each eigenspace $\langle E_m\rangle$. Then
$$\widehat{\tilde{P}_\gamma^m (v_m)}=\tilde{\Theta}^m_\gamma(\xi) \,\widehat{v_m},$$
and this Fourier symbol is given by
\begin{equation}\label{symbolV}
\tilde{\Theta}^m_{\gamma}(\xi)=2^{2\gamma}\frac{\Gamma \Big(\frac{1}{2}+\frac{\gamma}{2}
+\frac{\sqrt{(\frac{N}{2}-1)^2+\mu_m}}{2}
+\frac{1}{2}(Q_0+\xi i)\Big)\Gamma \Big(\frac{1}{2}+\frac{\gamma}{2}
+\frac{\sqrt{(\frac{N}{2}-1)^2+\mu_m}}{2}
-\frac{1}{2}(Q_0+\xi i)\Big)}
{\Gamma \Big(\frac{1}{2}-\frac{\gamma}{2}
+\frac{\sqrt{(\frac{N}{2}-1)^2+\mu_m}}{2}
+\frac{1}{2}(Q_0+\xi i)\Big)\Gamma \Big(\frac{1}{2}-\frac{\gamma}{2}
+\frac{\sqrt{(\frac{N}{2}-1)^2+\mu_m}}{2}
-\frac{1}{2}(Q_0+\xi i)\Big)}.
\end{equation}
\end{proposition}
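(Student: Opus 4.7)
The plan is to derive \eqref{symbolV} directly from Proposition \ref{prop:symbol} by combining the conjugation relation \eqref{tilde-P} with the behavior of the one-dimensional Fourier transform under multiplication by real powers of $r$. Since $\tilde P_\gamma^{g_0}$ is obtained from $P_\gamma^{g_0}$ by conjugation with $r^{Q_0}$, and multiplication by a function of $r$ alone commutes with the projection onto each spherical harmonic $\langle E_m\rangle$, the operator $\tilde P_\gamma^{g_0}$ still diagonalizes under the eigenspace decomposition of $\s^{N-1}$; it remains to compute the symbol mode by mode.

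Passing to the variable $t = -\log r$, multiplication by $r^\alpha$ becomes multiplication by $e^{-\alpha t}$. With the normalization \eqref{fourier}, a direct calculation gives, for real $\alpha$,
\[
\widehat{e^{-\alpha t} f}(\xi) \;=\; \widehat{f}(\xi - i\alpha),
\]
so multiplication by $r^\alpha$ produces the analytic translation $\xi \mapsto \xi - i\alpha$ of the spectral parameter. Applying this to $\tilde P_\gamma^m v_m = r^{Q_0} P_\gamma^m(r^{-Q_0} v_m)$ and inserting Proposition \ref{prop:symbol} yields
\[
\widehat{\tilde P_\gamma^m v_m}(\xi) \;=\; \Theta_\gamma^m(\xi - iQ_0)\,\widehat{v_m}(\xi).
\]
The remaining step is algebraic: rewrite the modulus squared in \eqref{symbol-isolated} as $|\Gamma(a+bi)|^2 = \Gamma(a+bi)\Gamma(a-bi)$ (extended meromorphically), substitute $\xi \mapsto \xi - iQ_0$, and observe that $\pm\tfrac{i}{2}(\xi - iQ_0) = \pm\tfrac12(Q_0 + i\xi)$. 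The four gamma factors then rearrange precisely into the right-hand side of \eqref{symbolV}.

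The main subtlety I anticipate is justifying the analytic translation in $\xi$: multiplying by $e^{\pm Q_0 t}$ destroys decay at one infinity, so the Fourier transform need not exist in the classical sense for all $v$ of interest. The clean remedy is to first verify the identity $\tilde\Theta_\gamma^m(\xi) = \Theta_\gamma^m(\xi - iQ_0)$ on a dense class of Schwartz functions compactly supported in $t$ (where both transforms are entire in $\xi$ and the translation follows from a contour-shift argument), and then to lift it to an equality of meromorphic symbols, using that $\Theta_\gamma^m$ is a ratio of gamma functions. A conceptually cleaner alternative, which I would pursue if the first route becomes technical, is to redo the scattering derivation of Proposition \ref{prop:symbol} from scratch using the modified problem \eqref{equation-scattering-modified}-\eqref{Neumann-V}: after projecting on $\langle E_m\rangle$, taking the Fourier transform in $t$, and setting $\tau = \tanh\sigma$, one lands on a hypergeometric ODE whose indicial roots at the conformal infinity $\tau = 1$ are shifted by $\pm Q_0/2$ due to the extra zero-order terms $-2Q_0\,\partial_t \mathcal V - Q_0^2 \mathcal V$ in \eqref{equation-scattering-modified}; applying the standard connection formulas for the hypergeometric function and matching against the expansion \eqref{formav} via the Neumann recipe \eqref{Neumann-V} delivers \eqref{symbolV} in one stroke.
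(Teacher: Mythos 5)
Your proposal is correct, and in fact it recovers both routes the paper itself uses: the conjugation/Fourier-shift identity $\tilde\Theta_\gamma^m(\xi)=\Theta_\gamma^m(\xi-iQ_0)$, which the paper states at the end of its proof as ``an alternative way to calculate the symbol,'' and the scattering derivation via the modified extension problem \eqref{equation-scattering-modified}--\eqref{extension22}, which is the paper's main argument. You simply lead with the shift argument where the paper leads with the scattering computation; your observation that the shift is best verified on compactly supported test functions (where $\hat v_m$ is entire) and then extended by meromorphy of the gamma-quotient is exactly the right way to handle the loss of decay caused by multiplying by $e^{\pm Q_0 t}$, a point the paper leaves implicit. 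The algebra $\pm\tfrac{i}{2}(\xi-iQ_0)=\pm\tfrac12(Q_0+i\xi)$ and the rearrangement of the four gamma factors into \eqref{symbolV} are both correct.
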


\begin{proof}
We write the hyperbolic metric as \eqref{hyperbolic-metric-sigma-k0} using the change of variable $\sigma=-\log(\rho/2)$.  The scattering equation for $\mathcal W$ is \eqref{eqs} in the particular case $k=0$, $n=N$, and thus, we follow the arguments in the proof of Theorem \ref{thm:symbol}. Set $r=e^{-t}$ and project over spherical harmonics as in \eqref{equ-m}, which yields
\begin{equation}\label{extension1}\begin{split}
\partial_{{\sigma}{\sigma}}\mathcal W_m+R(\sigma)\partial_{\sigma}\mathcal W_m+
(\cosh{\sigma})^{-2}
\partial_{tt}\mathcal W_m
-(\sinh \sigma)^{-2}\mu_m \mathcal W_m+\big(\tfrac{N^2}{4}-\gamma^2\big)\mathcal W_m=0
\end{split}\end{equation}
for $$R(\sigma)=\frac{\partial_{\sigma}(\cosh{\sigma}\sinh^{N-1}\sigma)}
{\cosh{\sigma}\sinh^{N-1}\sigma}.$$
Recall the relation \eqref{changeWV} and rewrite the  extension equation \eqref{extension1} in terms of each projection $\mathcal V_m$ of $\mathcal V$. This gives
\begin{equation}\label{extension3}
\begin{split}
\partial_{{\sigma}{\sigma}}\mathcal V_m+R(\sigma)\partial_{\sigma}\mathcal V_m
+(\cosh{\sigma})^{-2}\left\{\partial_{tt}\mathcal V_m
+2 Q_0\partial_t \mathcal V_m\right.
\left.+Q_0^2\mathcal V_m\right\}&\\
-(\sinh \sigma)^{-2}\mu_m \mathcal V_m+\big(\tfrac{N^2}{4}-\gamma^2\big)\mathcal V_m&=0.
\end{split}
\end{equation}
Now we use the change of variable \eqref{cambioz}, and take Fourier transform \eqref{fourier} with respect to the variable $t$. Then
\begin{equation}\label{extension22}
(1-\tau^2)\partial_{\tau\tau}\widehat{\mathcal V_m}+\left(\tfrac{N-1}{\tau}-\tau\right)\partial_\tau \widehat{\mathcal V_m}
+\Big[-\mu_m\tfrac{1}{\tau^2}
+\big(\tfrac{N^2}{4}-\gamma^2\big)\tfrac{1}{1-\tau^2}-\left(\xi-iQ_0\right)^2\Big]\widehat{\mathcal V_m}=0.
\end{equation}
The Fourier symbol \eqref{symbolV} is obtained following the same steps as in the proof of Theorem \ref{thm:symbol}. Note that the only difference is the coefficient of $\widehat{\mathcal V}_m$ in \eqref{extension22}.

We note here than an alternative way to calculate the symbol is by taking Fourier transform in relation $\tilde P_\gamma^{g_0} (v)=e^{-Q_0t} P_\gamma^{g_0}(w)$, as follows:
\begin{equation*}
\widehat{\tilde P_\gamma^{m} v_m(t)}= \widehat{P_\gamma^{m}w_m}(\xi-iQ_0)=\Theta_\gamma^m(\xi-iQ_0)\hat w_m(\xi-iQ_0)=\Theta_\gamma^m(\xi-iQ_0)\hat v_m(\xi).
\end{equation*}
Thus $\tilde \Theta_\gamma^m(\xi)=\Theta_\gamma^m(\xi-iQ_0)$, as desired.
\end{proof}

Now we turn to Proposition \ref{prop:new-defining-function}, and we show that there exists a very special defining function adapted to $\mathcal V$.

\begin{lemma}\label{cg-modified}
Let $\gamma\in(0,1)$. There exists  a new defining function $\rho^*$ such that, if we define the metric $\bar g^*=(\rho^*)^2 g^+$, then
\begin{equation*}
E_{\bar g^*}(\rho^*)= (\rho^*)^{-(1+2\gamma)}\big(\tfrac{4\rho}{4+\rho^2}\big)^2Q_0^2,
\end{equation*} where $E_{\bar g^*}(\rho^*)$ is defined in \eqref{Erho}. The precise expression for $\rho^*$ is
\begin{equation}\label{rho*}
\rho^*(\rho)=
\left[\alpha^{-1}\big(\tfrac{4\rho}{4+\rho^2}\big)^{\tfrac{N-2\gamma}{2}}
\Hyperg\Big(\tfrac{\gamma}{p-1},\tfrac{N-2\gamma}{2}-\tfrac{\gamma}{p-1};
\tfrac{N}{2};\big(\tfrac{4-\rho^2}{4+\rho^2}\big)^2\Big)\right]^{2/(N-2\gamma)},
\quad\rho\in(0,2),
\end{equation}
where
\begin{equation*}
\alpha
=\frac{\Gamma(\frac{N}{2})\Gamma(\gamma)}
{\Gamma\big(\gamma+\frac{\gamma}{p-1}\big)
\Gamma\big(\frac{N}{2}-\frac{\gamma}{p-1}\big)}.
\end{equation*}
The function $\rho^*$ is strictly monotone with respect to $\rho$, and in particular,
$\rho^*\in(0,\rho_0^*)$ for
\begin{equation}\label{rho*0}
\rho_0^*:=\rho^*(2)=\alpha^{\frac{-2}{N-2\gamma}}.
\end{equation}
Moreover, it has the asymptotic expansion near the conformal infinity
\begin{equation}
\label{asymptotics-rho*}\rho^*(\rho)=\rho\left[ 1+O(\rho^{2\gamma})+O(\rho^2)\right].
\end{equation}
\end{lemma}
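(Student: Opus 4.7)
My plan is to adapt the argument of Proposition \ref{prop:new-defining-function} to the modified scattering problem \eqref{equation-scattering-modified} associated with $\tilde P_\gamma^{g_0}$. I would define $\mathcal V^0$ as the unique $t$-independent solution of \eqref{equation-scattering-modified} whose leading coefficient $\mathcal V^0_1$ in the expansion \eqref{formav} is the constant $1$, and then set $\rho^* := (\mathcal V^0)^{2/(N-2\gamma)}$. Since $\partial_t \mathcal V^0 \equiv 0$, the cross term in \eqref{equation-scattering-modified} drops out and the equation reduces to
$$\bigl\{-\Delta_{g^+} - \bigl(\tfrac{N^2}{4}-\gamma^2\bigr)\bigr\}\mathcal V^0 = \bigl(\tfrac{4\rho}{4+\rho^2}\bigr)^2 Q_0^2 \,\mathcal V^0.$$
This is the key identity for computing $E_{\bar g^*}(\rho^*)$: applying the second line of \eqref{Erho} with $\rho^*$ in place of $\rho$ and substituting $\mathcal V^0 = (\rho^*)^{(N-2\gamma)/2}$ produces the stated formula $E_{\bar g^*}(\rho^*) = (\rho^*)^{-(1+2\gamma)}(\tfrac{4\rho}{4+\rho^2})^2 Q_0^2$ directly.

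The explicit expression \eqref{rho*} comes from solving the reduced ODE. In the variable $\tau = \tanh\sigma = (4-\rho^2)/(4+\rho^2)$, it coincides with \eqref{extension22} at $m=0$, $\xi=0$ and effective parameter $\lambda = -iQ_0$. Plugging $Q_0 = -\tfrac{N-2\gamma}{2}+\tfrac{2\gamma}{p-1}$ into the parameters of \eqref{varphi1} produces, after cancellations, $\tilde a = \tfrac{\gamma}{p-1}$, $\tilde b = \tfrac{N-2\gamma}{2}-\tfrac{\gamma}{p-1}$ and $\tilde c = \tfrac{N}{2}$; regularity at $\tau=0$ (i.e.\ $\rho=2$) eliminates the second branch of \eqref{varphi1}, leaving one free constant $A$. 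I then expand $\Hyperg(\tilde a, \tilde b; \tilde c; \tau^2)$ around $\tau = 1$ via the connection formula \eqref{prop4}, match the leading $\rho^{(N-2\gamma)/2}$ coefficient of \eqref{formav} to the Dirichlet value $1$, and use $1-\tau^2 = (4\rho/(4+\rho^2))^2$; this forces $A = \alpha^{-1}$ for the $\alpha$ in the statement and yields \eqref{rho*}.

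The value $\rho^*(2) = \alpha^{-2/(N-2\gamma)}$ follows at once from $\tau(2)=0$, which makes both $\Hyperg(\cdot,\cdot;\cdot;0)=1$ and $(4\rho/(4+\rho^2))\big|_{\rho=2}=1$. The asymptotics \eqref{asymptotics-rho*} come from the same connection formula used in the normalization: it gives $\mathcal V^0 = \rho^{(N-2\gamma)/2}\bigl(1 + \tfrac{\beta}{\alpha}\rho^{2\gamma} + O(\rho^2)\bigr)$ with $\beta$ the companion residue in \eqref{prop4}, and raising to the $2/(N-2\gamma)$-power gives the stated expansion.

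The main obstacle, in my view, is strict monotonicity of $\rho \mapsto \rho^*$ on $(0,2)$. The explicit formula factors as a product of $(1-\tau^2)^{(N-2\gamma)/4}$, increasing in $\rho$, and $\Hyperg(\tilde a,\tilde b;\tilde c;\tau^2)$, decreasing in $\rho$ (all Taylor coefficients are positive since $\tilde a,\tilde b,\tilde c>0$ in the admissible range of $p$), so monotonicity is not visible by inspection. I would attack it by direct differentiation using $\tfrac{d}{dz}\Hyperg(a,b;c;z) = \tfrac{ab}{c}\Hyperg(a+1,b+1;c+1;z)$ together with a three-term contiguous identity that rearranges $\partial_\rho \rho^*$ into a single hypergeometric function of definite sign. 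Once monotonicity is established, the boundary values $\rho^*(0^+) = 0$ and $\rho^*(2) = \alpha^{-2/(N-2\gamma)}$ give $\rho^* \in (0, \rho_0^*)$.
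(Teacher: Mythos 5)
Your proposal reproduces the paper's argument essentially step for step: you define $\mathcal V^0$ as the $t$-independent solution of the modified scattering problem \eqref{equation-scattering-modified} with Dirichlet data $1$, set $\rho^*=(\mathcal V^0)^{2/(N-2\gamma)}$, exploit $\partial_t\mathcal V^0\equiv 0$ so that \eqref{Erho} reads off $E_{\bar g^*}(\rho^*)$ directly from the scattering equation, and solve the resulting ODE at $m=0$ with $i\lambda\mapsto Q_0$ to identify the hypergeometric parameters $\gamma/(p-1)$, $\tfrac{N-2\gamma}{2}-\gamma/(p-1)$, $\tfrac{N}{2}$ and the normalization $A=\alpha^{-1}$; the endpoint value and the expansion near $\rho=0$ then follow from the connection formula \eqref{prop4} exactly as you describe. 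The one place you stop short is the monotonicity claim: you correctly identify the competition between the two factors and the need for a hypergeometric differentiation/contiguity identity, but you do not carry it out. The paper's resolution is slightly more economical than the route you sketch: after passing to $\eta=\big(\tfrac{4-\rho^2}{4+\rho^2}\big)^2$, it factors $f(\eta)=(1-\eta)^{-\frac{N-2\gamma}{4}+\frac{\gamma}{p-1}}\cdot\bigl[(1-\eta)^{\frac{N-2\gamma}{2}-\frac{\gamma}{p-1}}\Hyperg(\cdot,\cdot;\tfrac{N}{2};\eta)\bigr]$, applies \eqref{prop6} with $m=1$ to the bracketed factor, and observes that both resulting terms in $f'(\eta)$ are negative under the restriction $p\in\big(\tfrac{N}{N-2\gamma},\tfrac{N+2\gamma}{N-2\gamma}\big)$ — no three-term contiguous identity needed. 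Filling in that single calculation would make your proof complete.
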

\begin{proof}
The proof follows Lemma $4.5$ in \cite{Chang-Gonzalez}. The scattering equation \eqref{equation-scattering} for $\mathcal W$ is modified to \eqref{equation-scattering-modified} when we substitute \eqref{changeWV}, but the additional terms do not affect the overall result. Then we know that, given $v\equiv 1$ on $M$, \eqref{equation-scattering-modified}
has a unique solution $\mathcal V^0$ with the asymptotic expansion
\begin{equation*}
\mathcal V^0=\mathcal V_1^0 \rho^{\frac{N}{2}-\gamma}+ \mathcal V_2^0 \rho^{\frac{N}{2}+\gamma}, \quad \mathcal V_1^0,\mathcal V_2^0 \in \mathcal C^{\infty}(\overline{X})
\end{equation*}
and Dirichlet condition  on $M=\mathbb R\times \mathbb S^{N-1}$
\begin{equation}\label{Dirichlet-V0}
\mathcal V_1^0|_{\rho=0}=1.\end{equation}
Actually, from the proof of Proposition  \ref{thm:symbolV} and the modifications of Proposition \ref{prop:symbol} we do obtain an explicit formula for such $\mathcal V^0$. Indeed, from
\eqref{varphi1} and \eqref{A} for $k=0$, $n=N$, $m=0$, replacing $i\lambda$ by $Q_0$, we arrive at
\begin{equation*}
\mathcal V^0(\tau)=\varphi(\tau)=\alpha^{-1}(1-\tau ^2)^{\frac{N}{4}-\frac{\gamma}{2}}
\Hyperg\Big(\tfrac{\gamma}{p-1},\tfrac{N-2\gamma}{2}-\tfrac{\gamma}{p-1};\tfrac{N}{2};\tau^2\Big).
\end{equation*}
Finally, substitute in the relation between $\tau$ and $\rho$ from \eqref{tau_rho}
and set
\begin{equation}\label{definition-rho*}
\rho^*(\rho)=(\mathcal V^0)^{\frac{1}{N/2-\gamma}}(\rho).
\end{equation}
Then, recalling \eqref{Erho}, for this $\rho^*$ we have
\begin{equation*}
E_{\bar g^*}(\rho^*)=(\rho^*)^{-\frac{N}{2}-\gamma-1} \left\{-\Delta_{g^+}-\big(\tfrac{N^2}{4}-\gamma^2\big)\right\}(\mathcal V^0)
=(\rho^*)^{-(1+2\gamma)}\big(\tfrac{4\rho}{4+\rho^2}\big)^2Q_0^2,
\end{equation*}
as desired. Here we have used the scattering equation for $\mathcal V^0$ from \eqref{equation-scattering-modified} and the fact that $\mathcal V^0$ does not depend on the variable $t$.

To show monotonicity, denote $\eta:=\big(\tfrac{4-\rho^2}{4+\rho^2}\big)^2$ for $\eta\in(0,1)$. It is enough to check that
\begin{equation*}
f(\eta):=(1-\eta)^{\frac{N-2\gamma}{4}} \Hyperg\Big(\tfrac{\gamma}{p-1},\tfrac{N-2\gamma}{2}-\tfrac{\gamma}{p-1};\tfrac{N}{2};
\eta\Big)
\end{equation*}
is monotone with respect to $\eta$.
From properties \eqref{prop5} and \eqref{prop6} of the Hypergeometric function  and the possible values for $p$ in \eqref{exponent-p} we can assert that
\begin{equation*}
\begin{split}
\frac{d}{d\eta}f(\eta)&=
\frac{d}{d\eta}\left((1-\eta)^{-\frac{N-2\gamma}{4}+\frac{\gamma}{p-1}}
(1-\eta)^{\frac{N-2\gamma}{2}-\frac{\gamma}{p-1}} \Hyperg\big(\tfrac{N-2\gamma}{2}-\tfrac{\gamma}{p-1},\tfrac{\gamma}{p-1};\tfrac{N}{2};\eta\big)\right)\\
&=\left(\tfrac{N-2\gamma}{4}-\tfrac{\gamma}{p-1}\right)
(1-\eta)^{-\frac{N-2\gamma}{4}+\frac{\gamma}{p-1}-1}(1-\eta)^{\frac{N-2\gamma}{2}-\frac{\gamma}{p-1}} \Hyperg\big(\tfrac{N-2\gamma}{2}-\tfrac{\gamma}{p-1},\tfrac{\gamma}{p-1};\tfrac{N}{2};\eta\big)\\
&
-\tfrac{2}{N}\left(\tfrac{N-2\gamma}{2}-\tfrac{\gamma}{p-1}\right)\left(\tfrac{N}{2}-\tfrac{\gamma}{p-1}\right)
(1-\eta)^{\frac{N-2\gamma}{2}-\frac{\gamma}{p-1}-1} \Hyperg\big(\tfrac{N-2\gamma}{2}-\tfrac{\gamma}{p-1}+1,\tfrac{\gamma}{p-1};\tfrac{N}{2}+1;\eta\big)\\
&<0.
\end{split}
\end{equation*}

\end{proof}

\begin{remark}
For the Neumann condition, note that, by construction,
\begin{equation}\label{Neumann-V0}
\tilde P_{\gamma}^{g_0}(1)=d_\gamma\mathcal V_2^0|_{\rho=0},
\end{equation}
while from \eqref{tilde-P} and the definition of $A_{N,p,\gamma}$ from \eqref{Apn},
\begin{equation*}
\tilde P_\gamma^{g_0}(1)=r^{\frac{2\gamma}{p-1}p}(-\Delta_{\mathbb R^{N}})^\gamma(r^{-\frac{2\gamma}{p-1}})=A_{N,p,\gamma}.\\
\end{equation*}
\end{remark}

The last result in this section shows that the scattering problem for $\mathcal V$ \eqref{equation-scattering-modified} can be transformed into a new extension problem as in Proposition \ref{prop:new-defining-function}, and whose Dirichlet-to-Neumann operator is precisely $\tilde P_\gamma^{g_0}$. For this we will introduce the new metric on $\R^N\setminus\{0\}$
\begin{equation}\label{g*}
\bar{g}^*=(\rho^*)^2g^+,
\end{equation}
where $\rho^*$ is the defining function defined in \eqref{rho*}, and let us denote
\begin{equation}\label{notation*}
V^*=(\rho^*)^{-(N/2-\gamma)}\mathcal V.
 \end{equation}

\begin{proposition} \label{prop:divV*}
Let $v$ be a smooth function on $M=\mathbb R\times\mathbb S^{N-1}$.
The extension problem
\begin{equation}\label{divV*}
\left\{\begin{array}{@{}r@{}l@{}l}
 -\divergence_{\bar{g}^*}((\rho^*)^{1-2\gamma}\nabla_{\bar{g}^*}V^*)
-(\rho^*)^{-(1+2\gamma)}\left(\tfrac{4\rho}{4+\rho^2}\right)^{2}2Q_0\,\partial_t V^*&\,=0\quad &\text{in } (X,\bar g^*), \medskip\\
V^*|_{\rho=0}&\,=v\quad &\text{on }(M,g_0),
\end{array}\right.
\end{equation}
has a unique solution $V^*$. Moreover, for its Neumann data,
\begin{equation}\label{Neumann-V*}
\tilde P_\gamma^{g_0}(v)=-\tilde d_\gamma \lim_{\rho^* \to 0}(\rho^*)^{1-2\gamma} \partial_{\rho^*} (V^*)+A_{N,p,\gamma}v.
\end{equation}
\end{proposition}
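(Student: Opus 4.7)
The plan is to mirror the derivation of Proposition \ref{prop:new-defining-function}, but starting from the modified scattering equation \eqref{equation-scattering-modified} instead of \eqref{equation-scattering}, and using the special defining function $\rho^*$ from Lemma \ref{cg-modified} precisely to cancel the zeroth-order term produced by $Q_0^2$. First, I would isolate the universal algebraic identity underlying Proposition \ref{prop:extension}: for any smooth $V^*$ on $X$ and any defining function $\rho^*$ with $\bar g^* = (\rho^*)^2 g^+$,
\begin{equation*}
\{-\Delta_{g^+} - (\tfrac{N^2}{4}-\gamma^2)\}((\rho^*)^{N/2-\gamma}V^*) = (\rho^*)^{N/2+\gamma+1}\bigl[-\divergence_{\bar g^*}((\rho^*)^{1-2\gamma}\nabla_{\bar g^*}V^*) + E_{\bar g^*}(\rho^*)V^*\bigr].
\end{equation*}
This is the content of the calculation behind \eqref{div} and holds without needing $V^*$ to be a solution.

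Next, I would substitute $\mathcal V = (\rho^*)^{N/2-\gamma}V^*$ into \eqref{equation-scattering-modified} rewritten as $\{-\Delta_{g^+}-(\tfrac{N^2}{4}-\gamma^2)\}\mathcal V = \bigl(\tfrac{4+\rho^2}{4\rho}\bigr)^{-2}\bigl[2Q_0\partial_t\mathcal V + Q_0^2\mathcal V\bigr]$. Because $\rho^*$ depends only on $\rho$, one has $\partial_t\mathcal V = (\rho^*)^{N/2-\gamma}\partial_t V^*$. Dividing by $(\rho^*)^{N/2+\gamma+1}$ and using $\bigl(\tfrac{4+\rho^2}{4\rho}\bigr)^{-2}=\bigl(\tfrac{4\rho}{4+\rho^2}\bigr)^{2}$, I arrive at
\begin{equation*}
-\divergence_{\bar g^*}((\rho^*)^{1-2\gamma}\nabla_{\bar g^*}V^*) + E_{\bar g^*}(\rho^*)V^* = (\rho^*)^{-(1+2\gamma)}\bigl(\tfrac{4\rho}{4+\rho^2}\bigr)^2\bigl[2Q_0\partial_t V^* + Q_0^2 V^*\bigr].
\end{equation*}
Invoking Lemma \ref{cg-modified}, the $Q_0^2 V^*$ terms cancel exactly and leave \eqref{divV*}. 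Existence and uniqueness of $V^*$ reduce to those of the scattering solution $\mathcal W$ via the invertible substitutions $r^{Q_0}\mathcal W = \mathcal V$ and $\mathcal V = (\rho^*)^{N/2-\gamma}V^*$.

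The Dirichlet condition follows by combining \eqref{formav}, \eqref{Dirichlet-V}, and the expansion \eqref{asymptotics-rho*}: as $\rho\to 0$,
\begin{equation*}
V^* = (\rho^*)^{-(N/2-\gamma)}\bigl[\rho^{N/2-\gamma}\mathcal V_1 + \rho^{N/2+\gamma}\mathcal V_2\bigr] \longrightarrow \mathcal V_1|_{\rho=0} = v.
\end{equation*}

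The main computational step is the Neumann identity \eqref{Neumann-V*}. I would expand $V^* = V_1^* + (\rho^*)^{2\gamma}V_2^* + \ldots$ as a function of $\rho^*$, so that $(\rho^*)^{1-2\gamma}\partial_{\rho^*}V^* \to 2\gamma V_2^*|_{\rho^*=0}$, and (since $\tilde d_\gamma = -d_\gamma/(2\gamma)$) the limit in \eqref{Neumann-V*} equals $d_\gamma V_2^*|_{\rho^*=0}$. To identify this coefficient I would use $(\rho^*)^{N/2-\gamma} = \mathcal V^0$ from \eqref{definition-rho*}, giving $V^* = \mathcal V/\mathcal V^0$; expanding the numerator and denominator via \eqref{formav} (for $\mathcal V$) and the analogous expansion for $\mathcal V^0$ with Dirichlet data \eqref{Dirichlet-V0},
\begin{equation*}
V^* = \frac{\mathcal V_1 + \rho^{2\gamma}\mathcal V_2 + \ldots}{\mathcal V_1^0 + \rho^{2\gamma}\mathcal V_2^0 + \ldots} = v + \rho^{2\gamma}\bigl[\mathcal V_2 - v\,\mathcal V_2^0\bigr]\bigr|_{\rho=0} + \ldots,
\end{equation*}
and using the matching between $(\rho^*)^{2\gamma}$ and $\rho^{2\gamma}$ at leading order from \eqref{asymptotics-rho*}. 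Invoking \eqref{Neumann-V}, \eqref{Neumann-V0} and the identity $\tilde P_\gamma^{g_0}(1) = A_{N,p,\gamma}$ from the preceding remark, the coefficient becomes $V_2^*|_{\rho^*=0} = d_\gamma^{-1}\bigl[\tilde P_\gamma^{g_0}(v) - A_{N,p,\gamma}v\bigr]$, which rearranges to \eqref{Neumann-V*}. The principal obstacle is precisely this bookkeeping: carefully tracking that the correction from $\rho^* \neq \rho$ at order $\rho^{1+2\gamma}$ produces exactly the additional $A_{N,p,\gamma}v$ term, which reflects the fact that $\tilde P_\gamma^{g_0}$ fails to annihilate constants — in contrast to the conformal operator $P_\gamma^{g_0}$ in Proposition \ref{prop:new-defining-function}, where the analogous extra term was $w\,Q_\gamma^{g_0}$.
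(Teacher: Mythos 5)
Your proof is correct and follows essentially the same route as the paper: substitute $\mathcal V=(\rho^*)^{N/2-\gamma}V^*$ into the modified scattering equation, invoke the divergence-form identity (which the paper obtains by factoring through the conformal covariance of $P_1^{g^+}$, while you invoke it directly from the computation behind Proposition \ref{prop:extension}), use Lemma \ref{cg-modified} to cancel the $Q_0^2$ term, and then compute the Neumann data from $V^*=\mathcal V/\mathcal V^0$ exactly as in the paper. Your explicit remark that the matching $(\rho^*)^{2\gamma}\leftrightarrow\rho^{2\gamma}$ at leading order comes from \eqref{asymptotics-rho*} supplies a detail the paper passes over implicitly when it computes the limit with $\rho^{1-2\gamma}\partial_\rho$ rather than $(\rho^*)^{1-2\gamma}\partial_{\rho^*}$.
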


\begin{proof}
The
original scattering equation \eqref{equation-scattering}-\eqref{formaw} was rewritten in terms of $\mathcal V$ (recall  \eqref{changeWV}) as
\eqref{equation-scattering-modified}-\eqref{formav} with Dirichlet condition $\mathcal V_1|_{\rho=0}=v$.
Let us rewrite this equation into the more familiar form of Proposition \ref{prop:new-defining-function}. We follow the arguments in \cite{Chang-Gonzalez}; the difference comes from some additional terms that appear when changing to $\mathcal V$.

First use the definition of the classical conformal Laplacian for $g^+$ (that has constant scalar curvature $R_{g^+}=-N(N+1)$),
 $$P^{g^+}_{1}=-\Delta_{g^+}-\tfrac{N^2-1}{4},$$
and the conformal property of this operator \eqref{conformal-property} to assure that
$$P_1^{g^+}(\mathcal V)=(\rho^*)^{\frac{N+3}{2}}P_1^{\bar{g}}((\rho^*)^{-\frac{N-1}{2}}\mathcal V).$$
Using \eqref{notation*}  we can rewrite equation \eqref{equation-scattering-modified} in terms of $V^*$ as
\begin{equation*}
\begin{split}
P_1^{\bar{g}}((\rho^*)^{\frac{1-2\gamma}{2}}V^*)
+(\rho^*)^{\frac{-3-2\gamma}{2}}
\left\{\left(\tfrac{4+\rho^2}{4\rho}\right)^{-2}\left(-2Q_0\,
\partial_t V^*-Q_0^2\,V^*\right)
+\big(\gamma^2-\tfrac{1}{4}\big)V^*\right\}=0
\end{split}
\end{equation*}
or, equivalently,
using that for $\varrho:=\rho^{\frac{1-2\gamma}{2}}$,
\begin{equation*}
\varrho\Delta_{\bar g^*}(\varrho V)=\divergence_{\bar g^*}(\varrho^2 \nabla_{\bar g^*} V)+\varrho V\Delta_{\bar g^*}(\varrho),
\end{equation*}
we have
\begin{equation*}\label{eq_modified_div}
\begin{split}
-\divergence_{\bar{g}^*}((\rho^*)^{1-2\gamma}\nabla_{\bar{g}^*}V^*)+E_{\bar g^*}(\rho^*)V^*
+(\rho^*)^{-(1+2\gamma)}\left(\tfrac{4+\rho^2}{2\rho}\right)^{-2}\left(-2Q_0\,\partial_t V^*-Q_0^2\,V^*\right)=0,
\end{split}
\end{equation*}
with $E_{\bar g^*}(\rho^*)$ defined as in \eqref{Erho}. Finally, note that the defining function $\rho^*$ was chosen as in Lemma \ref{cg-modified}. This yields \eqref{divV*}.

For the boundary conditions let us recall the asymptotics \eqref{asymptotics-rho*}. The Dirichlet condition follows directly from  \eqref{Dirichlet-condition} and the asymptotics. For the Neumann condition, we recall the definition of $\rho^*$ from \eqref{definition-rho*}, so
\begin{equation*}
V^*=(\rho^*)^{-\frac{N}{2}+\gamma}\mathcal V=\frac{\mathcal V}{\mathcal V^0}=
\frac{\mathcal V_1+\rho^{2\gamma} \mathcal V_2}{\mathcal V_1^0+\rho^{2\gamma} \mathcal V_2^0},
\end{equation*}
and thus
\begin{equation*}
-\tilde d_\gamma \lim_{\rho\to 0} \rho^{1-2\gamma}\partial_\rho V^*=d_\gamma\left.\left( \mathcal V_2 \mathcal V_1^0-\mathcal V_1 \mathcal V_2^0\right)\right|_{\rho=0}=
\tilde P_\gamma^{g_0}v-A_{N,p,\gamma}v,
\end{equation*}
where we have used \eqref{Dirichlet-V} and \eqref{Neumann-V} for $\mathcal V$,  and
\eqref{Dirichlet-V0} and \eqref{Neumann-V0} for $\mathcal V^0$. This completes the proof of the Proposition.
\end{proof}

\section{New ODE methods for non-local equations}\label{section:ODE-methods}

In this section we use the conformal properties developed in the previous section to study positive singular solutions to equation
\begin{equation}\label{problem-isolated}
(-\Delta_{\mathbb R^N})^\gamma u=A_{N,p,\gamma}u^p \mbox{ in }\R^N\setminus\{0\}.
\end{equation}
The first idea is, in the notation of Section \ref{subsection:conjugation}, to set $v=r^{\frac{2\gamma}{p-1}}u$ and rewrite this equation as
\begin{equation}\label{problem-vv}
\tilde P_\gamma^{g_0}(v)=A_{N,p,\gamma}v^p, \quad\text{in } \mathbb R\times\mathbb S^{N-1},
\end{equation}
and to consider the projection over spherical harmonics in $\mathbb S^{N-1}$,
\begin{equation*}
\tilde P_\gamma^m(v_m)=A_{N,p,\gamma}(v_m)^p, \quad\text{for } v=v(t),
\end{equation*}
While in Proposition \ref{thm:symbolV} we calculated the Fourier symbol for $\tilde P_\gamma^m$, now we will write it as an integro-differential operator for a well behaved convolution kernel. The advantage of this formulation is that immediately yields regularity for $v_m$ as in \cite{DelaTorre-delPino-Gonzalez-Wei}.

Now we look at the $m=0$ projection, which corresponds to finding radially symmetric singular solutions to \eqref{problem-isolated}. This is a non-local ODE for $u=u(r)$. In the  second part of the section we define a suitable Hamiltonian quantity in conformal coordinates in the spirit a classical second order ODE.

\subsection{The kernel}\label{subsection:kernel}

We consider first the projection $m=0$.
 Following the argument in \cite{DelaTorre-delPino-Gonzalez-Wei}, one can use polar coordinates to rewrite $\tilde P_\gamma^{0}$ as an integro-differential operator with a new convolution kernel. Indeed, polar coordinates $x=(r,\theta)$ and $\bar x=(\bar r,\bar\theta)$ in the definition of the fractional Laplacian \eqref{Laplacian-introduction} give
\begin{equation*}
(-\Delta_{\mathbb R^N})^\gamma u(x)=k_{N,\gamma}P.V. \int_0^\infty\int_{\mathbb S^{N-1}}\frac{r^{-\frac{2\gamma}{p-1}}v(r)-\bar r^{-\frac{2\gamma}{p-1}}v(\bar r)}{|r^2+\bar r^2+2r\bar r\langle \theta, \bar\theta\rangle|^{\frac{N+2\gamma}{2}}}\,\bar r^{N-1}\,d\bar r\,d\bar\theta.
\end{equation*}
After the substitutions $\bar r=r{s}$ and $v(r)=(1-{s}^{-\frac{2\gamma}{p-1}})v(r)+{s}^{-\frac{2\gamma}{p-1}}v(r)$, and recalling the definition for $\tilde P_\gamma^0$ from \eqref{tilde-P} we have
\begin{eqnarray*}
\tilde P_\gamma^0(v)=k_{N,\gamma}P.V.\int_0^\infty\int_{\mathbb S^{N-1}}\frac{{s}^{-\frac{2\gamma}{p-1}+N-1}(v(r)-v(r{s}))}{|1+{s}^2-2{s}\langle \theta, \bar\theta\rangle|^{\frac{N+2\gamma}{2}}}\,d{s}\,d\bar\theta+Cv(r),
\end{eqnarray*}
where \begin{equation*}
C=k_{N,\gamma}P.V.\int_0^\infty\int_{\mathbb S^{N-1}}\frac{(1-{s}^{-\frac{2\gamma}{p-1}}){s}^{N-1}}
{|1+{s}^2-2{s}\langle \theta, \bar\theta\rangle|^{\frac{N+2\gamma}{2}}}\,d{s}\,d\bar\theta.
\end{equation*}
Using the fact that $v\equiv 1$ is a solution, one gets that $C=A_{N,p,\gamma}$. Finally, the change of variables
$r=e^{-t}$, $\bar r=e^{-t'}$ yields
\begin{equation}\label{convolution-K}
\tilde P^{0}_\gamma(v)(t)=P.V. \int_{\mathbb R}\tilde{\mathcal K}_0(t-t')[v(t)-v(t')]\,dt'+A_{N,p,\gamma}v(t)
\end{equation}
for the convolution kernel
\begin{equation}\label{FH}
\tilde{\mathcal K}_0(t)=\int_{\mathbb S^{N-1}}\frac{k_{N,\gamma}e^{-(\frac{2\gamma}{p-1}-N)t}}
{|1+e^{2t}-2e^{t}\langle\theta, \bar\theta\rangle|^{\frac{N+2\gamma}{2}}}\,d\bar\theta
=c\,e^{-(\frac{2\gamma}{p-1}-\frac{N-2\gamma}{2})t}\int_0^\pi \frac{(\sin \phi_1)^{N-2}}{(\cosh t-\cos \phi_1)^{\frac{N+2\gamma}{2}}}\,d\phi_1,
\end{equation}
where $\phi_1$ is the angle between $\theta$ and $\bar\theta$ in spherical coordinates,  and $c$ is a positive constant that only depends on $N$ and $\gamma$. From here we have the explicit expression
\begin{equation}\label{kernel-K}
\tilde{\mathcal K}_0(t)=
c\,e^{-(\frac{2\gamma}{p-1}-\frac{N-2\gamma}{2})t}e^{-\frac{N+2\gamma}{2}|t|}\Hyperg\big( \tfrac{N+2\gamma}{2},1+\gamma;\tfrac{N}{2};e^{-2|t|}\big),
\end{equation}
for a different constant $c$.
As in \cite{DelaTorre-delPino-Gonzalez-Wei}, one can calculate its asymptotic behavior, and we refer to this paper for details:
\begin{lemma} The kernel $\tilde{\mathcal K}_0(t)$ is decaying as $t\to\pm\infty$. More precisely,
\begin{equation*}
\tilde{\mathcal K}_0(t)\sim \begin{cases}
 |t|^{-1-2\gamma} &\mbox{ as }|t|\to 0,\\
e^{-(N-\frac{2\gamma}{p-1})|t|} &\mbox{ as }t\to -\infty,\\
e^{-\frac{2p\gamma}{p-1}|t|} &\mbox{ as }t \to +\infty.
\end{cases}\end{equation*}
\end{lemma}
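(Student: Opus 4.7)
The strategy is to read off each asymptotic directly from the explicit formula
\[
\tilde{\mathcal K}_0(t)= c\,e^{-(\frac{2\gamma}{p-1}-\frac{N-2\gamma}{2})t}e^{-\frac{N+2\gamma}{2}|t|}\Hyperg\big( \tfrac{N+2\gamma}{2},1+\gamma;\tfrac{N}{2};e^{-2|t|}\big)
\]
by invoking the appropriate classical property of the hypergeometric function $\Hyperg$ in each of the three regimes. Set $a=\tfrac{N+2\gamma}{2}$, $b=1+\gamma$, $c=\tfrac{N}{2}$, so that $c-a-b=-1-2\gamma$ and $a+b-c=1+2\gamma$.

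For the two regimes $t\to\pm\infty$, the argument $e^{-2|t|}\to 0$, so by property \eqref{prop2} (i.e.\ $\Hyperg(a,b;c;0)=1$) one has $\Hyperg(a,b;c;e^{-2|t|})=1+O(e^{-2|t|})$, and the entire asymptotic is driven by the exponential prefactor. A direct computation yields, for $t\to+\infty$,
\[
-\Big(\tfrac{2\gamma}{p-1}-\tfrac{N-2\gamma}{2}\Big)t-\tfrac{N+2\gamma}{2}t=-\Big(\tfrac{2\gamma}{p-1}+2\gamma\Big)t=-\tfrac{2p\gamma}{p-1}t,
\]
and, using $|t|=-t$ for $t\to-\infty$,
\[
-\Big(\tfrac{2\gamma}{p-1}-\tfrac{N-2\gamma}{2}\Big)t+\tfrac{N+2\gamma}{2}t=\Big(N-\tfrac{2\gamma}{p-1}\Big)t = -\Big(N-\tfrac{2\gamma}{p-1}\Big)|t|.
\]
These are exactly the two exponents in the statement.

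For the singular behavior near $|t|=0$, the plan is to apply the transformation formula \eqref{prop4} connecting $z=0$ and $z=1$. Since $c-a-b=-1-2\gamma<0$ and $c-a,c-b$ are not non-positive integers, this gives
\[
\Hyperg(a,b;c;z) = \frac{\Gamma(c)\Gamma(a+b-c)}{\Gamma(a)\Gamma(b)}(1-z)^{c-a-b}\big(1+O(1-z)\big) + O(1),
\]
as $z\to 1^-$. Substituting $z=e^{-2|t|}$ and using $1-e^{-2|t|}=2|t|+O(t^2)$ gives
\[
\Hyperg\big(a,b;c;e^{-2|t|}\big) \sim \frac{\Gamma(\tfrac{N}{2})\Gamma(1+2\gamma)}{\Gamma(\tfrac{N+2\gamma}{2})\Gamma(1+\gamma)}(2|t|)^{-1-2\gamma},
\]
while the exponential prefactor tends to $1$ as $|t|\to 0$. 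Hence $\tilde{\mathcal K}_0(t)\sim C|t|^{-1-2\gamma}$, with $C>0$ an explicit positive constant, matching the $(-\Delta_{\mathbb R^N})^\gamma$-type singularity on the diagonal that one should expect after the logarithmic change of variable $r=e^{-t}$.

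The main technical care is bookkeeping: one must track the contribution from each of the two competing exponentials (the piece with signed $t$ coming from the conjugation \eqref{tilde-P} and the piece $e^{-\frac{N+2\gamma}{2}|t|}$ coming from \eqref{FH}), since they combine differently in the two tail regimes. No new ideas are needed beyond properties \eqref{prop2} and \eqref{prop4} of the hypergeometric function; the computation is a routine but delicate verification, which is why the argument is left to the companion reference \cite{DelaTorre-delPino-Gonzalez-Wei}.
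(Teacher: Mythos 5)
Your proof is correct and supplies exactly the routine verification that the paper outsources to \cite{DelaTorre-delPino-Gonzalez-Wei}: read off the two tail rates from the explicit exponential prefactor in \eqref{kernel-K} (the $\Hyperg$ factor tends to $1$ there since its argument $e^{-2|t|}\to0$), and obtain the $|t|^{-1-2\gamma}$ diagonal singularity from the Gauss connection formula \eqref{prop4} applied at $z=e^{-2|t|}\to1^-$, where the dominant term scales as $(1-z)^{c-a-b}=(1-z)^{-1-2\gamma}$. Both elementary exponent computations $-\bigl(\tfrac{2\gamma}{p-1}-\tfrac{N-2\gamma}{2}\bigr)t\mp\tfrac{N+2\gamma}{2}t$ check out, and the constant $\Gamma(\tfrac{N}{2})\Gamma(1+2\gamma)/\bigl(\Gamma(\tfrac{N+2\gamma}{2})\Gamma(1+\gamma)\bigr)$ is correct. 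This is the same approach the paper has in mind. The only caveat worth recording is that for the exceptional value $\gamma=\tfrac12$ one has $c-a-b=-2\in\mathbb Z_{\le0}$, so \eqref{prop4} must be replaced by its degenerate (logarithmic) counterpart; the leading $(1-z)^{-1-2\gamma}$ singularity and hence the lemma are unaffected, but a complete write-up should note this.
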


\begin{remark}
Note that \eqref{FH} is a special case of the Funk-Hecke formula for the $m=0$ spherical  harmonic (see \cite{Muller}, pages 29-30).
\end{remark}

We would like to obtain analogous results for any projection $m=1,2,\ldots$. The main result in this section is that one also has \eqref{convolution-K} for any projection $\tilde P_\gamma^m$. However, we have not been able to use the previous argument (in the spirit of the Funk-Hecke formula) and,  instead, we develop a new approach using conformal geometry and the special defining function $\rho^*$ from Proposition \ref{prop:new-defining-function}.\\

Set $Q_0=\frac{2\gamma}{p-1}-\frac{N-2\gamma}{2}$. In the notation of Proposition \ref{thm:symbolV} we have:

\begin{proposition}\label{prop:all-kernels} For the $m$-th projection of the operator $\tilde P_\gamma^{g_0}$ we have the expression
\begin{equation*}
\tilde P_\gamma^m(v_m)(t)=\int_{\mathbb R} \tilde{\mathcal K}_m(t-t')[v_m(t)-v_m(t')]\,dt'+A_{N,p,\gamma}v_m(t),
\end{equation*}
for a convolution kernel $\tilde{\mathcal K}_m$ on $\mathbb R$
with the asymptotic behavior
\begin{equation*}
\tilde{\mathcal K}_m(t)\sim
\begin{cases}
 |t|^{-1-2\gamma} &\mbox{ as }|t|\to 0,\\
e^{-\big(1+\gamma+\sqrt{(\frac{N-2}{2})^2+\mu_m}+Q_0\big)t} &\mbox{ as }t \to +\infty,\\
e^{\big(1+\gamma+\sqrt{(\frac{N-2}{2})^2+\mu_m}-Q_0\big)t} &\mbox{ as }t \to -\infty.
\end{cases}
\end{equation*}
\end{proposition}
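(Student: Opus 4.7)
My plan is to combine the explicit Fourier symbol from Proposition~\ref{thm:symbolV} with a direct inversion argument to simultaneously produce the convolution kernel $\tilde{\mathcal K}_m$ and read off its asymptotic behaviour. At the Fourier level the claimed identity is equivalent to
\[
\tilde\Theta_\gamma^m(\xi)=\sqrt{2\pi}\bigl[\widehat{\tilde{\mathcal K}_m}(0)-\widehat{\tilde{\mathcal K}_m}(\xi)\bigr]+A_{N,p,\gamma},
\]
interpreted in the principal-value sense, so constructing $\tilde{\mathcal K}_m$ amounts to inverting the tempered symbol $\tilde\Theta_\gamma^m(\xi)-A_{N,p,\gamma}$ modulo an additive constant that is absorbed into the pointwise multiplier of $v_m(t)$. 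The geometric input underlying this is that $\tilde P_\gamma^m-A_{N,p,\gamma}I$ is, by \eqref{Neumann-V*}, the Dirichlet-to-Neumann operator of the modified extension problem of Proposition~\ref{prop:divV*} projected onto $\langle E_m\rangle$. Translation invariance in $t$ gives a Poisson-kernel representation $V^{*,m}(\rho^*,t)=\int\mathcal P_m(\rho^*,t-t')v_m(t')\,dt'$, with $\mathcal P_m$ obtained from the hypergeometric solutions \eqref{varphi1} after the replacement $i\lambda\to\xi+iQ_0$ used in Proposition~\ref{thm:symbolV}; its weighted normal trace supplies the convolution part of the representation, and symmetrising against the singularity at $t=t'$ puts it into the difference-quotient form of the statement.

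\textbf{Asymptotic behaviour from the symbol.} Writing $\nu_m=\sqrt{(N/2-1)^2+\mu_m}=\tfrac{N}{2}-1+m$, the short-distance singularity follows from the universal growth of the symbol: Stirling gives $\tilde\Theta_\gamma^m(\xi)=|\xi|^{2\gamma}\bigl(1+O(|\xi|^{-2})\bigr)$ as $|\xi|\to\infty$, and inverse Fourier transform then produces the standard $\tilde{\mathcal K}_m(t)\sim c_\gamma|t|^{-1-2\gamma}$ as $t\to 0$. The large-$|t|$ decay is controlled by the poles of $\tilde\Theta_\gamma^m$ closest to the real axis; the two Gamma functions in the numerator of \eqref{symbolV} produce simple poles at
\[
\xi_n^{+}=i(1+\gamma+\nu_m+Q_0+2n),\qquad \xi_n^{-}=-i(1+\gamma+\nu_m-Q_0+2n),\qquad n\ge 0,
\]
in the upper and lower half-planes respectively. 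Closing the inverse Fourier contour upstairs for $t>0$ and downstairs for $t<0$ and collecting the residues at $\xi_0^{\pm}$ gives exactly the exponential rates $e^{-(1+\gamma+\nu_m+Q_0)t}$ as $t\to+\infty$ and $e^{(1+\gamma+\nu_m-Q_0)t}$ as $t\to-\infty$ announced in the statement, while the higher poles produce exponentially smaller corrections.

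\textbf{Main obstacle and consistency check.} The chief technical difficulty is justifying the inversion rigorously, because the symbol grows like $|\xi|^{2\gamma}$ and does not vanish at $\xi=0$ for $m\ge 1$: thus $\tilde{\mathcal K}_m$ is not an integrable function but a principal-value Calder\'on--Zygmund-type kernel, and the zero-frequency mass has to be distinguished carefully from the pointwise multiplier $A_{N,p,\gamma}v_m(t)$ on the right-hand side. As an independent consistency check on the kernel and its asymptotics one can compute directly via Funk--Hecke on the isolated-singularity cylinder, writing $(-\Delta_{\mathbb R^N})^\gamma(r^{-2\gamma/(p-1)}v_m(t)E_m(\theta))$ in polar coordinates and integrating $d\bar\theta$ against $E_m$ to get
\[
\tilde{\mathcal K}_m(\tau)=k_{N,\gamma}\,e^{(N-\frac{2\gamma}{p-1})\tau}\,\psi_m(e^\tau),\qquad \psi_m(s)\,E_m(\theta)=\int_{\mathbb S^{N-1}}\frac{E_m(\bar\theta)}{(1+s^2-2s\langle\theta,\bar\theta\rangle)^{(N+2\gamma)/2}}\,d\bar\theta;
\]
the Gegenbauer generating-function expansion then gives $\psi_m(s)\sim c_m s^m$ as $s\to 0$ and $\psi_m(s)\sim c'_m s^{-(N+2\gamma+m)}$ as $s\to\infty$, independently reproducing the same two exponential rates derived above from the pole analysis.
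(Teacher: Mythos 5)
Your construction follows the paper's strategy very closely: in both cases the kernel is produced via the Poisson-kernel representation of the modified extension problem, the weighted normal trace gives the difference-quotient form, and the asymptotics are read off from the growth $\tilde\Theta_\gamma^m(\xi)\sim|\xi|^{2\gamma}$ (Stirling) and from the first poles of the symbol by contour shifting. The one structural difference is the order of operations. The paper first treats the critical case $Q_0=0$, where Lemma \ref{cg-modified} makes the extension a pure divergence-form problem with no drift and the Poisson kernel is even in $t$; it obtains $\mathcal K_m(t)$ there, and only then deduces $\tilde{\mathcal K}_m(t)=e^{-Q_0 t}\mathcal K_m(t)$ by the elementary conjugation $\tilde P_\gamma^{g_0}(v)=e^{-Q_0 t}P_\gamma^{g_0}(e^{Q_0 t}v)$, pinning the free additive constant to $A_{N,p,\gamma}$ by evaluating on $v\equiv 1$. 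You propose instead to work directly on the extension \eqref{divV*} with its first-order drift $-2Q_0\,\partial_t V^*$; that is equivalent and produces the same kernel, but the paper's factorization is slightly cleaner because the $Q_0$-dependence is isolated in the trivial factor $e^{-Q_0 t}$ rather than in the Poisson-kernel computation itself. A genuine add-on in your proposal is the Funk--Hecke/Gegenbauer consistency check: the paper explicitly notes after \eqref{kernel-K} that the $m=0$ Funk--Hecke computation was not made to work for $m\ge1$ as a \emph{construction}, but your observation that the Gegenbauer generating-function expansion pins the two exponential rates for all $m$ is correct (indeed $1+\gamma+\nu_m-Q_0=N+m-\tfrac{2\gamma}{p-1}$ and $1+\gamma+\nu_m+Q_0=2\gamma+m+\tfrac{2\gamma}{p-1}$), and it gives a nice independent check not present in the paper. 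One small typo: the substitution turning $\Theta_\gamma^m$ into $\tilde\Theta_\gamma^m$ is $i\lambda\to Q_0+i\xi$ (equivalently $\xi\to\xi-iQ_0$), not $i\lambda\to\xi+iQ_0$.
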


\begin{proof} We first consider the case that $p=\frac{N+2\gamma}{N-2\gamma}$ so that $Q_0=0$, and look at the operator $P_\gamma^{g_0}(w)$ from Proposition \ref{prop:symbol}. Let $\rho^*$ be the new defining function from Proposition \ref{prop:new-defining-function} and write a new extension problem for $w$ in the corresponding metric  $\bar g^*$. In this particular case, we can use \eqref{rho*} to write
\begin{equation*}
\rho^*(\rho)=\left[\alpha^{-1}\big(\tfrac{4\rho}{4+\rho^2}\big)^{\frac{N-2\gamma}{2}}
\Hyperg\Big(\tfrac{N-2\gamma}{4},\tfrac{N-2\gamma}{4}, \tfrac{N}{2},\big(\tfrac{4-\rho^2}{4+\rho^2}\big)^2\Big)\right]^{\frac{2}{N-2\gamma}}, \quad
\alpha=\frac{\Gamma(\frac{N}{2})\Gamma(\gamma)}{\Gamma(\frac{N}{4}+\frac{\gamma}{2})^2}.
\end{equation*}
 The extension problem for $\bar g^*$ is
\begin{equation*}
\left\{\begin{array}{@{}r@{}l@{}l}
 -\divergence_{\bar{g}^*}((\rho^*)^{1-2\gamma}\nabla_{\bar{g}^*}W^*)
    &\,=0\quad
    &\text{in } (X,\bar g^*), \medskip\\
W^*|_{\rho=0}
    &\,=w\quad
    &\text{on }(M,g_0);
\end{array}\right.
\end{equation*}
notice that it does not have a zero-th order term. Moreover, for the Neumann data,
\begin{equation*}
P_\gamma^{g_0}(w)=-\tilde d_\gamma \lim_{\rho^* \to 0}(\rho^*)^{1-2\gamma} \partial_{\rho^*} (W^*)+\Lambda_{N,\gamma}w.
\end{equation*}
From the proof of Proposition \ref{prop:new-defining-function} we know that $W^*=(\rho^*)^{-(N/2-\gamma)}\mathcal W$, where $\mathcal W$ is the solution to \eqref{equation30}. Taking the projection over spherical harmonics, and arguing as in the proof of Proposition \ref{prop:symbol}, we have  that
$\widehat{\mathcal W_m}(\tau,\xi)=\widehat{w_m}(\xi)\,\varphi(\tau)$,
and $\varphi=\varphi_\xi^{(m)}$ is given in \eqref{varphi0}. Let us undo all the changes of variable, but let us keep the notation $\varphi(\rho^*)=\varphi_\xi^{(m)}(\tau)$.

Taking the inverse Fourier transform, we obtain a Poisson  formula
\begin{equation*}
W^*_m(\rho^*,t)=\int_{\mathbb R} \mathcal P_{m}(\rho^*,t-t')w_m(t')\,dt',
\end{equation*}
where
\begin{equation*}
\mathcal P_m(\rho^*,t)=\frac{1}{\sqrt{2\pi}}\int_{\mathbb R} (\rho^*)^{-(N/2-\gamma)}\varphi(\rho^*)e^{i\xi t}\,d\xi.
\end{equation*}
Note that, by construction, $\int_{\mathbb R} \mathcal P_m(\rho^*,t)\,dt=1$ for all $\rho^*$.
Now we calculate
\begin{equation*}\begin{split}
\lim_{\rho^* \to 0}(\rho^*)^{1-2\gamma} \partial_{\rho^*} (W_m^*)&=
\lim_{\rho^* \to 0}(\rho^*)^{1-2\gamma} \frac{W_m^*(\rho^*,t)-W_m^*(0,t)}{\rho^*}\\
&= \lim_{\rho^*\to 0}(\rho^*)^{1-2\gamma}\int_{\mathbb R} \frac{\mathcal P_m(\rho^*,t-t')}{\rho^*}[w_m(t')-w_m(t)]\,dt'.
\end{split}\end{equation*}
This implies that
\begin{equation}\label{equation60.1}
P_\gamma^m(w_m)(t)=\int_{\mathbb R} \mathcal K_m(t-t')[w_m(t)-w_m(t')]\,dt'+\Lambda_{N,\gamma}w_m(t),
\end{equation}
where the convolution kernel is defined as
\begin{equation*}
\mathcal K_m(t)=\tilde d_\gamma\lim_{\rho^*\to 0}(\rho^*)^{1-2\gamma}\frac{\mathcal P_m(\rho^*,t)}{\rho^*}.
\end{equation*}
If we calculate this limit, the precise expression for $\varphi$ from \eqref{varphi0} yields that
\begin{equation*}\label{kernel-m}
\mathcal K_m(t)=\frac{1}{\sqrt{2\pi}}\int_{\mathbb R}(\Theta_\gamma^m(\xi)-\Lambda_{N,\gamma})e^{i\xi t}\,d\xi,\quad \mathcal K_m(-t)=\mathcal K_m(t).
\end{equation*}
which, of course, agrees with Proposition \ref{prop:symbol}.

The asymptotic behavior for the kernel follows from the arguments in Section \ref{section:Hardy}, for instance. In particular, the limit as $t\to 0$ is an easy calculation since Stirling's formula implies that $\Theta_\gamma^m(\xi)\sim |\xi|^{2\gamma}$ as $\xi\to\infty$. For the limit as $|t|\to \infty$ we use that the first pole of the symbol happens at $\pm i(1+\gamma+\sqrt{(\frac{N-2}{2})^2+\mu_m})$ so it extends analytically to a strip that contains the real axis. We have:
\begin{equation*}
\mathcal K_m(t)\sim \begin{cases}
 |t|^{-1-2\gamma} &\mbox{ as }|t|\to 0,\\
e^{-\big(1+\gamma+\sqrt{(\frac{N-2}{2})^2+\mu_m}\big)|t|} &\mbox{ as }t \to \pm\infty.
\end{cases}
\end{equation*}

Now we move on to $\tilde P_\gamma^{g_0}(v)$, whose symbol is calculated in Proposition \ref{thm:symbolV}. Recall that under the change
$w(t)=r^{-Q_0}v(t)$, we have
\begin{equation*}
\tilde P_\gamma^{g_0}(v)=e^{-Q_0t}P^{g_0}_\gamma(e^{Q_0t}v).
\end{equation*}
From \eqref{equation60.1}, if we split
 $e^{Q_0 t}v_m(t)=(e^{Q_0t}-e^{Q_0 t'})v_m(t)+e^{Q_0 t'}v_m(t)$, then
 \begin{equation*}
\tilde P_\gamma^{g_0}(v)(t)=Cv(t)+\int_{\mathbb R}\tilde{\mathcal K}_m(t-t') (v_m(t)-v_m(t'))\,dt'
\end{equation*}
for the kernel
\begin{equation*}
\tilde{\mathcal K}_m(t)=\mathcal K_m(t) e^{-Q_0t}=\frac{1}{\sqrt{2\pi}}e^{-Q_0 t}\int_{\mathbb R}(\Theta_\gamma^m(\xi)-\Lambda_{N,\gamma})e^{i\xi t}\,d\xi,
\end{equation*}
and the constant
\begin{equation*}
C=\Lambda_{N,\gamma}+\int_{\mathbb R} \mathcal K_m(t-t')(1-e^{Q_0 (t'-t)})\,dt'.
\end{equation*}
We have not attempted a direct calculation for the constant $C$. Instead, by noting that $v\equiv 1$ is an exact solution to the equation $\tilde P_\gamma^{g_0}(v)=A_{N,p,\gamma}v^{p}$, we have that $C=A_{N,p,\gamma}$, and this completes the proof of the proposition.
\end{proof}

\subsection{The Hamiltonian along trajectories}\label{subsection:Hamiltonian}

Now we concentrate on positive radial solutions to  \eqref{problem-vv}. These satisfy
\begin{equation}\label{problem-v}
\tilde P_\gamma^{0}(v)=A_{N,p,\gamma}v^p,\quad v=v(t).
\end{equation}
We prove the existence of a Hamiltonian type quantity for  \eqref{problem-v}, decreasing along trajectories when $p$ is in the subcritical range, while this Hamiltonian remains constant in $t$ for critical $p$. Monotonicity formulas for non-local equations in the form of a Hamiltonian have been known for some time (\cite{CabreSola-Morales,Cabre-Sire1,Frank-Lenzmann-Silvestre}). Our main innovation is that our formula \eqref{Hamiltonian} gives a precise analogue of the ODE local case (see Proposition 1 in \cite{Mazzeo-Pacard}, and the notes \cite{Schoen:notas}),
 and hints what the phase portrait for $v$  should be in the non-local setting. We hope to return to this problem elsewhere.

\begin{teo}\label{thctthamiltonian}
Fix $\gamma\in(0,1)$ and $p\in(\frac{N}{N-2\gamma},\frac{N+2\gamma}{N-2\gamma})$. Let $v=v(t)$ be a solution to \eqref{problem-v} and set $V^*$ its extension from Proposition \ref{prop:divV*}.
Then, the Hamiltonian quantity
\begin{equation}\label{Hamiltonian}
\begin{split}
H^*_\gamma(t)=&\frac{A_{N,p,\gamma}}{\tilde{d}_{\gamma}}
\left(-\frac{1}{2}v^2+\frac{1}{p+1}v^{p+1}\right)
+\frac{1}{2}\int_{0}^{\rho^*_0}  {(\rho^*)}^{1-2\gamma}\left\{-e^*_1(\partial_{\rho^*}V^*)^2
+e^*_2(\partial_t V^*)^2\right\}\,d\rho^*\\
=&:H_1(t)+H_2(t)
\end{split}
\end{equation}
is decreasing with respect to $t$. In addition, if $p=\frac{N+2\gamma}{N-2\gamma}$, then $H^*_\gamma(t)$ is constant along trajectories.

Here we write, using Lemma \ref{cg-modified},  $\rho$ as a function of $\rho^*$, and
\begin{equation}\label{e}
\begin{split}
&e^*=\left( \tfrac{\rho^*}{\rho}\right)^2\left( 1+\tfrac{\rho^2}{4}\right)\left(1-\tfrac{\rho^2}{4}\right)^{N-1},\\
&e^*_1=\left( \tfrac{\rho^*}{\rho}\right)^{-2}e^*,\\
&e^*_2=\left( \tfrac{\rho^*}{\rho}\right)^{-2}\left( 1+\tfrac{\rho^2}{4}\right)^{-2}e^*.
\end{split}\end{equation}
The constants $A_{N,p,\gamma}$ and $\tilde{d}_{\gamma}$ are given in \eqref{Apn} and \eqref{tilde-d}, respectively.
\end{teo}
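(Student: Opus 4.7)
The plan is to differentiate $H^*_\gamma(t)$ directly and show that, after invoking the extension PDE \eqref{divV*} and the Neumann identity \eqref{Neumann-V*}, all bulk and boundary contributions collapse into a single term proportional to $-2Q_0\int(\partial_t V^*)^2$, which has the correct sign and vanishes precisely in the critical case. First I would differentiate the algebraic piece to obtain $\tfrac{d}{dt}H_1 = \tfrac{A_{N,p,\gamma}}{\tilde d_\gamma}(v^p-v)\partial_t v$; using $\tilde P_\gamma^0(v)=A_{N,p,\gamma}v^p$ from \eqref{problem-v} together with \eqref{Neumann-V*} rewrites this as $\tfrac{d}{dt}H_1 = -\lim_{\rho^*\to 0}(\rho^*)^{1-2\gamma}\partial_{\rho^*}V^*\cdot\partial_t V^*\big|_{\rho^*=0}$, i.e.\ a pure boundary trace at the conformal infinity.

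Second, I would differentiate $H_2$ under the integral sign, producing $\int_0^{\rho_0^*}(\rho^*)^{1-2\gamma}\{-e_1^*\partial_{\rho^*}V^*\partial_{\rho^*}\partial_t V^* + e_2^*\partial_t V^*\partial_{tt}V^*\}\,d\rho^*$, and use the PDE to eliminate the second time derivative. Writing \eqref{divV*} explicitly in the coordinates $(\rho^*,t)$ adapted to the metric $\bar g^* = (\rho^*)^2 g^+$ from \eqref{hyperbolic-metric-rho0}, on functions $V^*$ radial in $\theta$, should give the pointwise identity
\[
\partial_{\rho^*}\bigl((\rho^*)^{1-2\gamma}e_1^*\partial_{\rho^*}V^*\bigr) + (\rho^*)^{1-2\gamma}e_2^*\partial_{tt}V^* + 2Q_0\,\omega(\rho^*)\,\partial_t V^* = 0,
\]
where $e_1^*, e_2^*$ match the weights of \eqref{e} and $\omega>0$ is the weight inherited from the explicit zeroth-order coefficient $(\rho^*)^{-(1+2\gamma)}(4\rho/(4+\rho^2))^2$. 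Multiplying this PDE by $\partial_t V^*$ and substituting back collapses the integrand into a total derivative plus the bulk remainder $-2Q_0\,\omega(\partial_t V^*)^2$.

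Third, the total derivative integrates to boundary terms at $\rho^*=0$ and $\rho^*=\rho_0^*$. The asymptotics \eqref{asymptotics-rho*} give $e_1^*\to 1$ at the conformal infinity, so the lower endpoint contribution is exactly $+\lim_{\rho^*\to 0}(\rho^*)^{1-2\gamma}\partial_{\rho^*}V^*\cdot\partial_t V^*|_{\rho^*=0}$ and cancels $\tfrac{d}{dt}H_1$. At the far endpoint $\rho^*=\rho_0^*$ (corresponding to $\rho=2$), the explicit form in \eqref{e} shows that $e_1^*$ contains the factor $(1-\rho^2/4)^{N-1}$, which vanishes since $N\geq 2$, killing that contribution provided $V^*$ is smooth there. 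The net result is
\[
\frac{d}{dt}H^*_\gamma(t) = -2Q_0\int_0^{\rho_0^*}\omega(\rho^*)(\partial_t V^*)^2\,d\rho^* \leq 0,
\]
since $Q_0 = \tfrac{2\gamma}{p-1} - \tfrac{N-2\gamma}{2} \geq 0$ in the range \eqref{exponent-p}, with equality iff $p=\tfrac{N+2\gamma}{N-2\gamma}$; this simultaneously yields both the monotonicity and the critical-case conservation.

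The hard part will be Step 2: transferring $\divergence_{\bar g^*}$ into explicit $(\rho^*,t)$-coordinates through the nonlinear change of variable $\rho\mapsto\rho^*$ of Lemma \ref{cg-modified}, and verifying that the resulting coefficients match the weights $e_1^*, e_2^*$ prescribed in \eqref{e} exactly, with a manifestly positive $\omega$. This reduces to careful bookkeeping of $\sqrt{|\bar g^*|}$ and the inverse components $(\bar g^*)^{\rho^*\rho^*}, (\bar g^*)^{tt}$ through the conformal factor $(\rho^*/\rho)^2$ and the factors $(1\pm\rho^2/4)$ from the hyperbolic metric. A secondary care is needed to justify differentiation under the integral and the vanishing of the far boundary term, which should follow from the interior regularity and decay of $V^*$ delivered by the scattering construction.
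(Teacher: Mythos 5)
Your proposal is essentially the same argument as the paper's own proof. The paper multiplies the extension PDE \eqref{divV*}, written out with the explicit coefficients $e^*\rho^2(\rho^*)^{-(1+2\gamma)}=(\rho^*)^{1-2\gamma}e_1^*$ and $(\rho^*)^{1-2\gamma}e_2^*$ in front of $\partial_{\rho^*\rho^*}$ and $\partial_{tt}$ respectively, by $\partial_t V^*$, integrates in $\rho^*$ over $(0,\rho_0^*)$, integrates the first term by parts, recognizes the two bulk integrals as $-\partial_t H_2$, evaluates the boundary trace at $\rho^*\to 0$ via the Neumann identity \eqref{Neumann-V*} to recover $-\partial_t H_1$, and is left with the $2Q_0$-weighted remainder with the correct sign; this is precisely your Steps 1--3 in a slightly different order, with the same identification of the vanishing far-boundary term via the $(1-\rho^2/4)^{N-1}$ factor and the same sign analysis of $Q_0$.
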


\begin{proof}
In the notation of Proposition \ref{prop:divV*}, let $v$ be a function on $M=\mathbb R\times\mathbb S^{N-1}$ only depending on the variable $t\in\mathbb R$, and let $V^*$ be the corresponding solution to the extension problem \eqref{divV*}. Then $V^*=V^*(\rho,t)$. Use that \begin{equation*}\label{equation40}
\begin{split}
\divergence_{\bar{g}^*}((\rho^*)^{1-2\gamma}\nabla_{\bar{g}^*}V^*)
=\frac{1}{e^*}\partial_{\rho^*}\left(e^*(\rho^*)^{-(1+2\gamma)}\rho^2\partial_{\rho^*}V^*\right)
+(\rho^*)^{1-2\gamma}\left(\tfrac{\rho^*}{\rho}\right)^{-2}\left(1+\tfrac{\rho^2}{4}\right)^{-2}\partial_{tt}V^*,
\end{split}
\end{equation*}
where
$e^*=|\sqrt{\bar{g}^*}|$ is given in \eqref{e}, so
equation \eqref{divV*} reads
\begin{equation*}\label{eqdiv*final}
\begin{split}
-&\partial_{\rho^*}\left(e^*\rho^2(\rho^*)^{-(1+2\gamma)}\partial_{\rho^*}V^*\right)
-(\rho^*)^{1-2\gamma}e^*\left(\tfrac{\rho^*}{\rho}\right)^{-2}
\left(1+\tfrac{\rho^2}{4}\right)^{-2}\partial_{tt}V^* \\
&\qquad-(\rho^*)^{-(1+2\gamma)}e^*\left(\tfrac{4+\rho^{2}}{4\rho}\right)^{-2}
2\left(-\tfrac{N-2\gamma}{2}+\tfrac{2\gamma}{p-1}\right)\partial_t V^*=0.
\end{split}
\end{equation*}
We follow the same steps as in \cite{DelaTorre-Gonzalez}:  multiply this equation by $\partial_t{V^*}$ and integrate with respect to $\rho^*\in(0,\rho^*_0)$, where $\rho^*_0$ is given in \eqref{rho*0}. Using integration by parts in the first term, the regularity of the function $V^*$ at $\rho^*_0$, and the fact that $\tfrac{1}{2}\partial_t\left[(\partial_t V^*)^2\right]=\partial_{tt}V^*\partial_t V^*$ and $\tfrac{1}{2}\partial_t\left[(\partial_{\rho^*} V^*)^2\right]=\partial_{t\rho^*}(V^*)\partial_{\rho^*}V^*$, it holds
\begin{equation*}\label{ham_inter}\begin{split}
&\lim_{\rho^*\rightarrow 0}\left(\partial_t(V^*)e^*(\rho^*)^{-(1+2\gamma)}\rho^2\partial_{\rho^*}V^*\right)\\
&+\int_{0}^{\rho^*_0}\left[\tfrac{1}{2}e^*(\rho^*)^{-(1+2\gamma)}\rho^2\partial_t\left[(\partial_{\rho^*} V^*)^2\right]\right]\,d\rho^*
-\int_{0}^{\rho^*_0}\left[\tfrac{1}{2}(\rho^*)^{1-2\gamma}e^*
\left(\tfrac{\rho}{\rho^*}\right)^{2}\left(1+\tfrac{\rho^2}{4}\right)^{-2}\partial_{t}\left[(\partial_t V^*)^2\right]\right]\,d\rho^* \\
&-\int_{0}^{\rho^*_0}\left[(\rho^*)^{-(1+2\gamma)}e^*
\left(\tfrac{4\rho}{4+\rho^{2}}\right)^{2}2\left(-\tfrac{N-2\gamma}{2}+\tfrac{2\gamma}{p-1}\right)
\left[\partial_t V^*\right]^2\right]\,d\rho^*\\
&=0.
\end{split}
\end{equation*}
But, for the limit as $\rho^*\to 0$, we use \eqref{Neumann-V*} and  equation \eqref{problem-v},
\begin{equation*}\label{limit}\begin{split}
\tilde d_\gamma\lim_{\rho^*\rightarrow 0}\left((\rho^*)^{-(1+2\gamma)}\rho^2e^*\partial_t V^*\partial_{\rho^*}V^*\right)
&=\left[-\tilde P_\gamma^{g_0}v+A_{N,p,\gamma}v\right]\partial_t v
=A_{N,p,\gamma}(v-v^p)\partial_t v\\
&=A_{N,p,\gamma}\partial_t \left(\tfrac{1}{2}v^2-\tfrac{1}{p+1}v^{p+1}\right).
\end{split}\end{equation*}
Then, for $H(t)$ defined as in \eqref{Hamiltonian}, we have
\begin{equation*}\label{ham_der}
\partial_t\left[ H(t)\right]=-2\int_{0}^{\rho^*_0}\left[(\rho^*)^{1-2\gamma}e^*
\left(\tfrac{\rho}{\rho^*}\right)^{-2}\left(1+\tfrac{\rho^2}{4}\right)^{-2}
\left(-\tfrac{N-2\gamma}{2}+\tfrac{2\gamma}{p-1}\right)\left[\partial_t V^*\right]^2\right]\,d\rho^*\leq 0,
\end{equation*}
which proves the result.
\end{proof}

\section{The approximate solution}

\subsection{Function spaces}\label{section:function-spaces}

In this section we define the weighted H\"{o}lder space $\mathcal C_{\mu,\nu}^{2,\alpha}(\R^n \setminus \Sigma)$ tailored for this problem, following the notations and definitions in Section 3 of \cite{mp}. Intuitively, these spaces consist of functions which are products of powers of the distance to $\Sigma$ with functions whose H\"{o}lder norms are invariant under homothetic transformations centered at an arbitrary point on $\Sigma$.

Despite the non-local setting, the local Fermi coordinates are still in use around each component $\Sigma_i$ of $\Sigma$.
When $\Sigma_i$ is a point, these are simply polar coordinates around it. In case $\Sigma_i$ is a higher dimensional sub-manifold, let $\mathcal{T}_\sigma^i$ be the tubular neighbourhood of radius $\sigma$ around $\Sigma_i$. It is well known that $\mathcal{T}_\sigma^i$ is a disk bundle over $\Sigma_i$; more precisely, it is diffeomorphic to the bundle of radius $\sigma$ in the normal bundle $\mathcal N\Sigma_i$.  The Fermi coordinates will be constructed as coordinates in the normal bundle transferred to $\mathcal{T}_\sigma^i$ via such diffeomorphism. Let $r$ be the distance to $\Sigma_i$, which is well defined and smooth away from $\Sigma_i$ for small $\sigma$. Let also $y$ be a local coordinate system on $\Sigma_i$ and $\theta$ the angular variable on the sphere in each normal space $\mathcal N_y\Sigma_i$. We denote by $B^{\mathcal N}_{\sigma}$ the ball of radius $\sigma$ in $\mathcal N_y\Sigma_i$. Finally we let $x$ denote the rectangular coordinate in these normal spaces, so that $r=|x|$, $\theta=\frac{x}{|x|}$.

Let $u$ be a function in this tubular neighbourhood and define
\begin{equation*}
\|u\|_{0,\alpha,0}^{\mathcal{T}_\sigma^i}=\sup_{z\in \mathcal{T}_\sigma^i}|u|+\sup_{z,\tilde{z}\in \mathcal{T}_\sigma^i}\frac{(r+\tilde{r})^\alpha|u(z)-u(\tilde{z})|}
{|r-\tilde{r}|^\alpha+|y-\tilde{y}|^\alpha+(r+\tilde{r})^\alpha|\theta-\tilde{\theta}|^\alpha},
\end{equation*}
where $z,\tilde{z}$ are two points in $\mathcal{T}_\sigma^i$ and $(r,\theta,y), (\tilde{r},\tilde{\theta}, \tilde{y})$ are their Fermi coordinates.

We fix a $R>0$ be large enough such that $\Sigma \subset B_{\frac{R}{2}}(0)$ in $\R^n$. Hereafter the letter $z$ is reserved to denote a point in $\R^n \setminus \Sigma$.  For notational convenience let us also fix a positive function $\varrho\in\mathcal C^\infty(\R^n \setminus \Sigma)$ that is equal to the polar distance $r$ in each $\mathcal{T}_\sigma^i$, and to $|z|$ in $\R^n \setminus B_R(0)$.

\begin{definition}
The space $\mathcal C_0^{l,\alpha}(\R^n \setminus \Sigma)$ is defined to be the set of all $u\in \mathcal C^{l,\alpha}(\R^n \setminus \Sigma)$ for which the norm
\begin{equation*}
\|u\|_{l,\alpha,0}=\|u\|_{\mathcal C^{l,\alpha}(\Sigma_{\sigma/2}^c)}+\sum_{i=1}^K\sum_{j=0}^l\|\nabla^j u\|_{\mathcal C^{0,\alpha}(\mathcal{T}_\sigma^i)}
\end{equation*}
is finite. Here $\Sigma_{\sigma/2}^c=\R^n \setminus \bigcup_{i=1}^K \mathcal{T}_{\sigma/2}^i$.
\end{definition}

Let us define a weighted H\"older space for functions having different behaviors near $\Sigma$ and at $\infty$. With $R>0$ fixed, for any $\mu,\nu\in \R$ we set
\begin{equation*}\begin{split}
\mathcal C_\mu^{l,\alpha}(B_R \setminus \Sigma)&=\{u=\varrho^\mu \bar{u}: \bar{u}\in \mathcal C_0^{l,\alpha}(B_R \setminus \Sigma)\},\\
\mathcal C_\nu^{l,\alpha}(\R^n \setminus B_R)&=\{u=\varrho^\nu \bar{u}: \bar{u}\in \mathcal C_0^{l,\alpha}(\R^n \setminus B_R )\},
\end{split}
\end{equation*}
and thus we can define:

\begin{definition}
The space $\mathcal C^{l,\alpha}_{\mu,\nu}(\R^n \setminus \Sigma)$ consists of all functions $u$ for which the norm
\begin{equation*}
\|u\|_{\mathcal C^{l,\alpha}_{\mu,\nu}}=\sup_{B_R\setminus \Sigma}\|\varrho^{-\mu}u\|_{l,\alpha,0}+\sup_{\R^n\setminus B_R}\|\varrho^{-\nu}u\|_{l,\alpha,0}
\end{equation*}
is finite. The spaces $\mathcal C^{l,\alpha}_{\mu,\nu}(\R^N \setminus\{0\})$ and $\mathcal C^{l,\alpha}_{\mu,\nu}(\R^n \setminus \R^k)$ are defined similarly, in terms of the (global) Fermi coordinates $(r,\theta)$ or $(r,\theta,y)$ and the weights $r^\mu$, $r^\nu$.
\end{definition}

\begin{remark}\label{remark:edge}
From the definition of $\mathcal C^{l,\alpha}_{\mu,\nu}$, functions in this space are allowed to blow up like $\varrho^\mu$ near $\Sigma_i$ and decay like $\varrho^\nu$ at $\infty$.  Moreover, near $\Sigma_i$, their derivatives with respect to up to $l$-fold products of the vector fields $r\partial_r, r\partial_y, \partial_\theta$ blow up no faster than $\varrho^\mu$ while at $\infty$, their derivatives with respect to up to $l$-fold products of the vector fields $|z|\partial_i$ decay at least like $\varrho^{\nu}$.
\end{remark}

\begin{remark}
As it is customary in the analysis of fractional order operators, we write many times, with some abuse of notation, $\mathcal C^{2\gamma+\alpha}_{\mu,\nu}$.
\end{remark}

\subsection{Approximate solution with isolated singularities}

Let $\Sigma =\{q_1, \cdots, q_K\}$ be a prescribed set of singular points. In the next paragraphs we  construct an approximate solution to
\begin{equation*}
(-\Delta_{\mathbb R^N})^\gamma u=A_{N,p,\gamma}u^p \mbox{ in }\R^N\setminus\Sigma,
\end{equation*}
and check that it is indeed a good approximation in certain weighted spaces.

Let $u_1$ be the fast decaying solution to \eqref{problem-isolated} that we constructed in Proposition \ref{existence}.
Now consider the following rescaling
\begin{equation}\label{rescaling}
u_\ve(x)=\ve^{-\frac{2\gamma}{p-1}}u_1\left(\frac{x}{\ve}\right)\quad\text{in}\quad\mathbb R^N\setminus\{0\}.
\end{equation}

Choose  $\chi_d$ to be a smooth cut-off function such that $\chi_d=1$ if $|x|\leq d$ and $\chi_d(x)=0$ for $|x|\geq 2d$, where $d>0$ is a positive constant such that $d<d_0=\inf_{i\neq j}\{\dist(q_i,q_j)/2\}$. Let ${\bar\ve}=\{\ve_1,\cdots,\ve_K\}$ be a $K$-tuple of dilation parameters satisfying $c\ve\leq\ve_i\leq\ve<1$ for $i=1,\ldots,K$. Now define our approximate solution by
\begin{equation*}
\bar{u}_\ve(x)=\sum_{i=1}^K\chi_d(x-q_i)u_{\ve_i}(x-q_i).
\end{equation*}
Set also
\begin{equation}\label{f-epsilon}
f_\ve:=(-\Delta_x)^\gamma \bar{u}_\ve-A_{N,p,\gamma}\bar{u}_\ve^p.
\end{equation}

For the rest of the section, we consider the spaces $\mathcal C^{0,\alpha}_{\tilde\mu,\tilde\nu}$, where
\begin{equation}\label{choice-weights-nonlinear}
-\frac{2\gamma}{p-1}<\tilde \mu<2\gamma\quad \text{and}\quad -(n-2\gamma)<\tilde\nu.
\end{equation}

\begin{lemma}
There exists a constant $C$, depending on $d, \tilde\mu,\tilde\nu$ only, such that
\begin{equation}\label{error1}
\|f_\ve\|_{\mathcal C^{0,\alpha}_{\tilde\mu-2\gamma,\tilde\nu-2\gamma}}\leq C\ve^{N-\frac{2p\gamma}{p-1}}.
\end{equation}

\end{lemma}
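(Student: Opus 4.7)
The strategy is to introduce the shorthand $\kappa := N-\tfrac{2p\gamma}{p-1}=N-2\gamma-\tfrac{2\gamma}{p-1}$, which is positive in the range \eqref{exponent-p}, so that the target bound reads $\|f_\ve\|\le C\ve^{\kappa}$; and to decompose $\R^N$ into three zones, with $d$ chosen small enough that $B_{2d}(q_i)$ are pairwise disjoint: the interior (I) $\bigcup_i B_{d/2}(q_i)$, on which $\chi_d(\cdot-q_i)\equiv 1$ at $x$; the transition annuli (II) $\{d/2\le|x-q_i|\le 2d\}$; and the remote zone (III), where every cutoff vanishes at $x$. In each zone I will derive a pointwise bound $|f_\ve(x)|\le C\ve^{\kappa}\omega(x)$ with $\omega$ controlled by the weight of $\mathcal C^{0,\alpha}_{\tilde\mu-2\gamma,\tilde\nu-2\gamma}$, and then upgrade it to the H\"older seminorm by differentiating the nonlocal integral representations under the integral sign.

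On zone (I), disjointness of supports gives $\bar u_\ve(x)=u_{\ve_i}(x-q_i)$; decomposing $\chi_d u_{\ve_i}=u_{\ve_i}-(1-\chi_d)u_{\ve_i}$ and using the exact identity $(-\Delta_{\R^N})^\gamma u_{\ve_i}=A_{N,p,\gamma}u_{\ve_i}^p$ from \eqref{Lane-Emden} cancels the local term and leaves
\begin{equation*}
f_\ve(x)=-(-\Delta_{\R^N})^\gamma\bigl[(1-\chi_d)u_{\ve_i}(\cdot-q_i)\bigr](x)+\sum_{j\neq i}(-\Delta_{\R^N})^\gamma\bigl[\chi_d(\cdot-q_j)u_{\ve_j}(\cdot-q_j)\bigr](x).
\end{equation*}
The first summand reduces to $-c_{N,\gamma}\int_{|y-q_i|\ge d}(1-\chi_d(y-q_i))u_{\ve_i}(y-q_i)|x-y|^{-(N+2\gamma)}\,dy$ (the evaluation at $x$ vanishes), and is controlled by $C\ve^{\kappa}$ using the fast decay $u_{\ve_i}(z)\le C\ve^{\kappa}|z|^{-(N-2\gamma)}$ for $|z|\ge d$ from Proposition \ref{existence}. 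The second summand is bounded by $C\sum_{j\ne i}\dist(x,q_j)^{-(N+2\gamma)}\int_{|z|\le 2d}u_{\ve_j}(z)\,dz$, and splitting the integral at $|z|=\ve_j$ using the two-scale bound $u_{\ve_j}(z)\le C\min\bigl\{|z|^{-\tfrac{2\gamma}{p-1}},\,\ve^{\kappa}|z|^{-(N-2\gamma)}\bigr\}$ gives $O(\ve^{\kappa})$, since $\kappa<N-\tfrac{2\gamma}{p-1}$. Hence $|f_\ve|\le C\ve^{\kappa}$ on zone (I); this is compatible with the weight $|x-q_i|^{\tilde\mu-2\gamma}$ because $\tilde\mu<2\gamma$ keeps $|x-q_i|^{2\gamma-\tilde\mu}$ bounded on $|x-q_i|\le d/2$.

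On zone (II), $\bar u_\ve=\chi_d u_{\ve_i}(\cdot-q_i)$ and an analogous splitting gives
\begin{equation*}
f_\ve=A_{N,p,\gamma}(1-\chi_d^p)u_{\ve_i}^p(\cdot-q_i)-(-\Delta_{\R^N})^\gamma[(1-\chi_d)u_{\ve_i}(\cdot-q_i)]+\sum_{j\ne i}(-\Delta_{\R^N})^\gamma[\chi_d u_{\ve_j}(\cdot-q_j)];
\end{equation*}
on this annulus $|x-q_i|\sim d$ forces $u_{\ve_i}(x-q_i)\le C\ve^{\kappa}$, so the first summand is $O(\ve^{p\kappa})$ (far better than $\ve^{\kappa}$) and the remaining two pieces are treated exactly as in zone (I). On zone (III), $\bar u_\ve(x)=0$ so that $f_\ve(x)=-c_{N,\gamma}\sum_j\int_{B_{2d}(q_j)}\chi_d(y-q_j)u_{\ve_j}(y-q_j)|x-y|^{-(N+2\gamma)}\,dy$, which is bounded by $C\ve^{\kappa}\sum_j\dist(x,q_j)^{-(N+2\gamma)}$; for $|x|$ large this reduces to $C\ve^{\kappa}|x|^{-(N+2\gamma)}$, and the admissible range $\tilde\nu>-(N-2\gamma)>-N$ forces it to be dominated by $\ve^{\kappa}|x|^{\tilde\nu-2\gamma}$ at infinity.

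For the H\"older seminorm, in all three zones $f_\ve$ is expressed as a nonlocal integral of smooth functions that either vanish at $x$ or are separated from $x$ by a positive distance, so one can differentiate under the integral in the scaled Fermi vector fields $r\p_r$, $r\p_y$, $\p_\theta$ near $\Sigma$ (respectively $|z|\p_i$ near infinity), and the smoothness of $|x-y|^{-(N+2\gamma)}$ away from the diagonal yields H\"older quotients of the same order $\ve^{\kappa}$. The main technical obstacle, in my view, will be the transition annulus (II): a naive H\"older quotient of $f_\ve$ there is not obviously $O(\ve^{\kappa})$, and one must exploit simultaneously that $\chi_d$ varies on the fixed scale $d$ (independent of $\ve$) and that $u_{\ve_i}$ has already shrunk to the target amplitude $\ve^{\kappa}$ on that annulus, so that variations of $\chi_d$ cost no power of $\ve$ while the amplitude it multiplies is already at the right size. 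Gathering the three pointwise and H\"older bounds with their respective weights yields \eqref{error1}.
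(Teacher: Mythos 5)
Your proposal is correct and follows essentially the same route as the paper's proof: both write $f_\ve$ using the Lane--Emden identity \eqref{Lane-Emden} to cancel the local term, leaving a tail/commutator piece and a nonlinear cutoff defect, then estimate over the same three regions $B_{d/2}(q_i)$, the annuli $\{d/2\le|x-q_i|\le 2d\}$, and the complement; your splitting $\chi_d u_{\ve_i}=u_{\ve_i}-(1-\chi_d)u_{\ve_i}$ is algebraically equivalent to the paper's commutator $[(-\Delta)^\gamma,\chi]u_{\ve_i}$ (they differ by the term $(1-\chi_d)Au_{\ve_i}^p$ which is absorbed into the defect $A(1-\chi_d^p)u_{\ve_i}^p$ versus $A(\chi_d-\chi_d^p)u_{\ve_i}^p$). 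The paper also leaves the H\"older seminorm as an "analogous" step, so your treatment of that point is no less complete than the original.
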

\begin{proof}
Using the definition of $(-\Delta)^\gamma$ in $\mathbb R^N$, one has
\begin{equation*}\begin{split}
(-\Delta_x)^\gamma (\chi_iu_{\ve_i})(x-q_i)&=k_{N,\gamma}P.V. \int_{\R^N}\frac{\chi_i(x-q_i)u_{\ve_i}(x-q_i)-\chi_i(\tilde{x}-q_i)u_{\ve_i}(\tilde{x}-q_i)}
{|x-\tilde{x}|^{N+2\gamma}}\,d\tilde{x}\\
&=\chi_i(x-q_i)(-\Delta_x)^\gamma u_{\ve_i}(x-q_i)\\
&\quad+k_{N,\gamma}
P.V.\int_{\R^N}\frac{(\chi_i(x-q_i)-\chi_i(\tilde{x}-q_i))u_{\ve_i}(\tilde{x}-q_i)}
{|x-\tilde{x}|^{N+2\gamma}}\,d\tilde{x}
\end{split}
\end{equation*}
for each $i=1,\ldots,K$. Using the equation \eqref{Lane-Emden} satisfied by $u_{\ve_i}$ we have
\begin{equation*}
\begin{split}
f_\ve(x)&=
A_{N,p,\gamma}\sum_{i=1}^K(\chi_i-\chi_i^p)u_{\ve_i}^p(x-q_i)+k_{N,\gamma}\sum_{i=1}^K
P.V.\int_{\R^N}\frac{(\chi_i(x-q_i)-\chi_i(\tilde{x}-q_i))u_{\ve_i}(\tilde{x}-q_i)}
{|x-\tilde{x}|^{N+2\gamma}}\,d\tilde{x}\\
&=:I_1+k_{N,\gamma}I_2.
\end{split}
\end{equation*}
Let us look first at the term $I_1$. It vanishes unless  $|x-q_{i}|\in [d, 2d]$ for some $i=1,\ldots,K$. But then, one knows from the asymptotic behaviour of $u_{\ve_i}$ that
$$u_{\ve_i}(x)=O\Big(\ve_i^{-\frac{2\gamma}{p-1}}\Big|\frac{x-q_i}{\ve_i}\Big|^{-(N-2\gamma)}\Big)
=O(\ve^{N-2\gamma-\frac{2\gamma}{p-1}})|x-q_i|^{-(N-2\gamma)},$$
so one has
\begin{equation*}
I_1(x)\leq C\ve^{N-2\gamma-\frac{2\gamma}{p-1}} \quad\mbox{if }|x-q_{i}|\in [d, 2d].
\end{equation*}

For the second term $I_2=I_2(x)$, we fix $i=1,\ldots,K$, and divide it into three cases: $x\in B_{d/2}(q_{i})$, $x\in B_{2d}(q_{i})\setminus B_{d/2}(q_{i})$ and $x\in \R^N \setminus B_{2d}(q_{i})$. In the first case, $x\in B_{d/2}(q_{i})$, without loss of generality, assume that $q_{i}=0$, so
\begin{equation*}
\begin{split}
I_2(x)&=P.V.\int_{\R^N}\frac{(\chi_i(x)-\chi_i(\tilde{x}))u_{\ve_i}(\tilde{x})}{|x-\tilde{x}|^{N+2\gamma}}\,d\tilde{x}\\
&=P.V\Big[\int_{B_d(0)}\cdots +\int_{B_{2d}\setminus B_d(0)}\cdots +\int_{\R^N \setminus B_{2d}(0)}\cdots\Big]\\
&\lesssim \int_{\{d<|\tilde{x}|<2d\}}\frac{u_{\ve_i}(\tilde x)}{|x-\tilde{x}|^{N+2\gamma-2}}\,d\tilde x
+\int_{\{|\tilde{x}|>2d\}}\frac{u_{\ve_i}(\tilde{x})}{|x-\tilde{x}|^{N+2\gamma}}\,d\tilde x.
\end{split}
\end{equation*}
Hereafter ``$\cdots$'' carries its obvious meaning, replacing the previously written integrand. Using that $|x-\tilde{x}|\geq \frac{1}{2}|\tilde{x}|$ for $|\tilde{x}|>2d$ when $|x|<\frac{d}{2}$ we easily estimate
\begin{equation*}
I_2(x)\leq O(\ve^{N-\frac{2p\gamma}{p-1}})+\int_{2d}^\infty \frac{u_{\ve_i}(r)}{r^{1+2\gamma}}\,dr=O(\ve^{N-\frac{2p\gamma}{p-1}}).
\end{equation*}

Next, if $x\in B_{2d}(q_i)\setminus B_{d/2}(q_i)$,
\begin{equation*}
\begin{split}
I_2(x)&=P.V.\Big[\int_{B_{d/4}(q_i)}\cdots +\int_{B_{2d}(q_i)\setminus B_{d/4}(q_i)}\cdots +\int_{\R^N \setminus B_{2d}(q_i)}\cdots\Big]\\
&=O\Big(\int_0^{\frac{d}{4\ve}}\ve^{N-\frac{2p\gamma}{p-1}}u_1(\tilde{x})\,d\tilde{x}\Big)
+O\Big(\int_{B_{2d}(q_i)\setminus B_{d/4}(q_i)}\frac{\ve^{N-\frac{2p\gamma}{p-1}}}{|x-\tilde{x}|^{N+2\gamma-2}}\,d\tilde{x}\Big)\\
&\quad+O\Big(\int_{\R^N\setminus B_{2d}(q_i)}\frac{u_{\ve_i}(\tilde{x})}{|\tilde{x}|^{N+2\gamma}}\,d\tilde{x}\Big)\\
&=O(\ve^{N-\frac{2p\gamma}{p-1}}).
\end{split}
\end{equation*}

Finally, if $x\in \R^N\setminus B_{2d}(q_i)$,
\begin{equation*}
\begin{split}
I_2(x)&=P.V.\Big[\int_{B_d(q_i)}\cdots +\int_{B_{2d}\setminus B_d(q_i)}\cdots +\int_{\R^N \setminus B_{2d}(q_i)}\cdots\Big]\\
&=O(\ve^{N-\frac{2p\gamma}{p-1}}|x|^{-(N+2\gamma)}).
\end{split}
\end{equation*}
Combining all the estimates above we get a $\mathcal C^{0}_{\tilde\mu-2\gamma, \tilde\nu-2\gamma}$ bound for a pair of weights satisfying \eqref{choice-weights-nonlinear}. But passing to $\mathcal C^{0,\alpha}_{\tilde\mu-2\gamma, \tilde\nu-2\gamma}$ is analogous and thus we obtain \eqref{error1}.
\end{proof}

\subsection{Approximate solution in general case}

First note that our ODE argument for $u_1$ also yields a fast decaying positive solution to the general problem
\begin{equation}\label{equation0}
(-\Delta_{\mathbb R^n})^\gamma u=A_{N,p,\gamma}u^p \mbox{ in }\R^n\setminus \mathbb R^k.
\end{equation}
that is singular along $\mathbb R^k$. Recall that we have set $N=n-k$.

Indeed, define $\tilde u_1(x,y):=u_1(x)$, where $z=(x,y)\in \R^{n-k}\times \R^k$, and use the Lemma below. For this reason, many times we will use indistinctly the notations $u_1(z)$ and $u_1(x)$. Moreover, after a straightforward rescaling, the constant $A_{N,p,\gamma}$ may be taken to be one.

\begin{lemma} \label{lemma5.6}
If $u$ is defined on $\mathbb R^N$ and we set $\tilde u(z):=u(x)$ in $\mathbb R^n$ in the notation above, then
$$(-\Delta_{\mathbb R^n})^\gamma \tilde u=(-\Delta_{\mathbb R^N})^\gamma u.$$
\end{lemma}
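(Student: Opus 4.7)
My plan is to prove this identity via the Fourier-analytic characterization of the fractional Laplacian, since the singular-integral definition \eqref{Laplacian-introduction} has constants $k_{n,\gamma}\neq k_{N,\gamma}$ in different dimensions, and verifying the identity by direct substitution therefore requires a nontrivial constant check. Recall that $(-\Delta_{\mathbb{R}^n})^\gamma$ is the Fourier multiplier with symbol $|\xi|^{2\gamma}$, $\xi\in\mathbb{R}^n$. The function $\tilde u(z)=u(x)$ is tempered (assuming $u$ is), and its Fourier transform in the $z$-variable factorizes distributionally as $\widehat{\tilde u}(\xi_1,\xi_2)=\hat u(\xi_1)\otimes\delta_0(\xi_2)$, where $\xi=(\xi_1,\xi_2)\in\mathbb{R}^N\times\mathbb{R}^k$. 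Multiplying by $|\xi|^{2\gamma}=(|\xi_1|^2+|\xi_2|^2)^\gamma$ and pairing against $\delta_0$ in the $\xi_2$-variable yields precisely $|\xi_1|^{2\gamma}\hat u(\xi_1)\otimes\delta_0(\xi_2)$, which is the Fourier transform (as a tempered distribution on $\mathbb{R}^n$) of $z\mapsto \bigl((-\Delta_{\mathbb{R}^N})^\gamma u\bigr)(x)$. Inverting the Fourier transform gives the claimed equality.

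For completeness (and to match the pointwise integral formulation used throughout the paper), I would also record the direct verification from \eqref{Laplacian-introduction}. Starting from
\[
(-\Delta_{\mathbb{R}^n})^\gamma\tilde u(z)=k_{n,\gamma}\,P.V.\int_{\mathbb{R}^N}\!\!\int_{\mathbb{R}^k}\frac{u(x)-u(\tilde x)}{(|x-\tilde x|^2+|y-\tilde y|^2)^{(n+2\gamma)/2}}\,d\tilde y\,d\tilde x,
\]
I would perform the substitution $\tilde y = |x-\tilde x|\,\eta$ in the inner integral to collapse the $\tilde y$-dependence into a factor $|x-\tilde x|^{-(N+2\gamma)}$ times the constant
\[
C_k:=\int_{\mathbb{R}^k}\frac{d\eta}{(1+|\eta|^2)^{(n+2\gamma)/2}}.
\]
The only potentially delicate step is checking that $k_{n,\gamma}\,C_k=k_{N,\gamma}$. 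Evaluating $C_k$ in polar coordinates produces a Beta integral giving $C_k=\pi^{k/2}\,\Gamma(\tfrac{N+2\gamma}{2})/\Gamma(\tfrac{n+2\gamma}{2})$; substituting the explicit form of $k_{n,\gamma}$ from \eqref{Laplacian-introduction} and using $\Gamma(\tfrac{n}{2}+\gamma)=\Gamma(\tfrac{n+2\gamma}{2})$ and $\Gamma(\tfrac{N}{2}+\gamma)=\Gamma(\tfrac{N+2\gamma}{2})$ reduces the claim to the identity of constants, which cancels cleanly.

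The principal value issue is mild: because $u-u(\tilde{\,\cdot\,})$ is even to leading order near the singularity only in the $\tilde x$-variable (there is no cancellation in $\tilde y$), the $\tilde y$-integral is in fact absolutely convergent once we restrict to $|x-\tilde x|\geq\epsilon$, and the $P.V.$ is taken only in $\tilde x$; so Fubini applies on $\{|x-\tilde x|\geq\epsilon\}$, and letting $\epsilon\to0$ after performing the $\tilde y$-integration gives the $N$-dimensional principal-value integral. This is the only point that requires a line of justification, but it is standard. The Fourier argument bypasses even this concern, so I would present it first and mention the singular-integral verification as an alternative that moreover makes transparent why the dimensional constants conspire correctly.
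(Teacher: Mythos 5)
Your proposal is correct, and it offers two routes: the Fourier-multiplier argument you lead with is genuinely different from the paper's, while the singular-integral verification you sketch as an ``alternative'' is in fact precisely the paper's proof. The paper computes exactly as you do in your second paragraph -- separate the $\tilde y$-integral, scale out $|x-\tilde x|$, and reduce to the constant identity $k_{n,\gamma}\int_{\mathbb R^k}(1+|\tilde y|^2)^{-(n+2\gamma)/2}\,d\tilde y=k_{N,\gamma}$ -- except that the paper simply cites Lemma~A.1 and Corollary~A.1 of \cite{cw} for this identity, whereas you verify it directly via the Beta integral $C_k=\pi^{k/2}\,\Gamma(\tfrac{N+2\gamma}{2})/\Gamma(\tfrac{n+2\gamma}{2})$; your arithmetic there is right and is a small improvement in self-containedness. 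Your remark about the principal value is also correctly targeted: one does need to observe that truncating in $|x-\tilde x|$ rather than in $|z-\tilde z|$ gives the same limit (by the evenness of the $\tilde x$-domain under reflection through $x$), and that Fubini applies on each truncation; this is the implicit step in the paper's two-line computation.

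On the Fourier route: it does buy you conceptual clarity -- the multiplier $(|\xi_1|^2+|\xi_2|^2)^\gamma$ restricted to $\{\xi_2=0\}$ is visibly $|\xi_1|^{2\gamma}$, so the lemma is ``obvious'' without any constant bookkeeping. But the step where you multiply the distribution $\hat u(\xi_1)\otimes\delta_0(\xi_2)$ by $|\xi|^{2\gamma}$ and ``pair against $\delta_0$ in the $\xi_2$-variable'' is formal: $|\xi|^{2\gamma}$ is continuous but not $\mathcal C^\infty$ at the origin, so it is not a legal multiplier of arbitrary tempered distributions, and a product with a distribution supported on the lower-dimensional set $\{\xi_2=0\}$ is exactly the situation where one cannot invoke the usual smooth-times-distribution rule. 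For the functions $u$ actually occurring in the paper (radial, with explicit polynomial bounds near $0$ and decay at $\infty$), one could justify the computation by regularizing $|\xi|^{2\gamma}$ away from the origin or by testing against Schwartz functions vanishing to high order there, but that justification is roughly as long as the singular-integral proof. So the Fourier argument is a good conceptual companion rather than the shortest rigorous path; if you keep it, state explicitly the regularity class of $u$ under which the distributional manipulations are legitimate.
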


\begin{proof}
We compute, first evaluating the $y$-integral,
\begin{equation*}\begin{split}
(-\Delta_{\mathbb R^n})^\gamma \tilde u(z)&=k_{n,\gamma}P.V.\int_{\R^n}\frac{\tilde u(z)-\tilde u(\tilde{z})}{|z-\tilde{z}|^{n+2\gamma}}\,d\tilde{z}=k_{n,\gamma}P.V.\int_{\mathbb R^k}\int_{\mathbb R^N}\frac{u(x)-u(\tilde x)}{[|x-\tilde{x}|^2+|y-\tilde{y}|^2]^{\frac{n+2\gamma}{2}}}\,d\tilde{x}\,d\tilde{y}\\
&=k_{n,\gamma}P.V.\int_{\mathbb R^N} \frac{u(x)-u(\tilde x)}{|x-\tilde{x}|^{N+2\gamma}}\,d\tilde
x\int_{\mathbb R^k} \frac{1}{(1+|\tilde y|^2)^{\frac{n+2\gamma}{2}}}\,d\tilde y=(-\Delta_{\mathbb R^N})^\gamma u(x).
 \end{split}\end{equation*}
Here we have used
\begin{equation}\label{relation}
k_{n,\gamma}\int_{\R^k}\frac{1}{(1+|\tilde y|^2)^{\frac{n+2\gamma}{2}}}\,d\tilde y=k_{N,\gamma}.
\end{equation}
(See Lemma A.1 and Corollary A.1 in \cite{cw}).
\end{proof}

Now we turn to the construction of an approximate solution for \eqref{problem-introduction}. Let $\Sigma$ be a $k$-dimensional compact sub-manifold in $\R^n$. We shall use local Fermi coordinates around $\Sigma$, as defined in Section \ref{section:function-spaces}. Let $\mathcal T_\sigma$ be the tubular neighbourhood of radius $\sigma$ around $\Sigma$. For a point $z\in \mathcal T_\sigma$, denote it by $z=(x,y)\in \mathcal N\Sigma\times \Sigma$ where $\mathcal N\Sigma $ is the normal bundle of $\Sigma$.
Let $B$ a ball in $\mathcal N\Sigma$. We identify $\mathcal T_\sigma$ with $B\times\Sigma$. In these coordinates, the Euclidean metric is written as (see, for instance, \cite{Mazzeo-Smale})
\begin{equation*}
|dz|^2=
\begin{pmatrix}|dx|^2& O(r)\\
O(r)&g_\Sigma+O(r)\end{pmatrix},
\end{equation*}
where $|dx|^2$ is the standard flat metric in $B$ and $g_\Sigma$ the metric in $\Sigma$. The volume form reduces to
\begin{equation*}
dz=dx\sqrt{\det g_\Sigma}+O(r).
\end{equation*}
In the ball $B$ we use standard polar coordinates $r>0$, $\theta\in\mathbb S^{N-1}$. In addition, near each $q\in\Sigma$, we will consider normal coordinates for $g_\Sigma$ centered at $q$. A neighborhood of $\Sigma\ni q$ is then identified with a neighborhood of $\mathbb R^k\ni 0$ with the metric
\begin{equation*}
g_\Sigma=|dy|^2+O(|y|^2),
\end{equation*}
which yields the volume form
\begin{equation}\label{volume-form}
dz=dx\,dy\,(1+O(r)+O(|y|^2)).
\end{equation}
Note that $\Sigma$ is compact, so we can cover it by a finite number of small balls $B$.

As in the isolated singularity case, we define an approximate solution as follows:
\begin{equation*}
\bar{u}_\ve(x,y)=\chi_d(x)u_\ve(x)
\end{equation*}
where $\chi_d$ is a cut-off function such that $\chi_d=1$ if $|x|\leq d$ and $\chi_d(x)=0$ for $|x|\geq 2d$. In the following we always assume $d<\frac{\sigma}{2}$. Let
\begin{equation*}
f_\ve:=(-\Delta_{\mathbb R^n})^\gamma \bar{u}_\ve-\bar{u}_\ve^p.
\end{equation*}

\begin{lemma}\label{lemma:error2}
Assume, in addition to \eqref{choice-weights-nonlinear},  that $-\frac{2\gamma}{p-1}<\tilde\mu<\min\{\gamma-\frac{2\gamma}{p-1}, \frac{1}{2}-\frac{2\gamma}{p-1}\}$. Then there exists a positive constant $C$ depending only on $d,\tilde\mu,\tilde\nu$ but independent of $\ve$ such that for $\ve\ll1$,
\begin{equation}\label{error2}
\|f_\ve\|_{\mathcal C^{0,\alpha}_{\tilde\mu-2\gamma,\tilde\nu-2\gamma}}\leq C\ve^q,
\end{equation}
where $q=\min\{\frac{(p-3)\gamma}{p-1}-\tilde{\mu}, \frac{1}{2}-\gamma+\frac{(p-3)\gamma}{p-1}-\tilde{\mu},N-\frac{2p\gamma}{p-1}\}>0$.
\end{lemma}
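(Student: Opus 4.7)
The proof parallels the isolated-singularity case (the preceding lemma) with two new ingredients: (i) the ambient $y$-integration in Fermi coordinates, and (ii) the fact that the Euclidean metric on $\R^n$ is not flat in Fermi coordinates but only asymptotically so near $\Sigma$. Extending $u_\ve$ to a function on the Fermi chart via $u_\ve(z) := u_\ve(x)$, I first decompose
\begin{equation*}
f_\ve = A_{N,p,\gamma}(\chi_d-\chi_d^p)u_\ve^p + [(-\Delta_{\R^n})^\gamma,\chi_d]u_\ve + \chi_d(x)\,\mathcal{E}(z),
\end{equation*}
where the identity $(-\Delta_{\R^N})^\gamma u_\ve = A_{N,p,\gamma}u_\ve^p$ from Proposition \ref{existence} has been used, and
\begin{equation*}
\mathcal{E}(z) := (-\Delta_{\R^n})^\gamma u_\ve(z) - (-\Delta_{\R^N})^\gamma u_\ve(x)
\end{equation*}
is the \emph{curvature defect} measuring the deviation of the Fermi-coordinate $\R^n$-Laplacian of a function of $x$ alone from the flat $\R^N$-Laplacian. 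Observe that $\mathcal{E}\equiv 0$ when $\Sigma=\R^k$ by Lemma \ref{lemma5.6}.

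The first two terms are handled as in the isolated case. The commutator integral is split into the regions $\{|\tilde{x}|<d/2\}$, $\{d/2<|\tilde{x}|<2d\}$, $\{|\tilde{x}|>2d\}$; after performing the $y$-integration, whose leading constant is precisely \eqref{relation}, the resulting $N$-dimensional integrals are the ones already estimated in the preceding lemma, yielding $O(\ve^{N-\frac{2p\gamma}{p-1}})$. The nonlinear mismatch $(\chi_d-\chi_d^p)u_\ve^p$ vanishes off the annulus $\{d<|x|<2d\}$ and is controlled by the far-field asymptotics $u_\ve(x)=O(\ve^{N-2\gamma-\frac{2\gamma}{p-1}}|x|^{-(N-2\gamma)})$, giving the same exponent $N-\frac{2p\gamma}{p-1}$.

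The heart of the new argument is the estimate of $\mathcal{E}(z)$. In Fermi coordinates, the ambient squared distance and volume element expand as
\begin{equation*}
|z-\tilde{z}|^2 = |x-\tilde{x}|^2 + |y-\tilde{y}|^2 + O(|x|\cdot|y-\tilde{y}|^2) + O(|y|^2\cdot|y-\tilde{y}|^2) + \cdots,
\qquad dz = (1+O(|x|)+O(|y|^2))\,dx\,dy.
\end{equation*}
Subtracting off the flat kernel and carrying out the $y$-integration order by order (using \eqref{relation} on the leading piece), one obtains
\begin{equation*}
\mathcal{E}(z) = O(|x|)\,\mathcal{I}_1(x) + O(|y|^2)\,\mathcal{I}_2(x) + \text{higher order},
\end{equation*}
where each $\mathcal{I}_j(x)$ is an $N$-dimensional principal-value integral of the same flavour as $(-\Delta_{\R^N})^\gamma u_\ve$. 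Scaling $\tilde{x}=\ve X$ shows $\mathcal{I}_j(x) = O(\ve^{-\frac{2p\gamma}{p-1}})$ at the characteristic scale $|x|\sim\ve$. Multiplying by the geometric prefactors $|x|=O(\varrho)$ and $|y|^2=O(1)$ respectively, and measuring in the weighted norm $\varrho^{-(\tilde{\mu}-2\gamma)}$, produces the exponents $\frac{(p-3)\gamma}{p-1}-\tilde{\mu}$ and $\frac{1}{2}-\gamma+\frac{(p-3)\gamma}{p-1}-\tilde{\mu}$. The two hypotheses $\tilde{\mu}<\gamma-\frac{2\gamma}{p-1}$ and $\tilde{\mu}<\frac{1}{2}-\frac{2\gamma}{p-1}$ are precisely what force each of these exponents to be strictly positive.

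\textbf{Main obstacle.} The delicate step is justifying the formal kernel expansion rigorously in the principal-value integral, because $|z-\tilde{z}|$ is singular at $\tilde{z}=z$ and the Fermi expansion of the ambient distance is only valid up to a bounded Fermi radius. I would handle this by splitting $\R^n$ into a diagonal zone $\{|z-\tilde{z}|\le|x|/2\}$, an intermediate Fermi zone, and a far zone: in the diagonal zone, the symmetric difference $u_\ve(x)-u_\ve(\tilde{x})$ together with second-order control of the Fermi metric absorbs the singularity and recovers the half-integer gain responsible for the $\frac{1}{2}-\gamma$ contribution; in the intermediate zone, the expansion is routine Taylor; and in the far zone, one uses the decay $u_\ve(x)\sim\ve|x|^{-(N-2\gamma)}$ from Proposition \ref{existence}. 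The $\mathcal{C}^{0,\alpha}$ part of the norm is obtained by applying the same scheme to finite differences (rescaling about any reference point), and the $\tilde{\nu}$-weight at infinity follows from combining the fast-decay asymptotics of $u_\ve$ with the compact support of $\bar{u}_\ve$ via $\chi_d$.
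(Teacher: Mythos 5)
Your high-level plan — decompose into a nonlinear mismatch, a commutator, and a geometric defect, and control the defect by expanding the Fermi metric — is a reasonable reading of what must happen, but there are concrete gaps that prevent the sketch from producing the exponent $q$.

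\textbf{The missing geometric term.} Your expansion
$|z-\tilde{z}|^2 = |x-\tilde{x}|^2 + |y-\tilde{y}|^2 + O(|x|\,|y-\tilde{y}|^2) + O(|y|^2\,|y-\tilde{y}|^2) + \cdots$
omits the dominant cubic correction. Writing the Fermi chart $\Phi(x,y)=y_{\mathrm{emb}}+\sum_i x_i\nu_i(y)$, the product $\bigl(D\Phi\cdot\delta\bigr)\cdot\bigl(D^2\Phi[\delta,\delta]\bigr)$ contains the second-fundamental-form term $\mathrm{II}^i_{\alpha\beta}\,\delta x_i\,\delta y_\alpha\,\delta y_\beta = O\bigl(|x-\tilde{x}|\,|y-\tilde{y}|^2\bigr)$, which is \emph{not} proportional to $|x|$: relative to $|x-\tilde{x}|^2+|y-\tilde{y}|^2$ it is of size $O(|y-\tilde{y}|)$ when $|x-\tilde{x}|\sim|y-\tilde{y}|$. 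This first-order-in-$|y-\tilde{y}|$ correction (the $O(|\tilde{x}|)+O(|y-\tilde{y}|)$ factor in the paper's $I_{111}$) is exactly what dictates the splitting parameter and hence the $\tfrac12$ in $q$; your $O(|x|\cdot)$ and $O(|y|^2\cdot)$ terms are strictly subdominant to it.

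\textbf{The wrong splitting scale.} Your diagonal zone $\{|z-\tilde{z}|\le|x|/2\}$ corresponds to cutting at $|y-\tilde{y}|\lesssim|x|$. The paper cuts at $|y-\tilde{y}|\le|x|^{\beta}$ and then optimizes: the metric-expansion error is $O(|x|^{\beta})$ (from the term above), while the tail contribution from $|y-\tilde{y}|>|x|^{\beta}$ scales like $|x|^{-2\beta\gamma}\bar{u}_\ve$. These balance at $\beta=\tfrac12$, not $\beta=1$. With $\beta=1$ the tail term $|x|^{-2\gamma}\bar{u}_\ve$ is too large and the argument does not close. Your sketch never introduces this tail term at all, so the exponent $\frac{(p-3)\gamma}{p-1}-\tilde{\mu}$ (which in the paper comes from $|x|^{-\gamma}\bar{u}_\ve$) is unaccounted for.

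\textbf{The exponent accounting does not check out.} Taking your claim at face value: $O(|x|)\cdot\mathcal{I}_1$ at $|x|\sim\ve$ with $\mathcal{I}_1=O(\ve^{-2p\gamma/(p-1)})$ and weight $\varrho^{2\gamma-\tilde{\mu}}$ gives $\ve^{\,1-\frac{2\gamma}{p-1}-\tilde{\mu}}$, not $\ve^{\,\gamma-\frac{2\gamma}{p-1}-\tilde{\mu}}=\ve^{\frac{(p-3)\gamma}{p-1}-\tilde{\mu}}$. And with $|y|^2=O(1)$ the second term gives $\ve^{-\frac{2\gamma}{p-1}-\tilde{\mu}}$, whose exponent is \emph{negative} under the hypothesis $\tilde{\mu}>-\frac{2\gamma}{p-1}$, i.e.\ it diverges. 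The prefactor there cannot be $O(1)$; after recentering normal coordinates at $y$ (as the paper does) the relevant quantity is $|y-\tilde{y}|^2\le|x|^{2\beta}$, not $|y|^2$. Your phrase ``the symmetric difference \ldots recovers the half-integer gain'' is the place where the real work is hidden, but no mechanism is given; the half-integer comes from choosing $\beta=\tfrac12$ in the two-level splitting, which is absent from your zone decomposition.

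\textbf{A smaller issue.} The curvature defect $\mathcal{E}(z)=(-\Delta_{\R^n})^\gamma u_\ve(z)-(-\Delta_{\R^N})^\gamma u_\ve(x)$ requires extending $u_\ve(z):=u_\ve(x)$ to all of $\R^n$, but the Fermi chart only covers a tubular neighbourhood; any extension changes $\mathcal{E}$. The paper sidesteps this by working directly with the compactly supported $\bar{u}_\ve$ and splitting the integration region into $\mathcal{T}_\sigma$ and its complement, using $\bar{u}_\ve\equiv0$ outside $\mathcal{T}_\sigma$ to trivialize the far part.
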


\begin{proof}
Let us fix a point $z=(x,y)\in \mathcal  T_\sigma$, i.e. $|x|<\sigma$. By the definition of the fractional Laplacian,
\begin{equation*}\begin{split}
(-\Delta_z)^\gamma \bar{u}_\ve(z)&=k_{n,\gamma}P.V.\int_{\R^n}\frac{\bar{u}_\ve(z)-\bar{u}_\ve(\tilde{z})}{|z-\tilde{z}|^{n+2\gamma}}\,d\tilde{z}\\
&=k_{n,\gamma}\Big[P.V.\int_{\mathcal T_\sigma}\cdots+\int_{\mathcal T_\sigma^c}\cdots \Big]=:I_1+I_2.
\end{split}\end{equation*}
Note that in this neighborhood we can write $\bar u_\ve(z):=\bar u_\ve(x)$.

For $I_2$, since $\bar{u}_\ve(\tilde{x})=0$ when $\tilde{z}=(\tilde{x},\tilde{y})\in \mathcal T_\sigma^c$, one has
\begin{equation*}\begin{split}
I_2&=k_{n,\gamma}\int_{\mathcal T_\sigma^c}\frac{\bar{u}_\ve(x)-\bar{u}_\ve(\tilde{x})}{|z-\tilde{z}|^{n+2\gamma}}\,d\tilde{z}
=\bar{u}_\ve(x)\,k_{n,\gamma}\int_{\mathcal T_\sigma^c}\frac{1}{|z-\tilde{z}|^{n+2\gamma}}\,d\tilde{z}
\leq C\bar{u}_\ve(x),
\end{split}
\end{equation*}
so $I_2=O(1)\bar{u}_\ve(x)$ (the precise constant depends on $\sigma$). Next, for $I_1$, use normal coordinates $\tilde y$ in $\Sigma$ centered at $y$ in a neighborhood $\{|y-\tilde y|<\sigma_1\}$ for some $\sigma_1$ small but fixed. The constants will also depend on this $\sigma_1$. We have
\begin{equation*}
\begin{split}
I_1&=k_{n,\gamma}P.V.\int_{\mathcal T_\sigma}\frac{\bar{u}_\ve(x)-\bar{u}_\ve(\tilde{x})}{|z-\tilde{z}|^{n+2\gamma}}\,d\tilde{z}\\
&=k_{n,\gamma}P.V.\Big[\int_{\{|y-\tilde{y}|\leq |x|^\beta\}\cap \mathcal T_\sigma}\cdots+\int_{\{\sigma_1>|y-\tilde{y}|>|x|^\beta\}\cap \mathcal T_\sigma}\cdots+\int_{\{|y-\tilde{y}|>\sigma_1\}\cap \mathcal T_\sigma}\cdots\Big]\\
&=:k_{n,\gamma}[I_{11}+I_{12}+I_{13}],
\end{split}
\end{equation*}
where $\beta\in(0,1) $ is to be determined later. The main term will be $I_{11}$; let us calculate the other two. First, for $I_{12}$ we recall the expansion of the volume form \eqref{volume-form}, and approximate $|z-\tilde z|^2=|x-\tilde x|^2+|y-\tilde y|^2$ and $d\tilde z=d\tilde x \,d\tilde y$ modulo lower order perturbations. Then
\begin{equation*}
\begin{split}
I_{12}&=\int_{\{\sigma_1>|y-\tilde{y}|>|x|^\beta\}\cap \mathcal T_\sigma}\frac{\bar{u}_\ve(x)-\bar{u}_\ve(\tilde{x})}{|z-\tilde{z}|^{n+2\gamma}}\,d\tilde{z}\\
&=\int_{\{\sigma_1>|y-\tilde{y}|>|x|^\beta\}}\int_{\{|x-\tilde{x}|\leq |x|^\beta\}\cap \mathcal T_\sigma}\cdots +\int_{\{\sigma_1>|y-\tilde{y}|>|x|^\beta\}}\int_{\{|x-\tilde{x}|>|x|^\beta\}\cap \mathcal T_\sigma}\cdots\\
&\lesssim\int_{\{|x-\tilde{x}|\leq |x|^\beta\}}(\bar{u}_\ve(x)-\bar{u}_\ve(\tilde{x}))\Big(\int_{\{\sigma_1>|y-\tilde{y}|>|x|^\beta\}}
\frac{1}{|z-\tilde{z}|^{n+2\gamma}}\,d\tilde{y}\Big)d\tilde x\\
&\quad+\int_{\{|x-\tilde{x}|>|x|^\beta\}}\frac{\bar{u}_\ve(x)-\bar{u}_\ve(\tilde{x})}{|x-\tilde{x}|^{N+2\gamma}}
\Big(\int_{\big\{|\hat{y}|>\frac{|x|^\beta}{|x-\tilde{x}|}\big\}}\frac{1}{(1+|\hat{y}|^2)^\frac{n+2\gamma}{2}}
\,d\hat{y}\Big)d\tilde{x}.
\end{split}
\end{equation*}
We estimate the above integrals in $d\tilde y$. For instance, for the first term, we have used that
\begin{equation*}
\begin{split}
\int_{\{\sigma_1>|y-\tilde{y}|>|x|^\beta\}}
\frac{1}{|z-\tilde{z}|^{n+2\gamma}}\,d\tilde{y}
&\leq \int_{\{\sigma_1>|y-\tilde{y}|>|x|^\beta\}}\frac{1}{|y-\tilde{y}|^{n+2\gamma}}\,d\tilde{y}
\lesssim \int_{\{|y|\geq |x|^\beta\}}\frac{1}{|y|^{n+2\gamma}}\,dy\\
&\lesssim \int_{|x|^\beta}^{\infty}\frac{r^{k-1}}{r^{n+2\gamma}}\,dr
\lesssim |x|^{-\beta(N+2\gamma)},
\end{split}
\end{equation*}
which yields,
\begin{equation*}
\begin{split}
I_{12}&\lesssim\int_{\{|x-\tilde{x}|\leq |x|^\beta\}}(\bar{u}_\ve(x)-\bar{u}_\ve(\tilde{x}))|x|^{-\beta(N+2\gamma)}\,d\tilde{x}
+\int_{\{|x-\tilde{x}|>|x|^\beta\}}\frac{\bar{u}_\ve(x)-\bar{u}_\ve(\tilde{x})}{|x-\tilde{x}|^{N+2\gamma}}\,d\tilde{x}.\\
\end{split}
\end{equation*}
Now, since  $|x-\tilde{x}|>|x|^\beta$ implies that $|\tilde{x}|>c_0|x|^\beta$ and  $|x-\tilde{x}|\sim |\tilde{x}|$ for some $c_0>0$ independent of $|x|$ small,
\begin{equation*}
\begin{split}
I_{12}&\lesssim |x|^{-2\beta\gamma}\bar{u}_\ve(x)+|x|^{-\beta(N+2\gamma)}\int_{\{|x-\tilde{x}|\leq|x|^\beta\}}\bar{u}_\ve(\tilde{x})\,d\tilde{x}
+\int_{\{|\tilde{x}|>c_0|x|^\beta\}}\frac{\bar{u}_\ve(\tilde{x})}{|\tilde{x}|^{N+2\gamma}}\,d\tilde{x}\\
&+\int_{\{|\tilde{x}|\geq c_0|x|^\beta\}}\frac{\bar{u}_\ve(x)}{|\tilde{x}|^{N+2\gamma}}.
\end{split}
\end{equation*}
 We conclude, using the definition of $\bar{u}_\ve$ and the rescaling \eqref{rescaling}, that
\begin{equation*}
\begin{split}
I_{12}&\lesssim |x|^{-2\beta\gamma}\bar{u}_\ve(x)+\ve^{N-\frac{2\gamma}{p-1}}|x|^{-\beta(N+2\gamma)}
{\int_{\big\{|\tilde{x}|
\leq\frac{|x|^\beta}{\ve}\big\}}u_1(\tilde{x})\,d\tilde{x}}
\quad+\ve^{-\frac{2p\gamma}{p-1}}
{\int_{\big\{|\tilde{x}|>\frac{|x|^\beta}{\ve}\big\}}\frac{u_1(\tilde{x})}{|\tilde{x}|^{N+2\gamma}}
\,d\tilde{x}}\\&
+|x|^{-2\gamma \beta}\bar{u}_\ve(x).
\end{split}
\end{equation*}
For $I_{13}$, one has
\begin{equation*}
\begin{split}
I_{13}\leq C\int_{\mathcal{T}_\sigma}|\bar{u}_\ve(x)|+|\bar{u}_\ve(\tilde{x})|d\tilde{x}
\leq C(\bar{u}_\ve(x)+\ve^{N-\frac{2p\gamma}{p-1}}(1+|x|)^{-(N-2\gamma)}).
\end{split}
\end{equation*}
We look now into the main term $I_{11}$, for which we need to be more precise,
\begin{equation*}\begin{split}
I_{11}&=P.V. \int_{\{|y-\tilde{y}|\leq |x|^\beta\}}\int_{\{|\tilde{x}|<\sigma\}}
\frac{\bar{u}_\ve(x)-\bar{u}_\ve(\tilde{x})}{|z-\tilde{z}|^{n+2\gamma}}\,d\tilde{z}\\
&=P.V.\Big[\int_{\{|y-\tilde{y}|\leq |x|^\beta\}}\int_{\{|x-\tilde{x}|\leq|x|^\beta\}\cap \{|\tilde{x}|<\sigma\}}\cdots +\int_{\{|y-\tilde{y}|\leq |x|^\beta\}}\int_{\{|x-\tilde{x}|>|x|^\beta\}\cap \{|\tilde{x}|<\sigma\}}\cdots\Big]\\
&=:I_{111}+I_{112}.
\end{split}\end{equation*}
Let us estimate these two integrals. First, since for $|x| $ small, $|x-\tilde{x}|<|x|^\beta$ implies that $|\tilde{x}|<\sigma$, we have
\begin{equation*}\begin{split}
k_{n,\gamma}I_{111}&= k_{n,\gamma}\,P.V. \int_{\{|y-\tilde{y}|\leq |x|^\beta\}}\int_{\{|x-\tilde{x}|\leq|x|^\beta\}}
\frac{\bar{u}_\ve(x)-\bar{u}_\ve(\tilde{x})}{|z-\tilde{z}|^{n+2\gamma}}\,d\tilde{z}\\
&=k_{n,\gamma}\,P.V.\int_{\{|y-\tilde{y}|\leq|x|^\beta\}}\int_{\{|x-\tilde{x}|\leq|x|^\beta\}}
\frac{\bar{u}_\ve(x)-\bar{u}_\ve(\tilde{x})}
{[|x-\tilde{x}|^2+|y-\tilde{y}|^2]^{\frac{n+2\gamma}{2}}}
(1+O(|\tilde{x}|)+O(|y-\tilde{y}|))\,d\tilde x\,d\tilde y\\
&=(1+O(|x|^\beta))k_{n,\gamma}\,P.V.
\int_{\{|x-\tilde{x}|\leq|x|^\beta\}}
\frac{\bar{u}_\ve(x)-\bar{u}_\ve(\tilde{x})}{|x-\tilde{x}|^{N+2\gamma}}
\int_{\{|y|\leq\frac{|x|^\beta}{|x-\tilde{x}|}\}}\frac{1}{(1+|y|^2)^{\frac{n+2\gamma}{2}}}\,dy\,d\tilde{x}\\
&=(1+O(|x|^\beta))k_{n,\gamma}\,P.V.\int_{\big\{|x-\tilde{x}|\leq|x|^\beta\big\}}
\frac{\bar{u}_\ve(x)-\bar{u}_\ve(\tilde{x})}{|x-\tilde{x}|^{N+2\gamma}}\\
&\qquad\cdot \Big[\int_{\R^k}\frac{1}{(1+|y|^2)^{\frac{n+2\gamma}{2}}}\,dy
-\int_{\big\{|y|>\frac{|x|^\beta}{|x-\tilde{x}|}\big\}}
\frac{1}{(1+|y|^2)^{\frac{n+2\gamma}{2}}}\,dy\Big]\,d\tilde{x}.\\
\end{split}\end{equation*}
Recall relation \eqref{relation}, then
\begin{equation*}\begin{split}
k_{n,\gamma}I_{111}&=(1+O(|x|^\beta))k_{N,\gamma} P.V.\int_{\{|x-\tilde{x}|\leq|x|^\beta\}}\frac{\bar{u}_\ve(x)-\bar{u}_\ve(\tilde{x})}{|x-\tilde{x}|^{N+2\gamma}}\,d\tilde{x}\\
&\quad+O(1)\int_{\{|x-\tilde{x}|\leq|x|^\beta\}}\frac{\bar{u}_\ve(x)-\bar{u}_\ve(\tilde{x})}
{|x-\tilde{x}|^{N+2\gamma}}\Big(\frac{|x|^\beta}{|x-\tilde{x}|}\Big)^{-(N+2\gamma)}\,d\tilde{x}\\
&=(1+O(|x|^\beta))k_{N,\gamma}P.V.\int_{\R^N}\frac{\bar{u}_\ve(x)-\bar{u}_\ve(\tilde{x})}{|x-\tilde{x}|^{N+2\gamma}}\,d\tilde{x}\\
&\quad+O(1)\int_{\{|x-\tilde{x}|>|x|^\beta\}}\frac{\bar{u}_\ve(x)-\bar{u}_\ve(\tilde{x})}{|x-\tilde{x}|^{N+2\gamma}}\,d\tilde{x}\\
&\quad+O(1)\int_{\{|x-\tilde{x}|\leq|x|^\beta\}}\frac{\bar{u}_\ve(x)-\bar{u}_\ve(\tilde{x})}
{|x-\tilde{x}|^{N+2\gamma}}\Big(\frac{|x|^\beta}{|x-\tilde{x}|}\Big)^{-(N+2\gamma)}\,d\tilde{x}.
\end{split}\end{equation*}
Using the definition of the fractional Laplacian in $\mathbb R^N$,
\begin{equation*}
\begin{split}
k_{n,\gamma}I_{111}&=(1+O(|x|^\beta))(-\Delta_x)^\gamma \bar{u}_\ve(x)\\
&\quad+O(1)\int_{\{|x-\tilde{x}|>|x|^\beta\}}\frac{\bar{u}_\ve(x)-\bar{u}_\ve(\tilde{x})}{|x-\tilde{x}|^{N+2\gamma}}\,d\tilde{x}
+O(1)|x|^{-\beta(N+2\gamma)}\int_{\{|x-\tilde{x}|\leq|x|^\beta\}}(\bar{u}_\ve(x)-\bar{u}_\ve(\tilde{x}))\,d\tilde{x}.\\
\end{split}\end{equation*}
Now we use a similar argument to that of $I_{12}$, which yields
\begin{equation*}
\begin{split}
\int_{\{|x-\tilde{x}|>|x|^\beta\}}\frac{\bar{u}_\ve(x)-\bar{u}_\ve(\tilde{x})}{|x-\tilde{x}|^{N+2\gamma}}\,d\tilde{x}
&\lesssim\int_{\{|\tilde{x}|>|x|^\beta\}}\frac{\bar{u}_\ve(x)}{|\tilde{x}|^{N+2\gamma}}\,d\tilde{x}
+\int_{\{|\tilde{x}|>|x|^\beta\}}\frac{\bar{u}_\ve(\tilde{x})}{|\tilde{x}|^{N+2\gamma}}\,d\tilde{x}\\
&\lesssim|x|^{-2\beta\gamma}\bar{u}_\ve(x)+\ve^{-\frac{2p\gamma}{p-1}}
\int_{\big\{\frac{|x|^\beta}{\ve}<|\tilde{x}|<\frac{\sigma}{\ve}\big\}}
\frac{u_1(\tilde{x})}{|\tilde{x}|^{N+2\gamma}}\,d\tilde{x}
\end{split}\end{equation*}
 and also,
\begin{equation*}
\begin{split}
|x|^{-\beta(N+2\gamma)}&\int_{\{|x-\tilde{x}|\leq|x|^\beta\}}(\bar{u}_\ve(x)-\bar{u}_\ve(\tilde{x}))\,d\tilde{x}\\
&\lesssim|x|^{-\beta(N+2\gamma)}\Big[ |x|^{\beta N}\bar{u}_\ve(x)+\ve^{N-\frac{2\gamma}{p-1}}\int_{\{|\tilde{x}|\leq\frac{|x|^\beta}{\ve}\}}u_1(\tilde{x})\,d\tilde{x}\Big]\\
&\lesssim|x|^{-2\beta\gamma}\bar{u}_\ve(x)+\ve^{N-\frac{2\gamma}{p-1}}|x|^{-\beta(N+2\gamma)}
\int_{\{|\tilde{x}|\leq\frac{|x|^\beta}{\ve}\}}u_1(\tilde{x})\,d\tilde{x}.
\end{split}
\end{equation*}
In conclusion, one has
\begin{equation*}\begin{split}
k_{n,\gamma}I_{111}=(1+&O(|x|^\beta))(-\Delta_x)^\gamma \bar{u}_\ve(x)+O(1)\Big[|x|^{-2\beta\gamma}\bar{u}_\ve(x)\\
&+\ve^{-\frac{2p\gamma}{p-1}}
\int_{\{\frac{|x|^\beta}{\ve}\leq|\tilde{x}|<\frac{\sigma}{\ve}\}}\frac{u_1(\tilde{x})}{|\tilde{x}|^{N+2\gamma}}\,d\tilde{x}
+\ve^{N-\frac{2\gamma}{p-1}}|x|^{-\beta(N+2\gamma)}\int_{\{|\tilde{x}|\leq\frac{|x|^\beta}{\ve}\}}u_1(\tilde{x})\,d\tilde{x}\Big].
\end{split}
\end{equation*}
Next, for $I_{112}$ we calculate similarly
\begin{equation*}\begin{split}
I_{112}&\lesssim\int_{\{|y-\tilde{y}|\leq|x|^\beta\}}\int_{\{|x-\tilde{x}|>|x|^\beta\}}
\frac{\bar{u}_\ve(x)-\bar{u}_\ve(\tilde{x})}{[|x-\tilde{x}|^2+|y-\tilde{y}|^2]^\frac{n+2\gamma}{2}}\,d\tilde{x}\,d\tilde{y}\\
&=\int_{\{|x-\tilde{x}|>|x|^\beta\}}\frac{\bar{u}_\ve(x)-\bar{u}_\ve(\tilde{x})}{|x-\tilde{x}|^{N+2\gamma}}
\int_{\{|y|\leq\frac{|x|^\beta}{|x-\tilde{x}|}\}}\frac{1}{(1+|y|^2)^{\frac{n+2\gamma}{2}}}\,dy\,d\tilde{x}\\
&=\int_{\{|x-\tilde{x}|>|x|^\beta\}}\frac{\bar{u}_\ve(x)-\bar{u}_\ve(\tilde{x})}{|x-\tilde{x}|^{N+2\gamma}}
\Big(\frac{|x|^\beta}{|x-\tilde{x}|}\Big)^k\,d\tilde{x}\\
&=|x|^{-2\beta\gamma}\bar{u}_\ve(x)+\ve^{-k-\frac{2p\gamma}{p-1}}|x|^{\beta k}\int_{\{|\tilde{x}|>\frac{|x|^\beta}{\ve}\}}\frac{u_1(\tilde{x})}{|\tilde{x}|^{n+2\gamma}}\,d\tilde{x}.
\end{split}\end{equation*}
Combining the estimates for $I_{111}, I_{112}$ and $I_{12}, I_{13}$ we obtain
\begin{equation*}\begin{split}
(-\Delta_z)^\gamma \bar{u}_\ve(x)&=(1+O(|x|^\beta))(-\Delta_x)^\gamma \bar{u}_\ve(x)+O(1)\Big[|x|^{-2\beta\gamma}\bar{u}_\ve(x)\\
&\quad+\ve^{-\frac{2p\gamma}{p-1}}\int_{\{\frac{|x|^\beta}{\ve}<|\tilde{x}|
<\frac{\sigma}{\ve}\}}\frac{u_1(\tilde{x})}{|\tilde{x}|^{N+2\gamma}}d\tilde{x}
+\ve^{N-\frac{2\gamma}{p-1}}|x|^{-\beta(N+2\gamma)}\int_{\{|\tilde{x}|<\frac{|x|^\beta}{\ve}\}}u_1(\tilde{x})\,d\tilde{x}\\
&\quad+\ve^{-k-\frac{2p\gamma}{p-1}}|x|^{\beta k}\int_{\{|\tilde{x}|>\frac{|x|^\beta}{\ve}\}}\frac{u_1(\tilde{x})}{|\tilde{x}|^{n+2\gamma}}\,d\tilde{x}
+O(\ve^{N-\frac{2p\gamma}{p-1}}(1+|x|)^{-(N-2\gamma)})\Big]\\
&=(1+O(|x|^\beta))(-\Delta_x)^\gamma \bar{u}_\ve(x)+O(1)|x|^{-2\beta\gamma}\bar{u}_\ve(x)+\mathcal R_1.
\end{split}
\end{equation*}
In order to estimate $\mathcal R_1$ we use the asymptotic behavior of $u_1(x)$ at $0$ and $\infty$. By direct computation one sees that
\begin{equation*}
\mathcal R_1(x)=\begin{cases}
|x|^{-\beta\frac{2p\gamma}{p-1}}, \quad &\text{if } |x|^\beta<\ve,\\
\ve^{N-\frac{2p\gamma}{p-1}}|x|^{-\beta N}, \quad &\text{if }|x|^\beta>\ve.
\end{cases}
\end{equation*}
The choice $\beta=\frac{1}{2}$ yields that $\mathcal R_1=O(|x|^{-\gamma})\bar{u}_\ve(x)$, and thus
\begin{equation*}
(-\Delta_z)^\gamma\bar{u}_\ve(z)=(1+O(|x|^\beta))(-\Delta_x)^\gamma \bar{u}_\ve(x)
+O(1)|x|^{-\gamma}\bar{u}_\ve(x).
\end{equation*}
Finally, recall that $\bar{u}_\ve(x)=\chi_d(x)u_\ve(x)$, then by the estimates in the previous subsection (\ref{error1}), one has
\begin{equation}
|f_{\ve}(z)|\lesssim|x|^{\frac{1}{2}}|(-\Delta_x)^\gamma\bar{u}_\ve(x)|+|x|^{-\gamma}\bar{u}_{\ve}(x)+\mathcal{E},
\end{equation}
where the weighted norm of $\mathcal{E}$ can be bounded by $\ve^{N-\frac{2p\gamma}{p-1}}$.

For $z\in \R^n \setminus \mathcal{T}_{\frac{\sigma}{2}}$, the estimate is similar to the isolated singularity case, we omit the details here. Then we may conclude
\begin{equation*}
\|f_\ve\|_{\mathcal C^{0,\alpha}_{\tilde\mu-2\gamma,\tilde\nu-2\gamma}}\leq C\ve^{q},
\end{equation*}
where $q=\min\big\{\frac{(p-3)\gamma}{p-1}-\tilde\mu, \frac{1}{2}-\gamma+\frac{(p-3)\gamma}{p-1}-\tilde\mu, N-\frac{2p\gamma}{p-1}\big\}$, and it is positive if $-\frac{2\gamma}{p-1}<\tilde\mu<\min\big\{\gamma-\frac{2\gamma}{p-1}, \frac{1}{2}-\frac{2\gamma}{p-1}\big\}$.
\end{proof}

{\begin{remark}\label{remark}
In general, in terms of the local Fermi coordinates $(x,y)$ around a fixed $z_0=(0,0)\in\Sigma$, for $u\in \mathcal C^{\alpha+2\gamma}_{\tilde{\mu},\tilde{\nu}}(\R^n \setminus \Sigma)$, one has the following estimate:
\begin{equation*}
(-\Delta)^\gamma u=(-\Delta_{\R^n\setminus \R^k})^\gamma u(x,y)+|x|^{\tau}\|u\|_*
\end{equation*}
for $|x|\ll1, |y|\ll1$, and some $\tau>\tilde{\mu}-2\gamma$. Indeed, similar to the estimates in Lemma \ref{lemma:error2}, except the main term in $I_{111}$, in the estimates, it suffices to control the terms $u(\tilde{x})$ by $\|u\|_*|x|^{\tilde{\mu}}$.
\end{remark}}

\section{Hardy type operators with fractional Laplacian}\label{section:Hardy}

Here we give a formula for the Green's function for the Hardy type operator in $\mathbb R^N$,
\begin{equation}\label{Hardy-operator}
 L\phi:=(-\Delta_{\mathbb R^N})^{\gamma}\phi-\frac{\kappa}{r^{2\gamma}}\phi,
\end{equation}
where $\kappa\in\mathbb R$. In the notation of Section \ref{section:isolated-singularity},  after the conjugation \eqref{conjugation1} we may study the equivalent operator
\begin{equation*}
\tilde {\mathcal L} w:=
e^{-\frac{N+2\gamma}{2}t}\mathcal L(e^{\frac{N-2\gamma}{2}t}w)=P_\gamma^{g_0} w-\kappa w \quad\text{on }\mathbb R\times\mathbb S^{N-1}
\end{equation*}
for $\phi=e^{\frac{N-2\gamma}{2}}w$. Consider the  projections over spherical harmonics: for $m=0,1\ldots$, let $w_m$ be a solution to
\begin{equation}\label{equation-introduction-m}
\tilde {\mathcal L}_m w:=P_\gamma^m w_m-\kappa w_m=h_m\quad\text{on }\mathbb R.
\end{equation}
Recall Proposition \ref{prop:symbol}. Then, in Fourier variables, equation \eqref{equation-introduction-m} simply becomes
\begin{equation*}
(\Theta_\gamma^m(\xi)-\kappa)\hat w_m=\hat h_m.
\end{equation*}
The behavior of this equation depends on the zeroes of the symbol $\Theta^m_\gamma(\xi)-\kappa$. In any case, we can formally write
\begin{equation}\label{w_m:Fourier}
w_m(t)=\int_{\mathbb R} \frac{1}{\Theta_\gamma^m(\xi)-\kappa} \hat h_m(\xi) e^{i\xi t}\,d\xi=\int_{\mathbb R} h_m(t') \mathcal G_m(t-t')\,dt',
\end{equation}
where the Green's function for the problem is formally given by
\begin{equation}\label{Green100}
\mathcal G_m(t)=\int_{\mathbb R} e^{i\xi t} \frac{1}{\Theta_\gamma^m(\xi)-\kappa}\,d\xi.
\end{equation}
Let us make this statement rigorous in the stable case (this is, below the Hardy constant \eqref{Hardy-constant}), by taking into account the poles that may occur in the above integral. For this, let us define
\begin{equation}\label{Am}
A_m=\tfrac{1}{2}+\tfrac{\gamma}{2}
+\tfrac{1}{2}\sqrt{\big(\tfrac{N}{2}-1\big)^2+\mu_m},\quad B_m=\tfrac{1}{2}-\tfrac{\gamma}{2}+\tfrac{1}{2}\sqrt{\big(\tfrac{N}{2}-1\big)^2+\mu_m},
\end{equation}
and observe that the symbol
\begin{equation*}\label{symbol1}
\Theta_\gamma^m(\xi)=2^{2\gamma}\frac{\big|\Gamma\big(A_m+\tfrac{\xi}{2}i\big)\big|^2}
{\big|\Gamma\big(B_m+\tfrac{\xi}{2}i\big)\big|^2}
=2^{2\gamma}\frac{\Gamma\big(A_m+\tfrac{\xi}{2}i\big)\Gamma\big(A_m-\tfrac{\xi}{2}i\big)}
{\Gamma\big(B_m+\tfrac{\xi}{2}i\big)\Gamma\big(B_m-\tfrac{\xi}{2}i\big)}
\end{equation*}
 can be extended meromorphically to the complex plane; this extension will be denoted simply by
\begin{equation*}
\Theta_m(z)
:=2^{2\gamma}\frac{\Gamma\big(A_m+\frac{z}{2}i\big)\Gamma\big(A_m-\frac{z}{2}i\big)}
{\Gamma\big(B_m+\frac{z}{2}i\big)\Gamma\big(B_m-\frac{z}{2}i\big)},
\end{equation*}
for  $z\in \mathbb C$. We have:

\begin{teo}\label{thm:Hardy-potential}
Let $0\leq \kappa<\Lambda_{N,\gamma}$ and fix $m=0,1,\ldots$. Assume that the right hand side $h_m$ in \eqref{equation-introduction-m} satisfies
\begin{equation}\label{decay-h}
h_m(t)=\begin{cases}
O(e^{-\delta t}) &\text{as } t\to+\infty,\\
O(e^{\delta_0 t}) &\text{as } t\to-\infty,
\end{cases}
\end{equation}
for some real constants $\delta,\delta_0$. It holds:
\begin{itemize}
\item[\emph{i.}] The function $\frac{1}{\Theta_m(z)-\kappa}$ is meromorphic in $z\in\mathbb C$. Its poles are located at points of the form $\tau_j\pm i\sigma_j$ and $-\tau_j\pm i\sigma_j$, where $\sigma_j>\sigma_0>0$ for $j=1,\ldots$, and $\tau_j\geq0$ for $j=0,1,\ldots$. In addition, $\tau_0=0$, and $\tau_j=0$ for $j$ large enough. For such $j$, $\{\sigma_j\}$ is an increasing sequence with no accumulation points.

\item[\emph{ii.}] If $\delta>0$, $\delta_0\geq 0$, then a particular solution of \eqref{equation-introduction-m} can be written as
\begin{equation}\label{Green0}
w_m(t)=\int_{\mathbb R} h_m(t') {\mathcal G}_m(t-t')\,dt',
\end{equation}
where
\begin{equation}\label{Green1}
{\mathcal G}_m(t)=d_0e^{-\sigma_0 |t|}+\sum_{j=1}^\infty  e^{-\sigma_j |t|} [d_j\cos(\tau_j |t|)+d_j'\sin(\tau_j|t|)],
\end{equation}
for some constants $d_j,d_j'$ given precisely in the proof.  Moreover, ${\mathcal G}_m$ is an even $\mathcal C^{\infty}$ function when $t\neq 0$.

\item[\emph{iii.}] Now assume only that $\delta+\delta_0\geq 0$. If $\sigma_J<\delta<\sigma_{J+1}$ (and thus $\delta_0>-\sigma_{J+1})$, then a particular solution is
    \begin{equation}\label{solucion2}
w_m(t)=\int_{\mathbb R} h_m(t') {\widetilde{\mathcal G}}_m(t-t')\,dt'
\end{equation}
where
\begin{equation*}\label{Green2}
\widetilde{\mathcal G}_m(t)=\sum_{j=J+1}^\infty e^{-\sigma_j t} [d_j  \cos(\tau_j t)+d_j'\sin(\tau_j t)], \quad\text{for}\quad t>0.
\end{equation*}
 $\widetilde{\mathcal G}_m$ is a $\mathcal C^{\infty}$ function when $t\neq 0$. In addition, $w_m$ satisfies the asymptotic behavior
\begin{equation}\label{equation100}
w_m(t)=O(e^{-\delta t}) \quad\text{as} \quad t\to +\infty, \quad w_m(t)\sim \sum_{j=0}^J D_j(t) e^{- \sigma_j t}+O(e^{\delta_0 t}) \quad\text{as} \quad t\to -\infty,
\end{equation}
for some functions $D_0(t),\ldots,D_J(t)$. In the case $\tau_j=0$, we have
\begin{equation*}
D_j(t)\to \tilde D_j:=d_j\int_{\mathbb R}e^{\sigma_j t'}h_m(t')\,dt', \quad\text{as}\quad t\to -\infty,
\end{equation*}
while when $\tau_j\neq 0$,
\begin{equation*}
\begin{split}
D_j(t)&=d_j\Big[\cos(\tau_j t)\int_{t}^{+\infty}e^{\sigma_j t'}\cos(\tau_j t')h(t')\,dt'+\sin(\tau_j t)\int_{t}^{+\infty}e^{\sigma_j t'}\sin(\tau_j t')h(t')\,dt'\Big]\\
&+d_j'\Big[\sin(\tau_j t)\int_{t}^{+\infty}e^{\sigma_j t'}\cos(\tau_j t')h(t')\,dt'-\cos(\tau_j t)\int_{t}^{+\infty}e^{\sigma_j t'}\sin(\tau_j t')h(t')\,dt'\Big].
\end{split}
\end{equation*}
\end{itemize}
\end{teo}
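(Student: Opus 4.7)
The idea is to invert the Fourier symbol explicitly via residue calculus. First I will analyze the meromorphic extension
\[
\Theta_m(z)-\kappa \;=\; 2^{2\gamma}\frac{\Gamma(A_m+\tfrac{z}{2}i)\Gamma(A_m-\tfrac{z}{2}i)}
{\Gamma(B_m+\tfrac{z}{2}i)\Gamma(B_m-\tfrac{z}{2}i)}-\kappa.
\]
Two symmetries are immediate from the formula: $\Theta_m(-z)=\Theta_m(z)$ and $\Theta_m(\bar z)=\overline{\Theta_m(z)}$, so the zeros of $\Theta_m(z)-\kappa$ occur in quadruples $\pm\tau_j\pm i\sigma_j$ (collapsing to pairs on the axes). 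The hypothesis $\kappa<\Lambda_{N,\gamma}=\Theta^0_\gamma(0)$, together with the fact that $\Theta_m$ is minimized on $\mathbb R$ at $\xi=0$ and that $m\mapsto\Theta_m(0)$ is non-decreasing, excludes real zeros. To locate $\sigma_0$, observe that $\Theta_m(i\sigma)$ is real-analytic in $\sigma$, equals $\Theta_m(0)>\kappa$ at $\sigma=0$, and tends to $0$ as $\sigma\uparrow 2B_m$ (pole of the denominator); the intermediate value theorem then produces the desired imaginary root with $\sigma_0\in(0,2B_m)$, yielding $\tau_0=0$. For the far poles I will apply Stirling's formula to the ratio of Gamma functions: as $|z|\to\infty$ along the real direction the ratio grows like $|z|^{2\gamma}$, while the level sets $\{\Theta_m=\kappa\}$ high in the strip accumulate on the imaginary axis, forcing $\tau_j=0$ for $j$ large with $\{\sigma_j\}$ an increasing sequence diverging to $+\infty$.

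For part (ii) I will justify the formal inversion \eqref{Green100}. The Stirling asymptotic $|\Theta_m(\xi)-\kappa|^{-1}\sim|\xi|^{-2\gamma}$ makes the integral defining $\mathcal G_m(t)$ absolutely convergent. For $t>0$ I close the real contour by a large semicircle in the upper half-plane; Stirling's expansion along arcs chosen to avoid the discrete pole set gives $|\Theta_m(z)-\kappa|^{-1}=O(|z|^{-2\gamma})$, while the factor $e^{izt}$ provides decay $e^{-(\mathrm{Im}\,z)t}$, so the arc contribution vanishes in the limit. The residue theorem then expresses $\mathcal G_m(t)$ as a sum over the upper half-plane poles $\pm\tau_j+i\sigma_j$; grouping the pair $\tau_j+i\sigma_j$ and $-\tau_j+i\sigma_j$ via $\Theta_m(-z)=\Theta_m(z)$ (so that their residues are complex conjugates up to a sign) collapses the sum to the real form \eqref{Green1} with
\[
d_j=2\,\mathrm{Re}\!\left(\frac{i}{\Theta_m'(\tau_j+i\sigma_j)}\right),\qquad d_j'=-2\,\mathrm{Im}\!\left(\frac{i}{\Theta_m'(\tau_j+i\sigma_j)}\right),
\]
with the appropriate halving when $\tau_j=0$. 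The evenness $\mathcal G_m(-t)=\mathcal G_m(t)$ then follows from the substitution $\xi\mapsto-\xi$ in \eqref{Green100} together with $\Theta_m(-\xi)=\Theta_m(\xi)$, allowing the replacement $t\mapsto|t|$.

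For part (iii), the decay \eqref{decay-h} with $\delta+\delta_0\geq 0$ makes $\hat h_m(\xi)$ holomorphic in the horizontal strip $-\delta<\mathrm{Im}\,\xi<\delta_0$. Writing
\[
w_m(t)=\frac{1}{2\pi}\int_{\mathbb R}\frac{\hat h_m(\xi)}{\Theta_m(\xi)-\kappa}\,e^{i\xi t}\,d\xi
\]
and shifting the contour downward to $\mathrm{Im}\,\xi=-\delta'$ with $\sigma_J<\delta'<\sigma_{J+1}$, one picks up residues at the lower half-plane poles $\pm\tau_j-i\sigma_j$ for $j=0,\dots,J$. The shifted integral reproduces, via the same residue calculus as in part (ii) but restricted to $j\geq J+1$, the modified Green's function $\widetilde{\mathcal G}_m$; the resulting convolution then converges under the relaxed decay. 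The residues at the crossed poles, after unfolding $\hat h_m(\pm\tau_j-i\sigma_j)=\int_{\mathbb R} e^{(\sigma_j\mp i\tau_j)t'}h_m(t')\,dt'$ and collecting real and imaginary parts, assemble into the functions $D_j(t)e^{-\sigma_j t}$ claimed in \eqref{equation100}, with the $t\to-\infty$ limits following by dominated convergence. The main technical obstacle throughout is the Jordan-type estimate on large contours in the presence of infinitely many poles: one must select a sequence of radii (or rectangles) that stay uniformly bounded away from the pole set while Stirling's asymptotic still delivers $|\Theta_m(z)-\kappa|^{-1}\to 0$, and here the eventual clustering $\tau_j=0$ is precisely what permits such a choice.
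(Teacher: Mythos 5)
Parts \emph{i.} and \emph{ii.} of your argument are essentially the paper's, up to presentation: the same symmetries, the same monotonicity of $\Theta_m$ on the real and imaginary axes (here you use the IVT where the paper invokes the explicit pole of $\Gamma(B_m-\sigma/2)$ at $\sigma=2B_m$, both valid), and the same upper half-plane contour with the residue bookkeeping. Your compact formulas for $d_j,d_j'$ in terms of $\Theta_m'$ are equivalent to the paper's residues, and the appeal to Stirling for both the clustering of far poles on the imaginary axis and the vanishing of the arcs matches the role of the paper's Section on technical results.

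For part \emph{iii.} there is a genuine directional error. First, with the paper's convention $\hat h(\xi)=\frac{1}{\sqrt{2\pi}}\int e^{-i\xi t}h(t)\,dt$, the transform of $h_m$ is holomorphic on the strip $-\delta_0<\mathrm{Im}\,\xi<\delta$, not $-\delta<\mathrm{Im}\,\xi<\delta_0$ as you state. More importantly, to kill the slow $e^{-\sigma_0 t}$ tail at $t\to+\infty$ and replace $\mathcal G_m$ by a kernel whose $t>0$ expansion starts only at $j=J+1$, one must move the line of integration \emph{upward} to $\mathbb R+i\vartheta$ with $\vartheta\in(\sigma_J,\delta)$: on such a line $|e^{i\zeta t}|=e^{-\vartheta t}$ already decays for $t>0$, and closing in $\{\mathrm{Im}\,z>\vartheta\}$ encloses exactly the poles $\pm\tau_j+i\sigma_j$ with $j\geq J+1$. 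Your downward shift to $\mathbb R-i\delta'$ has the opposite effect: for $t>0$, closing above a lowered line picks up \emph{more} poles, including the lower-half ones $\pm\tau_j-i\sigma_j$ for $j\leq J$, whose residues carry factors $e^{\sigma_j t}$ that blow up as $t\to+\infty$. So the claim ``the shifted integral reproduces\ldots the modified Green's function $\widetilde{\mathcal G}_m$ restricted to $j\geq J+1$'' is false as stated. The fact that you nevertheless land on the correct $\tilde D_j=d_j\int e^{\sigma_j t'}h_m(t')\,dt'$ is an artifact of a compensating sign slip: $\hat h_m(\pm\tau_j-i\sigma_j)$ involves $e^{-\sigma_j t'}$, not $e^{\sigma_j t'}$ as you wrote, whereas $e^{\sigma_j t'}$ is precisely $\hat h_m$ evaluated at the upper half-plane pole $\pm\tau_j+i\sigma_j$, which is what the upward shift produces. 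Reverse the contour shift (and the resulting pole bookkeeping) and the rest of your part \emph{iii.} goes through as in the paper.
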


\begin{remark}\label{remark:homogeneous}
All solutions of \eqref{equation-introduction-m} are obtained from the particular solution $w_m$ above by adding those of the homogeneous problem $\tilde{\mathcal L}_m w=0$, which are of the form
\begin{equation*}
\begin{split}
w_{m,h}(t)&=C_0^-e^{-\sigma_0 t}+C_0^+e^{\sigma_0 t}+\sum_{j=1}^\infty  e^{-\sigma_j t}
[C_j^-\cos(\tau_j t)+C_j'^-\sin(\tau_j t)]\\
&+\sum_{j=1}^\infty  e^{+\sigma_j t}[C_j^+\cos(\tau_j t)+C_j'^+\sin(\tau_j t)]
\end{split}
\end{equation*}
for some real constants $C_j^-,C_j^+,C_j'^-,C_j'^+$, $j=0,1,\ldots$.
\end{remark}

We also look at the case when $\kappa$ leaves the stability regime. In order to simplify the presentation, we only consider the projection $m=0$ and the equation
\begin{equation}\label{equation-radial}
\tilde{\mathcal L}_0 w=h.
\end{equation}
In addition,  we assume that only the first pole leaves the stability regime, which happens if $\Lambda_{N,\gamma}<\kappa<\Lambda'_{N,\gamma}$  for some $\Lambda'_{N,\gamma}$. Then, in addition to the poles above, we will have two real poles $\tau_0$ and $-\tau_0$. Some study regarding $\Lambda'_{N,\gamma}$ will be given in the next section but we are not interested in its explicit formula.

 \begin{proposition}\label{prop:unstable}
Let $\Lambda_{N,\gamma}<\kappa<\Lambda'_{N,\gamma}$. Assume that $h$ decays like $O(e^{-\delta t})$ as $t\to\infty$, and $O(e^{\delta_0 t})$ as $t\to-\infty$ for some real constants $\delta,\delta_0$. It holds:
\begin{itemize}
\item[{\textit i.}] The function $\frac{1}{\Theta_m(z)-\kappa}$ is  meromorphic in $z\in\mathbb C$. Its poles are located at points of the form $\tau_j\pm i\sigma_j$ and $-\tau_j\pm i\sigma_j$, where $\tau_j,\sigma_j\geq 0$, for $j=0,1,\ldots$. In addition, $\sigma_0=0$, and $\tau_j=0$ for $j$ large enough. For such $j$, $\{\sigma_j\}$ is an increasing sequence with no accumulation points.

\item[\textit{ii.}]
If $\delta>0$, $\delta_0> 0$, then
a particular solution of \eqref{equation-radial} can be written as
\begin{equation}\label{v0}
w_0(t)=\int_{\mathbb R} h(t') {\mathcal G_0}(t-t')\,dt',
\end{equation}
where
\begin{equation*}
\mathcal G_0(t)=d_0 \sin({\tau_0t})\chi_{(-\infty,0)}(t)+\sum_{j=1}^\infty e^{-\sigma_j |t|}[d_j\cos(\tau_j t)+d_j'\sin(\tau_j |t|)]
\end{equation*}
for some constants $d_0,d_j,d_j'$, $j=1,\ldots$. Moreover, $\mathcal G_0$ is an even $\mathcal C^{\infty}$ function when $t\neq 0$.

\item[\textit{iii.}] The analogous statements to  Theorem \ref{thm:Hardy-potential}, {\em iii.},  and Remark \ref{remark:homogeneous} hold.
\end{itemize}
\end{proposition}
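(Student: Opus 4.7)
\emph{Plan.} The strategy mirrors Theorem \ref{thm:Hardy-potential}, with the extra care required to handle the pair of real poles that emerge when $\kappa$ crosses the Hardy threshold $\Lambda_{N,\gamma}=\Theta_0(0)$.

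First, to locate the poles, I would start from the explicit expression
\[
\Theta_0(z)=2^{2\gamma}\,\frac{\Gamma\!\bigl(A_0+\tfrac{z}{2}i\bigr)\Gamma\!\bigl(A_0-\tfrac{z}{2}i\bigr)}
{\Gamma\!\bigl(B_0+\tfrac{z}{2}i\bigr)\Gamma\!\bigl(B_0-\tfrac{z}{2}i\bigr)},
\]
and exploit the evenness $\Theta_0(-z)=\Theta_0(z)$, so zeros of $\Theta_0(z)-\kappa$ come in pairs $\pm z_j$. On the real axis, the sharp fractional Hardy inequality gives $\Theta_0(\xi)\geq \Theta_0(0)=\Lambda_{N,\gamma}$, while Stirling forces $\Theta_0(\xi)\to\infty$ as $|\xi|\to\infty$. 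Since $\Lambda_{N,\gamma}<\kappa<\Lambda'_{N,\gamma}$, the intermediate value theorem produces a unique pair $\pm\tau_0\in\mathbb R$ with $\Theta_0(\pm\tau_0)=\kappa$. The remaining poles are perturbations of those from the stable case and therefore stay off the real axis at $\pm\tau_j\pm i\sigma_j$ for $j\geq 1$. A further Stirling estimate on the $\Gamma$-quotient shows $|\Theta_0(z)|\to\infty$ as $|\Re z|\to\infty$ uniformly in horizontal strips, so deep poles of $1/(\Theta_0-\kappa)$ cluster near the poles of $\Theta_0$ itself, which lie purely on the imaginary axis at the points $\pm i(2A_0+2n)$, $n\in\mathbb N$; this forces $\tau_j=0$ for $j$ large and $\sigma_j\to\infty$ without accumulation.

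Next, to construct $\mathcal G_0$, I would invert the symbol along the contour $\Gamma_\eta$ obtained from $\mathbb R$ by inserting small semicircular detours of radius $\eta$ passing \emph{above} both real poles $\pm\tau_0$:
\[
\mathcal G_0(t)=\frac{1}{2\pi}\int_{\Gamma_\eta}\frac{e^{i\xi t}}{\Theta_0(\xi)-\kappa}\,d\xi.
\]
For $t>0$, close $\Gamma_\eta$ by a large semicircle in the upper half-plane; Stirling applied to $\Theta_0$ provides the decay needed for Jordan's lemma, and the enclosed residues at $\pm\tau_j+i\sigma_j$, $j\geq 1$, produce the damped terms $e^{-\sigma_j t}[d_j\cos(\tau_j t)+d_j'\sin(\tau_j t)]$; crucially $\pm\tau_0$ are \emph{not} enclosed. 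For $t<0$, close in the lower half-plane; now the contour encloses (with opposite orientation) the poles $\pm\tau_j-i\sigma_j$, $j\geq 1$, \emph{and} both real poles $\pm\tau_0$. Evenness of $\Theta_0$ gives $\Theta_0'(-\tau_0)=-\Theta_0'(\tau_0)$, so the residues at $\pm\tau_0$ are opposite in sign, and their combined contribution is proportional to $e^{i\tau_0 t}-e^{-i\tau_0 t}=2i\sin(\tau_0 t)$, yielding the $d_0\sin(\tau_0 t)\chi_{(-\infty,0)}(t)$ term. Combining the two sides furnishes \eqref{v0}.

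Finally, smoothness of $\mathcal G_0$ for $t\neq 0$ follows from the geometric decay $e^{-\sigma_j|t|}$ with $\sigma_j\to\infty$, which is preserved (up to polynomial growth in $\tau_j$) under termwise differentiation. The convolution representation in \eqref{v0} is justified by Fubini once $\mathcal G_0$ is locally integrable, using the decay hypotheses on $h$ to match the growth of $\mathcal G_0$ at infinity. Part iii follows by shifting $\Gamma_\eta$ past the first $J$ pairs of complex poles exactly as in Theorem \ref{thm:Hardy-potential} iii, the only new feature being the pre-existing deformation around $\pm\tau_0$, which is inert under this additional shift. \emph{The main obstacle is precisely the bookkeeping of residues at, and the choice of contour deformation around, the real poles $\pm\tau_0$:} different deformations produce particular solutions differing by linear combinations of the bounded homogeneous solutions $\cos(\tau_0 t)$ and $\sin(\tau_0 t)$, and it is the specific choice of passing above both real poles that forces the oscillatory part of $\mathcal G_0$ to be supported only on $t<0$.
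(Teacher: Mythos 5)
Your contour-deformation strategy is correct in spirit but genuinely different from the paper's. The paper regularizes by replacing $\Theta_0(\xi)$ with $\Theta_0(\xi-\varepsilon i)$, which is equivalent to integrating along $\mathbb R-\varepsilon i$, i.e.\ \emph{below} both real poles $\pm\tau_0$; for $t>0$ the two real poles are then enclosed when closing upward, one obtains an even Green's function with a $\sin(\tau_0|t|)$ contribution on both sides, and the one-sided form \eqref{v0} is only recovered afterwards by invoking the Fredholm compatibility condition $\int h(t')e^{i\tau_0 t'}\,dt'=0$ and taking imaginary parts. Your proposal instead indents \emph{above} $\pm\tau_0$, so that for $t>0$ the real poles are missed and for $t<0$ they are picked up; the oddness of $\Theta'_0$ turns the two residues into a pure multiple of $\sin(\tau_0 t)$. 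This delivers the $\chi_{(-\infty,0)}$ kernel directly, without appealing to the compatibility condition, and in that sense is cleaner. (It is also worth flagging that the resulting $\mathcal G_0$ is not even, so the paper's final sentence in \textit{ii.} only applies to the even regularized kernel, not to the one written in \eqref{v0}.)

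There is, however, a genuine error in your pole-localization step. You write that ``deep poles of $1/(\Theta_0-\kappa)$ cluster near the poles of $\Theta_0$ itself, which lie purely on the imaginary axis at the points $\pm i(2A_0+2n)$.'' This is backwards: a pole of $1/(\Theta_0-\kappa)$ is a zero of $\Theta_0-\kappa$, and near a pole of $\Theta_0$ the quantity $|\Theta_0|$ blows up, so $\Theta_0-\kappa$ cannot vanish there. The zeros of $\Theta_0-\kappa$ instead cluster, for large imaginary part, near the \emph{zeros} of $\Theta_0$, namely near $\pm i(2B_0+2j)$ where the Gamma functions $\Gamma\bigl(B_0\pm\tfrac{z}{2}i\bigr)$ in the denominator have their poles. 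This is precisely the content of Section~\ref{subsection:technical}, equation \eqref{zN}, with $\beta=B_0$ playing the relevant role. The conclusion you want---$\tau_j=0$ for $j$ large, $\sigma_j\to\infty$ with no accumulation---is still correct, but your stated mechanism is not. You should also note, when closing the contours, that because the poles march off along the imaginary axis the large-semicircle radii must be chosen to avoid them (e.g.\ bisecting consecutive $\sigma_j$'s), since the decay $|\Theta_0(z)|^{-1}=O(|z|^{-2\gamma})$ from Stirling is uniform only away from the imaginary axis; on the semicircle the Jordan factor $e^{-|t|\,\mathrm{Im}\,z}$ must carry the estimate near that axis.
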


Further study of fractional non-linear equations with critical Hardy potential has been done in \cite{AMPP,Dipierro-Montoro-Peral-Sciunzi}, for instance.

\bigskip

Before we start the proof of Theorem \ref{thm:Hardy-potential}, let us give some preliminaries regarding the symbol the symbol $\Theta_m$.

\begin{remark}
It is interesting to observe that
$$\Theta_m(z)=\Theta_m(-z).$$
Moreover, thanks to Stirling formula (expression 6.1.37 in \cite{Abramowitz})
\begin{equation}\label{Stirling}\Gamma(z)\sim e^{-z}z^{z-\frac{1}{2}}(2\pi)^{\frac{1}{2}},\quad \text{as}\quad |z|\to \infty \text{ in } |\arg\, z|<\pi,\end{equation}
one may check that for $\xi\in\mathbb R$,
\begin{equation}\label{symbol-limit}
\Theta_m(\xi)\sim |m+\xi i|^{2\gamma},\quad \text{as}\quad |\xi|\to\infty,
\end{equation}
and this limit is uniform in $m$. Here the symbol $\sim$ means that one can bound one quantity, above and below, by constant times the other. This also shows that, for fixed $m$, the behavior at infinity is the same as the one for the standard fractional Laplacian $(-\Delta)^\gamma$.
\end{remark}

The following proposition uses this idea to study the behavior as $|t|\to 0$. Recall that the Green's function for the fractional Laplacian $(-\Delta_{\mathbb R})^\gamma$ in one space dimension is precisely
$$G(t)=|t|^{-(1-2\gamma)}.$$
We will prove that $\mathcal G_m$ has a similar behavior.

\begin{proposition} Let $\gamma\in(0,1/2)$. Then
\begin{equation*}
\lim_{|t|\to 0} \frac{\mathcal G_m(t)}{|t|^{-(1-2\gamma)}}=c
\end{equation*}
for some positive constant $c$.
\end{proposition}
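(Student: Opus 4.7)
The strategy is to reduce the problem to the asymptotics of the one-dimensional Riesz kernel by isolating the leading high-frequency behavior of the symbol $1/(\Theta_m(\xi)-\kappa)$ in the Fourier representation \eqref{Green100}. Since the small-$|t|$ behavior of $\mathcal G_m$ is governed by the large-$|\xi|$ behavior of its symbol, and \eqref{symbol-limit} identifies this with $|\xi|^{2\gamma}$, I expect the principal singularity to agree, up to a positive multiplicative constant, with that of the fundamental solution of $(-\Delta_{\mathbb R})^\gamma$ in one dimension, namely $c_\gamma|t|^{-(1-2\gamma)}$ with $c_\gamma>0$.

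The first step is to sharpen \eqref{symbol-limit} by applying the expansion $\log\Gamma(z+a)=(z+a-\tfrac12)\log z-z+\tfrac12\log(2\pi)+O(1/z)$ to each of the four Gamma factors in $\Theta_m$, and exploiting the evenness $\Theta_m(\xi)=\Theta_m(-\xi)$ to cancel the odd-order $O(1/\xi)$ corrections, obtaining
\begin{equation*}
\Theta_m(\xi)=|\xi|^{2\gamma}\bigl(1+O(|\xi|^{-2})\bigr)\quad\text{as }|\xi|\to\infty.
\end{equation*}
Next, fix a smooth cutoff $\chi\in\mathcal{C}^\infty(\mathbb R)$ that vanishes on $[-1,1]$ and equals $1$ outside $[-2,2]$, and decompose
\begin{equation*}
\frac{1}{\Theta_m(\xi)-\kappa}=\frac{\chi(\xi)}{|\xi|^{2\gamma}}+R(\xi).
\end{equation*}
The remainder $R$ is bounded near the origin because $\Theta_m(0)>\kappa$, which follows from the Hardy-type bound $\kappa<\Lambda_{N,\gamma}\leq\Theta_m(0)$ (the monotonicity of $\Theta_m(0)$ in $m$ being immediate from \eqref{symbol} and the explicit form of $A_m,B_m$). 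At infinity, a direct computation using the expansion above gives $R(\xi)=O(|\xi|^{-4\gamma})+O(|\xi|^{-2\gamma-2})$.

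Taking inverse Fourier transforms, the Riesz piece contributes
\begin{equation*}
\int_{\mathbb R}e^{i\xi t}\frac{\chi(\xi)}{|\xi|^{2\gamma}}\,d\xi=c_\gamma|t|^{-(1-2\gamma)}-\int_{\mathbb R}e^{i\xi t}\frac{1-\chi(\xi)}{|\xi|^{2\gamma}}\,d\xi,
\end{equation*}
where the second integral is a bounded continuous function of $t$ since $(1-\chi)/|\xi|^{2\gamma}\in L^1(\mathbb R)$ (using $2\gamma<1$). If $\gamma\in(1/4,1/2)$, then $R\in L^1(\mathbb R)$ and its Fourier inverse is bounded, completing the proof. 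For $\gamma\in(0,1/4]$ I would iterate: successively peel off the terms $\kappa^{k}\chi(\xi)/|\xi|^{2\gamma(k+1)}$ from $R$, stopping at an index $K$ with $2\gamma(K+2)>1$. Each extracted piece produces, via the same Riesz inversion, a contribution of order $|t|^{-(1-2\gamma(k+1))}$ (or a $\log|t|$ term at a borderline $2\gamma(k+1)=1$), strictly less singular than $|t|^{-(1-2\gamma)}$, while the final remainder lies in $L^1(\mathbb R)$ with bounded Fourier inverse. Assembling all pieces yields $\mathcal G_m(t)=c_\gamma|t|^{-(1-2\gamma)}+o\bigl(|t|^{-(1-2\gamma)}\bigr)$ as $|t|\to 0$, proving the limit with $c=c_\gamma>0$.

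The main obstacle is handling the low-regularity range $\gamma\in(0,1/4]$, where a single subtraction does not produce an integrable remainder at infinity: one must verify that the iterative expansion of $1/(\Theta_m-\kappa)$ in powers of $\kappa/\Theta_m$ converges uniformly for $|\xi|$ large enough, that each intermediate Riesz inversion is controlled, and that the borderline cases $\gamma=1/(2(k+1))$, in which logarithmic corrections appear, remain strictly subdominant to the principal Riesz term.
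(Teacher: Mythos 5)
Your argument is correct, but it takes a genuinely different route from the paper's. The paper proves the limit by the substitution $\zeta=\xi t$, which converts the small-$|t|$ limit of $\mathcal G_m(t)/|t|^{-(1-2\gamma)}$ into the integral $\int_{\mathbb R}|t|^{-2\gamma}(\Theta_m(\zeta/t)-\kappa)^{-1}e^{i\zeta}\,d\zeta$, then splits the domain at $|\zeta|=t^\delta$ (with $2\gamma<\delta<1$): on $|\zeta|>t^\delta$ the frequency $\xi=\zeta/t$ is large and Stirling gives $|t|^{-2\gamma}/(\Theta_m(\zeta/t)-\kappa)\to|\zeta|^{-2\gamma}$, producing the Riesz constant $\int\cos\zeta\,|\zeta|^{-2\gamma}\,d\zeta>0$, while on $|\zeta|\leq t^\delta$ the monotonicity $\Theta_m(\zeta/t)\geq\Theta_m(0)>\kappa$ makes the contribution $O(t^{\delta-2\gamma})\to0$. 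You instead decompose the symbol itself, peeling off $\chi(\xi)/|\xi|^{2\gamma}$ and reducing to (i) the textbook Fourier inversion of the one-dimensional Riesz kernel and (ii) an $L^1$ bound on the remainder, iterating the subtraction via the geometric series $\sum_k\kappa^k\Theta_m^{-(k+1)}$ when $\gamma\leq 1/4$. Two remarks on the trade-off: your route requires the sharper asymptotics $\Theta_m(\xi)=|\xi|^{2\gamma}(1+O(\xi^{-2}))$, which goes beyond the two-sided bound stated in \eqref{symbol-limit} but which you correctly derive (the $O(1/\xi)$ Stirling correction is real, hence contributes only $O(\xi^{-2})$ once one takes $\Real(1/(B_m+i\xi/2))$); and it is heavier for small $\gamma$. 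In exchange, it is more self-contained — every step reduces to dominated convergence and known Riesz-potential asymptotics rather than a somewhat informal pointwise-limit-under-the-integral argument — and it yields as a byproduct a full asymptotic expansion of $\mathcal G_m$ at $t=0$, not merely the leading term. Both arguments identify the same positive constant $c=\int_{\mathbb R}\cos\zeta\,|\zeta|^{-2\gamma}\,d\zeta=2\Gamma(1-2\gamma)\sin(\pi\gamma)$.
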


\begin{proof}
Indeed, recalling \eqref{symbol-limit}, we have
\begin{equation*}
\begin{split}
\lim_{|t|\to 0} \frac{\int_{\mathbb R} \frac{1}{\Theta_m(\xi)-\lambda}\,e^{i\xi t}\,d\xi}{|t|^{-(1-2\gamma)}}&=\lim_{|t|\to 0}\int_{\mathbb R} \frac{1}{|t|^{2\gamma}\big[\Theta_m(\frac{\zeta}{t})-\lambda\big]}e^{i\zeta} \,d\zeta\\
&=\lim_{t\to 0}\left[\int_{\{|\zeta|>t^\delta\}}\ldots+\int_{\{|\zeta|\leq t^\delta\}}\ldots\right]=:\lim_{t\to 0}[I_1+I_2]
\end{split}
\end{equation*}
for some $2\gamma<\delta<1$.

For $I_1$, we use Stirling's formula \eqref{Stirling} to estimate
\begin{equation*}
I_1\sim \int_{\{|\zeta|>t^\delta\}}\frac{\cos(\zeta)}{|\zeta|^{2\gamma}}\,d\zeta\to c\quad\text{as}\quad t\to 0,
\end{equation*}
while for $I_2$,
\begin{equation*}
    |I_2|\leq \int_{\{|\zeta|\leq t^\delta\}} \frac{1}{|t|^{2\gamma}\big[\Theta_m(0)-\lambda\big]}\,d\zeta \to 0\quad\text{as}\quad t\to 0,
\end{equation*}
as desired.
\end{proof}

\begin{lemma}\label{lemma-increasing}
Define the function
$\Phi(x,\xi)=2^{2\gamma}\frac{\big|\Gamma\big(A_m+x+\tfrac{\xi}{2}i\big)\big|^2}
{\big|\Gamma\big(B_m+x+\tfrac{\xi}{2}i\big)\big|^2}.$
Then:
\begin{itemize}
\item[\emph{i.}] Fixed $x>B_m$,  $\Psi (x,\xi)$ is a (strictly) increasing function of $\xi>0$.
\item[\emph{ii.}] $\Psi(x,0)$ is a (strictly) increasing function of $x>0$.
\end{itemize}
\end{lemma}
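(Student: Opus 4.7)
The plan is to prove both monotonicity statements by logarithmic differentiation, converting ratios of Gamma functions into differences of digamma functions $\psi=\Gamma'/\Gamma$, and then exploiting the standard series representation
\[
\psi(z)=-\gamma_{\mathrm{E}}+\sum_{n=0}^{\infty}\left(\frac{1}{n+1}-\frac{1}{n+z}\right),
\qquad z\notin\{0,-1,-2,\ldots\},
\]
for termwise comparisons. First I would use $|\Gamma(a+ib)|^2=\Gamma(a+ib)\Gamma(a-ib)$ to rewrite
\[
\Phi(x,\xi)=2^{2\gamma}\,\frac{\Gamma(A_m+x+i\xi/2)\,\Gamma(A_m+x-i\xi/2)}{\Gamma(B_m+x+i\xi/2)\,\Gamma(B_m+x-i\xi/2)}.
\]
Note that by the definition \eqref{Am}, $B_m=\tfrac{1}{2}-\tfrac{\gamma}{2}+\tfrac{1}{2}\sqrt{(N/2-1)^2+\mu_m}\ge\tfrac{1-\gamma}{2}>0$, so under either hypothesis $x>B_m$ or $x>0$ the quantities $B_m+x$ and $A_m+x$ are positive and the digamma series above applies.

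For part (i), differentiating $\log\Phi$ in $\xi$ gives
\[
\frac{\partial_\xi\Phi(x,\xi)}{\Phi(x,\xi)}=\Im\psi(B_m+x+i\xi/2)-\Im\psi(A_m+x+i\xi/2).
\]
Taking imaginary parts of the digamma series at $z=a+ib$ with $a>0$ yields
\[
\Im\psi(a+ib)=\sum_{n=0}^{\infty}\frac{b}{(n+a)^2+b^2},
\]
which, for fixed $b>0$, is a strictly decreasing function of $a>0$. Applied with $a\in\{B_m+x,A_m+x\}$ (both positive) and $b=\xi/2>0$, together with $A_m-B_m=\gamma>0$, each term of the $B_m$-series strictly dominates the corresponding term of the $A_m$-series. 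Hence $\partial_\xi\Phi(x,\xi)>0$ for all $\xi>0$, establishing (i).

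For part (ii), at $\xi=0$ one has $\Phi(x,0)=2^{2\gamma}\Gamma(A_m+x)^2/\Gamma(B_m+x)^2$, so
\[
\frac{\partial_x\Phi(x,0)}{\Phi(x,0)}=2\bigl[\psi(A_m+x)-\psi(B_m+x)\bigr].
\]
The trigamma function $\psi'(z)=\sum_{n=0}^{\infty}(n+z)^{-2}$ is strictly positive on $(0,\infty)$, so $\psi$ is strictly increasing there; combined with $A_m>B_m$ and $B_m+x>0$, this yields $\partial_x\Phi(x,0)>0$ for $x>0$. There is no genuine obstacle in this argument: the only point requiring care is ensuring positivity of $B_m+x$ and $A_m+x$ so that the digamma series converges termwise, which follows immediately from $B_m>0$ and the stated hypotheses on $x$.
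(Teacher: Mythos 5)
Your proof is correct and follows essentially the same route as the paper: logarithmic differentiation converts the ratio of Gamma functions into differences of digamma values, whose sign is read off from the series expansion \eqref{expansion-digamma}, and the paper likewise just asserts "a similar argument" for part (ii), which you spell out via the trigamma series. The only difference is that you make the imaginary part explicit as $\Im\psi(a+ib)=\sum_{n\geq 0}\frac{b}{(n+a)^2+b^2}$ and note that the hypotheses $x>B_m$ (resp.\ $x>0$) are stronger than the positivity $B_m+x>0$ actually needed, both of which are harmless refinements of the paper's terser proof.
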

\begin{proof}
As in \cite{DelaTorre-Gonzalez}, section 7, one calculates using \eqref{expansion-digamma},
\begin{equation*}\begin{split}
\partial_\xi (\log \Theta_m( \xi))&=\text{Im} \left\{ \psi\left( B_m+x+\tfrac{\xi}{2} i) \right)-\psi\left( A_m +x+\tfrac{\xi}{2} i \right)\right\}\\
& = c \,\Imag\sum_{l=0}^\infty \left(\frac{1}{l+A_m+x+\frac{\xi}{2} i}-\frac{1}{l+B_m+x+\frac{\xi}{2} i}\right)>0,
\end{split}\end{equation*}
as claimed. A similar argument yields the monotonicity in $x$.
\end{proof}

Now we give the proof of Theorem \ref{thm:Hardy-potential}. Before we consider the general case, let us study first when $\kappa=0$, for which $\mathcal G_m$ can be computed almost explicitly. Fix $m=0,1,\ldots$. The poles of the function $\frac{1}{\Theta_m(z)}$ happen at points $z\in\mathbb C$ such that
$$\pm \frac{z}{2} i+B_m=-j,\quad \text{for }j\in \mathbb N\cup \{0\},$$
 i.e, at points $\{\pm i\sigma_j\}$ for
 \begin{equation}\label{poles-theta}\sigma_j:=
 2(B_m+j),\quad j=0,1,\ldots.
 \end{equation}
Then the integral in \eqref{Green100}
can be computed in terms of the usual residue formula. Define the region in the complex plane
\begin{equation}\label{contour}
\Omega_+=\{z\in\mathbb C\,:\, |z|<R, \Imag z>0 \}.
\end{equation}
A standard contour integration along $\partial\Omega_+$ gives, as $R\to \infty$, that
\begin{equation}\label{residue-formula}
\mathcal G_m(t)=2\pi i \sum_{j=0}^\infty \Res\Big(e^{i z t} \frac{1}{\Theta_m(z)},i\sigma_j\Big)=2\pi i \sum_{j=0}^\infty e^{- \sigma_j t}c_j,
\end{equation}
where $c_j=c_j(m)$ is the residue  of the function $\frac{1}{\Theta_m(z)}$ at the pole $i\sigma_j$.
This argument is valid as long as the integral in the upper semicircle tends to zero as $R\to \infty$. This happens when $t>0$ since $|e^{iz t}|=e^{-t\Imag z}$. For $t<0$, we need to modify the contour of integration to $\Omega_{-}=\{z\in\mathbb C\,:\, |z|<R, \Imag z<0 \}$,
and we have that, for $t<0$,
\begin{equation*}
\mathcal G_m(t)=2\pi i \sum_{j=0}^\infty c_j e^{\sigma_j t},
\end{equation*}
which of course gives that $\mathcal G_m$ is an even function in $t$. In any case $\mathcal G_m$ is exponentially decaying as  $|t|\to\infty$ with speed given by the first pole $|\sigma_0|= 2B_m$.

In addition, recalling the formula for the residues of the Gamma function from \eqref{residue-Gamma},
we have that
\begin{equation*}
\begin{split}
c_j&=\frac{1}{2^{2\gamma}}\, \frac{\Gamma(2B_m+j)}{\Gamma(A_m-B_m-j)\Gamma(A_m +B_m+j)}\lim_{z\to i \sigma_j} \Gamma(B_m+\tfrac{z}{2}i)(z-i\sigma_j)\\
&=\frac{2}{2^{2\gamma}}\frac{\Gamma\big(1-\gamma+\sqrt{(\frac{n}{2}-1)^2+\mu_m}+j\big)}
{\Gamma(\gamma-j)\Gamma\big(1+\sqrt{(\frac{n}{2}-1)^2+\mu_m}+j\big)}\frac{-i(-1)^{j}}{j!}
\end{split}\end{equation*}
for $j\geq 1$, which yields the (uniform) convergence of the series \eqref{Green1} by Stirling's formula \eqref{Stirling}.

\bigskip

Next, take a general $0<\kappa<\Lambda_{n,\gamma}$. The function $e^{iz t} \frac{1}{\Theta_m(z)-\kappa}$ is meromorphic in the complex plane $\mathbb C$. Moreover, if $z$ is a root of $\Theta_m(z)=\kappa$, so are $-z$, $\bar z$ and $-\bar z$.

Let us check then that there are no poles on the real line. Indeed, the first statement in Lemma \ref{lemma-increasing} implies that is enough to show that
\begin{equation*}\Theta_m(0)-\kappa>0.\end{equation*}
But again, from the second statement of the lemma, $\Theta_m(0)>\Theta_0(0)$, so we only need to look at the case $m=0$.
Finally, just note that $\Theta_0(0)=\Lambda_{N,\gamma}>\kappa$.

Next, we look for poles on the imaginary axis. For $\sigma>0$, $\Theta_m(i\sigma)=\Psi(-\sigma,0)$ and this function is (strictly) decreasing in $\sigma$. Moreover, $\Psi(0,0)=\Theta_m(0)=\Lambda_{N,\gamma}>\kappa$. Let $\sigma_0\in(0,+\infty]$ be the first point where $\Theta_m(i\sigma_0)=\kappa$. Then $\pm i\sigma_0$ are poles on the imaginary axis.
Moreover, the first statement of Lemma \ref{lemma-increasing} shows that there are no other poles in the strip $\{z\,:\,|\Imag(z)|\leq\sigma_0\}$.

\bigskip

Denote the rest of the poles by $z_j:=\tau_j+i\sigma_j$, $\tau_j-i\sigma_j$, $-\tau_j+i\sigma_j$ and $-\tau_j-i\sigma_j$, $j=1,2,\ldots$. Here we take $\sigma_j>\sigma_0>0$, $\tau_j\geq 0$. A detailed study of the poles is given in the Section \ref{subsection:technical}. In particular, for large $j$, all poles lie there on the imaginary axis, and their asymptotic behavior is similar to that of \eqref{poles-theta}.

Now we can complete the proof of Theorem \ref{thm:Hardy-potential}. Since we have shown that there is a spectral gap $\sigma_0$ from the real line, it is possible to modify the contour of integration in \eqref{residue-formula} to prove a similar residue formula:  for $t>0$,
\begin{equation}\label{formula-Green0}\begin{split}
\mathcal G_m(t)&=2\pi i \Res\Big(e^{i z t} \frac{1}{\Theta_m(z)-\kappa},i\sigma_0\Big)\\
&+2\pi i \sum_{j=1}^\infty \left[\Res\Big(e^{i z t} \frac{1}{\Theta_m(z)-\kappa},\tau_j+i\sigma_j\Big)+\Res\Big(e^{i z t} \frac{1}{\Theta_m(z)-\kappa},-\tau_j+i\sigma_j\Big)\right]\\
=&2\pi i c_0e^{-\sigma_0 t}+2\pi i \sum_{j=1}^\infty e^{-\sigma_j t} [c_j\cos(\tau_j t)+c_j'\sin(\tau_j t)].
\end{split}
\end{equation}
Here $-i(c_j'+ic_j)/2$ is the residue of the function $\frac{1}{\Theta_m(z)-\kappa}$ at the point $\tau_j+i\sigma_j$ if $\tau_j\neq0$ (if $\tau_j=0$ then we write instead $ic_j$ for this residue).

By the monotonicity properties of $\Theta_m$, 
we see that
\[\begin{split}
d_0
&=2\pi i c_0
=2\pi i
    \Res\left(\dfrac{1}{\Theta_m(z)-\kappa},i\sigma_0\right)\\
&=2\pi i\lim_{z\to i\sigma_0}
    \dfrac{z-i\sigma_0}{\Theta_m(z)-\kappa}
=-2\pi \lim_{\zeta\to \sigma_0}
    \dfrac{\zeta-\sigma_0}{\Psi(-\frac{\zeta}{2},0)-\kappa}
>0.
\end{split}\]
Using the symmetries of the  function $\Theta_m$, it can be easily shown that $c_j,c_j'$ are purely imaginary and that the residues at the points $\pm \tau_j+i\sigma_j$ (denoted by $r_{\pm}$) are related by $r_+=-\overline{r_-}$. Moreover, the asymptotic behavior for this residue is calculated in \eqref{asymptotics-residue}; indeed, $c_j\sim Cj^{-2\gamma}$. The convergence of the series is guaranteed, and we have proved the desired expression \eqref{Green1} (for $t<0$, $\mathcal G$ it is defined evenly).

We have produced a particular solution to  \eqref{equation-introduction-m}, given explicitly by \eqref{Green0}. Due to the presence of a non-trivial kernel, one has an infinite dimensional family of solutions. The main idea in the proof of \emph{iii.} is to modify \eqref{Green0} by adding elements in the kernel in order to obtain a particular solution that decays as $O(e^{-\delta t})$ as $t\to +\infty$ (at the expense of worsening the behavior as $t\to -\infty$). We obtain what is known as a multipole expansion. We start with a simple lemma:

\begin{lemma}\label{lemma:exponentials}
If $f_1(t)=O(e^{-a |t|})$ as $t\to \infty$, $f_2(t)=O(e^{-a_+ t})$ as $t\to +\infty$ and $f_2(t)=O(e^{a_{-} t})$ as $t\to -\infty$ for some $a,a_+>0$, $a_{-}>-a$, then
$$f_1*f_2 (t)=O(e^{-\min\{a,a_+\}t})\quad\text{as}\quad t\to+\infty.$$
\end{lemma}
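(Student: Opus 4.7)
The plan is a routine splitting of the convolution integral. Setting $b:=\min\{a,a_+\}$ and fixing $t>0$ large, I would write
\begin{equation*}
(f_1*f_2)(t) = \int_{-\infty}^{0} f_1(t-s)f_2(s)\,ds + \int_{0}^{t} f_1(t-s)f_2(s)\,ds + \int_{t}^{\infty} f_1(t-s)f_2(s)\,ds =: I_1 + I_2 + I_3,
\end{equation*}
and estimate each piece by choosing, in each region, the appropriate sign in the bounds $|f_1(\sigma)| \lesssim e^{-a|\sigma|}$, $|f_2(s)| \lesssim e^{-a_+ s}$ for $s>0$, and $|f_2(s)| \lesssim e^{a_- s}$ for $s<0$.

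For $I_1$, the variable $t-s$ is positive while $s$ is negative, so I would pull out $e^{-at}$ and reduce to $\int_{-\infty}^{0} e^{(a+a_-)s}\,ds$, which converges precisely because of the hypothesis $a_->-a$, giving $I_1 = O(e^{-at})$. For $I_3$, now $t-s<0$ and $s>0$, so I would pull out $e^{at}$ and integrate $e^{-(a+a_+)s}$ from $t$ to $\infty$, yielding $I_3 = O(e^{-a_+ t})$. For $I_2$, both arguments are positive and the integrand is bounded by $e^{-at}e^{(a-a_+)s}$; integration on $(0,t)$ gives $O(e^{-a_+ t})$ when $a>a_+$, $O(e^{-at})$ when $a<a_+$, and a harmless factor of $t$ in the borderline equality case. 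Summing yields the desired $O(e^{-bt})$ as $t\to+\infty$.

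There is essentially no obstacle: the only subtle point is the degenerate case $a = a_+$, where the middle integral picks up a polynomial factor $t$. This can be absorbed by replacing $a$ with $a-\epsilon$ for arbitrarily small $\epsilon>0$ (the hypothesis $a_->-a$ still holds), or equivalently by noting that in the applications of this lemma to Theorem~\ref{thm:Hardy-potential}, the relevant rates are either strictly ordered or one can shrink the decay rate of $f_1$ infinitesimally without changing the overall conclusion.
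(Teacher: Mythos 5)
Your proof is correct and uses exactly the same decomposition as the paper: the convolution integral is split over $(-\infty,0)$, $(0,t)$, and $(t,\infty)$, with the appropriate exponential bound applied in each region. Your additional remark on the borderline case $a=a_+$ is a helpful clarification that the paper glosses over with ``straightforward computations.''
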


\begin{proof}
Indeed, for $t>0$,
\begin{equation*}
\begin{split}
|f_1*&f_2(t)|=\Big|\int_{\mathbb R} f_1(t-t')f_2(t')\,dt'\Big|\\
&\lesssim \int_{-\infty}^0 e^{-a(t-t')} e^{a_{-} t'}\,dt'+\int_0^t  e^{-a (t-t')}e^{-a_+t'}\,ds+\int_t^{+\infty}e^{a (t-t')}e^{-a_+t'}\,dt'.
\end{split}
\end{equation*}
The lemma follows by straightforward computations.
\end{proof}

\begin{remark} It is interesting to observe that $a_{-}$ is not involved in the decay as $t\to +\infty$. Moreover, by reversing the role of $t$ and $-t$, it is possible to obtain the analogous statement for $t\to-\infty$ with the obvious modifications.
\end{remark}

Let us assume the bounds \eqref{decay-h}. Suppose first that $0<\delta\leq\sigma_0$. Then \eqref{Green0} is already our solution, since
\begin{equation*}
|w_m(t)|=\Big|\int_{\mathbb R}\mathcal G_m(t-t') h(t')\,dt'\Big |
\leq \int_{\mathbb R} O(e^{-\sigma_{0}|t-t'|})|h(t')|\,dt'
=O(e^{-\delta t}).
\end{equation*}
Similarly as $t\to -\infty$, $w(t)=O(e^{\min\{\delta_0,\sigma_0\}t})$.

Fix $m=0,1,\ldots$, and drop the subindex $m$ for simplicity. Now we take $\sigma_J<\delta\leq\sigma_{J+1}$ for some $J\geq 0$ and $\delta+\delta_0>0$ (and thus, $\delta_0>-\sigma_{J+1}$). The idea is to change our Fourier transform in order to integrate on a different horizontal line $\zeta\in\mathbb R+i\vartheta$ for some $\vartheta>0$. This is, for $w=w(t)$, set
\begin{equation*}
\widetilde w(\zeta)=\frac{1}{\sqrt{2\pi}}\int_{\mathbb R} e^{-i\zeta t}\, w(t)\,dt
=\frac{1}{\sqrt{2\pi}}\int_{\mathbb R} e^{-i(\xi+i\vartheta) t}\, w(t)\,dt=\hat w(\xi+i\vartheta),
\end{equation*}
whose inverse Fourier transform is
\begin{equation*}
w(t)=\frac{1}{\sqrt{2\pi}}\int_{\mathbb R+i\vartheta} e^{i\zeta t}  \,\widetilde w(\zeta)\,d\zeta,
\end{equation*}
Moreover, in the new variable $\zeta=\xi+i\vartheta$ we have that
\begin{equation*}
\widetilde h(\zeta)=\widetilde{P^{(m)}(w)}(\zeta)=(\Theta_m(\zeta)-\kappa)\widetilde w(\zeta).
\end{equation*}
Inverting this symbol we obtain a particular solution
\begin{equation*}
w(t)=\int_{\mathbb R+i\vartheta} \frac{1}{\Theta_m(\zeta)-\kappa} \widetilde h(\zeta) e^{i\zeta t}\,d\zeta
=\int_{\mathbb R} h(t') \widetilde{\mathcal G}(t-t')\,dt',
\end{equation*}
where
\begin{equation*}
\widetilde{\mathcal G}(t)=\int_{\mathbb R+i\vartheta} e^{i\zeta t} \frac{1}{\Theta_m(\zeta)-\kappa}\,d\zeta.
\end{equation*}
In order to evaluate this integral, for $t>0$ we need to replace the contour of integration from \eqref{contour} to $\partial \Omega_{\vartheta,+}$ for
\begin{equation*}
\Omega_{\vartheta,+}=\{z\in\mathbb C\,:\, |z|<R,\, \Imag z>\vartheta \},
\end{equation*}
in the case $\kappa=0$, and with the obvious modifications, for $\kappa\neq 0$.
We will take $\vartheta\in(\sigma_J,\delta)$.
Letting $R\to \infty$, this yields, similarly to \eqref{formula-Green0}, that
\begin{equation*}
\widetilde{\mathcal G}(t)=2\pi i \sum_{j=J+1}^\infty \Res\Big(e^{i z t} \frac{1}{\Theta_m(z)-\kappa},\pm\tau_j+i\sigma_j\Big)=2\pi i \sum_{j=J+1}^\infty e^{- \sigma_j t}c_j,\quad\text{for}\quad t>0,
\end{equation*}
where, as above, $c_j$ is the residue  of the function $\frac{1}{\Theta_m(z)-\kappa}$ at the pole $\tau_j+i\sigma_j$.

On the other hand,  for $t<0$, we need to change the contour of integration to $\partial \Omega_{\vartheta,-}$, where
\begin{equation*}
\Omega_{\vartheta,-} =\{z\in\mathbb C \;:\; -R<\Real(z)<R,\,-R<\Imag(z)<\vartheta\}.
\end{equation*}
We see again that, for $t<0$, $|e^{izt}|=e^{-t\Imag z}$, so the contour integral is well defined in the lower half-space $\{\Imag(z)<0\}$. However, for the region $\{\Imag(z)>0\}\cap \partial \Omega_{\vartheta,-}$ we need the additional estimate $\frac{1}{\Theta_m(z)-\kappa}=O(R^{-2\gamma})$, which follows directly from \eqref{symbol-limit}. Using the residue formula we obtain
\begin{equation*}
\begin{split}
\widetilde{\mathcal G}(t)&=2\pi i \left[\sum_{j=1}^{\infty}\Res\Big(e^{i z t} \frac{1}{\Theta_m(z)-\kappa},\pm\tau_j-i\sigma_j\Big)
+\sum_{j=0}^{J}\Res\Big(e^{i z t} \frac{1}{\Theta_m(z)-\kappa},\pm\tau_j+i\sigma_j\Big)\right]\\
&= 2\pi i\sum_{j=1}^{\infty} e^{ \sigma_j t} [c_j\cos (\tau_j t)+c_j'\sin (\tau_j t)]+2\pi i\sum_{j=1}^{J} e^{ -\sigma_j t}[c_j\cos (\tau_j t)+c_j'\sin (\tau_j t)]+2\pi ic_0e^{-\sigma_0 t},
\end{split}
\end{equation*}
for $t<0$. In order to check the asymptotic behavior of $w_m$ we write
\begin{equation*}
w_m(t)=\left[\int_{-\infty}^t +\int_{t}^{+\infty}\right]\widetilde{\mathcal  G}(t-t')h(t')\,dt'=W_1(t)+W_2(t),
\end{equation*}
where
\begin{equation*}
\begin{split}
W_1(t)&=\sum_{j=J+1}^{\infty} e^{-\sigma_j t}\int_{-\infty}^t  e^{\sigma_j t'}[d_j\cos (\tau_j (t-t'))+d_j'\sin (\tau_j (t-t'))]h(t')\,dt',\\
W_2(t)&= \sum_{j=1}^{\infty} e^{ \sigma_j t}\int_t^{+\infty} e^{ -\sigma_j t'} [d_j\cos (\tau_j (t-t'))+d_j'\sin (\tau_j (t-t'))]h(t')\,dt'\\
&+\sum_{j=0}^{J} e^{ -\sigma_j t}\int_t^{+\infty} e^{ \sigma_j t'}[d_j\cos (\tau_j (t-t'))+d_j'\sin (\tau_j (t-t'))]h(t')\,dt'.
\end{split}
\end{equation*}
Let us check its asymptotic behavior.  The growth hypothesis on $h$ from \eqref{decay-h} imply that
\begin{equation*}
\Big|\int_{\mathbb R}  h(t')\widetilde{\mathcal G}(t-t')\,dt'\Big |
=O(e^{-\delta t}),
\end{equation*}
as $t\to +\infty$. Thus we have found a different particular solution $w_m$ with the desired decay. On the other hand, when $t\to -\infty$, the worst term above is the second one in $W_2$, as expected. Let
\begin{equation*}
D_j(t)=\int_{t}^{+\infty}e^{\sigma_j t'}[d_j\cos (\tau_j (t-t'))+d_j'\sin (\tau_j (t-t'))]h(t')\,dt', \quad j=1,\ldots,J.
\end{equation*}
Let us assume for a moment, that $\tau_j=0$. Two cases can happen as $t\to -\infty$:
\begin{itemize}
\item either $D_j(t)\to \tilde D_j\neq 0$, so $w_m(t)\sim e^{-\sigma_j t}$, or
\item $D_j(-\infty)=0$, so $w_m(t)=O(e^{\delta_0 t})$,  by L'H\^{o}pital's rule,
\end{itemize}
and this completes the proof of \eqref{equation100}. When $\tau_j\neq 0$ we need to write
\begin{equation}\label{modified}
\begin{split}
D_j(t)&=d_j\Big[\cos(\tau_j t)\int_{t}^{+\infty}e^{\sigma_j t'}\cos(\tau_j t')h(t')\,dt'+\sin(\tau_j t)\int_{t}^{+\infty}e^{\sigma_j t'}\sin(\tau_j t')h(t')\,dt'\Big]\\
&+d_j'\Big[\sin(\tau_j t)\int_{t}^{+\infty}e^{\sigma_j t'}\cos(\tau_j t')h(t')\,dt'-\cos(\tau_j t)\int_{t}^{+\infty}e^{\sigma_j t'}\sin(\tau_j t')h(t')\,dt'\Big],
\end{split}
\end{equation}
but we have an analogous result.

This completes the proof of Theorem \ref{thm:Hardy-potential}.

\qed

\begin{remark}
We now look at the proof of Theorem \ref{thm:Hardy-potential}, \emph{iii.}, in terms of the variation of constants method and Fredholm theory. Starting from equations \eqref{Green0}-\eqref{Green1}, we estimate, for $t>0$,
\begin{equation}\label{equation12}
\begin{split}
\Big|\int_{\mathbb R} \Big[\mathcal G_m(t-t') -\sum_{j=0}^J  e^{-\sigma_j |t-t'|}(d_j\cos(\tau_j |t-t'|)+d_j'\sin(\tau_j |t-t'|)\Big]h(t')\,dt'\Big |&\\
\leq \int_{\mathbb R} O(e^{-\sigma_{J+1}|t-t'|})|h(t')|\,dt'
&=O(e^{-\delta t}).
\end{split}
\end{equation}
Let
\begin{equation*}
\varphi_j(t):=\int_{\mathbb R}e^{-\sigma_j |t-t'|}(d_j\cos(\tau_j |t-t'|)+d_j'\sin(\tau_j |t-t'|))h(t')\,dt', \quad j=0,\ldots,J,
\end{equation*}
so
\begin{equation}\label{formulawm}
w_m(t)=\sum_{j=0}^J  \varphi_j(t)+O(e^{-\delta t}) \quad\text{as}\quad t\to+\infty.
\end{equation}
 The underlying idea is that, if one wishes to improve the decay of \eqref{formulawm} as $t\to +\infty$, one needs to add to this particular solution  elements in the kernel, say, $C_je^{-\sigma_j}$ for a precise constant $C_j$, $j=0,\ldots,J$. But this worsens the decay at $t\to -\infty$. Let us make this computation more precise in the case that $\tau_j=0$. The general case is similar, but lengthy.
\begin{equation}\label{equation10-1}\begin{split}
\varphi_j(t)&=d_j\int_{\mathbb R} e^{-\sigma_j|t-t'|}h(t')\,dt'\\
&=d_j\int_{-\infty}^t e^{-\sigma_j(t-t')}h(t')\,dt'+d_j\int_{t}^{+\infty} e^{\sigma_j(t-t')}h(t')\,dt'.
\end{split}\end{equation}
The first integral in the right hand side above can be rewritten using that, by Fredholm theory, the following compatibility condition must be satisfied:
\begin{equation}\label{Fredholm}
0=\int_{\mathbb R} e^{\sigma_j t'}h(t')\,dt'=\int_{-\infty}^t \ldots +\int_{t}^{+\infty}\ldots.
\end{equation}
Thus we have
\begin{equation*}
\varphi_j(t)=d_j\int_{t}^{+\infty} \left[-e^{-\sigma_j(t-t')}+e^{\sigma_j(t-t')} \right]h(t')\,dt'.
\end{equation*}
Then, except for the factor $d_j$ in front, this is the standard variation of constants formula to produce a particular solution for the second order ODE
\begin{equation*}
\varphi_j''(t)=\sigma_j^2\varphi_j(t)-2\sigma_j h(t),
\end{equation*}
and in particular shows that $\varphi_j(t)$ decays like $h(t)$ as $t\to +\infty$, which is $O(e^{-\delta t})$. One could also check behavior as $t\to -\infty$, similarly to the above.
\end{remark}

\subsection{Beyond the stability regime}

Now we look at the proof of Proposition \ref{prop:unstable}. As we have mentioned, in order to simplify the presentation, we only consider the projection $m=0$. Let $\Lambda_{N,\gamma}<\kappa<\Lambda'_{N,\gamma}$ be the region where we have exactly two real poles at $\tau_0$ and $-\tau_0$, for $\tau>0$. For this, just note that, for real $\xi>0$, Lemma \ref{lemma-increasing} shows that $\Theta_0(\xi)$ is an increasing function in $\xi$, and it is even. Denote the rest of the poles as in the previous subsection, for $j=1,2,\ldots$.

We proceed as in the proof of Theorem \ref{thm:Hardy-potential} and write
\begin{equation}\label{solution-radial}
w_0(t)=\int_{\mathbb R} \frac{1}{\Theta_0(\xi)-\kappa} \hat h(\xi) e^{i\xi t}\,d\xi=\int_{\mathbb R} h(t') \mathcal G_0(t-t')\,dt'.
\end{equation}
There are two real poles now. So in order to give sense to the integral above we can integrate on a different horizontal line, just below the horizontal axis, or to regularize the integral by some $\varepsilon>0$ small. For this, calculate
\begin{equation*}
\mathcal G_0^\varepsilon(t)=\int_{\mathbb R} e^{i\xi t} \frac{1}{\Theta_0(\xi-\varepsilon i)-\kappa}\,d\xi.
\end{equation*}
The poles are now $\tau_0+\varepsilon i$ and $\tau_0-\varepsilon i$.  Define the region
$\Omega_+=\{z\in\mathbb C\,:\, |z-(\tau+\varepsilon i)|<R, \Real{(z)}>0 \}$.
A standard contour integration along $\partial\Omega_+$ gives, as $R\to \infty$, that for $t>0$,
\begin{equation*}
\mathcal G_0^\varepsilon(t)=2\pi i c_0^\varepsilon e^{i(\tau_0+\varepsilon i) t}+2\pi i \sum_{j=1}^\infty e^{-\sigma^\varepsilon_j t}  [c_j^\varepsilon\cos(\tau^\varepsilon_j t) +c_j'^\varepsilon\sin(\tau^\varepsilon_j t)]
\end{equation*}
where
\begin{equation*}
c_0^\varepsilon=\Res\Big( \frac{1}{\Theta_0(z-\varepsilon i)-\kappa},\tau_0+\varepsilon i\Big).
\end{equation*}
Taking the limit $\varepsilon \to 0$,
\begin{equation*}
\mathcal G_0^\varepsilon(t)\to \mathcal G_0(t)=2\pi i [c_0 \cos(\tau_0t)+c_0'\sin(\tau_0 t)]+2\pi i \sum_{j=1}^\infty e^{-\sigma_j t} [c_j \cos(\tau_jt)+c_j'\sin(\tau_j t)],
\end{equation*}
 for $t>0$, and extended evenly to the real line.

Let us simplify this formula for the $j=0$ case. Using Fredholm theory, to have a decaying solution of equation \eqref{equation-radial}, $h$ must satisfy the compatibility condition
\begin{equation*}\begin{split}
0&=e^{-i\tau_0 t}\int_{\mathbb R} h(t')e^{i\tau_0 t'}\,dt'=\int_{\mathbb R } h(t-t') e^{-i\tau_0 t'}\,dt'\\
&=\int_{0}^{+\infty} h(t-t') e^{-i\tau_0 t'}\,dt'+\int_{-\infty}^0 h(t-t') e^{-i\tau_0 t'}\,dt'.
\end{split}
\end{equation*}
Substitute this expression into the formula below (and take imaginary part)
\begin{equation*}\begin{split}
\int_{0}^{+\infty} &h(t-t') e^{-i\tau_0 t'}\,dt'+\int_{-\infty}^0 h(t-t')e^{i\tau_0 t'}\,dt'\\
&=\int_{-\infty}^0 h(t-t') e^{-i\tau_0 t'}\,-\int_{-\infty}^0 h(t-t')e^{i\tau_0 t'}\,dt'
=-2i\int_{t}^{+\infty} h(t')\sin(\tau_0 (t-t'))\,dt'.
\end{split}\end{equation*}
Arguing as in the proof of Theorem \ref{thm:Hardy-potential} we obtain \emph{ii}. The only difference with the stable case is that the $j=0$ term in the summation in formula \eqref{equation12} needs to be replaced by
\begin{equation*}
\int_{t}^{+\infty}\sin(\tau_0 (t-t'))h(t')\,dt'
\end{equation*}
(note that $c_0=0$). A similar argument yields  \emph{iii.} too.

\subsection{A-priori estimates in weighted Sobolev spaces}\label{subsection:estimates-Sobolev}


For $s>0$, we define the norm in $\mathbb R\times\mathbb S^{N-1}$ given by
\begin{equation}\label{norm-0}
\|w\|^2_s=\sum_{m=0}^\infty\int_{\mathbb R} (1+\xi^2+m^2)^{2s} |\hat w_m(\xi)|^2\,d\xi.
\end{equation}
These are homogeneous norms in the variable $r=e^{-t}$, and formulate the Sobolev counterpart to the H\"older norms in $\mathbb R^N\setminus\{0\}$ from Section \ref{section:function-spaces}. That is, for $ w^*(r):=w(t)$ and $s$ integer we have
\begin{equation*}
\begin{split}
&\|w\|^2_0=\sum_{m=0}^\infty\int_0^\infty |\tilde w^*_m|^2 r^{-1}\,dr,\\
&\|w\|^2_1=\sum_{m=0}^\infty\int_0^\infty (|\tilde w^*_m|^2+|\partial_r\tilde w^*_m|^2 r^2)\,r^{-1}\,dr,\\
&\|w\|^2_2=\sum_{m=0}^\infty\int_0^\infty (|\tilde w^*_m|^2+|\partial_r\tilde w^*_m|^2 r^2+|\partial_{rr}\tilde w^*_{m}|^2r^4)\,r^{-1}\,dr.
\end{split}
\end{equation*}
One may also give the corresponding weighted norms, for a weight of the type $r^{-\vartheta}=e^{\vartheta t}$. Indeed, one just needs to modify the norm \eqref{norm-0} to
\begin{equation*}
\|w\|^2_{s,\vartheta}=\sum_{m=0}^\infty\int_{\mathbb R+i\vartheta} (1+\xi^2+m^2)^{2s} |\hat w_m(\xi)|^2\,d\xi.
\end{equation*}
For instance,  in the particular case $s=1$, this is
\begin{equation*}
\|w\|^2_{1,\vartheta}=\sum_{m=0}^\infty\int_0^\infty (| w^*_m|^{2}r^{-2\vartheta}+|\partial_r w^*_m|^2 r^{2-2\vartheta})\,r^{-1}\,dr.
\end{equation*}

\begin{proposition}
Let $s\geq 2\gamma$, and fix $\vartheta\in\mathbb R$ such that the horizontal line $\mathbb R+i\vartheta$ does not cross any pole $\tau_j^{(m)}\pm i\sigma_j^{(m)}$, $j=0,1,\ldots$, $m=0,1,\ldots$. If $w$ is a solution to
\begin{equation*}
\tilde{\mathcal L} w=h\quad\text{in } \mathbb R\times \mathbb S^{N-1}
\end{equation*}
 of the form \eqref{Green0}, then
\begin{equation*}\label{estimate-sobolev}
\|w\|_{s,\vartheta}\leq C\|h\|_{s-2\gamma,\vartheta}
\end{equation*}
for some constant $C>0$.
\end{proposition}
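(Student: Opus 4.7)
The plan is to reduce the estimate to a uniform pointwise bound on the reciprocal of the symbol along the horizontal contour $\mathbb R + i\vartheta$. The solution \eqref{Green0}, obtained by inverting $\tilde{\mathcal L}_m$ via the shifted Fourier transform exactly as in the proof of Theorem \ref{thm:Hardy-potential} iii., satisfies on the shifted line
\[
\widetilde{w}_m(\zeta) = \frac{\widetilde{h}_m(\zeta)}{\Theta_m(\zeta)-\kappa},\qquad \zeta=\xi+i\vartheta,
\]
for every $m=0,1,\ldots$. Plancherel along $\mathbb R + i\vartheta$ applied to each projection, combined with the definition of $\|\cdot\|_{s,\vartheta}$, then reduces the target estimate to the symbol bound
\[
|\Theta_m(\zeta)-\kappa|\ \gtrsim\ (1+\xi^2+m^2)^{\gamma} \qquad \text{uniformly in } m\in\mathbb N,\ \xi\in\mathbb R.
\]

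To establish the symbol bound I would split $(m,\xi)$-space into a compact region and its complement. On the unbounded region, I would apply Stirling's formula to the four Gamma factors defining $\Theta_m(\zeta)$, using $A_m-B_m=\gamma$ together with $A_m\asymp 1+\sqrt{\mu_m}$, to upgrade the asymptotic \eqref{symbol-limit} to the uniform statement $\Theta_m(\zeta)\sim (1+\xi^2+\mu_m)^{\gamma}$ as $m+|\xi|\to\infty$. Since this grows without bound, it dominates the constant $\kappa$ outside a sufficiently large compact set, yielding the required bound there. On the compact leftover region, the hypothesis that $\mathbb R+i\vartheta$ avoids every pole $\pm\tau_j^{(m)}\pm i\sigma_j^{(m)}$, together with the discreteness of the pole set from Theorem \ref{thm:Hardy-potential} i., ensures that $\Theta_m(\zeta)-\kappa$ is continuous and nonvanishing on the finite union of lines corresponding to the finitely many remaining $m$'s; a standard compactness argument then provides a positive lower bound. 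Inserting the symbol bound back into Plancherel yields $\|w\|_{s,\vartheta}^2\leq C\|h\|_{s-2\gamma,\vartheta}^2$, as desired. The hypothesis $s\geq 2\gamma$ enters only to guarantee that the right-hand norm has nonnegative order.

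The main obstacle I anticipate is the uniformity in $m$ of the Stirling expansion. One needs the error term in $\Theta_m(\zeta)\sim (A_m^2+\zeta^2/4)^{\gamma}$ to be controlled \emph{independently} of $m$ and of the (fixed) shift $\vartheta$, so that the ``large'' region on which the asymptotic dominates $\kappa$ can be chosen once and for all; otherwise the compact complementary set would also depend on $m$ and the compactness step would fail. This difficulty is resolved by observing that the four Stirling arguments $A_m\pm\tfrac{\zeta}{2}i$ and $B_m\pm\tfrac{\zeta}{2}i$ all remain in a sector of $\mathbb C$ bounded away from the negative real axis (since $B_m\geq B_0>0$ for all $m$ and $\vartheta$ is fixed), which is precisely the regime where Stirling's remainder $\Gamma(z+a)/\Gamma(z+b)=z^{a-b}(1+O(|z|^{-1}))$ is uniform in the base point $z$, making the approximation locally uniform in $(m,\xi)$.
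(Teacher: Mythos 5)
Your proof is correct and follows the same route as the paper's: project onto spherical harmonics, pass to Plancherel on the shifted line $\mathbb R+i\vartheta$, and invoke the uniform symbol lower bound $|\Theta_m(\zeta)-\kappa|\gtrsim(1+\xi^2+m^2)^{\gamma}$. The paper simply cites \eqref{symbol-limit} for this bound, whereas you spell out the Stirling uniformity in $m$ together with the compactness argument on the complementary bounded region (using that the line avoids the poles) --- a welcome elaboration of a step the paper leaves implicit, but the underlying mechanism is identical.
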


\begin{proof}
We project over spherical harmonics $w=\sum_m w_m E_m$, where $w_m$ is a solution to $\tilde{\mathcal L}_m w_m=h_m$.  Assume, without loss of generality, that $\vartheta=0$, otherwise replace the Fourier transform $\hat \cdot$ by $\widetilde \cdot$ on a different horizontal line. In particular, $\hat w_m(\xi)=(\Theta_m(\xi)-\kappa)^{-1}\hat h_m(\xi)$, and we simply estimate
\begin{equation*}
\begin{split}
\|w\|_s^2&=\sum_{m=0}^\infty\int_{\mathbb R} \frac{(1+|\xi|^2+m^2)^{2s}}{|\Theta_m(\xi)-\kappa|^2} |\hat h_m(\xi)|^2\,d\xi\leq C \sum_{m=0}^\infty\int_{\mathbb R}(1+|\xi|^2+m^2)^{2s-4\gamma}|\hat h_m(\xi)|^2\,d\xi\\
&=C\|h\|^2_{s-2\gamma},
\end{split}
\end{equation*}
where we have used that
\begin{equation*}
\frac{(1+|\xi|^2+m^2)^{2s}}{|\Theta_m(\xi)-\kappa|^2}\leq C (1+|\xi|^2+m^2)^{2s-4\gamma},
\end{equation*}
which follows from \eqref{symbol-limit}.
\end{proof}

\subsection{An application to a non-local ODE}\label{subsection:two-dimensional}

For simplicity, denote by   $u_1$ be the fast decaying solution from Proposition \ref{existence} for $\epsilon=1$. We would like to study the kernel for the linearization of \eqref{Lane-Emden} in the space of radial functions, this is,
\begin{equation}\label{eq:linearized-phi}
L_1\varphi:=(-\Delta)^\gamma \varphi -p(u_1)^{p-1}\varphi=0,\quad \varphi=\varphi(r).
\end{equation}
In terms of the $t$ variable we can write as follows: set $r=e^{-t}$, setting  $u_1=r^{-\frac{2\gamma}{p-1}}v_1$, $w=r^{\frac{N-2\gamma}{2}}\phi$, we can write this equation as
\begin{equation*}
\mathcal L_1 w:= r^{\frac{n+2\gamma}{2}}L_1 \varphi=P_\gamma w-V(t)w,\quad w=w(t),
\end{equation*}
for the potential
\begin{equation}\label{potential11}
V(t)=p(v_1)^{p-1}=
\begin{cases}
&\kappa+O(e^{-qt}),\quad \text{if } t\to+\infty, \\
&O(e^{q_1 t}),\quad \text{if } t\to -\infty,
\end{cases}
\end{equation}
where we have set $\kappa=pA_{N,p,\gamma}^{p-1}$, and for some $q,q_1>0$.

The following Hamiltonian identity will be useful  (the proof is exactly the same as in Theorem \ref{thctthamiltonian}):

\begin{lemma}\label{lemma:Wronskian}
Let $w_i=w_i(t)$, $i=1,2$, be two solutions to $\mathcal L_*w=0$ and set $W_i$, $i=1,2$, the corresponding extensions from Proposition \ref{prop:new-defining-function} (for simplicity, the special defining function $\rho^*$ will just be  denoted by $\rho$).
Then, the Hamiltonian quantity
\begin{equation*}\label{Hamiltonian2}
\begin{split}
\tilde H_\gamma(t):=\int_{0}^{\rho_0}  {\rho}^{1-2\gamma} e(\rho)[W_1\partial_t W_2-W_2\partial_t W_1]\,d\rho\\
\end{split}
\end{equation*}
 is constant along trajectories. Here $e(\rho)$ is a smooth function satisfying $e(\rho)\to 1$ as $\rho \to 0$.
\end{lemma}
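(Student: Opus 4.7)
My plan is to mirror the argument of Theorem \ref{thctthamiltonian}, adapted to the bilinear linear setting. Since each $w_i$ solves the linear equation $\mathcal L_* w_i = 0$, Proposition \ref{prop:new-defining-function} (applied to the operator obtained from $P_\gamma^{g_0}$ after absorbing the multiplicative potential $V(t)$ into the Neumann condition) provides an extension $W_i$ of $w_i$ to $(0,\rho_0)\times \mathbb R$ satisfying a pure divergence-form equation with no zeroth order term in the bulk. Writing out the divergence in the coordinates $(\rho,t)$ (in the radial case only these variables are involved), this equation reads
\begin{equation*}
\partial_\rho\bigl(\rho^{1-2\gamma} e_1(\rho)\,\partial_\rho W_i\bigr) + \rho^{1-2\gamma} e_2(\rho)\,\partial_{tt} W_i = 0,
\end{equation*}
with smooth positive coefficients $e_1,e_2$ inherited from $\bar g^*$ (and satisfying $e_2(\rho)\to 1$ as $\rho\to 0$), together with the Neumann condition
\begin{equation*}
\lim_{\rho\to 0}\rho^{1-2\gamma}\partial_\rho W_i = \tilde d_\gamma^{-1}\bigl(Q_\gamma^{g_0}-V(t)\bigr)\,w_i,
\end{equation*}
which comes from imposing $\mathcal L_* w_i=0$ at the boundary.

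Next, I would set $e(\rho)=e_2(\rho)$ in the definition of $\tilde H_\gamma(t)$ and differentiate under the integral sign. The product rule cancels the symmetric term $\partial_t W_1\partial_t W_2-\partial_t W_2\partial_t W_1$, leaving
\begin{equation*}
\partial_t \tilde H_\gamma(t) = \int_0^{\rho_0}\rho^{1-2\gamma}e_2(\rho)\bigl[W_1\,\partial_{tt}W_2 - W_2\,\partial_{tt}W_1\bigr]\,d\rho.
\end{equation*}
Using the extension PDE to replace $\rho^{1-2\gamma}e_2(\rho)\,\partial_{tt}W_i$ by $-\partial_\rho(\rho^{1-2\gamma}e_1(\rho)\,\partial_\rho W_i)$ and integrating by parts in $\rho$, the interior cross terms cancel by the antisymmetry of the Wronskian, yielding
\begin{equation*}
\partial_t\tilde H_\gamma(t) = -\Bigl[\rho^{1-2\gamma}e_1(\rho)\bigl(W_1\,\partial_\rho W_2 - W_2\,\partial_\rho W_1\bigr)\Bigr]_{\rho=0}^{\rho=\rho_0}.
\end{equation*}

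Finally, I would check both boundary contributions vanish. At $\rho=\rho_0$ the regularity of $W_i$ (precisely as used in Theorem \ref{thctthamiltonian} at the corresponding endpoint) forces this endpoint to contribute zero. At $\rho=0$, substituting the Neumann condition gives
\begin{equation*}
\lim_{\rho\to 0}\rho^{1-2\gamma}\bigl(W_1\partial_\rho W_2 - W_2\partial_\rho W_1\bigr)
= \tilde d_\gamma^{-1}\bigl(Q_\gamma^{g_0}-V(t)\bigr)\bigl(w_1 w_2 - w_2 w_1\bigr) = 0,
\end{equation*}
the crucial point being that $V(t)$, although non-autonomous, enters as a scalar multiplier and is therefore killed by the bilinear antisymmetrization — this is exactly the mechanism by which the classical Wronskian is conserved for a second-order linear ODE with variable coefficients. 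Hence $\partial_t\tilde H_\gamma\equiv 0$. The only subtle point is to ensure that the special defining function $\rho^*$ from Proposition \ref{prop:new-defining-function} is still available when the equation has the $t$-dependent potential $V(t)$; but since $V(t)$ only modifies the Neumann data, the construction of $\rho^*$ (and thus of $e_1, e_2$) is unaffected, and the identification of $e(\rho)=e_2(\rho)$ follows exactly as in the derivation of \eqref{e} from Theorem \ref{thctthamiltonian}.
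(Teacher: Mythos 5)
Your proposal is correct and takes essentially the same approach as the paper, which literally states that the proof of Lemma~\ref{lemma:Wronskian} ``is exactly the same as in Theorem~\ref{thctthamiltonian}.'' Your fleshed-out version---differentiate the weighted Wronskian, replace $\rho^{1-2\gamma}e_2(\rho)\partial_{tt}W_i$ via the pure divergence extension PDE from Proposition~\ref{prop:new-defining-function}, integrate by parts in $\rho$, and observe that the $\rho=\rho_0$ boundary term vanishes by interior regularity while the $\rho\to 0$ term vanishes because the $t$-dependent potential in the Neumann condition is killed by the antisymmetry $w_1w_2-w_2w_1=0$---is the intended argument, with the correct identification $e=e_2$ (i.e.\ $e_2^*$ from \eqref{e}, which tends to $1$ as $\rho\to 0$).
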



First, let us look  at  non-singular solutions of \eqref{Lane-Emden}. Since this equation is invariant under rescaling, it is well known that
\begin{equation*}
\varphi_*:=r\partial_r u_1+\tfrac{p-1}{2\gamma}u_1
\end{equation*}
belongs to the kernel of $L_*$. If we set
\begin{equation*}
w_*=r^{\frac{N-2\gamma}{2}}\varphi_*,
\end{equation*}
then $w_*$ is a solution to $\mathcal L_1 w=0$. Moreover, in the stable case,
\begin{equation*}
w_*(t)\sim \begin{cases}
e^{-\sigma_0 t}\quad\text{as }t\to+\infty,\\
e^{-\frac{N-2\gamma}{2} t}\quad\text{as }t\to-\infty.
\end{cases}
\end{equation*}
In the unstable case, the picture is more complicated but has a similar behavior (see \cite{survey-ODE}).

From Lemma \ref{lemma:Wronskian} above we have a non-degeneracy result, which will be used in the proof of Proposition \ref{injectivity}:

\begin{corollary}\label{cor:other-solutions}
Any other solution to $\mathcal L_* w=0$ that decays both at $\pm \infty$ must be a multiple of $w_*$.
\end{corollary}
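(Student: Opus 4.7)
The strategy is to use the Hamiltonian identity of Lemma~\ref{lemma:Wronskian} as a surrogate Wronskian and to force it to vanish by exploiting the decay at both ends. Let $w$ be a solution of $\mathcal{L}_* w = 0$ that decays at $\pm\infty$. I lift both $w$ and $w_*$ to the extensions $W$ and $W_*$ via Proposition~\ref{prop:new-defining-function} (with the special defining function $\rho$), and apply Lemma~\ref{lemma:Wronskian} to the pair $(w, w_*)$ to obtain that
\begin{equation*}
\tilde H_\gamma[w,w_*](t) := \int_0^{\rho_0}\rho^{1-2\gamma} e(\rho)\bigl[W_*\,\partial_t W - W\,\partial_t W_*\bigr](t,\rho)\,d\rho
\end{equation*}
is constant in $t$.

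Next I plan to show $\tilde H_\gamma \equiv 0$ by sending $t\to\pm\infty$. The Frobenius-type analysis of Section~\ref{section:Hardy} shows that both $w$ and $w_*$ decay exponentially at each end, with rates governed by the indicial roots of the asymptotic constant-coefficient operators $P_\gamma^{g_0}-\kappa$ at $+\infty$ and $P_\gamma^{g_0}$ at $-\infty$. Standard degenerate-elliptic regularity for the extension problem propagates this decay uniformly in $\rho$ to $W,W_*$, and, by differentiating the extension equation in $t$, to $\partial_t W,\partial_t W_*$. Thus the integrand in $\tilde H_\gamma(t)$ is exponentially small as $t\to\pm\infty$, and constancy forces $\tilde H_\gamma \equiv 0$.

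To upgrade $\tilde H_\gamma \equiv 0$ to $w \in \mathrm{span}(w_*)$, I expand $w, w_*$ at $t\to+\infty$ in the indicial basis $\{e^{-\sigma_j t}, e^{-\sigma_j t}\cos(\tau_j t), e^{-\sigma_j t}\sin(\tau_j t)\}_{j\ge 0}$ of the asymptotic model, with coefficients $a_j, a_j^*$ respectively. The leading $e^{-2\sigma_0 t}$-contribution to the integrand of $\tilde H_\gamma(t)$ cancels identically (the Wronskian vanishes when both solutions share the same leading mode), while at the next order the integrand contains a factor $(\sigma_j - \sigma_0)(a_0 a_j^* - a_0^* a_j)$ times a non-zero weighted integral in $\rho$. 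Since $\tilde H_\gamma \equiv 0$ must hold order by order as $t \to +\infty$, this forces $a_j/a_j^* = a_0/a_0^* =: c$ for every $j\ge 0$. Setting $\tilde w := w - c\, w_*$, I obtain a solution still decaying at $-\infty$ and with super-exponential decay at $+\infty$. The main obstacle is then to conclude $\tilde w \equiv 0$ from this super-exponential decay at one end: I would close the argument by invoking a unique continuation principle for $\mathcal{L}_*$ in the spirit of \cite{Frank-Lenzmann-Silvestre}, analogous to the one underlying Proposition~\ref{two-dimensional}, which rules out any nontrivial solution of the linearized equation decaying faster than all indicial modes at a single end. The delicate point — and the technical heart of the proof — is that the infinite sequence of complex indicial roots makes the order-by-order matching genuinely infinite-dimensional, so care is needed to ensure convergence of the formal expansion at each step.
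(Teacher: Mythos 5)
The paper states this corollary without a separate proof, treating it as an immediate consequence of Lemma~\ref{lemma:Wronskian}; your outline — Hamiltonian constant, decays to zero at both ends hence $\equiv 0$, extract proportionality of the indicial coefficients, then close with unique continuation — is exactly the non-local Wronskian argument the authors have in mind, and it matches the way these same ingredients are invoked later in the proofs of Proposition~\ref{injectivity} and Proposition~\ref{two-dimensional}. In particular, you are right that the Hamiltonian alone does not give proportionality pointwise: it gives proportionality of the asymptotic expansions, and the unique continuation step (Fall--Felli / R\"uland, cited in those proofs) is indispensable to upgrade this to $\tilde w\equiv 0$. So your final step is not an optional add-on but the correct closure.

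Where your argument could be tightened is the order-by-order matching. Rather than equating all orders of $\tilde H_\gamma$ at once — which, as you note, is entangled by coincidences $\sigma_i+\sigma_j=\sigma_k+\sigma_l$ and by the complex pairs $\sigma_j\pm i\tau_j$ — it is cleaner to iterate one mode at a time: set $c=a_0/a_0^*$, so that $\tilde w:=w-cw_*$ has leading decay $e^{-\sigma_J t}$ at $t\to+\infty$ with $J\geq 1$; then $\tilde H_\gamma[\tilde w,w_*]\equiv 0$ and its leading order as $t\to+\infty$ is the \emph{single} term
$(\sigma_0-\sigma_J)\,a_0^*\,\tilde a_J\,M_{0J}\,e^{-(\sigma_0+\sigma_J)t}$ with $M_{0J}:=\int_0^{\rho_0}\rho^{1-2\gamma}e(\rho)f_0 f_J\,d\rho$, because pairing the leading mode of $w_*$ with the leading mode of $\tilde w$ gives the strictly smallest combined exponent (no coincidence is possible at the top order since $\{\sigma_j\}$ is strictly increasing). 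This forces $\tilde a_J=0$, and you iterate. Two points that still need to be said explicitly: $M_{0J}\neq 0$ (which follows from positivity of the extension profiles $f_j$ and the metric factor $e(\rho)$ on $(0,\rho_0)$), and in the unstable regime the first indicial pair is complex, so the leading coefficient lives in $\mathbb{C}$ (equivalently, vanishing of both the $\cos$ and $\sin$ amplitudes) — the bookkeeping is the same but the statement "the leading coefficient vanishes" is a two-real-dimensional condition there.
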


We conclude the section which a statement that is not needed in the proof of the main theorem, but we have decided to include it here because it showcases a classical ODE type behavior for a non-local equation, and it motivates the arguments in Section \ref{section:linear-theory}.

Assume that we are in the unstable case, i.e., the setting of Proposition \ref{prop:unstable}.

\begin{proposition}\label{two-dimensional} Let $q>0$ and fix a potential on $\mathbb R$ as in \eqref{potential11}, with the asymptotic behavior
\begin{equation*}
V(t)=\begin{cases}
\kappa+O(e^{-q t})\quad \text{as }t\to+\infty,\\
O(1)\quad \text{as }t\to-\infty,
\end{cases}
\end{equation*}
for $r=e^{-t}$. Then the  space of radial solutions to equation
\begin{equation}\label{eq-two-dimensional}
(-\Delta)^\gamma u-\frac{V}{r^{2\gamma}}u=0\quad\text{in }\mathbb R^N
\end{equation}
that have a bound of the form $|u(r)|\leq Cr^{-\frac{N-2\gamma}{2}}$ is at most two-dimensional.
\end{proposition}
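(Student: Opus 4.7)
My approach is a shooting argument based on the detailed Green's function structure developed in Section \ref{section:Hardy}, reducing the dimension question to the two-dimensional space of bounded solutions of the \emph{unperturbed} constant-coefficient equation.

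\emph{Setup.} The first step is to conjugate via $w(t) = r^{(N-2\gamma)/2} u(r)$ with $r = e^{-t}$. The growth bound $|u(r)| \leq C r^{-(N-2\gamma)/2}$ is equivalent to $|w(t)| \leq C$ for all $t \in \mathbb{R}$, and by the conformal relation \eqref{conjugation1}, equation \eqref{eq-two-dimensional} for radial $u$ becomes
\begin{equation*}
\mathcal{L} w := P_\gamma^0 w - V(t) w = 0 \quad \text{on } \mathbb{R},
\end{equation*}
where $P_\gamma^0$ is the zero-th projection of the conformal fractional Laplacian on the cylinder. Rewriting as $(P_\gamma^0 - \kappa) w = (V(t) - \kappa) w$, the right-hand side $h := (V - \kappa)w$ decays like $O(e^{-qt})$ as $t \to +\infty$ and is bounded as $t \to -\infty$.

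\emph{Asymptotic expansion at $+\infty$.} The key structural input is Proposition \ref{prop:unstable}: in the unstable regime (where $\pm\tau_0$ are the only real poles), the bounded-on-$\mathbb{R}$ solutions of the homogeneous model $(P_\gamma^0 - \kappa) w_h = 0$ form exactly a \emph{two-dimensional} subspace spanned by $\cos(\tau_0 t)$ and $\sin(\tau_0 t)$, since every other homogeneous mode in Remark \ref{remark:homogeneous} has the form $e^{\pm \sigma_j t}\{\cos,\sin\}(\tau_j t)$ with $\sigma_j > 0$ and is unbounded at one of the two ends. Using the Green's function $\widetilde{\mathcal{G}}_0$ from Proposition \ref{prop:unstable} \emph{iii.} (with $\delta \in (0, q)$, so $J=0$), I would represent a bounded solution of $\mathcal{L} w = 0$ as a Green's function convolution with $h$ plus a homogeneous correction; the rapid decay of $(V-\kappa)$ at $+\infty$ lets the integral remainder be absorbed into an $O(e^{-\delta t})$ term, and boundedness forces the homogeneous correction to lie in the two-dimensional span above. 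The conclusion is the asymptotic expansion
\begin{equation*}
w(t) = A \cos(\tau_0 t) + B \sin(\tau_0 t) + O(e^{-\delta t}) \quad \text{as } t \to +\infty,
\end{equation*}
for some $(A, B) \in \mathbb{R}^2$ depending linearly on $w$.

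\emph{Shooting map and dimension count.} Let $\mathcal{K}$ denote the space of bounded radial solutions of the original equation. Define the linear map $\Phi : \mathcal{K} \to \mathbb{R}^2$ by $\Phi(w) = (A,B)$. Its image is at most two-dimensional, so $\dim \mathcal{K} \leq 2 + \dim \ker \Phi$. Thus the proposition reduces to showing $\ker \Phi = \{0\}$.

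\emph{Injectivity, the main obstacle.} Suppose $\Phi(w) = 0$, so $w(t) = O(e^{-\delta t})$ at $+\infty$ while remaining only \emph{bounded} at $-\infty$. To conclude $w \equiv 0$ I would proceed by a bootstrap-plus-rigidity argument: since $w$ now decays at $+\infty$, so does $h = (V-\kappa)w$, and applying Proposition \ref{prop:unstable} \emph{iii.} with larger and larger $\delta$ progressively improves the expansion, forcing $w$ at $+\infty$ to match a combination of pure homogeneous modes $e^{-\sigma_j t}\{\cos,\sin\}(\tau_j t)$ with $\sigma_j > 0$. Each such mode is unbounded as $t \to -\infty$, and boundedness of $w$ at $-\infty$ must then propagate \emph{back} through the global integral representation to force all these coefficients to vanish. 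This last rigidity step is where the real difficulty lies; I expect it to require either a Fredholm/parametrix argument (in the spirit of Section \ref{section:Fredholm}) showing that the integral operator $w \mapsto \int \widetilde{\mathcal{G}}_0(\cdot - t')(V(t')-\kappa)w(t')\,dt'$ does not admit eigenvalue $1$ on the decaying sector, or a Hamiltonian/Wronskian identity analogous to Lemma \ref{lemma:Wronskian} adapted to the general potential $V$ considered here, pairing $w$ with carefully chosen auxiliary solutions to obtain enough orthogonality to force $w \equiv 0$.
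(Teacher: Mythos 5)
Your setup and reduction are exactly the paper's: conjugate via $w = r^{(N-2\gamma)/2}u$ to the cylinder, write the equation as $(P_\gamma^0-\kappa)w = (V-\kappa)w$, apply Proposition \ref{prop:unstable} with $\delta \in (0,q)$ (so $J=0$, the lowest pole being the real pair $\pm\tau_0$), obtain the expansion $w(t) = A\cos(\tau_0 t)+B\sin(\tau_0 t)+O(e^{-\delta t})$ at $+\infty$ with the remaining homogeneous modes killed by boundedness, and reduce the dimension bound to the injectivity of the shooting map $\Phi(w)=(A,B)$.

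The gap is that you stop at precisely the step the paper has to work hardest for, namely $\ker\Phi=\{0\}$, and neither of the two routes you gesture at is the one the paper takes. The paper's argument has two ingredients you do not mention. First, a bootstrap: if $w\in\ker\Phi$ then $w=O(e^{-\delta t})$ as $t\to+\infty$, hence $h=(V-\kappa)w=O(e^{-(q+\delta)t})$, and iterating Proposition \ref{prop:unstable} \emph{iii.} with $\delta'=lq$, $l=2,3,\ldots$, and $\delta_0=0$ — using that at each stage the only admissible homogeneous correction decaying at $+\infty$, namely $e^{-\sigma_j t}$ with $\sigma_j>0$, is ruled out by boundedness at $-\infty$ — upgrades the decay to faster than any exponential, i.e. $u(r)=o(r^a)$ as $r\to0$ for every $a$. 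Second, a strong unique continuation theorem (Fall--Felli in the stable range, R\"uland in the unstable range) for the fractional Schr\"odinger operator $(-\Delta)^\gamma - V r^{-2\gamma}$ then forces $u\equiv0$. Without invoking unique continuation, the infinite-order vanishing at the origin by itself does not conclude; and the Wronskian/Hamiltonian of Lemma \ref{lemma:Wronskian} only handles solutions decaying at \emph{both} ends, which is not what $\ker\Phi$ gives you a priori. So the skeleton is right, but the proof as proposed has no mechanism to close the rigidity step — you would need to recognize that the bootstrap produces a flat solution and then appeal to unique continuation, which is the actual crux of the paper's argument.
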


\begin{proof} Let $u$ be one of such solutions, and
write $w=u r^{\frac{N-2\gamma}{2}}$, $w=w(t)$. By assumption, $w$ is bounded on $\mathbb R$. Moreover, $w$ satisfies the equation $P^{(0)}_\gamma w-Vw=0$.

First we look at the indicial roots at $t\to-\infty$, these come from studying the problem $P_\gamma^{(0)}w=0$, this is, for $\kappa=0$. We have
\[\begin{split}
P_\gamma^{(0)}w=\mathcal{V_*}(t)w=:h_0=
\begin{cases}
O(1)
	& \text{ as } t\to+\infty\\
O(e^{-q_1|t|})
	& \text{ as } t\to-\infty.
\end{cases}
\end{split}\]
Thus we get a particular solution decaying as $t\to -\infty$. Using this piece of information, we now look at the indicial roots as $t\to +\infty$, which come from studying the equation
\begin{equation*}
\tilde{\mathcal L}_0 w=h,\quad \text{for}\quad h:=(V-\kappa)w.
\end{equation*}
Then we have the bounds for $h$
\begin{equation*}
h(t)=\begin{cases}
O(e^{-q t})\quad \text{as }t\to+\infty,\\
O(e^{-q_1 |t|})\quad \text{as }t\to-\infty,
\end{cases}
\end{equation*}
so we take $\delta=q>0$, $\delta_0=0$, and apply Proposition \ref{prop:unstable}. Then $w$ must be of the form
\begin{equation*}\begin{split}
w(t)=w_0(t)&+C_0^1\sin(\tau_0 t)+C_0^2\cos (\tau_0 t)\\
&+\sum_{j=1}^\infty e^{-\sigma_j t} \left[ C_j^1 \sin(\tau_j t)+C_j^2 \cos(\tau_j t)\right]
+\sum_{j=1}^\infty e^{\sigma_j t} \left[ D_j^1 \sin(\tau_j t)+D_j^2 \cos(\tau_j t)\right]
\end{split}
\end{equation*}
for some real constants $C_0^1,C_0^2,C_j^1,C_j^2,D_j^1,D_j^2$, $j=1,2,\ldots$,
and $w_0$ is given by \eqref{v0}. The same proposition yields that $w_0$ is decaying as $O(e^{-\delta t})$ when $t\to+ \infty$, so we must have $D_j^1,D_j^2=0$ for $j=1,2,\ldots$.  Moreover, as $t\to -\infty$, if we want a bounded solution, this fixes the values of $C_j^1,C_j^2$ for $j=1,2,\ldots$, which implies that only $C_0^1$ and $C_0^2$ are free, and determined from $w$.

The Hamiltonian from Lemma \ref{lemma:Wronskian} shows that
there are no decaying solutions on both sides $t \to \pm\infty$ except for the trivial one $\phi_*$, which is controlled; indeed, its asymptotic behavior is given by the indicial roots (if it decays exponentially as $O(e^{-\delta t})$ for $t\to +\infty$, we can iterate statement \textit{iii.} with $\delta=lq$, $l=2,3,\ldots$ and $\delta_0=0$, to show that it decays faster than any $O(e^{-\delta t})$, $\delta>0$, as $t\to +\infty$). Next, we use a unique continuation result for equation \eqref{eq-two-dimensional} to show that it must vanish. In the stable case, unique continuation was proved in \cite{Fall-Felli} using a monotonicity formula, while in the unstable case it follows from \cite{Ruland}, where Carleman estimates were the crucial ingredient.

We remark that if, in addition, the potential satisfies a monotonicity condition, one can give a direct proof of unique continuation using Theorem 1 from \cite{Frank-Lenzmann-Silvestre}. Note that, however, in \cite{Frank-Lenzmann-Silvestre} the potential is assumed to be smooth at the origin. But one can check that the lack of regularity of the potential at the origin can be handled by the higher order of vanishing of $u$.

\end{proof}

\subsection{Technical results}\label{subsection:technical}

Here we give a more precise calculation of the poles of the function $\frac{1}{\Theta_m(z)-\kappa}$. For this, given $\kappa\in\mathbb R$, we aim to solve the equation \begin{equation}\label{a1}
\frac{\Gamma (\alpha +iz)\Gamma (\alpha -iz)}{\Gamma (\beta +iz)\Gamma
(\beta -iz)}-\kappa =0
\end{equation}%
with $\left\vert \alpha-\beta \right\vert <1$ and $\beta <\alpha $.

\begin{lemma}
Let
$$z=iR+\zeta$$
with $\left\vert z\right\vert >R_{0}$ and $R_{0}$ sufficiently large. Then
the solutions to \eqref{a1} are contained in balls of radius $\frac{C\kappa
\sin \left( (\alpha -\beta )\pi \right) }{\mathcal N^{2}{}^{(\alpha -\beta )}}$
around the points $z=(\mathcal N+\beta )i$, with $\mathcal N=\left[ R\right] $ and $C$
depending solely on $\alpha$ and $\beta $.
\end{lemma}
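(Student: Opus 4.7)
The strategy is to treat the equation $F(z)=\kappa$, where $F(z)=\Gamma(\alpha+iz)\Gamma(\alpha-iz)/[\Gamma(\beta+iz)\Gamma(\beta-iz)]$, as a small perturbation (by $\kappa$) of the equation $1/F(z)=0$. The poles of $1/F$, namely the poles of $\Gamma(\beta+iz)$ and $\Gamma(\beta-iz)$, are located at $z=\pm i(\mathcal{N}+\beta)$, $\mathcal{N}\in\mathbb{N}\cup\{0\}$; since $\beta<\alpha$ and $|\alpha-\beta|<1$, they are never canceled by the numerator $\Gamma(\alpha\pm iz)$. The symmetry $F(-z)=F(z)$ reduces the analysis to $z_{\mathcal{N}}:=i(\mathcal{N}+\beta)$, $\mathcal{N}\geq \mathcal{N}_{0}$ for some large $\mathcal{N}_{0}$ depending on $R_{0}$.

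The first step is to write down a clean local expansion around $z_{\mathcal{N}}$. Using the residue formula $\mathrm{Res}_{w=-\mathcal{N}}\Gamma(w)=(-1)^{\mathcal{N}}/\mathcal{N}!$ and the fact that $\Gamma(\alpha+iz)$, $\Gamma(\alpha-iz)$, $\Gamma(\beta-iz)$ are holomorphic and non-vanishing in a neighborhood of $z_{\mathcal{N}}$, one obtains
\begin{equation*}
F(z)=\frac{i\mathcal{N}!\,(-1)^{\mathcal{N}}\,\Gamma(\alpha-\beta-\mathcal{N})\,\Gamma(\mathcal{N}+\alpha+\beta)}{\Gamma(\mathcal{N}+2\beta)}\,(z-z_{\mathcal{N}})\,\bigl(1+\epsilon_{\mathcal{N}}(z)\bigr),
\end{equation*}
where, for $|z-z_{\mathcal{N}}|\leq 1/2$, the remainder $\epsilon_{\mathcal{N}}(z)$ tends to $0$ uniformly as $\mathcal{N}\to\infty$ (each individual Gamma factor is controlled by its Taylor expansion, with errors uniform in $\mathcal{N}$ thanks to Stirling's formula away from the negative integers).

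The next step is to simplify the prefactor. The reflection formula gives
\begin{equation*}
\Gamma(\alpha-\beta-\mathcal{N})=\frac{\pi\,(-1)^{\mathcal{N}}}{\sin(\pi(\alpha-\beta))\,\Gamma(\mathcal{N}+1-(\alpha-\beta))},
\end{equation*}
so that solving $F(z)=\kappa$ to leading order yields
\begin{equation*}
z-z_{\mathcal{N}}=\frac{\kappa\sin(\pi(\alpha-\beta))}{i\pi}\cdot\frac{\Gamma(\mathcal{N}+1-(\alpha-\beta))\,\Gamma(\mathcal{N}+2\beta)}{\Gamma(\mathcal{N}+1)\,\Gamma(\mathcal{N}+\alpha+\beta)}\,\bigl(1+\epsilon_{\mathcal{N}}(z)\bigr)^{-1}.
\end{equation*}
An application of the asymptotic $\Gamma(\mathcal{N}+a)/\Gamma(\mathcal{N}+b)=\mathcal{N}^{a-b}(1+O(1/\mathcal{N}))$ to each of the two ratios produces a combined factor $\mathcal{N}^{-2(\alpha-\beta)}(1+O(1/\mathcal{N}))$, which is precisely the claimed size.

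To turn this heuristic into a proof, I will close with a Rouch\'e argument on the disk $D_{\mathcal{N}}:=\{z:|z-z_{\mathcal{N}}|\leq 2C|\kappa|\sin(\pi(\alpha-\beta))\mathcal{N}^{-2(\alpha-\beta)}\}$, comparing the holomorphic function $F(z)-\kappa$ with its linearization $A_{\mathcal{N}}(z-z_{\mathcal{N}})-\kappa$ (with $A_{\mathcal{N}}$ the explicit coefficient above). On $\partial D_{\mathcal{N}}$ the linearization dominates the error term by the uniform smallness of $\epsilon_{\mathcal{N}}$, so the two functions have the same number of zeros inside $D_{\mathcal{N}}$, namely one. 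Finally, I will exclude solutions outside the union $\bigcup_{\mathcal{N}\geq\mathcal{N}_0}D_{\mathcal{N}}\cup D_{\mathcal{N}}^{-}$ (where $D_{\mathcal{N}}^{-}$ is the reflection across the real axis): away from the poles of $1/F$, Stirling's asymptotic together with $|\arg(\alpha\pm iz)|<\pi$ shows that $|F(z)|\gtrsim|z|^{2(\alpha-\beta)}\to\infty$, so $F(z)=\kappa$ has no solutions there once $|z|>R_{0}$. The principal obstacle will be ensuring the uniformity in $\mathcal{N}$ of both the Stirling-type expansions and the Rouch\'e comparison, since the radius $\mathcal{N}^{-2(\alpha-\beta)}$ shrinks while the gradient of $F$ at $z_{\mathcal{N}}$ grows polynomially; a careful bookkeeping of the error terms in each Gamma expansion, and the fact that the disks $D_{\mathcal{N}}$ remain mutually disjoint (since consecutive $z_{\mathcal{N}}$'s are at distance $1$), handle this uniformity.
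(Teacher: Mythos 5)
Your proposal takes a genuinely different route from the paper's. The paper argues globally in one pass: it uses the reflection formula $\Gamma(s)\Gamma(1-s)=\pi/\sin(\pi s)$ to turn the Gamma ratio into a sine ratio, applies Stirling to the remaining (positive-argument) Gamma quotient to obtain two-sided bounds of order $R^{2(\alpha-\beta)}$ on an annulus $|z|>R_0$, and then observes that making $|\sin(\pi(\beta-\delta+i\zeta))/\sin(\pi(\alpha-\delta+i\zeta))|$ of size $\kappa R^{-2(\alpha-\beta)}$ forces $\beta-\delta+i\zeta$ to lie within the claimed distance of an integer. This directly yields the containment statement of the lemma, with no per-pole analysis. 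Your proposal instead works locally near each candidate point $z_{\mathcal N}=i(\mathcal N+\beta)$: you compute the residue-driven linearization of $F$ there (your prefactor $A_{\mathcal N}$ and its $\mathcal N^{2(\alpha-\beta)}$ growth are correct, and match the reflection-formula computation the paper performs in the subsequent paragraph), and then plan to invoke Rouch\'e. This buys you existence and uniqueness of a root in each disk for free, which the paper instead proves afterwards by a separate fixed-point argument; but Rouch\'e on the disks alone does not give containment, which is the actual content of the lemma.

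The gap is in your exclusion step. You assert that ``away from the poles of $1/F$, Stirling's asymptotic together with $|\arg(\alpha\pm iz)|<\pi$ shows that $|F(z)|\gtrsim|z|^{2(\alpha-\beta)}$.'' That justification fails precisely in the critical region, namely when $z$ lies on (or very near) the imaginary axis between consecutive $z_{\mathcal N}$'s. There one of $\alpha\pm iz$ is a large negative real number, so $\arg(\alpha\pm iz)=\pi$ exactly and Stirling's formula cannot be applied to that factor. The same is true of $\beta\pm iz$. To control $|F|$ there you must first pass both offending factors through the reflection formula (converting each to a Gamma evaluated at a large positive argument times a bounded sine ratio), and only then apply Stirling -- which is exactly the mechanism of the paper's proof. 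So the ingredient you would need to close the exclusion step is the same reflection-formula estimate the paper uses for the whole lemma; once you insert it, the exclusion argument and the Rouch\'e comparison both go through, and your plan becomes a valid (if somewhat longer) alternative. The other uniformity concerns you raise -- shrinking disk radii, disjointness of the $D_{\mathcal N}$, uniform control of $\epsilon_{\mathcal N}$ -- are indeed issues but they are routine; the Stirling/reflection issue is the one that would actually stall the proof as written.
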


\begin{proof}
First we note, by using the identity $\Gamma (s)\Gamma (1-s)=\pi
/\sin (\pi s)$, that
\begin{equation*}
\begin{split}
\frac{\Gamma (\alpha -R+i\zeta )}{\Gamma (\beta -R+i\zeta )}
&=\frac{\Gamma
(1-\beta +R-i\zeta )}{\Gamma (1-\alpha +R-i\zeta )}\frac{\sin (\pi (\beta
-R+i\zeta ))}{\sin (\pi (\alpha -R+i\zeta ))}\\
&=\frac{\Gamma (1-\beta +R-i\zeta )}{%
\Gamma (1-\alpha +R-i\zeta )}\frac{\sin (\pi (\beta -\delta +i\zeta ))}{\sin
(\pi (\alpha -\delta +i\zeta ))},
\end{split}
\end{equation*}
where we have denoted
\begin{equation*}
\delta =R-\left[ R\right].
\end{equation*}%
Then, Stirling's formula \eqref{Stirling} yields
\begin{equation*}\left\vert \Gamma (1+z)\right\vert \sim \left\vert z\right\vert ^{\Real%
z}e^{-(\Imag z)\arg (z)}e^{-\Real z}\sqrt{2\pi }\left\vert
z\right\vert ^{\frac{1}{2}},
\end{equation*}%
which implies
\begin{equation*}
\begin{split}
&\left\vert \frac{\Gamma (1-\beta +R-i\zeta )\Gamma (\alpha +R-i\zeta )}{\Gamma
(1-\alpha +R-i\zeta )\Gamma (\beta +R-i\zeta )}\right\vert \\
&\qquad\sim (R^{2}+\zeta
^{2})^{\alpha -\beta }e^{\zeta \left( \arctan \frac{\zeta }{1-\beta +R}+\arctan
\frac{\zeta }{\alpha +R}-\arctan \frac{\zeta }{1-\alpha +R}-\arctan \frac{\zeta }{%
\beta +R}\right) }e^{-2(\alpha -\beta )}.
\end{split}
\end{equation*}%
Since%
\begin{equation*}\begin{split}
\arctan &\frac{\zeta }{1-\beta +R}+\arctan \frac{\zeta }{\alpha +R}-\arctan
\frac{\zeta }{1-\alpha +R}-\arctan \frac{\zeta }{\beta +R} \\
&=\arctan \frac{\frac{\zeta }{1-\beta +R}-\frac{\zeta }{1-\alpha +R}}{1+\frac{%
\zeta }{1-\beta +R}\frac{\zeta }{1-\alpha +R}}+\arctan \frac{\frac{\zeta }{\alpha
+R}-\frac{\zeta }{\beta +R}}{1+\frac{\zeta }{\alpha +R}\frac{\zeta }{\beta +R}}\\
&\sim -2\arctan \frac{\left( \alpha -\beta \right) \zeta }{R^{2}+\zeta ^{2}}
\sim
-2\frac{\left( \alpha -\beta \right) \zeta }{R^{2}+\zeta ^{2}},
\end{split}\end{equation*}%
we can estimate, for $R^{2}+\zeta ^{2}$ sufficiently large,
\begin{equation*}
\left\vert \frac{\Gamma (1-\beta +R-i\zeta )\Gamma (\alpha +R-i\zeta )}{\Gamma
(1-\alpha +R-i\zeta )\Gamma (\beta +R-i\zeta )}\right\vert \sim (R^{2}+\zeta
^{2})^{\alpha -\beta }e^{-2\frac{\left( \alpha -\beta \right) \zeta ^{2}}{%
R^{2}+\zeta ^{2}}}e^{-2(\alpha -\beta)}.
\end{equation*}%
Therefore, for $R^{2}+\zeta ^{2}>R_{0}^{2}$ with $R_{0}$ sufficiently large, we have the bound
\begin{equation*}
C^{-1}(R^{2}+\zeta ^{2})^{\alpha -\beta }\leq \left\vert \frac{\Gamma (1-\beta
+R-i\zeta )\Gamma (\alpha +R-i\zeta )}{\Gamma (1-\alpha +R-i\zeta )\Gamma (\beta
+R-i\zeta )}\right\vert \leq C(R^{2}+\zeta ^{2})^{\alpha -\beta },
\end{equation*}%
where $C$ depends only on $\alpha $ and $\beta $. Hence,
\begin{equation*}
\frac{\kappa}{C(R^{2}+\zeta ^{2})^{\alpha -\beta }}\leq \left\vert \frac{%
\sin (\pi (\beta -\delta +i\zeta ))}{\sin (\pi (\alpha -\delta +i\zeta ))}%
\right\vert \leq \frac{\kappa }{C^{-1}(R^{2}+\zeta ^{2})^{\alpha -\beta }},
\end{equation*}%
and by writing%
\begin{equation*}
\delta -i\zeta =\beta +\tilde{z},
\end{equation*}%
we conclude that necessarily%
\begin{equation*}
\left\vert \tilde{z}\right\vert \leq \frac{C\kappa \sin \left( (\alpha
-\beta )\pi \right) }{R^{2}{}^{(\alpha -\beta )}},
\end{equation*}%
which implies that solutions to \eqref{a1} lie at
\begin{equation*}
z=iR+\zeta =i\left[ R\right] +i\beta +i\tilde{z}=\left[ R\right] +\beta
+O\left( \frac{C\kappa \sin \left( (\alpha -\beta )\pi \right) }{\left[ R%
\right] ^{2}{}^{(\alpha -\beta )}}\right),
\end{equation*}%
and this proves the Lemma.\\
\end{proof}

Next we write%
\[
z=i(\beta +\mathcal N)+\tilde{z}
\]%
with $\mathcal N$ sufficiently large (according to the previous lemma) natural
number, and equation \eqref{a1} reads%
\begin{equation}\label{a2}
\frac{\Gamma (\alpha -\beta -\mathcal N+i\tilde{z})\Gamma (\alpha +\beta +\mathcal N-i%
\tilde{z})}{\Gamma (-\mathcal N+i\tilde{z})\Gamma (2\beta +\mathcal N-i\tilde{z})}%
-\kappa =0.
\end{equation}%
Since%
\begin{equation*}
\Gamma (-\mathcal N+i\tilde{z})=(-1)^{\mathcal N-1}\frac{\Gamma (-i\tilde{z})\Gamma
(1+i\tilde{z})}{\Gamma (\mathcal N+1-i\tilde{z})}
\end{equation*}%
and%
\begin{equation*}
\Gamma (\alpha -\beta -\mathcal N+i\tilde{z})=(-1)^{\mathcal N-1}\frac{\Gamma (\beta
-\alpha -i\tilde{z})\Gamma (1+\alpha -\beta +i\tilde{z})}{\Gamma
(\mathcal N+1-\alpha +\beta -i\tilde{z})},
\end{equation*}%
we can write%
\begin{equation*}\begin{split}
&\frac{\Gamma (\alpha -\beta -\mathcal N+i\tilde{z})\Gamma (\alpha +\beta +\mathcal N-i%
\tilde{z})}{\Gamma (-\mathcal N+i\tilde{z})\Gamma (2\beta +\mathcal N-i\tilde{z})}\\
&\qquad=\frac{\Gamma (\mathcal N+1-i\tilde{z})\Gamma (\alpha +\beta +\mathcal N-i\tilde{z})}{%
\Gamma (\mathcal N+1-\alpha +\beta -i\tilde{z})\Gamma (2\beta +\mathcal N-i\tilde{z})}%
\frac{\Gamma (\beta -\alpha -i\tilde{z})\Gamma (1+\alpha -\beta +i%
\tilde{z})}{\Gamma (1+i\tilde{z})\Gamma (-i\tilde{z})}.
\end{split}\end{equation*}
Using that
\[
\frac{\Gamma (\beta -\alpha -i\tilde{z})\Gamma (1+\alpha -\beta +i%
\tilde{z})}{\Gamma (1+i\tilde{z})\Gamma (-i\tilde{z})}=\frac{%
\sin (-\pi i\tilde{z})}{\sin \left( \pi (\beta -\alpha -i\tilde{z}%
)\right) },
\]%
as well as Stirling's formula \eqref{Stirling} to estimate%
\begin{equation*}\begin{split}
&\frac{\Gamma (\mathcal N+1-i\tilde{z})\Gamma (\alpha +\beta +\mathcal N-i\tilde{z})}{%
\Gamma (\mathcal N+1-\alpha +\beta -i\tilde{z})\Gamma (2\beta +\mathcal N-i\tilde{z})}\\
&\qquad\sim \frac{(\mathcal N-i\tilde{z})^{\mathcal N-i\tilde{z}}(\alpha +\beta +\mathcal N-1-i%
\tilde{z})^{\alpha +\beta +\mathcal N-1-i\tilde{z}}}{(\mathcal N-\alpha +\beta -i%
\tilde{z})^{\mathcal N-\alpha +\beta -i\tilde{z}}(2\beta -1+\mathcal N-i\tilde{z}%
)^{2\beta -1+\mathcal N-i\tilde{z}}}\\
&\qquad\qquad\cdot e^{-2(\alpha -\beta )}\sqrt{\frac{(\mathcal N-i%
\tilde{z})(\alpha +\beta +\mathcal N-1-i\tilde{z})}{(\mathcal N-\alpha +\beta -i%
\tilde{z})(2\beta -1+\mathcal N-i\tilde{z})}}\\
&\qquad\sim \mathcal N^{2(\alpha -\beta )}e^{-2i(\alpha -\beta )\tilde{z}}e^{-2(\alpha
-\beta )},
\end{split}
\end{equation*}
we arrive at the relation%
\begin{equation*}
\frac{\sin (-\pi i\tilde{z})}{\sin \left( \pi (\beta -\alpha -i%
\tilde{z})\right) }e^{-2i(\alpha -\beta )\tilde{z}}\sim \frac{%
\kappa}{\mathcal N^{2(\alpha -\beta )}e^{-2(\alpha -\beta )}},
\end{equation*}%
which implies%
\begin{equation*}
\tilde{z}\sim \frac{i}{\pi }\frac{\kappa \sin \left( \pi (\beta -\alpha
)\right) }{\mathcal N^{2(\alpha -\beta )}e^{-2(\alpha -\beta )}}.
\end{equation*}%
In fact, it is easy to see from \eqref{a2} and the estimates above that a
purely imaginary solution $\tilde{z}$ does exist and a standard fixed
point argument in each of the balls in the previous lemma would show that it
is unique.

Finally, the half-ball of radius $R_{0}$ around the origin in the upper
half-plane is a compact set. Since the function at the left hand side of
\eqref{a1} is meromorphic, there cannot exist accumulation points of zeros
and this necessarily implies that the number of zeros in that half-ball is
finite.

We conclude then that the set of solutions to \eqref{a1} consists of a
finite number of solutions in a half ball of radius $R_{0}$ around the
origin in the upper half-plane together with an infinite sequence of roots
at the imaginary axis located at
\begin{equation}\label{zN}
z_{\mathcal N}=i(\beta +\mathcal N)+O\left( \frac{\kappa
\sin \left( \pi (\beta -\alpha )\right) }{\mathcal N^{2(\alpha -\beta )}e^{-2(\alpha
-\beta )}}\right) \quad\text{for}\quad \mathcal N>R_{0},
\end{equation}
as desired.

\bigskip

Now we consider the asymptotics for the residues. We define%
\begin{equation*}
g(z):=\frac{\Gamma (\alpha +iz)\Gamma (\alpha -iz)}{\Gamma (\beta
+iz)\Gamma (\beta -iz)}-\kappa.
\end{equation*}
We will estimate the residue of the function $\frac{1}{g(z)}$ at
the poles $z_{\mathcal N}$ when $\mathcal N$ is sufficiently large.
Given the fact that the poles are simple and the function $1/g(z)$
is analytic outside its poles, we have
\begin{equation*}
\Res\Big(\frac{1}{g(z)},z_{\mathcal N}\Big)=\lim_{z\rightarrow z_{\mathcal N}}\left( (z-z_{\mathcal N})\frac{%
1}{g(z)}\right) =\frac{1}{g^{\prime }(z_{\mathcal N})}\ .
\end{equation*}
Hence%
\begin{equation*}
\begin{split}
g^{\prime }(z)&=\frac{d}{dz}\left( \frac{\Gamma (\alpha +iz)\Gamma (\alpha
-iz)}{\Gamma (\beta +iz)\Gamma (\beta -iz)}\right) \\
&=-i\frac{\Gamma ^{\prime
}(\beta +iz)}{\Gamma ^{2}(\beta +iz)}\left( \frac{\Gamma (\alpha +iz)\Gamma
(\alpha -iz)}{\Gamma (\beta -iz)}\right) +\frac{1}{\Gamma (\beta +iz)}\left(
\frac{\Gamma (\alpha +iz)\Gamma (\alpha -iz)}{\Gamma (\beta -iz)}\right)
^{\prime }
\\&=: S_{1}+S_{2}.
\end{split}
\end{equation*}
Notice that $g(z_{\mathcal N})=0$ implies that
\begin{equation*}
 \Gamma (\beta +iz_{\mathcal N})=\frac{\Gamma (\alpha
+iz_{\mathcal N})\Gamma (\alpha -iz_{\mathcal N})}{\kappa \Gamma (\beta -iz_{\mathcal N})}
\end{equation*}%
and therefore,%
\begin{equation*}
S_{1}=-i\frac{\Gamma ^{\prime }(\beta +iz_{\mathcal N})}{\Gamma ^{2}(\beta +iz_{\mathcal N})}%
\left( \frac{\Gamma (\alpha +iz_{\mathcal N})\Gamma (\alpha -iz_{\mathcal N})}{\Gamma (\beta
-iz_{\mathcal N})}\right) =-i\kappa \frac{\Gamma ^{\prime }(\beta +iz_{\mathcal N})}{\Gamma
(\beta +iz_{\mathcal N})}=-i\kappa \psi (\beta +iz_{\mathcal N}),
\end{equation*}
where $\psi (z)$ is the digamma function. We recall the expansion \eqref{expansion-digamma},
\begin{equation*}
\psi(z)=-{\gamma}+\sum_{l=0}^\infty \left(\tfrac{1}{l+1}-\tfrac{1}{l+z}\right).
\end{equation*}
In this section, $\gamma$ denotes the Euler constant. Then%
\begin{equation}
S_{1}=i\kappa \left( \gamma +\sum_{l=0}^{\infty }\left( \frac{1}{l+\beta
+iz_{\mathcal N}}-\frac{1}{l+1}\right) \right) =\frac{\pi e^{-2(\alpha -\beta )}}{%
i\sin (\pi (\alpha -\beta ))}\mathcal N^{2(\alpha -\beta )}+O(1),  \label{s1}
\end{equation}%
where we have used the asymptotics of $l+\beta +iz_{\mathcal N}$ when $l=\mathcal N$ from \eqref{zN}. Next, using again \eqref{zN} we
estimate
\begin{equation*}\begin{split}
S_{2}&=\left. \frac{1}{\Gamma (\beta +iz)}\left( \frac{\Gamma (\alpha
+iz)\Gamma (\alpha -iz)}{\Gamma (\beta -iz)}\right) ^{\prime }\right\vert
_{z=z_{\mathcal N}}=\kappa i\left( \frac{\Gamma ^{\prime }(\alpha +iz)}{\Gamma
(\alpha +iz)}-\frac{\Gamma ^{\prime }(\alpha -iz)}{\Gamma (\alpha -iz)}+%
\frac{\Gamma ^{\prime }(\beta -iz)}{\Gamma (\beta -iz)}\right)\\
&=\kappa i\left( \psi (\alpha -\beta -\mathcal N+O(\mathcal N^{-2(\alpha -\beta )}))-\psi
(\alpha +\beta +\mathcal N+O(\mathcal N^{-2(\alpha -\beta )}))\right.\\
&\,\qquad\left.+\psi (2\beta +\mathcal N+O(\mathcal N^{-2(\alpha
-\beta )}))\right).
\end{split}
\end{equation*}%
By using the relations%
\begin{eqnarray*}
&&\psi (1-z)-\psi (z)=\pi \cot (\pi z)\\
&&\psi (z)\sim \ln (z-\gamma )+2\gamma \text{, as }\left\vert z\right\vert
\rightarrow \infty \text{, }\Real z>0,
\end{eqnarray*}
we conclude, as $\mathcal N\to \infty$,%
\begin{equation*}
\begin{split}
\psi (\alpha -\beta &-\mathcal N+O(\mathcal N^{-2(\alpha -\beta )}))\\
 &=\psi (1-\alpha +\beta
+\mathcal N+O(\mathcal N^{-2(\alpha -\beta )}))+\pi \cot (\pi (1-\alpha +\beta
+\mathcal N+O(\mathcal N^{-2(\alpha -\beta )}))) \\
&= \ln (\mathcal N)+O(1),
\end{split}
\end{equation*}%
and hence%
\begin{equation}
S_{2}=i\kappa \ln \mathcal N+O(1).  \label{s2}
\end{equation}%
Putting together \eqref{s1} and \eqref{s2} we find
\begin{equation*}
S_1+S_2=\frac{\pi e^{-2(\alpha -\beta )}}{i\sin (\pi (\alpha -\beta ))}\mathcal N^{2(\alpha
-\beta )}+i\kappa \ln \mathcal N+O(1),
\end{equation*}
and hence%
\begin{equation}\label{asymptotics-residue}
\begin{split}
\Res\Big(\frac{1}{g(z)},z_{\mathcal N}\Big)&= \frac{i}{\frac{\pi e^{-2(\alpha -\beta )}}{%
\sin (\pi (\alpha -\beta ))}\mathcal N^{2(\alpha -\beta )}-\kappa \ln \mathcal N+O(1)}\\
&=i\frac{%
\sin (\pi (\alpha -\beta ))e^{2(\alpha -\beta )}}{\pi }\mathcal N^{-2(\alpha -\beta
)}+O\left( \frac{\ln \mathcal N}{\mathcal N^{4(\alpha -\beta )}}\right)
\end{split}
\end{equation}
as $\mathcal N\rightarrow \infty $.

\section{Linear theory - injectivity}\label{section:linear-theory}

Let  $\bar u_\ve$ be the approximate solution from the  Section \ref{section:function-spaces}.
In this  section we consider the  linearized operator
\begin{equation}\label{linearized-general}
L_\ve \phi:=(-\Delta_{\mathbb R^n})^\gamma \phi-p\bar{u}_\ve^{p-1}\phi, \quad\text{in }\mathbb R^n\setminus \Sigma,
\end{equation}
where $\Sigma$ is a sub-manifold of dimension $k$ (or a disjoint union of smooth $k$-dimensional manifolds), and
\begin{equation}\label{linearized-points}
L_\ve \phi:=(-\Delta_{\mathbb R^N})^\gamma \phi-pA_{N,p,\gamma}\bar{u}_\ve^{p-1}\phi, \quad\text{in }\mathbb R^N\setminus \{q_1,\ldots,q_K\}.
\end{equation}

For this, we first need to study the model linearization
\begin{equation}\label{model-linearization}
\mathcal L_1\phi:=(-\Delta_{\mathbb R^N})^\gamma \phi -p A_{N,p,\gamma}u_1^{p-1} \phi=0\quad\text{in}\quad\mathbb R^N\setminus\{0\}.
\end{equation}
We will show that any solution (in suitable weighted spaces) to this equation
must vanish everywhere (from which injectivity in $\mathbb R^n\setminus\mathbb R^k$ follows easily), and then we will prove injectivity for the operator $L_\ve$.

Let us rewrite \eqref{model-linearization} using conformal properties and the conjugation \eqref{conjugation1}. If we  define
\begin{equation}\label{w-phi}
w=r^{\frac{N-2\gamma}{2}}\phi,
\end{equation}
then this equation is equivalent to
\begin{equation}\label{eq0}
P_\gamma^{g_0}(w)-Vw=0,
\end{equation}
for the radial potential
\begin{equation}\label{potential}
V=V(r)=r^{2\gamma}pA_{N,p,\gamma}u_1^{p-1}.
\end{equation}
 The asymptotic behavior of this potential is easily calculated using Proposition \ref{existence}
 and, indeed, for $r=e^{-t}$,
\begin{equation}\label{asymptotics-potential}
V(t)=\begin{cases}
 pA_{N,p,\gamma} +O(e^{-q_1t} )&\mbox{ as }t\to +\infty,\\
 O(e^{q_0 t})&\mbox{ as }t\to -\infty,
\end{cases}\end{equation}
for $q_0=(N-2\gamma)(p-1)-2\gamma>0$ and $q_1>0$.

\bigskip

Let $\gamma\in (0,1)$. By the well known extension theorem for the fractional Laplacian  \eqref{CS1}-\eqref{CS2},  equation \eqref{model-linearization} is equivalent to the boundary reaction problem
\begin{equation*}\label{eqlinear1}
\left\{\begin{array}{r@{}l@{}l}
\partial_{\ell\ell}\Phi+\dfrac{1-2\gamma}{\ell}\partial_\ell \Phi+\Delta_{\R^N}\Phi
    &\,=0
    &\quad\mbox{in }\R^{N+1}_+,\medskip\\
-\tilde d_\gamma\lim\limits_{\ell\to0}\ell^{1-2\gamma}\partial_\ell \Phi
    &\,=pA_{N,p,\gamma}u_1^{p-1}\Phi
    &\quad\mbox{on }\R^N \setminus \{0\},
\end{array}\right.
\end{equation*}
where $\tilde d_\gamma$ is defined in \eqref{tilde-d} and $\Phi|_{\ell=0}=\phi$.

Keeping the notations of Section \ref{section:isolated-singularity} for the spherical harmonic decomposition of $\mathbb S^{N-1}$, by $\mu_m$ we denote the $m$-th eigenvalue for $-\Delta_{\mathbb S^{N-1}}$, repeated according to multiplicity, and by $E_m(\theta)$ the corresponding eigenfunction.  Then we can write $\Phi=\sum_{m=0}^\infty \Phi_m(r,\ell)E_m(\theta)$, where $\Phi_m$ satisfies the following:
\begin{equation}\label{eqlinear2}
\left\{\begin{array}{r@{}l@{}l}
\partial_{\ell\ell}\Phi_{m}+\dfrac{1-2\gamma}{\ell}\partial_\ell\Phi_{m}
+\Delta_{\R^N}\Phi_m-\dfrac{\mu_m}{r^2}\Phi_m
    &\,=0
    &\quad\mbox{in }\R^{N+1}_+,\medskip\\
-\tilde d_\gamma\lim\limits_{\ell\to0}\ell^{1-2\gamma}\partial_\ell \Phi_m
    &\,=pA_{N,p,\gamma}u_1^{p-1}\Phi_m
    &\quad\mbox{on }\R^N \setminus \{0\},
\end{array}
\right.
\end{equation}
or equivalently, from \eqref{eq0},
\begin{equation}\label{eq0m}
P_\gamma^m(w)-Vw=0,
\end{equation}
for $w=w_m=r^{\frac{N-2\gamma}{2}}\phi_m$, $\phi_m=\Phi_m(\cdot,0)$.

\subsection{Indicial roots}\label{subsection:indicial-roots}

Let us calculate the indicial roots for the model linearized operator defined in \eqref{model-linearization} as $r\to 0$ and as $r\to \infty$.
Recalling \eqref{asymptotics-potential}, $\mathcal L_1$ behaves like the Hardy operator \eqref{Hardy-operator} with $\kappa=pA_{N,p,\gamma}$ as $r\to 0$ and $\kappa=0$ as $r\to \infty$. Moreover, we can characterize very precisely the location of the poles in Theorem \ref{thm:Hardy-potential} and Proposition \eqref{prop:unstable}.

Here we find  a crucial difference from the local case $\gamma=1$, where the Fourier symbol for the $m$-th projection $\Theta_m(\xi)-\kappa$ is quadratic in $\xi$, implying that there are only two poles. In contrast, in the non-local case, we have just seen that there exist \emph{infinitely many} poles.  Surprisingly, even though $\mathcal L_1$ is a non-local operator, its behavior is controlled by just four indicial roots, so we obtain results analogous to the local case.

For the statement of the next result, recall the shift \eqref{w-phi}.

\begin{lemma}\label{indicial}
For the operator $\mathcal L_1$ we have that, for each fixed mode $m=0,1,\ldots,$
\begin{itemize}
\item[\emph{i.}] At $r=\infty$, there exist two sequences of indicial roots
\begin{equation*}
\{\tilde \sigma_j^{(m)}\pm i\tilde\tau_j^{(m)}-\tfrac{N-2\gamma}{2}\}_{j=0}^\infty\quad \text{and} \quad\{-\tilde\sigma_j^{(m)}\pm i\tilde\tau_j^{(m)}-\tfrac{N-2\gamma}{2}\}_{j=0}^\infty.
\end{equation*}
Moreover,
\begin{equation*}
\tilde\gamma_m^{\pm}:=\pm\tilde\sigma_0^{(m)}-\tfrac{N-2\gamma}{2}=-\tfrac{N-2\gamma}{2}\pm \left[1-\gamma+\sqrt{(\tfrac{N-2}{2})^2+\mu_m}\right],\quad m=0,1,\ldots,
\end{equation*}
and
$\tilde{\gamma}_m^+$
is an increasing sequence (except for multiplicity repetitions).

\item[\emph{ii.}] At $r=0$,  there exist two sequences of indicial roots
\begin{equation*}
\{\sigma_j^{(m)}\pm i\tau_j^{(m)}-\tfrac{N-2\gamma}{2}\}_{j=0}^\infty\quad \text{and} \quad\{-\sigma_j^{(m)}\pm i\tau_j^{(m)}-\tfrac{N-2\gamma}{2}\}_{j=0}^\infty.
\end{equation*}
Moreover,
\begin{itemize}
\item[\emph{a)}] For the mode $m=0$, there exists $p_1$ with $\frac{N}{N-2\gamma}<p_1<\frac{N+2\gamma}{N-2\gamma}$ (and it is given by \eqref{p1}), such that for $\frac{N}{N-2\gamma}<p<p_1$ (the \underline{stable} case), the indicial roots $\gamma_0^{\pm}:=\pm\sigma_0^{(0)}-\frac{N-2\gamma}{2}$ are real with
\begin{equation*}\label{gamma0}
-\tfrac{2\gamma}{p-1}<\gamma_0^-<-\tfrac{N-2\gamma}{2}<\gamma_0^+,
\end{equation*}
while if $p_1<p<\frac{N+2\gamma}{N-2\gamma}$ (the \underline{unstable} case), then $\gamma_0^\pm$ are a pair of complex conjugates with real part $-\frac{N-2\gamma}{2}$ and imaginary part $\pm\tau_0^{(0)}$.

\item[\emph{b)}] In addition, for all $j\geq 1$,
\begin{equation*}
\sigma_j^{(0)}>\tfrac{N-2\gamma}{2}.
\end{equation*}

\item[\emph{c)}] For the mode $m=1$,
\begin{equation*}\label{indicial1}
\gamma_1^-:=-\sigma_0^{(1)}-\tfrac{N-2\gamma}{2}=-\tfrac{2\gamma}{p-1}-1.
\end{equation*}
\end{itemize}
\end{itemize}
\end{lemma}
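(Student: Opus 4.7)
The overall strategy is to reduce each indicial-root calculation to a zero of an explicit transcendental function built from the Fourier symbols $\Theta_m$ of Section 3. After the conjugation $w=r^{(N-2\gamma)/2}\phi$, the $m$-th spherical-harmonic projection of $\mathcal L_1\phi=0$ becomes $P_\gamma^m w_m - V\,w_m = 0$, with $V$ as in \eqref{potential}. The asymptotics \eqref{asymptotics-potential} give $V\to 0$ as $r\to\infty$ and $V\to\kappa:=pA_{N,p,\gamma}$ as $r\to 0$, so the indicial roots at either end are those produced by freezing the potential and solving the Hardy-type problem $P_\gamma^m w - \kappa w=0$ from Section 6. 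These are precisely the locations of the poles of $(\Theta_m(z)-\kappa)^{-1}$, shifted back to $\phi$ by $-\tfrac{N-2\gamma}{2}$.

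For part (i) we have $\kappa=0$, so the indicial roots are the zeros of $\Theta_m(z)$ itself. From \eqref{symbol-isolated} these come entirely from the poles of the denominator $\Gamma$-factors, i.e.\ $B_m\pm\tfrac{z}{2}i\in\{0,-1,-2,\dots\}$, giving the purely imaginary sequences $z=\pm 2i(B_m+j)$, $j\ge 0$. Setting $\tilde\sigma_j^{(m)}=2(B_m+j)$, $\tilde\tau_j^{(m)}=0$ and transferring back to $\phi$ yields the two claimed sequences; the explicit form of $\tilde\gamma_m^+$ follows by evaluating $2B_m=1-\gamma+\sqrt{(\tfrac{N-2}{2})^2+\mu_m}$ at $j=0$, and monotonicity in $m$ is inherited from the monotonicity of $\mu_m$.

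For part (ii) we take $\kappa=pA_{N,p,\gamma}$ and rely on the key identity $\Theta_0(i\sigma)=\Lambda(\sigma)$, where $\Lambda$ is the function from Remark \ref{remark:constants}. A digamma-expansion argument identical in spirit to Lemma \ref{lemma-increasing} shows that $\Lambda$ is strictly decreasing on $(0,\tfrac{N-2\gamma}{2})$ from $\Lambda(0)=\Lambda_{N,\gamma}$ down to $0$. We then define $p_1$ as the unique exponent in $(\tfrac{N}{N-2\gamma},\tfrac{N+2\gamma}{N-2\gamma})$ satisfying $p_1 A_{N,p_1,\gamma}=\Lambda_{N,\gamma}$, so that $\kappa<\Lambda_{N,\gamma}$ exactly when $p<p_1$. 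In this stable regime, $\Lambda(\beta)=\kappa$ has a unique root $\beta=\sigma_0^{(0)}\in(0,\tfrac{N-2\gamma}{2})$, giving the real pair $\gamma_0^{\pm}=\pm\beta-\tfrac{N-2\gamma}{2}$. For the sharp inequality $\gamma_0^->-\tfrac{2\gamma}{p-1}$, note that $A_{N,p,\gamma}=\Lambda(|\alpha^*|)$ with $|\alpha^*|=\tfrac{2\gamma}{p-1}-\tfrac{N-2\gamma}{2}$; since $\Lambda(\beta)=p\Lambda(|\alpha^*|)>\Lambda(|\alpha^*|)$, the monotonicity of $\Lambda$ forces $\beta<|\alpha^*|$, which is exactly the required bound. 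When $p>p_1$, $\Lambda(\beta)=\kappa$ has no root in $(0,\tfrac{N-2\gamma}{2})$; by Proposition \ref{prop:unstable} the first pair of poles leaves the imaginary axis and lands on the real axis at $\pm\tau_0^{(0)}$, producing $\gamma_0^{\pm}=-\tfrac{N-2\gamma}{2}\pm i\tau_0^{(0)}$.

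For part (ii)(b) we track $\Theta_0(i\sigma)$ on the successive intervals separated by its poles at $\sigma=2(A_0+j)$: the sign of the $\Gamma$-factors alternates, and on each positive loop the strict monotonicity yields exactly one crossing of $\kappa$, with all such crossings beyond the first occurring at $\sigma>\tfrac{N-2\gamma}{2}$; the large-$j$ asymptotics $\sigma_j^{(0)}\sim\tfrac{N-2\gamma}{2}+2j$ from Section 6.4 handle the tail. For part (ii)(c), the translation invariance of \eqref{Lane-Emden} supplies the kernel element $-\partial_1 u_1=-u_1'(r)\tfrac{x_1}{r}$, whose angular factor lies in the $m=1$ eigenspace and whose radial factor scales like $r^{-\tfrac{2\gamma}{p-1}-1}$ near the origin; thus $-\tfrac{2\gamma}{p-1}-1$ is an indicial root for $m=1$, and since it lies strictly below $-\tfrac{N-2\gamma}{2}$ it must coincide with $\gamma_1^-$. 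The main obstacle is part (ii)(b), which requires ruling out any stray low-lying intersection of $\Theta_0(i\sigma)$ with $\kappa$ coming from the perturbation of the zero-potential spectrum by $-\kappa$; this will need a careful sign-and-monotonicity analysis on every loop between consecutive poles, glued with the asymptotic estimates of Section 6.4.
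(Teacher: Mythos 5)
Your overall strategy --- reduce the indicial-root computation to locating zeros of $\Theta_m(z)-\kappa$ via the conjugation $w=r^{(N-2\gamma)/2}\phi$ and the asymptotics of the potential $V$ --- is exactly the paper's strategy. Part (i) is correct and matches: the zeros of $\Theta_m$ come solely from the poles of the denominator Gamma-factors, giving the purely imaginary sequence $z=\pm 2i(B_m+j)$. For part (ii)(a), your observation that $\Theta_0(i\sigma)=\Lambda(\sigma)$ (with $\Lambda$ from Remark~\ref{remark:constants}) is correct and gives a particularly clean route to the inequality $\gamma_0^->-\tfrac{2\gamma}{p-1}$: since $A_{N,p,\gamma}=\Lambda\big(\tfrac{2\gamma}{p-1}-\tfrac{N-2\gamma}{2}\big)$ and $\kappa=pA_{N,p,\gamma}>A_{N,p,\gamma}$, strict monotonicity of $\Lambda$ immediately gives $\sigma_0^{(0)}<\tfrac{2\gamma}{p-1}-\tfrac{N-2\gamma}{2}$. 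The paper reaches the same conclusion through the auxiliary function $\Phi_0(\alpha,0)$, but with a normalization of $\alpha$ that makes it easy to slip by a factor of two; your direct use of $\Lambda(\sigma)$ avoids that bookkeeping.

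There are, however, two genuine gaps. First, for (ii)(b) you restrict attention to the imaginary axis, i.e., to roots of $\Theta_0(i\sigma)=\kappa$ with $\sigma$ real, and your closing remark shows you are aware the argument is incomplete --- but it is incomplete for a reason you do not name: there is nothing in your sketch that excludes a pole of $(\Theta_0(z)-\kappa)^{-1}$ at $z=\tau+i\sigma$ with $\tau\neq 0$ and $0<\sigma\le\tfrac{N-2\gamma}{2}$. The paper's proof handles exactly this by a separate monotonicity claim (the computation that $\partial_\alpha\,\Imag\,\Phi_m(\alpha,\beta)$ has a definite sign when $\beta\neq0$), which forces every root inside the strip $|\Real\,\delta|\le 2B_m$ to lie on one of the two coordinate axes; only then does the unique axis root bound all the others by $\sigma_j^{(0)}>2B_0$. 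Without this step the conclusion does not follow from the imaginary-axis analysis alone, no matter how carefully one tracks the sign alternation between consecutive poles of $\Theta_0$.

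Second, for (ii)(c) the translation-invariance argument shows that $-\tfrac{2\gamma}{p-1}-1$ is \emph{an} indicial root for the mode $m=1$, but not that it is the first one $\gamma_1^-=-\sigma_0^{(1)}-\tfrac{N-2\gamma}{2}$. The sentence ``since it lies strictly below $-\tfrac{N-2\gamma}{2}$ it must coincide with $\gamma_1^-$'' does not suffice: there are infinitely many indicial roots below $-\tfrac{N-2\gamma}{2}$, one for each $j$, and the asymptotics of $u_1'$ could a priori lead with a higher $\gamma_j^-$ if the $\gamma_0^-$-coefficient in its Frobenius expansion were to vanish. The paper instead verifies directly that the corresponding $\delta$ solves \eqref{indicialm}, and then invokes the monotonicity of $\Phi_1(\cdot,0)$ together with $\Phi_1(0,0)>\lambda(p)$ (and again the strip claim) to rule out any smaller root. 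If you want to keep the translation-invariance route, you would need to supplement it with an argument that the one-signed kernel element $-u_1'>0$ realizes the extremal (most singular) behavior, which is itself a nontrivial assertion.
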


\begin{proof}
First we consider statement \emph{ii.} and  calculate the indicial roots at $r=0$.   Recalling the shift \eqref{w-phi}, let $\mathcal L_1$ act on the function $r^{-\frac{N-2\gamma}{2}+\delta}$, and consider instead the operator in \eqref{eq0}. Because of Proposition  \ref{prop:symbol}, for each $m=0,1,\ldots$, the indicial root $\gamma_m:=-\frac{N-2\gamma}{2}+\delta$ satisfies
\begin{equation}\label{indicialm}
2^{2\gamma}\frac{\Gamma \big(A_m+\frac{\delta}{2}\big)\Gamma \big(A_m-\frac{\delta}{2}\big)}
{\Gamma  \big(B_m+\frac{\delta}{2}\big)\Gamma \big(B_m
-\frac{\delta}{2}\big)}=pA_{N,p,\gamma},
\end{equation}
where $A_m,B_m$ are defined in \eqref{Am}.

Note that if $\delta\in\mathbb C$ is a solution, then $-\delta$ and $\pm \overline{\delta}$ are also solutions. Let us write $\frac{\delta}{2}=\alpha+i\beta$, and denote
\begin{equation*}
\Phi_m(\alpha,\beta)=2^{2\gamma}\frac{\Gamma \big(A_m+\frac{\delta}{2}\big)\Gamma \big(A_m-\frac{\delta}{2}\big)}
{\Gamma \big(B_m+\frac{\delta}{2}\big)\Gamma \big(B_m
-\frac{\delta}{2}\big)}.
\end{equation*}
From the expression, one can see that $\Phi_m(\alpha,0)$ and $\Phi_m(0,\beta)$ are real functions.

We first claim that on the $\alpha\beta$-plane, provided that $|\alpha|\leq{B_m}$, any solution of \eqref{indicialm} must satisfy $\alpha=0$ or $\beta=0$, i.e., $\delta$ must be real or purely imaginary. Observing that the right hand side of \eqref{indicialm} is real and so is $\Phi_m(0,\beta)$ for $\beta\neq0$, the claim follows from the strict monotonicity of the imaginary part with respect to $\alpha$, namely
\begin{equation}\label{claim-1}
\begin{split}
\frac{\partial}{\partial \alpha}\Imag(\Phi_m(\alpha, \beta))&=-\frac{i}{2}\frac{\partial }{\partial \alpha}\Big[\Phi_m(\alpha,\beta)-\Phi_m(\alpha,-\beta) \Big]\\
&=\sum_{j=0}^\infty \Imag\Big[ \tfrac{1}{j+A_m+\alpha-i\beta} +\tfrac{1}{j+A_m-\alpha-i\beta}+\tfrac{1}{j+B_m+\alpha+i\beta}+\tfrac{1}{j+B_m-\alpha+i\beta}\Big]\\
&=\beta\sum_{j=0}^\infty \Big[ \tfrac{1}{(j+A_m+\alpha)^2+\beta^2}+\tfrac{1}{(j+A_m-\alpha)^2+\beta^2} -\tfrac{1}{(j+B_m+\alpha)^2+\beta^2}-\tfrac{1}{(j+B_m-\alpha)^2+\beta^2}\Big],
\end{split}
\end{equation}
the summands being strictly negative since $A_m>B_m$.
If $\beta\neq 0$ and $|\alpha|\leq B_m$, it is easy to see that the above expression is not zero. This yields the  proof of the claim.

Moreover, $\Phi_m(\alpha, 0)$ and $\Phi_m(0,\beta)$ are even functions in $\alpha, \beta$, respectively. Using the properties of the digamma function again, one can check that
\begin{equation}\label{derivative-a}
\frac{\partial{\Phi_m(\alpha,0)}}{\partial \alpha}<0 \mbox{ for }\alpha>0
\end{equation}
and
\begin{equation}\label{derivative-b}
\frac{\partial{\Phi_m(0,\beta)}}{\partial \beta}>0 \mbox{ for }\beta>0.
\end{equation}

Let us consider now the case $m=0$. Using the explicit expression for $A_{N,p,\gamma}$ from \eqref{Apn}, then $\delta$ must be a solution of
\begin{equation}\label{eq:indicial}
\frac{\Gamma \big(\frac{N}{4}+\frac{\gamma}{2}+\frac{\delta}{2}\big)
\Gamma \big(\frac{N}{4}+\frac{\gamma}{2}-\frac{\delta}{2}\big)}
{\Gamma\big(\frac{N}{4}-\frac{\gamma}{2}+\frac{\delta}{2}\big)
\Gamma\big(\frac{N}{4}-\frac{\gamma}{2}-\frac{\delta}{2}\big)}
=p\frac{\Gamma\big(\frac{N}{2}-\frac{\gamma}{p-1}\big)\Gamma\big(\frac{\gamma}{p-1}+\gamma\big)}
{\Gamma\big(\frac{\gamma}{p-1}\big)\Gamma\big(\frac{N}{2}-\gamma-\frac{\gamma}{p-1}\big)}=:\lambda(p).
\end{equation}
From the arguments in \cite{Ao-Chan-Gonzalez-Wei} (see also the definition of $p_1$ in \eqref{p1}), there exists a unique $p_1$ satisfying $\frac{N}{N-2\gamma}<p_1<\frac{N+2\gamma}{N-2\gamma}$ such that $\Phi_0(0,0)=\lambda(p_1)$, and $\Phi_0(0,0)>\lambda(p)$ when $\frac{N}{N-2\gamma}<p<p_1$, and $\Phi_0(0,0)<\lambda(p)$ when $p_1<p<\frac{N+2\gamma}{N-2\gamma}$.

\bigskip

Assume first that $\frac{N}{N-2\gamma}<p<p_1$ (the stable case). Then from (\ref{derivative-b}), we know that there are no indicial roots on the imaginary axis. Next we consider the real axis. Since $\Phi_0(B_0, 0)=0$, by (\ref{derivative-a}), there exists an unique root $\alpha^*\in (0, B_0)$ such that $\Phi_0(\pm \alpha^*, 0)=\lambda(p)$. We now show that $\alpha^*\in (0, \frac{2\gamma}{p-1}-\frac{N-2\gamma}{2})$. Note that
\begin{equation*}
\Phi_0\big(\tfrac{2\gamma}{p-1}-\tfrac{N-2\gamma}{2},0\big)-pA_{N,p,\gamma}
=(1-p)\frac{\Gamma\big(\frac{N}{2}-\frac{\gamma}{p-1}\big)\Gamma\big(\frac{\gamma}{p-1}+\gamma\big)}
{\Gamma\big(\frac{\gamma}{p-1}\big)\Gamma\big(\frac{N}{2}-\gamma-\frac{\gamma}{p-1}\big)}<0.
\end{equation*}
We conclude using the monotonicity of $\Phi_0(\alpha,0)$ in $\alpha$.

Now we consider the unstable case, i.e., for $p>p_1$. First by \eqref{derivative-a}, there are no indicial roots on the real axis. Then by \eqref{claim-1}, in the region $|\alpha|\leq B_0$, if a solution exists, then $\delta$ must stay in the imaginary axis. Since $\Phi_0(0,\beta)$ is increasing in $\beta$ and $\lim_{\beta\to \infty}\Phi_0(0,\beta)=+\infty$, we get an unique $\beta^*>0$ such that $\Phi_0(0,\pm \beta^*)=\lambda(p)$.

In the notation of Section \ref{section:Hardy}, we denote all the solutions to \eqref{eq:indicial} to be $\sigma_j^{(0)}\pm i\tau_j^{(0)}$ and $-\sigma_j^{(0)}\pm i\tau_j^{(0)}$, such that $\sigma_j$ is increasing sequence, then from the above argument, one has the following properties:
\begin{equation*}\left\{\begin{array}{r@{}lr@{}ll}
\sigma_0^{(0)}
    &\,\in \big(0,\tfrac{2\gamma}{p-1}-\tfrac{N-2\gamma}{2}\big), &\tau_0^{(0)}
    &\,=0, \quad
    &\mbox{for }\tfrac{N}{N-2\gamma}<p<p_1,\medskip\\
\sigma_0^{(0)}
    &\,=0, \quad
    &\tau_0^{(0)}
    &\,\in (0,\infty), \quad
    &\mbox{for }p_1\leq p<\tfrac{N+2\gamma}{N-2\gamma},
\end{array}\right.\end{equation*}
and
\begin{equation*}
\sigma_j^{(0)}>2B_0=\tfrac{N-2\gamma}{2} \quad\mbox{ for }j\geq 1.\\
\end{equation*}

For the next mode $m=1$, one can check by direct calculation that $\alpha_1=\frac{2\gamma}{p-1}+1-\frac{N-2\gamma}{2}$ is a solution to \eqref{indicialm}. By the monotonicity \eqref{derivative-a}, there are no other real solutions in $(0,\alpha_1)$. This also implies that $\Phi_1(0,0)>\lambda(p)$, by \eqref{derivative-b}, there are no solutions in the imaginary axis.

\bigskip

Moreover, using the fact that $\Phi_m(\alpha,0)$ is increasing in $m$, and $\Phi_m(\pm B_m, 0)=0$, we can get a sequence of real solutions $\alpha_m\in (0, B_m)$ for $m\geq 1$ that is increasing. Moreover, from \eqref{claim-1}, one also has that in the region $|\alpha|\leq B_m$, all the solutions to \eqref{indicialm} are real.

Then, denoting the solutions to \eqref{indicialm} by $\sigma_j^{(m)}\pm i\tau_j^{(m)}$ and $-\sigma_j^{(m)}\pm i\tau_j^{(m)}$ for $m\geq 1$, we conclude that:
\begin{equation*}
\sigma_0^{(1)}=\tfrac{2\gamma}{p-1}+1-\tfrac{N-2\gamma}{2}, \quad \{\sigma_0^{(m)}\} \mbox{ is \ increasing}, \quad \tau_0^{(m)}=0.\\
\end{equation*}

We finally consider statement \emph{i.} in the Lemma and look for the indicial roots of $\mathcal L_1$ at $r=+\infty$. In this case, $\delta$ will satisfy the following equation:
\begin{equation*}
2^{2\gamma}\frac{\Big|\Gamma \Big(\tfrac{1}{2}+\tfrac{\gamma}{2}
+\tfrac{1}{2}\sqrt{\big(\tfrac{N}{2}-1\big)^2+\mu_m}+\frac{\delta}{2}\Big)\Big|^2}
{\Big|\Gamma\Big(\tfrac{1}{2}-\tfrac{\gamma}{2}+\tfrac{1}{2}\sqrt{\big(\tfrac{N}{2}-1\big)^2+\mu_m}
+\frac{\delta}{2}\Big)\Big|^2}=0.
\end{equation*}
For each fixed $m=0,1,\ldots$, the indicial roots occur when
\begin{equation*}
\tfrac{1}{2}-\tfrac{\gamma}{2}+\tfrac{1}{2}\sqrt{\big(\tfrac{N}{2}-1\big)^2+\mu_m}
\pm \frac{\delta}{2}=j, \quad\mbox{for }j=0, -1, -2, \dots, -\infty,
\end{equation*}
or
\begin{equation*}
\pm \delta=(1-\gamma)+\sqrt{(\tfrac{N}{2}-1)^2+\mu_m}+2j, \quad j=0,1,2,\dots.
\end{equation*}
Thus, the indicial roots for $\mathcal{L}_1$ at $r=+\infty$ are given by
\begin{equation*}
-\tfrac{N-2\gamma}{2}\pm \Big\{(1-\gamma)+\sqrt{(\tfrac{N}{2}-1)^2+\mu_m}\Big\}\pm 2j, \ \ j=0,1,\dots.
\end{equation*}
This finishes the proof of the Lemma.
\end{proof}

\subsection{Injectivity of $\mathcal L_1$ in the weighted space $\mathcal C^{2\gamma+\alpha}_{\mu, \nu_1}$}

The arguments in this section rely heavily on the results from Section \ref{section:Hardy}.  We  fix
\begin{equation}\label{restriction-mu-nu-0}
\mu>\Real({\gamma^+_0})\geq -\frac{N-2\gamma}{2},\quad \nu_1\leq\min\{ 0,\mu\}.
\end{equation}
Such $\mu$ is chosen to exclude the trivial solution $\phi_*=r\partial_r u_1+\frac{p-1}{2\gamma}u_1$.

\begin{proposition}\label{injectivity}
Under the hypothesis \eqref{restriction-mu-nu-0}, the only solution $\phi\in \mathcal C^{2\gamma+\alpha}_{\mu,\nu_1}(\R^N \setminus \{0\})$ of the equation $\mathcal L_1 \phi=0$ is the trivial solution $\phi\equiv 0$.
\end{proposition}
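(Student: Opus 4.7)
The plan is to imitate the strategy of Proposition~\ref{two-dimensional}, performing a spherical harmonic decomposition and analyzing each mode separately via the indicial root theory developed in Section~\ref{section:Hardy}, and then ruling out nontrivial kernel elements by combining a Hamiltonian identity with unique continuation.

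\textbf{Step 1: Reduction to a one-dimensional problem for each mode.} First I would write $\phi = \sum_{m\geq 0} \phi_m(r) E_m(\theta)$ and $w_m(t) = r^{\frac{N-2\gamma}{2}}\phi_m(r)$ with $r=e^{-t}$, so that $\mathcal L_1 \phi=0$ becomes the scalar problem
\[
\tilde{\mathcal L}_m w_m := P_\gamma^m w_m - V(t)\, w_m = 0 \quad\text{on }\mathbb R,
\]
for each $m=0,1,2,\dots$, with potential $V$ satisfying the asymptotics \eqref{asymptotics-potential}. The weight condition $\phi\in\mathcal C^{2\gamma+\alpha}_{\mu,\nu_1}$ translates into
\[
w_m(t)=O\bigl(e^{-(\mu+\frac{N-2\gamma}{2})t}\bigr)\ \text{as }t\to+\infty,\qquad w_m(t)=O\bigl(e^{(\nu_1+\frac{N-2\gamma}{2})t}\bigr)\ \text{as }t\to-\infty,
\]
together with matching control on derivatives in the sense of Remark~\ref{remark:edge}.

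\textbf{Step 2: Multipole expansion near $+\infty$ and $-\infty$.} For each fixed $m$ I would rewrite the equation as
\[
P_\gamma^m w_m - \kappa\, w_m = (V-\kappa)\, w_m =: h_m,\qquad \kappa := pA_{N,p,\gamma},
\]
so that, by \eqref{asymptotics-potential}, $h_m$ decays like $e^{-q_1 t}w_m$ as $t\to+\infty$ and is comparable to $w_m$ as $t\to-\infty$. Bootstrapping the assumed polynomial-type bound on $w_m$ and applying Theorem~\ref{thm:Hardy-potential}.iii (or Proposition~\ref{prop:unstable}.iii in the unstable regime) iteratively, I can represent $w_m$ as a convergent multipole expansion in the indicial exponents of Lemma~\ref{indicial}. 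Comparing with the weight condition from Step~1, the exponents at $+\infty$ slower than $\gamma^+_0$ (for $m=0$) or than the corresponding $\gamma^+_m$ (for $m\geq 1$) are forbidden; similarly, the constraint $\nu_1\leq 0$ together with the asymptotic behavior of Remark~\ref{remark:homogeneous} kills all growing modes at $-\infty$. Hence $w_m$ must in fact decay exponentially at both ends.

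\textbf{Step 3: Ruling out two-sided decaying solutions.} For each mode $m$, once $w_m$ is shown to decay at both $\pm\infty$ I would invoke the Hamiltonian identity of Lemma~\ref{lemma:Wronskian} applied to the pair $(w_m, w_*^{(m)})$, where $w_*^{(m)}$ is a reference two-sided decaying solution constructed from the canonical kernel element (namely $\phi_*=r\partial_r u_1+\frac{p-1}{2\gamma}u_1$ for $m=0$, and for $m=1$ from the translations $\partial_i u_1$, which lie in the $m=1$ eigenspace and have indicial behavior $r^{-\frac{2\gamma}{p-1}-1}$ at the origin, thus excluded by $\mu>\gamma_0^+$ and the analogous statement for $m=1$; for $m\geq 2$ no such two-sided decaying solution exists, which forces $w_m\equiv 0$ directly by Corollary~\ref{cor:other-solutions} combined with the indicial analysis). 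Thus in every case $w_m$ is proportional to an element that is itself incompatible with the prescribed weight, hence $w_m\equiv 0$ up to a unique continuation argument.

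\textbf{Step 4: Unique continuation and conclusion.} To close the argument I would invoke the unique continuation principle for the Hardy-type operator $\mathcal L_1$ (as used in the proof of Proposition~\ref{two-dimensional}, via \cite{Fall-Felli} in the stable case and \cite{Ruland} in the unstable case) to upgrade the vanishing at infinity into $\phi_m\equiv 0$ throughout $\mathbb R^N\setminus\{0\}$. Summing over $m$ yields $\phi\equiv 0$.

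\textbf{Main obstacle.} The delicate point is Step~2: because $\tilde{\mathcal L}_m$ has infinitely many indicial roots, one must control the \emph{full} multipole expansion uniformly and ensure its convergence, then identify \emph{exactly} which of these modes are ruled out by the hypothesis $\mu>\Real(\gamma_0^+)$. This requires the precise location of the poles given in Lemma~\ref{indicial} (in particular, the sharp lower bound $\sigma_j^{(0)}>\frac{N-2\gamma}{2}$ for $j\geq 1$ and the identification of $\gamma_1^-$), together with the residue asymptotics \eqref{asymptotics-residue} to guarantee uniform summability. Once the multipole expansion is under control, Steps~3 and~4 are essentially routine given the tools already developed.
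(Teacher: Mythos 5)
Your Steps 1, 2 and 4 correctly reproduce the paper's treatment of the \emph{radial} mode $m=0$: bootstrap the decay via Theorem~\ref{thm:Hardy-potential}.iii / Proposition~\ref{prop:unstable}.iii, use the Hamiltonian of Lemma~\ref{lemma:Wronskian} to rule out two-sided decaying solutions other than $\phi_*$, and finish with the strong unique continuation results of \cite{Fall-Felli} and \cite{Ruland}. The difficulty is Step 3: the Hamiltonian/Wronskian machinery of Lemma~\ref{lemma:Wronskian} and Corollary~\ref{cor:other-solutions} is established only for the operator $\mathcal L_*$ acting on radial functions, i.e.\ the $m=0$ projection (the special defining function $\rho^*$ and the divergence-form extension in Proposition~\ref{prop:divV*} are built precisely for the zero spherical mode). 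Applying it to $w_m$ for $m\geq 1$ is not justified by anything in the paper; one would have to redo the whole conjugation/extension construction with the angular eigenvalue $\mu_m$ present, and it is not clear a clean Hamiltonian survives. So your argument for the higher modes has a genuine gap.

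The paper actually uses two separate, purely elliptic arguments for $m\geq 1$, neither of which is a Hamiltonian argument. For $m=1,\ldots,N$ one observes that $u_1'(r)E_m$ is an explicit kernel element in the $m$-th eigenspace; if $\phi_m$ were a second nontrivial solution with the prescribed decay, one could form a nontrivial linear combination of $u_1'$ and $\phi_m$ that decays strictly faster than $r^{-(N+1-2\gamma)}$ at infinity (the singularities at $0$ cannot cancel because $u_1'$ blows up faster), and the indicial-root analysis at $\infty$ rules that out. For $m\geq N+1$ the key is a variational sign comparison with the positive first-mode extension $\Phi_1$: multiply the extension equation for $\Phi_m$ by $\Phi_1$ and vice versa, subtract, integrate over $\{\Phi_m>0\}$, and use $\mu_m>\mu_1$ together with $\Phi_1>0$ and $\partial_\nu\Phi_m\leq 0$ on $\partial^+\{\Phi_m>0\}$ to force $\int \Phi_m\Phi_1 r^{-2}\leq 0$, hence $\Phi_m\leq 0$; repeating with $\{\Phi_m<0\}$ gives $\Phi_m\equiv 0$. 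This integral/sign argument has no counterpart in your proposal and cannot be replaced by Corollary~\ref{cor:other-solutions}, which you invoke for $m\geq 2$ even though it is stated (and proved) only for the radial operator $\mathcal L_*$.

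In short: your plan works for $m=0$, but you would need to import the paper's linear-combination argument for $m=1,\ldots,N$ and the integral positivity argument for $m\geq N+1$ (or supply an independent justification of a Wronskian identity in each projected mode, which is a nontrivial extra task).
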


\begin{proof}
We would like to classify solutions to the following equation:
\begin{equation*}\label{eqlinear}
(-\Delta_{\mathbb R^N})^\gamma \phi=pA_{N,p,\gamma}u_1^{p-1}\phi \mbox{ in }\R^N\setminus \{0\},
\end{equation*}
or equivalently, \eqref{eqlinear2} or \eqref{eq0m} for each $m=0,1,\ldots$.

\bigskip

{\bf Step 1:} the mode $m=0$. Define the constant $\tau=pA_{N,p,\gamma}$ and rewrite equation \eqref{eq0}, i.e. equation \eqref{eq0m} for $m=0$, as
\begin{equation}\label{choice-h}
P_\gamma^0(w)-\tau w=(V-\tau)w=:h,
\end{equation}
for $w=w(t)$, the conjugation of $\phi_0$ according to \eqref{w-phi}, and the right hand side $h=h(t)$, both written in the variable $t=-\log{r}\in\R$. By definition,
\begin{equation}\label{eq:prop7.2w}
w(t)=
\begin{cases}
O(e^{-(\mu+\frac{N-2\gamma}{2})t})
    & \text{ as } t\to+\infty,\\
O(e^{-(\nu_1+\frac{N-2\gamma}{2})t})
    & \text{ as } t\to-\infty.\\
\end{cases}
\end{equation}
We use also \eqref{asymptotics-potential}
to estimate the right hand side,
\begin{equation*}\label{asymptotics-h}
h(t)=\begin{cases}
 O(e^{-(q_1+\mu+\frac{N-2\gamma}{2})t} )&\mbox{ as }t\to +\infty,\\
 O(e^{-(\nu_1+\frac{N-2\gamma}{2})t})&\mbox{ as }t\to -\infty.
\end{cases}\end{equation*}
By our choice of weights $\mu$, $\nu_1$ from \eqref{restriction-mu-nu-0}, we have that $\mu+\frac{N-2\gamma}{2}>\sigma_0^{(0)}$ and $\nu_1+\frac{N-2\gamma}{2}<\sigma_1^{(0)}$ (for this, recall statements \emph{a)} and \emph{b)} in Lemma \ref{indicial}).

We use Theorem \ref{thm:Hardy-potential} and Proposition \ref{prop:unstable} with $\delta=q_1+\mu+\frac{N-2\gamma}{2}>\Real(\gamma_0^++\frac{N-2\gamma}{2})=\sigma_0^{(0)}$ and $\delta_0=-(\nu_1+\frac{N-2\gamma}{2})>-\sigma_1^{(0)}$. Obviously, $\delta+\delta_0=q_1+(\mu-\nu_1)>0$. Let $J\geq0$ be such that $\sigma_J^{(0)}<\delta<\sigma_{J+1}^{(0)}$. Then we can find a particular solution $w_p$ (depending on $J$) such that
\begin{equation*}
w_p(t)=O(e^{-\delta t}), \quad\text{as}\quad t\to +\infty.
\end{equation*}
Our solution will be obtained from this particular solution by adding elements in the kernel. By our choice of $\mu$ (hence $\delta$), we could only add exponentials $e^{-\sigma_j^{(0)} t}$, $j>J$. But these grow fast at $-\infty$ and thus are not allowed by the choice of $\nu_1$, and we must have $w=w_p$. But again, by the theorem, either $w_p$ grows as $t\to -\infty$ like $O(e^{\delta_0 t})$,  or it has the asymptotic behavior $w_p\sim e^{-\sigma_j^{(0)} t}$ for some $j=0,\ldots,J$ (maybe after passing to a subsequence in $t$, in case $\tau_j^{(0)}\neq 0$). But by \eqref{eq:prop7.2w} this second scenario is not allowed unless $j=0$, so we must conclude that $w=w_p$ satisfies
\begin{equation*}
w(t)=
\begin{cases}
O(e^{-\delta t})
    &\text{ as }\quad t\to +\infty,\\
O(e^{\min\{-\sigma_0^{(0)},\delta_0\} t})
    &\text{ as }\quad t\to-\infty.
\end{cases}
\end{equation*}
With this and \eqref{eq:prop7.2w} we see that we have improved the decay of $w=w_p$ at $+\infty$ by $e^{-q_1t}$ (at the expense of slightly worsening the behavior at $-\infty$).

Now, by the definition of $h$ in \eqref{choice-h}, we can iterate this process with $\delta'=lq_1+\mu+\frac{N-2\gamma}{2}$, $l\geq 2$, and $\delta_0'=\min\{-\sigma_0,\delta_0\}$, to obtain better  decay when $t\to +\infty$. Indeed, with these choices of the parameters we still have $\delta'+\delta_0'=(l-1)q_1+\delta+\min\set{-\sigma_0,\delta_0}>0$, so that the results in Section \ref{section:Hardy} remain applicable.  The Hamiltonian argument from Corollary \ref{cor:other-solutions} shows that there are no decaying solutions on both sides $t\to\pm\infty$ except for $\phi_*$. As a consequence, we have that $w$ decays faster than any $e^{-\delta t}$ as $t\to +\infty$, which translated to $\phi$ means that $\phi=o(r^a)$ as $r\to 0$ for every $a\in\mathbb N$. Note that the strong unique continuation result of \cite{Fall-Felli} (stable case) and \cite{Ruland} (unstable case) for the operator $P_\gamma^0-V$ implies that $\phi$ must vanish everywhere.

\bigskip

{\bf Step 2:} the modes $m=1,\ldots,N$.   Differentiating the equation \eqref{Lane-Emden} 
we get
\begin{equation*}
\mathcal L_1\frac{\partial u_1}{\partial x_m}=0.
\end{equation*}
Since $u_1$ only depends on $r$, we have $\frac{\partial u_1}{\partial x_m}=u_1'(r)E_m$, where $E_m=\frac{x_m}{|x|}$. Using the fact that $-\Delta_{\mathbb S^{N-1}} E_m=\mu_m E_m$, the extension for $u'_1(r)$ to $\mathbb R^{N+1}_+$
solves \eqref{eqlinear2} with eigenvalue $N-1$, and $w_1:=r^{\frac{N-2\gamma}{2}}u_1'$ satisfies $P_\gamma^m w-Vw=0$. Note that $u'_1$ decays like $r^{-(N+1-2\gamma)}$ as $r\to \infty$ and blows up like $r^{-\frac{2\gamma}{p-1}-1}$ as $r\to 0$.

We know that also $\phi_m$ solves \eqref{eq0m}. Assume it  decays like $r^{-(N+1-2\gamma)}$ as $r\to \infty$ and blows up like $r^{\gamma_m^+}$ as $r\to 0$. Then we can find a non-trivial combination of $u'_1$ and $\phi_m$ that decays faster than $r^{-(N+1-2\gamma)}$ at infinity. Since their singularities at $0$ cannot cancel, this combination is non-trivial.

Now we claim that no solution to \eqref{eq0m} can decay faster than $r^{-(N+1-2\gamma)}$ at $\infty$, which is a contradiction and yields that $\phi_m=0$ for $m=1, \ldots, N$.

To show this claim we argue as in Step 1, using the indicial roots at infinity (namely $-(N+1-2\gamma)$ and $1$) and interchanging the role of $+\infty$ and $-\infty$ in the decay estimate. Using the facts that the solution decays like $r^{\sigma}$ for some $\sigma<{-(N-2\gamma+1)}$, i.e. $\sigma+\frac{N-2\gamma}{2}<-\frac{N-2\gamma}{2}-1=-\sigma_0^{(1)}$ and $\Real(\gamma_m^+)+\frac{N-2\gamma}{2}<\sigma_1^{(1)}$,  one can show that the solution is identically zero.

\bigskip

{\bf Step 3:} the remaining modes $m\geq{N+1}$. We use an integral estimate involving the first mode which has a sign, as in \cite{Davila-delPino-Musso-Wei,Davila-delPino-Musso}.
We note that, in particular, $\phi_1(r)=-u_1'(r)>0$, which also implies that its extension $\Phi_1$ is positive. In general, the $\gamma$-harmonic extension $\Phi_m$ of $\phi_m$ satisfies
\[\left\{
\begin{array}{r@{}ll}
\divergence(\ell^{1-2\gamma}\nabla\Phi_m)
    &\,=\mu_{m}\dfrac{\ell^{1-2\gamma}}{r^2}\Phi_m
    &\text{ in } \R^{N+1}_+,\medskip\\
-\tilde{d}_\gamma\lim\limits_{\ell\to0}\ell^{1-2\gamma}\p_{\ell}\Phi_m
    &\,=pu_1^{p-1}\phi_m
    &\text { on } \R^{N+1}_+.
\end{array}
\right.\]
We multiply this equation by $\Phi_1$ and the one with $m=1$ by $\Phi_m$. Their difference gives the equality
\begin{equation*}\begin{split}
(\mu_m-\mu_1)\dfrac{\ell^{1-2\gamma}}{r^2}\Phi_m\Phi_1
&=\Phi_1\divergence(\ell^{1-2\gamma}\nabla\Phi_m)-\Phi_m\divergence(\ell^{1-2\gamma}\nabla\Phi_1)\\
&=\divergence(\ell^{1-2\gamma}(\Phi_1\nabla\Phi_m-\Phi_m\nabla\Phi_1)).
\end{split}\end{equation*}
Let us integrate over the region where $\Phi_m>0$. The functions are regular enough near $x=0$ by the restriction \eqref{restriction-mu-nu-0}. The boundary $\p\set{\Phi_m>0}$ is decomposed into a disjoint union of $\p^0\set{\Phi_m>0}$ and $\p^+\set{\Phi_m>0}$, on which the extension variable $\ell=0$ and $\ell>0$, respectively. Hence
\begin{equation*}\begin{split}
0&\leq\tilde{d}_\gamma(\mu_m-\mu_1)\int_{\set{\Phi_m>0}}\dfrac{\Phi_m\Phi_1}{r^2}\,dxd\ell\\
&=\int_{\p^0\set{\Phi_m>0}}\left(\phi_1\lim\limits_{\ell\to0}\ell^{1-2\gamma}\pnu{\Phi_m}-\phi_m\lim\limits_{\ell\to0}\ell^{1-2\gamma}\pnu{\Phi_1}\right)\,dx\\
&\qquad+\int_{\p^+\set{\Phi_m>0}}\ell^{1-2\gamma}\left(\Phi_1\pnu{\Phi_m}-\Phi_m\pnu{\Phi_1}\right)\,dxd\ell.
\end{split}\end{equation*}
The first integral on the right hand side vanishes due to the equations $\Phi_1$ and $\Phi_m$ satisfy. Then we observe that on $\p^+\set{\Phi_m>0}$, one has $\Phi_1>0$, $\pnu{\Phi_m}\leq0$ and $\Phi_m=0$. This forces (using $\mu_m>\mu_1$)
\[\int_{\set{\Phi_m>0}}\dfrac{\Phi_m\Phi_1}{r^2}\,dxd\ell=0,\] which in turn implies $\Phi_m\leq0$. Similarly $\Phi_m\geq0$ and, therefore, $\Phi_m\equiv0$ for $m\geq{N+1}$. This completes the proof of the Proposition \ref{injectivity}.
\end{proof}

\subsection{Injectivity of $\mathbb{L}_1$ on $\mathcal C^{2\gamma+\alpha}_{\mu,\nu_1}$}

In the following, we set $N=n-k$ and consider more general equation \eqref{equation0}. Set
\begin{equation*}
\mathbb{L}_1=(-\Delta_{\mathbb R^n})^\gamma-pA_{N,p,\gamma}u_1^{p-1} \mbox{ in }\R^n \setminus \R^k.
\end{equation*}

\begin{proposition}\label{injectivity1}
Choose the weights $\mu,\nu_1$ as in Proposition  \ref{injectivity}.
The only solution $\phi\in \mathcal C^{2\gamma+\alpha}_{\mu,\nu_1}(\R^n \setminus \R^k)$ of the linearized equation $\mathbb{L}_1 \phi=0$ is the trivial solution $\phi\equiv 0$.
\end{proposition}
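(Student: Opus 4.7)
My plan is to reduce Proposition 7.3 to the isolated singularity case (Proposition 7.2) by exploiting translation invariance in $y \in \R^k$. Since $u_1(|x|)^{p-1}$ is $y$-independent and $\phi \in \mathcal C^{2\gamma+\alpha}_{\mu,\nu_1}$ is bounded in $y$ (because $\nu_1 \leq 0$), $\phi$ is tempered as a function of $y$ for each fixed $x \neq 0$. Taking the partial Fourier transform $\hat\phi(x,\eta) := \mathcal F_y[\phi](x,\eta)$ and using that the Fourier symbol of $(-\Delta_{\R^n})^\gamma$ equals $(|\xi|^2+|\eta|^2)^\gamma$, the equation $\mathbb L_1\phi = 0$ becomes, for every $\eta \in \R^k$,
\begin{equation*}
(-\Delta_x + |\eta|^2)^\gamma \hat\phi(x,\eta) = p A_{N,p,\gamma}\, u_1(|x|)^{p-1}\,\hat\phi(x,\eta), \quad x \in \R^N \setminus \{0\}.
\end{equation*}
At $\eta = 0$ this is exactly $\mathcal L_1 \hat\phi(\cdot,0) = 0$, and Proposition \ref{injectivity} forces $\hat\phi(\cdot,0) \equiv 0$.

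For fixed $\eta \neq 0$ I would reproduce the three-step argument of Proposition \ref{injectivity} with $(-\Delta_x)^\gamma$ replaced by the Bessel-type operator $B_\eta := (-\Delta_x + |\eta|^2)^\gamma$. Two structural observations make the transfer essentially automatic. First, near $r = |x| = 0$ the mass $|\eta|^2$ is a lower-order perturbation of $(-\Delta_x)^\gamma$: after conjugating by $r^{\frac{N-2\gamma}{2}}$ and passing to $t = -\log r$, the resulting operator on $\R \times \mathbb S^{N-1}$ equals $P_\gamma^{g_0}$ plus a correction that is exponentially small as $t \to +\infty$, so the indicial roots of Lemma \ref{indicial}(ii), the Green's function analysis of Section \ref{section:Hardy}, and the Frobenius/multipole expansion used in Step 1 of Proposition \ref{injectivity} all transfer. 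Second, at $r = \infty$ the mass term dominates and forces any bounded homogeneous solution of $B_\eta w = 0$ to decay exponentially, which only strengthens the estimates needed to close the iteration. With these in hand, the spherical harmonic decomposition on $\mathbb S^{N-1}$ is carried out exactly as in Proposition \ref{injectivity}: mode $m = 0$ via the Hardy-operator bootstrap together with the strong unique continuation results of Fall--Felli and R\"uland; modes $1 \leq m \leq N$ via the Hamiltonian/Wronskian identity of Lemma \ref{lemma:Wronskian} applied to $B_\eta$; and modes $m \geq N+1$ via the positivity argument of Step 3 of Proposition \ref{injectivity}, whose crucial sign estimate is preserved after adding the non-negative term $\ell^{1-2\gamma}|\eta|^2 \hat\Phi_m$ in the Caffarelli--Silvestre extension. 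Fourier inversion then yields $\phi \equiv 0$.

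The main technical obstacle is verifying the first observation above: that the delicate Fourier-symbol analysis of Section \ref{section:Hardy} (pole locations, contour deformations, residue asymptotics) transfers to the perturbed operator $B_\eta$. The symbol of the conjugated Bessel operator on $\R \times \mathbb S^{N-1}$ is no longer a pure function of the Fourier variable $\xi$ but acquires a correction depending on $t$ through the factor $|\eta|^2 e^{-2\gamma t}$, which is exponentially small as $t \to +\infty$. Treating this correction as a compact remainder in weighted spaces and using a perturbation argument, one checks that the poles of the unperturbed inverse symbol $1/(\Theta_m(\xi) - \tau)$ move only by amounts that do not affect the indicial analysis. Once that is in place, the rest of the proof is a direct transcription of Proposition \ref{injectivity}, and the conclusion $\phi \equiv 0$ follows.
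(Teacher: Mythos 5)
Your approach is genuinely different from the paper's and bypasses the key structural observation that makes the proof short. The paper proves Proposition \ref{injectivity1} in a single paragraph: after conjugating by $r^{-\frac{N-2\gamma}{2}}$ and projecting onto spherical harmonics of $\mathbb S^{N-1}$, it applies the Fourier--Helgason transform on $\mathbb H^{k+1}$ and observes that the resulting symbol $\Theta_\gamma^m(\lambda)$ from Theorem \ref{thm:symbol} coincides \emph{exactly} --- as a function of the spectral parameter $\lambda$ --- with the isolated-singularity symbol $\Theta_\gamma^m(\xi)$ of Proposition \ref{prop:symbol}. This symbol coincidence, which is the whole point of Section \ref{subsection:full-symbol} and stems from the conformal structure of $\mathbb S^{N-1}\times\mathbb H^{k+1}$, means the entire pole/residue/indicial analysis of Sections \ref{section:Hardy} and \ref{section:linear-theory} transfers verbatim, with no new operator to analyze. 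Your route, taking the ordinary Fourier transform in $y\in\R^k$, reduces instead to the family of Bessel-type operators $(-\Delta_x+|\eta|^2)^\gamma$, which are genuinely different operators whose conjugated symbols do \emph{not} coincide with $\Theta_\gamma^m$. Re-deriving the pole locations, contour deformations, Green's function expansions, and indicial root counts for these operators would be a substantial new project. You correctly identify this as "the main technical obstacle" but only sketch a perturbation argument rather than carrying it out.

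Beyond the missing symbol analysis, there are two further gaps. First, for the allowed weights $\nu_1\le 0$ the function $\phi$ may be merely bounded in $y$, so $\mathcal F_y[\phi](x,\cdot)$ is a priori a tempered distribution in $\eta$, not a function; your pointwise equation "for every $\eta\in\R^k$" does not yet make sense, and you would need a density or cutoff argument (or to pass to the weighted $L^2$ framework of Section \ref{section:Fredholm}) to legitimize it. Second, your claim that the correction to $P_\gamma^{g_0}$ is "exponentially small" is a one-sided statement: $e^{-2\gamma t}|\eta|^2$ decays as $t\to+\infty$ (near $r=0$) but grows exponentially as $t\to-\infty$ (near $r=\infty$), where the Bessel operator changes the asymptotics from polynomial to exponential and the indicial picture at infinity is completely different; treating it as a globally compact remainder requires justification that is not given. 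You are right that the mass term plausibly provides coercivity at infinity and could strengthen the estimates, but that is a heuristic, not a proof, and it is precisely the step the Fourier--Helgason approach renders unnecessary.
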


\begin{proof}
The idea is to use the results from Section \ref{subsection:full-symbol} to reduce $\mathbb L_1$ to the simpler $\mathcal L_1$, taking into account that $u_1$ only depends on the variable $r$ but not on $y$. In the notation of Proposition \ref{prop:symbol}, define $w=r^{-\frac{N-2\gamma}{2}}\phi$, and $w_m$ its $m$-th projection over spherical harmonics. Set $\hat w_m(\lambda,\omega)$, $\lambda\in\mathbb R$, $\omega\in \mathbb S^{k}$ to denote its Fourier-Helgason transform. By observing that the full symbol \eqref{symbol}, for each fixed $\omega$, coincides with the symbol
\eqref{symbol-isolated}, we have reduced our problem to that of Proposition \ref{injectivity}. This completes the proof.
\end{proof}

\subsection{\textit{A priori} estimates}

Now we go back to the linearized operator $L_\ve$ from \eqref{linearized-points} for the point singularity case $\mathbb R^N\setminus \{q_1,\ldots,q_K\}$, or \eqref{linearized-general} for the general $\mathbb R^n\setminus \Sigma$, and consider the equation
\begin{equation}\label{linear1}
L_\ve \phi=h.
\end{equation}

For simplicity, we use the following notation for the weighted norms
\begin{equation}\label{norm}
\|\phi\|_*=\|\phi\|_{\mathcal C^{2\gamma+\alpha}_{\mu,\nu}}, \quad \|h\|_{**}=\|h\|_{\mathcal C^{0,\alpha}_{\mu-2\gamma,\nu-2\gamma}}.
\end{equation}
Moreover, for this subsection, we assume that $\mu, \nu$ satisfy
\begin{equation*}
\Real(\gamma_0^+)<\mu\leq 0,\quad -(n-2\gamma)<\nu.
\end{equation*}
For this choice of weights we have the following \emph{a priori} estimate:

\begin{lemma}\label{lemma:apriori-estimate}
Given $h$ with $\|h\|_{**}<\infty$, suppose that $\phi$ be a solution of \eqref{linear1}, then there exists a constant $C$ independent of $\ve$ such that
\begin{equation*}\label{apriori}
\|\phi\|_*\leq C\|h\|_{**}.
\end{equation*}
\end{lemma}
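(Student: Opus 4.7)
The plan is a standard blow--up/contradiction argument, adapted to the non-local setting. Suppose no such uniform $C$ exists: then there are sequences $\ve_j\to 0$, $\phi_j\in \mathcal C^{2\gamma+\alpha}_{\mu,\nu}$ and $h_j$ with $L_{\ve_j}\phi_j=h_j$, $\|\phi_j\|_*=1$, and $\|h_j\|_{**}\to 0$. Choose a ``bad'' point $z_j\in\mathbb R^n\setminus\Sigma$ where a definite fraction, say at least $\tfrac14$, of the $L^\infty$--part of $\|\phi_j\|_*$ is realized (the H\"older seminorm part is handled by standard interior estimates after the rescaling is in place). Set $d_j:=\mathrm{dist}(z_j,\Sigma)$ and let $q_j\in\Sigma$ be the nearest point, so $z_j=q_j+d_j\theta_j$ with $|\theta_j|=1$. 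The idea is to rescale around $q_j$ by an appropriate scale, extract a nontrivial bounded solution of a limiting problem, and apply a Liouville/injectivity result to reach a contradiction.

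First I would dispose of the ``outer'' regimes. If $d_j\to+\infty$, rescale $\tilde\phi_j(z):=|z_j|^{-\nu}\phi_j(|z_j|z)$. Since $\bar u_{\ve_j}$ is compactly supported near $\Sigma$, the potential $p\bar u_{\ve_j}^{p-1}$ is negligible at the rescaled scale, and $\tilde\phi_j$ solves $(-\Delta_{\mathbb R^n})^\gamma\tilde\phi_j=\tilde h_j\to 0$ in an exhausting domain; the tail terms in the non-local integral are controlled by the weight $\nu>-(n-2\gamma)$ exactly because the $\gamma$-Riesz kernel decays like $|z|^{-(n+2\gamma)}$. Passing to the limit and using removability at the origin (allowed by $\mu\leq 0$) plus the fractional Liouville theorem yields $\tilde\phi_\infty\equiv 0$, contradicting $|\tilde\phi_\infty(z_\infty)|\gtrsim 1$. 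The intermediate case $d_j\to d_\infty\in(0,\infty)$, $|z_j|$ bounded, is handled analogously: no rescaling is needed and the limit is $\gamma$-harmonic on $\mathbb R^n\setminus\Sigma$ with two-sided decay forcing it to vanish.

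The interesting regime is $d_j\to 0$. Set $\lambda_j:=d_j/\ve_j$ and split into three subcases. When $\lambda_j\to\lambda_\infty\in(0,\infty)$, rescale $\tilde\phi_j(z):=\ve_j^{-\mu}\phi_j(q_j+\ve_j z)$; the rescaled equation has potential converging to $pA_{N,p,\gamma}u_1^{p-1}(z)$, and the weighted bound gives $|\tilde\phi_j(z)|\leq C|z|^\mu$ uniformly in the tube, while at infinity in the rescaled picture the weight $\nu$ translates to controlled growth. The limit $\tilde\phi_\infty$ lies in $\mathcal C^{2\gamma+\alpha}_{\mu,\nu_1}(\mathbb R^n\setminus\mathbb R^k)$ and satisfies $\mathbb L_1\tilde\phi_\infty=0$, so by Proposition \ref{injectivity1} we get $\tilde\phi_\infty\equiv 0$, contradicting $|\tilde\phi_\infty(\lambda_\infty\theta_\infty)|\geq\lambda_\infty^\mu/4$. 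When $\lambda_j\to+\infty$ (outside the bubble), rescale by $d_j$ instead: the potential $p\bar u_{\ve_j}^{p-1}(q_j+d_j z)d_j^{2\gamma}$ tends to $0$ on bounded sets because $\bar u_{\ve_j}$ decays like $\ve^{N-2\gamma-\frac{2\gamma}{p-1}}r^{-(N-2\gamma)}$ at this scale; the limit is $\gamma$-harmonic on $\mathbb R^n\setminus\mathbb R^k$ with growth $|z|^\mu$ near the singular set, and since $\mu>\Real(\gamma_0^+)\geq-\tfrac{N-2\gamma}{2}$ is strictly between the two leading indicial roots of $(-\Delta_{\mathbb R^n})^\gamma$ on $\mathbb R^n\setminus\mathbb R^k$, the indicial-root argument of Section \ref{subsection:indicial-roots} forces the limit to vanish. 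Finally, when $\lambda_j\to 0$ (deep inside the bubble), rescale by $d_j$: now the potential converges to the critical Hardy potential $pA_{N,p,\gamma}|z|^{-2\gamma}$ because $u_1(x)\sim|x|^{-\frac{2\gamma}{p-1}}$ near the origin. The limit $\hat\phi_\infty$ is a non-trivial solution of the Hardy-type operator \eqref{Hardy-operator} with $\kappa=pA_{N,p,\gamma}$, growing like $|z|^\mu$ at $0$; but $\mu>\Real(\gamma_0^+)$ strictly excludes the first indicial root, so by Theorem \ref{thm:Hardy-potential}/Proposition \ref{prop:unstable} together with the Hamiltonian non-degeneracy of Corollary \ref{cor:other-solutions}, $\hat\phi_\infty\equiv 0$, contradiction.

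The main obstacle is the non-local one: in each rescaling we must verify that the tail contributions to $(-\Delta)^\gamma\tilde\phi_j(z)$ coming from integration outside the rescaled compact region are genuinely negligible. This is not automatic from the weighted $L^\infty$ bound alone; one has to combine the $\varrho^\mu$ growth near $\Sigma$ with the $\varrho^\nu$ decay at infinity, and check that together with the $|z-\tilde z|^{-(n+2\gamma)}$ kernel the tail integral is $o(1)$ on compact sets of the rescaled picture. The constraint $\nu_1\leq\min\{0,\mu\}$ (and the restriction $\nu>-(n-2\gamma)$) is exactly what ensures these tail estimates hold uniformly in~$j$. A secondary technical point is that in Case C3 the potential is only \emph{asymptotically} Hardy, so one must write $V=pA_{N,p,\gamma}|z|^{-2\gamma}+o(|z|^{-2\gamma})$ and verify that the perturbation term can be absorbed, which is where the regularity of $u_1$ near zero (Proposition \ref{existence}) enters decisively.
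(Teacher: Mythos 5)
Your proposal and the paper's proof share the same starting point—assume $\|\phi_j\|_*=1$, $\|h_j\|_{**}\to 0$ for a sequence $\ve_j\to 0$, and try to derive a contradiction—but from there the two arguments diverge substantially. The paper's proof is a two-step Green's-function argument with a \emph{single} rescaling scale: Step 1 uses the representation $\phi=\int G(x,\tilde x)[h+p\bar u_\ve^{p-1}\phi]$ to show the weighted supremum over $\{\dist(\cdot,\Sigma)>\sigma\}$ is $o(1)$, which pins down a tube $B_\sigma(q_i)$ where $\sup |x-q_i|^{-\mu}|\phi|\geq\tfrac12$. Step 2 then rescales \emph{only by $\ve$}, $\bar\phi(\tilde x)=\ve^{-\mu}\phi(\ve\tilde x)$, shows $\bar\phi\to\phi_\infty$ locally uniformly with $\phi_\infty$ solving the model equation $\mathcal L_1\phi_\infty=0$ in $\mathcal C^{2\gamma+\alpha}_{\mu,\mu}(\R^N\setminus\{0\})$, invokes Proposition~\ref{injectivity} to get $\phi_\infty\equiv 0$, and then crucially \emph{plugs $\bar\phi\to 0$ back into the Green's function integral for $I_2$} to obtain $|x|^{-\mu}|\phi(x)|\leq o(1)(\|\phi\|_*+1)$ uniformly over the whole tube. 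This final pointwise bound is what produces the contradiction; it works regardless of where in the tube the near-maximizing point sits. In contrast, you blow up at a distinguished ``bad'' point $z_j$, set $d_j=\dist(z_j,\Sigma)$ and $\lambda_j=d_j/\ve_j$, and run a case analysis (C1: $\lambda_j\to\lambda_\infty\in(0,\infty)$, C2: $\lambda_j\to\infty$, C3: $\lambda_j\to 0$) with a \emph{different} rescaling scale in each. Only your Case C1 reproduces the paper's limit operator.

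This is where I see genuine gaps. In Case C2 you claim the limit is $\gamma$-harmonic in $\R^n\setminus\R^k$ with growth $|z|^\mu$ near $\R^k$ and that ``the indicial-root argument of Section~\ref{subsection:indicial-roots} forces the limit to vanish''—but Lemma~\ref{indicial} computes indicial roots for $\mathcal L_1$ (which carries the potential $pA_{N,p,\gamma}u_1^{p-1}$), not for the free $\gamma$-Laplacian, whose first indicial roots at $r=0$ for the $m=0$ mode are $0$ and $-(N-2\gamma)$. You also do not establish the matching growth bound at infinity in the rescaled picture needed to run a Liouville argument via the homogeneous solutions $e^{\pm\sigma_j t}$ of Remark~\ref{remark:homogeneous}. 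In Case C3 you invoke ``Theorem~\ref{thm:Hardy-potential}/Proposition~\ref{prop:unstable} together with the Hamiltonian non-degeneracy of Corollary~\ref{cor:other-solutions}''—but Corollary~\ref{cor:other-solutions} is a non-degeneracy statement for the operator $\mathcal L_*$ with the full potential $V$ from \eqref{potential}, while your C3 limit is the \emph{pure} Hardy operator $(-\Delta)^\gamma-\kappa r^{-2\gamma}$; that operator has an infinite-dimensional space of bounded homogeneous solutions in the unstable case $\kappa>\Lambda_{N,\gamma}$ (the oscillatory modes $\sin(\tau_0 t),\cos(\tau_0 t)$), and ruling them out requires a two-sided decay estimate that your $|z|^\mu$ bound alone does not give (since $w(t)=r^{\frac{N-2\gamma}{2}}\hat\phi_\infty$ is merely bounded near $t\to+\infty$ and could oscillate). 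The paper's choice to rescale always by $\ve_j$ and absorb the intermediate scales through the Green's function estimate in Step~2 is precisely what avoids these delicate multi-scale limit operators and keeps the argument on the ground covered by Propositions~\ref{injectivity} and \ref{injectivity1}.
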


\begin{proof}
We will argue by contradiction. Assume that there exists $\ve_j\to 0$, and a sequence of solutions $\{\phi_j\}$ to $L_{\ve_j}\phi_j=h_j$ such that
\begin{equation*}
\|\phi_j\|_{*}=1, \quad\text{and}\quad \|h_j\|_{**}\to 0 \mbox{ as }j\to \infty.
\end{equation*}
In the following we will drop the index $j$ without confusion.

We first consider the point singularity case $\mathbb R^N\setminus\Sigma$ for $\Sigma=\{q_1,\ldots,q_K\}$. By Green's representation formula one has
\begin{equation*}
\begin{split}
\phi(x)&=\int_{\R^N}G(x,\tilde x)[h+pA_{N,p,\gamma}\bar{u}_\ve^{p-1}\phi]\,d\tilde{x}
=:I_1+I_2,
\end{split}
\end{equation*}
where $G$ is the Green's function for the fractional Laplacian $(-\Delta_{\mathbb R^N})^\gamma$ given by $G(x,\tilde x)=C_{N,\gamma}|x-\tilde x|^{-(N-2\gamma)}$ for some normalization constant $C_{N,\gamma}$.
In the first step, let $x\in \R^{N} \setminus \bigcup_i B_\sigma(q_i)$. In this case
\begin{equation*}
\begin{split}
I_1&\lesssim\int_{\mathbb R^N} G(x,\tilde x)h(\tilde x)\,d\tilde x\\
&=\int_{\{\dist(\tilde x,\Sigma)<\frac{\sigma}{2}\}}\cdots
+\int_{\{\frac{\sigma}{2}<\dist(\tilde x,\Sigma)<\frac{|x|}{2}\}}\cdots
+\int_{\{\frac{|x|}{2}<\dist(\tilde x,\Sigma)<2|x|\}}\cdots
+\int_{\{\dist(\tilde x,\Sigma)>2|x|\}}\cdots\\
&\leq C\|h\|_{**}(|x|^{-(N-2\gamma)}+|x|^{\nu})\\
&\leq C\|h\|_{**}|x|^{\nu},
\end{split}
\end{equation*}
because of our restriction of $\nu$. Moreover,
\begin{equation*}
\begin{split}
I_2&=\int G(x,\tilde x)p\bar{u}_\ve(\tilde x)^{p-1}\phi(\tilde x)\,d\tilde x\\
&=\int_{\{\dist(\tilde x,\Sigma)<\ve\}}\cdots
+\int_{\{\frac{\sigma}{2}>\dist(\tilde x,\Sigma)>\ve\}}\cdots
+\int_{\{\dist(\tilde x,\Sigma)>\frac{\sigma}{2}\}}\cdots=:I_{21}+I_{22}+I_{23}.
\end{split}
\end{equation*}
Calculate
\begin{equation*}
\begin{split}
I_{21}&\leq \int_{\{\dist(\tilde x,\Sigma)<\ve\}}|x-\tilde x|^{-(N-2\gamma)}\varrho(\tilde x)^{-2\gamma}\phi
\leq \|\phi\|_*\int_{\{\dist(\tilde x,\Sigma)<\ve\}}|x-\tilde x|^{-(N-2\gamma)}\varrho(\tilde x)^{\mu-2\gamma}\,d\tilde x,\\
&\leq C\ve^{N+\mu-2\gamma}\|\phi\|_*\varrho(x)^{-(N-2\gamma)},
\end{split}
\end{equation*}
where $\varrho $ is the weight function defined in Section \ref{section:function-spaces}, and

\begin{equation*}
\begin{split}
I_{22}&=\int_{\{\dist(\tilde x,\Sigma)>\frac{\sigma}{2}\}}|x-\tilde x|^{-(N-2\gamma)}\ve^{N(p-1)-2p\gamma}\varrho(\tilde x)^{-(N-2\gamma)(p-1)}\phi\,d\tilde x\\
&\leq \|\phi\|_*\int_{\{R>\dist(\tilde x,\Sigma)>\frac{\sigma}{2}\}}|x-\tilde x|^{-(N-2\gamma)}\ve^{N(p-1)-2p\gamma}\varrho(\tilde x)^{\mu-(N-2\gamma)(p-1)}\,d\tilde x\\
&\quad\,+\|\phi\|_*\int_{\{\dist(\tilde x,\Sigma)>R\}}|x-\tilde x|^{-(N-2\gamma)}\ve^{N(p-1)-2p\gamma}\varrho(\tilde x)^{\nu-(N-2\gamma)(p-1)}\Big]\,d\tilde x\\
&\lesssim \ve^{N(p-1)-2p\gamma}\varrho(x)^{-(N-2\gamma)}\|\phi\|_*.
\end{split}
\end{equation*}
Next for $I_{23}$,
\begin{equation*}
\begin{split}
I_{23}&
\lesssim\int_{\{\ve<\dist(\tilde x,\Sigma)<\frac{\sigma}{2}\}}|x-\tilde x|^{-(N-2\gamma)}\ve^{N(p-1)-2p\gamma}\varrho(\tilde x)^{-(N-2\gamma)(p-1)}\phi\,d\tilde x\\
&\leq \ve^{N(p-1)-2p\gamma}\varrho(x)^{-(N-2\gamma)}\|\phi\|_*\int_{\{\ve<|\tilde x|<\frac{\sigma}{2}\}}|\tilde x|^{\mu-(N-2\gamma)(p-1)}\,d\tilde x\\
&\lesssim (\ve^{N(p-1)-2p\gamma}+\ve^{\mu-2\gamma+N})\|\phi\|_*\varrho(x)^{-(N-2\gamma)}.
\end{split}
\end{equation*}
Combining the above estimates, one has
\begin{equation*}
\begin{split}
I_2&\leq C(\ve^{N(p-1)-2p\gamma}+\ve^{\mu-2\gamma+N})\varrho(x)^{-(N-2\gamma)}\|\phi\|_*\\
&\lesssim (\ve^{N(p-1)-2p\gamma}+\ve^{\mu-2\gamma+N})\varrho(x)^\nu \|\phi\|_*,
\end{split}
\end{equation*}
and thus
\begin{equation*}
\sup_{\{\dist(x,\Sigma)>\sigma\}}\{\varrho(x)^{-\nu}|\phi|\}\leq C(\|h\|_{**}+o(1)\|\phi\|_*),
\end{equation*}
which implies, because our initial assumptions on $\phi$, that there exists $q_i$ such that
\begin{equation}\label{contradiction}
\sup_{\{|x-q_i|<\sigma\}}|x-q_i|^{-\mu}|\phi|\geq \frac{1}{2}.
\end{equation}

In the second step we study the region $|x-q_i|<\sigma$. Without loss of generality, assume $q_i=0$. Recall that we are writing $\phi=I_1+I_2$. On the one hand,
\begin{equation*}
\begin{split}
I_1&=\int_{\R^N}G(x,\tilde x)h(\tilde{x})\,d\tilde x\\
&=\int_{\{|\tilde x|>2\sigma\}}\cdots+\int_{\{|\tilde x|<\frac{|x|}{2}\}}\cdots
+\int_{\{\frac{|x|}{2}<|\tilde x|<2|x|\}}\cdots+\int_{\{2|x|<|\tilde x|<2\sigma\}}\cdots\\
&\leq c\|h\|_{**}\Big[ \int_{\{|\tilde x|>2\sigma\}}|x-\tilde x|^{-(N-2\gamma)}|\tilde x|^{\nu-2\gamma}\,d\tilde x
+\int_{\{|\tilde x|<\frac{|x|}{2}\}}|x-\tilde x|^{-(N-2\gamma)}|\tilde x|^{\mu-2\gamma}\,d\tilde x\\
&\quad+\int_{\{\frac{|x|}{2}<|\tilde x|<2|x|\}}
|x-\tilde x|^{-(N-2\gamma)}|\tilde x|^{\mu-2\gamma}\,d\tilde x
+\int_{\{2|x|<|\tilde x|<2\sigma\}}|x-\tilde x|^{-(N-2\gamma)}|\tilde x|^{\mu-2\gamma}  \,d\tilde x\Big]\\
&\leq C\|h\|_{**}|x|^\mu.
\end{split}
\end{equation*}
On the other hand, for $I_2$, recall that $\phi$ is a solution to
\begin{equation*}
(-\Delta_{\mathbb R^N})^\gamma \phi-pA_{N,p,\gamma}\bar{u}_\ve^{p-1}\phi=h.
\end{equation*}
Define $\bar{\phi}(\tilde x)=\ve^{-\mu}\phi(\ve \tilde x)$, then $\bar{\phi}$ satisfies
\begin{equation*}
(-\Delta_{\mathbb R^N})^\gamma \bar{\phi}-pA_{N,p,\gamma}u_1^{p-1}\bar{\phi}=\ve^{2\gamma-\mu}h(\ve\tilde{x}).
\end{equation*}
By the assumption that $\|h\|_{**}\to 0$, one has that the right hand side tends to $0$ as $j\to \infty$.
Since $|\bar{\phi}(\tilde{x})|\leq C\|\phi\|_{*}|\tilde{x}|^{\mu}$ locally uniformly, and by regularity theory,
$\bar{\phi}\in \mathcal C^{\eta}_{loc}(\R^N\setminus \{0\})$ for some $\eta\in (0,1)$, thus passing to a subsequence, $\bar{\phi}\to \phi_\infty$ locally uniformly in any compact set, where $\phi_\infty\in \mathcal C_{\mu,\mu}^{\alpha+2\gamma}(\R^N \setminus \{0\})$ is a solution of
\begin{equation}\label{limit-equation}
(-\Delta_{\mathbb R^N})^\gamma \phi_\infty-pA_{N,p,\gamma}u_1^{p-1}\phi_\infty=0\quad\mbox{in }\mathbb R^N\setminus\{0\}
\end{equation}
(to handle the non-locality we may pass to the extension in a standard way). Since $\mu\leq 0$, it satisfies the condition in Proposition \ref{injectivity}, from which we get that $\phi_\infty \equiv 0$, so $\bar{\phi}\to 0$.


Now we go back to the calculation of $I_2$. Here we use the change of variable $x=\ve x_1$.
\begin{equation*}
\begin{split}
I_2&=\int_{\{|\tilde x|<\sigma\}}p|x-\tilde x|^{-(N-2\gamma)}\bar{u}_\ve^{p-1}\phi\,d\tilde x
=\ve^{\mu}\int_{\{|\tilde x|<\frac{\sigma}{\ve}\}}|x_1-\tilde x|^{-(N-2\gamma)}u_1^{p-1}(\tilde x)\bar{\phi}(\tilde x)\,d\tilde x\\
&=\ve^{\mu}\Big[\int_{\{|\tilde x|<\frac{1}{R}\}}\cdots+\int_{\{\frac{1}{R}<|\tilde x|<R\}}\cdots
+\int_{\{R<|\tilde x|<\frac{\sigma}{\ve}\}}\cdots \Big]=:J_1+J_2+J_3,
\end{split}
\end{equation*}
for some positive constant $R$ large enough to be determined later. For $J_1$, fix $x$, when $\ve\to 0$ one has
\begin{equation*}
\begin{split}
J_1&=\ve^{\mu} \int_{\{|\tilde x|<\frac{1}{R}\}}|x_1-\tilde x|^{-(N-2\gamma)}u_1^{p-1}(\tilde x)\bar{\phi}(\tilde x)\,d\tilde x\leq \ve^{\mu} \|\phi\|_*|x_1|^{-(N-2\gamma)}\int_{\{|\tilde x|<\frac{1}{R}\}}|\tilde x|^{\mu-2\gamma}\,d\tilde x\\
&\leq CR^{-(N-2\gamma+\mu)}\|\phi\|_*|x|^{\mu}.
\end{split}
\end{equation*}
For $J_2$ we use the fact that in this region $\bar{\phi}\to 0$, so
\begin{equation*}
\begin{split}
J_2&=\ve^{\mu} \int_{\{\frac{1}{R}<|\tilde x|<R\}}|x_1-\tilde x|^{-(N-2\gamma)}u_1^{p-1}(\tilde x)\bar{\phi}(\tilde x)\,d\tilde x\\
&=o(1)\ve^{\mu}\int_{\{\frac{1}{R}<|\tilde x|<R\}}\frac{1}{|x_1-\tilde x|^{N-2\gamma}}\,d\tilde x
=o(1)\ve^\mu |x_1|^{-(N-2\gamma)}=o(1)|x|^\mu,
\end{split}
\end{equation*}
and finally,
\begin{equation*}
\begin{split}
J_3&
=\ve^\mu\int_{\{R<|\tilde x|<\frac{\sigma}{\ve}\}}|x_1-\tilde x|^{-(N-2\gamma)}u_1^{p-1}(\tilde x)\bar{\phi}(\tilde x)\,d\tilde x\\
&=\ve^\mu \|\phi\|_*\Big[\int_{\{R<|\tilde x|<\frac{|x_1|}{2}\}}|x_1-\tilde x|^{-(N-2\gamma)}|\tilde x|^\mu u_1^{p-1}\,d\tilde x+\int_{\{\frac{|x_1|}{2}<|\tilde x|<2|x_1|\}}|x_1-\tilde x|^{-(N-2\gamma)}u_1^{p-1}|\tilde x|^\mu\,d\tilde x\\
&\quad+\int_{\{2|x_1|<|\tilde x|<\frac{\sigma}{\ve}\}}|x_1-\tilde x|^{-(N-2\gamma)}u_1^{p-1}|\tilde x|^\mu\,d\tilde x\Big]\\
&\leq C\ve^{\mu}|x_1|^\mu \|\phi\|_*|x_1|^{-\tau}
\leq o(1)\|\phi\|_*|x|^\mu
\end{split}
\end{equation*}
for some $\tau>0$.

Combining all the above estimates, one has
\begin{equation*}
||x|^{-\mu}I_1|\leq o(1)(\|\phi\|_*+1),
\end{equation*}
which implies
\begin{equation*}
\|\phi\|_*\leq o(1)\|\phi\|_*+o(1)+\|h\|_{**}=o(1).
\end{equation*}
This is a contradiction with \eqref{contradiction}.

\bigskip

For the more general case $\mathbb R^n\setminus \Sigma$ when $\Sigma$ is a smooth  $k$-dimensional sub-manifold, the argument is similar as above, the only difference is that one arrives to the analogous to \eqref{limit-equation} in the estimate for $I_2$ near $\Sigma$:
\begin{equation*}
(-\Delta_{\mathbb R^n})^\gamma \phi_\infty-pu_1^{p-1}\phi_\infty=0\quad\mbox{in }\mathbb R^n\setminus\mathbb R^k.
\end{equation*}
After the obvious rescaling by the constant $A_{N,p,\gamma}$, where $N=n-k$, one uses Remark \ref{remark} and the injectivity result in Proposition \ref{injectivity1} instead of the one in Proposition \ref{injectivity}. This completes the proof of Lemma \ref{lemma:apriori-estimate}.
\end{proof}

\section{Fredholm properties - surjectivity}\label{section:Fredholm}


Our analysis here follows closely the one in \cite{Pacard:lectures} for the local case. These lecture notes are available online but, unfortunately, yet to be published.

For the rest of the paper, we will take the pair of dual weights $\mu,\tilde\mu$ such that $\mu+\tilde\mu=-(N-2\gamma)$ and $\nu+\tilde\nu=-(n-2\gamma)$  satisfying
\begin{equation}\label{choice-mu-tilde-mu}
\begin{split}
&-\frac{2\gamma}{p-1}<\tilde\mu<\Real(\gamma_0^-)\leq -\frac{N-2\gamma}{2}\leq \Real(\gamma_0^+)<\mu< 0,\\
&-(n-2\gamma)<\tilde\nu<0.
\end{split}
\end{equation}

In order to consider the invertibility of the linear operators \eqref{linearized-general} and \eqref{linearized-points}, defined in the spaces
\begin{equation*}
L_\ve:\mathcal C^{2\gamma+\alpha}_{\tilde\mu,\tilde\nu}\to \mathcal C^{0,\alpha}_{\tilde\mu-2\gamma,\tilde\nu-2\gamma},
\end{equation*}
it is simpler to consider the conjugate operator
\begin{equation}\label{conjugate-operator}
\tilde L_\ve(w):=f^{-1}_1L_\ve(f_2 w),\quad \tilde L_\ve:\mathcal C^{2\alpha+\gamma}_{\tilde\mu+\frac{N-2\gamma}{2},\tilde\nu+\frac{n-2\gamma}{2}}\to \mathcal C^{2\alpha+\gamma}_{\tilde\mu+\frac{N-2\gamma}{2},\tilde\nu+\frac{n-2\gamma}{2}},
\end{equation}
where $f_2$ is a weight $\varrho^{-\frac{n-2\gamma}{2}}$ near infinity, and $\varrho^{-\frac{N-2\gamma}{2}}$ near the singular set $\Sigma$, while $f_1$ is $\varrho^{-\frac{n+2\gamma}{2}}$ near infinity, and $\varrho^{-\frac{N+2\gamma}{2}}$ near the singular set. Recall that $\varrho$ is defined in Section \ref{section:function-spaces}. In addition, in each neighborhood use Fermi coordinates as in formula \eqref{norm-conjugate} below. This conjugate operator is better behaved in  weighted Hilbert spaces and simplifies the notation in the proof of Fredholm properties.

\subsection{Fredholm properties}

Fredholm properties for extension problems related to this type of operators were considered in \cite{Mazzeo:edge,Mazzeo:edge2}.

In the notation of Section \ref{section:function-spaces},  and following the paper \cite{Mazzeo-Pacard}, we define the weighted Lebesgue space $L^2_{\delta,\vartheta}(\mathbb R^n\setminus\Sigma)$.
These are $L^2_{\text{loc}}$ functions for which the norm
\begin{equation}\label{L2-norm}
\|\phi\|^2_{L^2_{\delta,\vartheta}(\mathbb R^n\setminus\Sigma)}=\int_{\mathbb R^n\setminus B_R} |\phi|^2\varrho^{ -2\gamma-2\vartheta}\,dz +\int_{B_R\setminus\mathcal T_\sigma} |\phi|^2\,dz+\int_{\mathcal T_\sigma}|\phi|^2\varrho^{N-1-2\gamma-2\delta}\,drdyd\theta
\end{equation}
is finite. Here $drdyd\theta$ denotes the corresponding measure in Fermi coordinates $r>0$, $y\in\Sigma$, $\theta\in\mathbb S^{N-1}$. One defines accordingly, for $\gamma>0$, weighted Sobolev spaces  $W^{2\gamma,2}_{\delta,\vartheta}$  with respect to the vector fields from Remark \ref{remark:edge} (see \cite{Mazzeo:edge} for the precise definitions).

The seemingly unusual normalization in the integrals in \eqref{L2-norm} is explained by
the change of variable $\phi=f_2w$. Indeed,
\begin{equation}\label{norm-conjugate}
\begin{split}
\|w\|^2_{L^2_{\delta,\vartheta}}&=\int_{-\infty}^{-\log R}\int_{\mathbb S^{n-1}}|w|^2e^{2\vartheta \tilde t}\,d\tilde \theta d\tilde t +\int_{\{\dist(\cdot,\Sigma)>\sigma,|z|<R\}}|w|^2\,dz\\
&+\int_{-\log \sigma}^{+\infty} \int_{\Sigma}\int_{\mathbb S^{N-1}}|w|^2e^{2\delta t}\, d\theta dy  dt.
\end{split}
\end{equation}

We have
\begin{lemma}\label{lemma:inclusions}
For the choice of parameters
\begin{equation*}\label{choice-delta-vartheta}
-\delta<\tilde\mu+\tfrac{N-2\gamma}{2},\quad-\vartheta>\tilde\nu+\tfrac{n-2\gamma}{2},
\end{equation*}
we have the continuous inclusions
\begin{equation*}
\mathcal C^{2\gamma+\alpha}_{\tilde\mu,\tilde\nu}(\mathbb R^n\setminus\Sigma) \hookrightarrow  L^2_{-\delta,-\vartheta} (\mathbb R^n\setminus \Sigma).
\end{equation*}
\end{lemma}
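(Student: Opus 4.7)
The plan is to verify directly the three contributions to $\|\phi\|^2_{L^2_{-\delta,-\vartheta}}$ in \eqref{L2-norm} by invoking only the pointwise bounds encoded in the weighted H\"older norm. Since $\phi\in \mathcal C^{2\gamma+\alpha}_{\tilde\mu,\tilde\nu}(\mathbb R^n\setminus\Sigma)$ means, in particular,
\begin{equation*}
|\phi(z)|\leq \|\phi\|_{\mathcal C^{2\gamma+\alpha}_{\tilde\mu,\tilde\nu}}\,\varrho(z)^{\tilde\mu}\quad\text{on }B_R\setminus\Sigma,\qquad |\phi(z)|\leq \|\phi\|_{\mathcal C^{2\gamma+\alpha}_{\tilde\mu,\tilde\nu}}\,\varrho(z)^{\tilde\nu}\quad\text{on }\mathbb R^n\setminus B_R,
\end{equation*}
no derivative information is needed; the inclusion is purely a weight-bookkeeping exercise.

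On the middle annular region $B_R\setminus\mathcal T_\sigma$, $\varrho$ is bounded above and below by positive constants, so the corresponding integral is trivially controlled by $\|\phi\|^2_{\mathcal C^{2\gamma+\alpha}_{\tilde\mu,\tilde\nu}}$. For the exterior region $\mathbb R^n\setminus B_R$ I would pass to standard polar coordinates $|z|^{n-1}\,d|z|\,d\theta$, so that the integrand becomes a constant multiple of $|z|^{2\tilde\nu-2\gamma+2\vartheta+n-1}$; integrability at infinity amounts to $2\tilde\nu-2\gamma+2\vartheta+n<0$, which is exactly the hypothesis $-\vartheta>\tilde\nu+\tfrac{n-2\gamma}{2}$. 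For the tubular neighbourhood $\mathcal T_\sigma$, in Fermi coordinates $(r,\theta,y)$ we have $\varrho\sim r$, so the integral is bounded by
\begin{equation*}
C\,\|\phi\|^2_{\mathcal C^{2\gamma+\alpha}_{\tilde\mu,\tilde\nu}}\int_{\Sigma}\int_{\mathbb S^{N-1}}\int_{0}^{\sigma} r^{2\tilde\mu+N-1-2\gamma+2\delta}\,dr\,d\theta\,dy,
\end{equation*}
which converges at $r=0$ precisely when $2\tilde\mu+N-2\gamma+2\delta>0$, i.e.\ when $-\delta<\tilde\mu+\tfrac{N-2\gamma}{2}$. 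Compactness of $\Sigma$ ensures finiteness of the $y$-integral.

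Combining the three contributions yields $\|\phi\|_{L^2_{-\delta,-\vartheta}}\leq C\|\phi\|_{\mathcal C^{2\gamma+\alpha}_{\tilde\mu,\tilde\nu}}$, which is the asserted continuous embedding. There is no genuine obstacle here: the only point that requires any care is matching the conventions in the definition \eqref{L2-norm} (in particular the extra factor $\varrho^{N-1}$ in the tubular measure $dr\,dy\,d\theta$, which accounts for the passage from Lebesgue measure on $\mathbb R^n$ to Fermi coordinates), and keeping track of the signs of $\delta,\vartheta$ after the reflection $(\delta,\vartheta)\mapsto(-\delta,-\vartheta)$.
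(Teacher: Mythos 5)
Your proof is correct and is the only natural route: the embedding is a pure weight-bookkeeping check against the definition \eqref{L2-norm}, and your exponent calculations near $\Sigma$ and at infinity reproduce exactly the two stated inequalities. The paper treats this lemma as an immediate consequence of the definitions and omits the verification, so there is no divergence of method to compare.
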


The spaces $L^2_{\delta,\vartheta}$ and $L^2_{-\delta,-\vartheta}$ are dual with respect to the natural pairing
\begin{equation*}
\langle\phi_1,\phi_2\rangle_*=\int_{\mathbb R^n} \phi_1 \phi_2,
\end{equation*}
for  $\phi_1\in L^2_{\delta,\vartheta}$, $\phi_2\in L^2_{-\delta,-\vartheta}$. Now, let
$\tilde L_\ve$ be the operator defined in \eqref{conjugate-operator}.
It is a densely defined, closed graph operator (this is a consequence of elliptic estimates).
Then, relative to this pairing, the adjoint of
\begin{equation}\label{operator1}
\tilde L_\ve:L^2_{-\delta,-\vartheta}\to L^2_{-\delta,-\vartheta}
\end{equation}
is precisely
\begin{equation}\label{operator2}
(\tilde L_\ve)^*=\tilde L_\ve:L^2_{\delta,\vartheta}\to L^2_{\delta,\vartheta}.
\end{equation}

Now we fix $\mu$, $\tilde\mu$, $\nu$, $\tilde\nu$ as in \eqref{choice-mu-tilde-mu}, and choose $-\delta<0$ slightly smaller than $\tilde \mu+\frac{N-2\gamma}{2}$ and $-\vartheta<0$ just slightly larger than $\tilde\nu+\tfrac{n-2\gamma}{2}$ so that, in particular,
\begin{equation}\label{choice-2}\begin{split}
&-\tfrac{2\gamma}{p-1}+\tfrac{N-2\gamma}{2}<-\delta<\tilde\mu+\tfrac{N-2\gamma}{2}<0
<\mu+\tfrac{N-2\gamma}{2}<\delta<\tfrac{N-2\gamma}{2},\\
&-\tfrac{n-2\gamma}{2}<\tilde\nu
+\tfrac{n-2\gamma}{2}<-\vartheta<0<\vartheta<\tfrac{n-2\gamma}{2},
\end{split}\end{equation}
and we have the inclusions from Lemma \ref{lemma:inclusions}. In addition, we can choose $\delta,\vartheta$
different from
the corresponding indicial roots. Higher order regularity is guaranteed by the results in Section \ref{subsection:estimates-Sobolev}. We will show:

\begin{proposition}\label{prop:Fredholm}

Let $\delta\in (-\frac{N+2\gamma}{2}-2\gamma, \tfrac{N-2\gamma}{2})$ and $\vartheta\in(-\frac{n-2\gamma}{2},\frac{n-2\gamma}{2})$ be real numbers satisfying \eqref{choice-2}.

Assume that $w\in L^2_{\delta,\vartheta}$ is a solution to
\begin{equation*}
\tilde L_\ve w= \tilde h \mbox{ on }\mathbb R^n\setminus\Sigma
\end{equation*}
for $\tilde h\in L^2_{\delta,\vartheta}$. Then we have the \emph{a priori} estimate
\begin{equation}\label{estimate1}
\|w\|_{L^2_{\delta,\vartheta}}\lesssim \|\tilde h\|_{L^2_{\delta,\vartheta}}+\|w\|_{L^2(\mathcal K)},
\end{equation}
where $\mathcal K$ is a compact set in $\mathbb R^n\setminus \Sigma$. Translating back to the original operator $L_\ve$, if $\phi$ is a solution to $L_\ve\phi=h$ in $\mathbb R\setminus\Sigma$, then \eqref{estimate1} is rewritten as
\begin{equation}\label{estimate11}
\|\phi\|_{L^2_{\delta+2\gamma,\vartheta+2\gamma}}\lesssim \| h\|_{L^2_{\delta,\vartheta}}+\|\phi\|_{L^2(\mathcal K)}.
\end{equation}

As a consequence, $L_\ve$ has good Fredholm properties. The same is true for the linear operators from \eqref{operator1} and \eqref{operator2}.
\end{proposition}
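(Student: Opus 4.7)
The strategy is to construct a right parametrix for $\tilde L_\ve$ with compact remainder, in the spirit of Mazzeo's theory of edge operators \cite{Mazzeo:edge}, but carefully adapted to the nonlocal setting. From such a parametrix, both the a priori estimate \eqref{estimate1} and the Fredholm properties follow by standard functional analytic arguments. The a priori estimate \eqref{estimate11} for $L_\ve$ is then obtained via the conjugation $\phi = f_2 w$, and Fredholmness of the pair \eqref{operator1}--\eqref{operator2} is a direct consequence together with the duality pairing $\langle \cdot,\cdot\rangle_*$.

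First, decompose $\mathbb R^n\setminus\Sigma$ into a tubular neighborhood $\mathcal T_{2\sigma}$ of $\Sigma$, an exterior region $\Omega_\infty=\{|z|>R\}$, and the compact complementary region $\mathcal K_0$. Pick a smooth partition of unity $\{\chi_\Sigma,\chi_0,\chi_\infty\}$ subordinate to this cover, together with slightly enlarged cut-offs $\tilde\chi_\ast\equiv 1$ on $\mathrm{supp}\,\chi_\ast$. Near $\Sigma$, Fermi coordinates $(r,\theta,y)$ let us write
\begin{equation*}
\tilde L_\ve = \mathcal M + \mathcal R,
\end{equation*}
where $\mathcal M$ is the conjugate of the model operator $\mathbb L_1$ analyzed in Section \ref{subsection:full-symbol} and $\mathcal R$ collects lower order perturbations: terms of order $O(r)$ from the metric expansion \eqref{volume-form}, terms of order $O(r^{q_0})$ coming from the difference between $\bar u_\ve^{p-1}$ and the limiting Hardy potential $\kappa/r^{2\gamma}$, and the global error measured by Lemma \ref{lemma:error2}. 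After projection onto spherical harmonics on $\mathbb S^{N-1}$ and Fourier--Helgason transform in $y$, the model $\mathcal M$ becomes diagonal with Fourier symbol $\Theta_\gamma^m(\xi,\lambda)-\kappa$ computed in Theorem \ref{thm:symbol}. Combined with the asymptotic $|\Theta_m(\xi)-\kappa|^{-1}\lesssim (1+|\xi|+m)^{-2\gamma}$ and the fact that our choice of $\delta$ avoids every indicial root $\pm\sigma_j^{(m)}$, the weighted $L^2$ arguments of Section \ref{subsection:estimates-Sobolev} yield a bounded right inverse $G_\Sigma$ of $\mathcal M$ on $L^2_\delta$. At infinity, $\tilde L_\ve$ is a compact perturbation of $(-\Delta)^\gamma$, which is an isomorphism between weighted $L^2_\vartheta$ spaces provided $\vartheta$ avoids the standard Euclidean indicial roots $\pm\frac{n-2\gamma}{2}$; this produces $G_\infty$. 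An interior inverse $G_0$ is supplied by standard fractional elliptic regularity.

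The parametrix is then
\begin{equation*}
P h := \chi_\Sigma\,G_\Sigma(\tilde\chi_\Sigma h) + \chi_0\,G_0(\tilde\chi_0 h) + \chi_\infty\,G_\infty(\tilde\chi_\infty h),
\end{equation*}
and a direct computation gives $\tilde L_\ve P = I + K$, where the remainder $K$ comes from the perturbation $\mathcal R$ and from the commutators $[\chi_\ast,\tilde L_\ve]$. If $\tilde L_\ve w = \tilde h$, then $w = P\tilde h - Kw$, and if $K$ is compact on $L^2_{\delta,\vartheta}$ we obtain \eqref{estimate1} and the full Fredholm package (closed range, finite-dimensional kernel and cokernel). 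Injectivity modulo the finite-dimensional kernel is then combined with the pointwise injectivity results of Section \ref{section:linear-theory} to produce a right inverse uniformly bounded in $\ve$.

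The main obstacle, and the point that requires the most care, is the compactness of $K$. In the local case treated in \cite{Mazzeo-Pacard}, $[\chi,\Delta]$ is a compactly supported first order differential operator, which is manifestly compact between the relevant weighted Sobolev spaces; in the present setting $[\chi,(-\Delta)^\gamma]$ is a genuinely nonlocal integral operator whose Schwartz kernel is supported away from the diagonal but still decays only like $|z-\tilde z|^{-(n+2\gamma)}$. The plan is to treat this kernel exactly in the spirit of the tail estimates in Lemma \ref{lemma:error2}: split it into a near-diagonal piece (which is small because the cut-off vanishes on the diagonal) and a true off-diagonal piece (which is Hilbert--Schmidt between $L^2_{\delta,\vartheta}$ and itself since our weights are chosen in the open strip \eqref{choice-2}, avoiding the critical exponents). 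Combined with the fact that the $O(r)$ and $O(r^{q_0})$ contributions in $\mathcal R$ gain either regularity or decay relative to the weight, this yields that $K$ is a compact endomorphism of $L^2_{\delta,\vartheta}$, completing the proof. The subtlety throughout is that all constants must be uniform in $\ve$, which ultimately reduces to the fast-decay property of $u_1$ established in Proposition \ref{existence}.
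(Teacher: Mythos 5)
Your proposed parametrix construction is the right strategy in spirit, and it is close to what the paper does: localize with a partition of unity, invert the model operator via the Fourier/Fourier--Helgason symbol computed in Theorem~\ref{thm:symbol}, and control the remainder. The Fourier side of your argument (the symbol estimate $|\Theta_\gamma^m-\kappa|^{-1}\lesssim (1+|\xi|+m)^{-2\gamma}$ away from indicial roots, the use of the spherical transform on $\mathbb H^{k+1}$, the avoidance of $\pm\sigma_j^{(m)}$ by the choice of $\delta$) matches Steps~2 and~6 of the paper's proof exactly.

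However, there is a genuine gap in how you move from the parametrix to the estimate~\eqref{estimate1}. First, with a \emph{right} parametrix $\tilde L_\ve P=I+K$ you cannot write $w=P\tilde h-Kw$; that identity comes from a \emph{left} parametrix $P\tilde L_\ve=I+K$. That is fixable, but the more serious issue is your claim that ``if $K$ is compact on $L^2_{\delta,\vartheta}$ we obtain~\eqref{estimate1}.'' Mere compactness of $K$ yields the Fredholm alternative, but it does \emph{not} give the specific form $\|w\|\lesssim\|\tilde h\|+\|w\|_{L^2(\mathcal K)}$ with $\mathcal K$ compact. To extract that estimate from $w=P\tilde h-Kw$ with only $K$ compact, one would have to argue by contradiction and then invoke a unique continuation principle to kill the limiting kernel element that vanishes on $\mathcal K$ — an ingredient you did not mention and that the paper does not need at this point. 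What the paper actually proves in its localization step is stronger and more elementary than compactness: it shows that the commutator term satisfies
\begin{equation*}
\|[L_1,\chi]\phi\|_{L^2_{\delta,\vartheta}}\;\leq\;C\|\phi\|_{L^2(\mathcal K)}+o(1)\|\phi\|_{L^2_{\delta+2\gamma,\vartheta+2\gamma}},
\end{equation*}
where the $o(1)$ can be made arbitrarily small by taking $R$ large (respectively $d$ small). This is a \emph{smallness} estimate, not a compactness estimate. The $o(1)$ factor is what lets one absorb the commutator error into the left-hand side and directly read off~\eqref{estimate1}; compactness is then proved separately (via a Rellich argument on bounded solution sequences) and only afterwards combined with~\eqref{estimate1} to give closed range and finite-dimensional kernel. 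Your Hilbert--Schmidt heuristic for the off-diagonal kernel is also not obviously correct without the weight gain the paper carefully tracks: the kernel $|z-\tilde z|^{-(n+2\gamma)}$ on the commutator's support needs to be tested against both weights $\varrho^{\delta}$ near $\Sigma$ and $\varrho^{\vartheta}$ at infinity, and the paper's term-by-term splitting into annular regions is precisely the computation that verifies the weights cooperate. Finally, the uniformity in $\ve$ that you raise at the end is not actually at issue in this proposition — the paper sets $\ve=1$ here, deferring the uniform-in-$\ve$ bound to Lemma~\ref{lemma-uniform-surjectivity}.
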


\begin{proof}
The proof here  goes by subtracting suitable parametrices near $\Sigma$ and near infinity thanks to Theorem \ref{thm:Hardy-potential}. Then the remainder is a compact operator. For simplicity we set $\ve=1$.

 \bigskip

 We first consider the point singularity case, i.e., $k=0$, $n=N$.

\textbf{Step 1:} (Localization) Let us study  how the operator $L_1:L^2_{\delta+2\gamma,\vartheta+2\gamma}\to L^2_{\delta,\vartheta}$  is affected by localization, so that it is enough to work with functions supported near infinity and near the singular set.

 In the first step,  assume that the singularity happens only at $r=\infty$ (but not at $r=0$). We would like to patch a suitable parametrix at $r=\infty$. Let $\chi$ be a cut-off function such that $\chi=1$ in $\mathbb R^N\setminus B_R$, $\chi=0$ in $B_{R/2}$. Let $\mathcal K=B_{2R}$, and set
\begin{equation*}
h_1:=L_1 (\chi\phi)=\chi L_1\phi+[L_1,\chi]\phi,
\end{equation*}
where $[\cdot,\cdot]$ denotes the commutator operator. Contrary to the local case, the commutator term does not have compact support, but can still give good estimates in  weighted Lebesgue spaces by carefully controlling the the tail terms. Let
\begin{equation*}
I(x):=[L_1,\chi]\phi(x)=(-\Delta_{\mathbb R^N})^\gamma (\chi\phi)(x)-\chi(x)(-\Delta_{\mathbb R^N})^\gamma \phi (x)=k_{N,\gamma}\int_{\mathbb R^N} \frac{\chi(x)-\chi(\tilde x)}{|x-\tilde x|^{N+2\gamma}}\phi(\tilde x)\,d\tilde x.
\end{equation*}
Let us bound this integral in $L^2_{\delta,\vartheta}$.

We first consider the case $|x|\ll1$. Note that
\begin{equation*}
\begin{split}
I(x)&=\int_{B_{2R}\setminus B_{R/2}} \frac{\chi(x)-\chi(\tilde x)}{|x-\tilde x|^{N+2\gamma}}\phi(\tilde x)\,d\tilde x+\int_{\mathbb R^N\setminus B_{2R}} \frac{\phi(\tilde x)}{|x-\tilde x|^{N+2\gamma}}\,d\tilde x\\
&\leq CR^{\frac{2\vartheta+2\gamma-N}{2}}
\|\phi\|_{L_{\delta+2\gamma,\vartheta+2\gamma}^2}+\|\phi\|_{L^2(\mathcal{K})}.
\end{split}
\end{equation*}
We have that $\|I\|_{L_{\delta}^2(B_1)}$ can be bounded by  $o(1)\|\phi\|_{L^2_{\delta+2\gamma,\vartheta+ 2\gamma}}+\|\phi\|_{L^2(\mathcal{K})}$ for large $R$ if $\vartheta<\frac{N-2\gamma}{2}$ and $\delta<\frac{N-2\gamma}{2}$.

Next, for $|x|\geq 2R$, we need to add the weight at infinity and calculate $\|I\|_{L^2_{\vartheta}(\mathbb R^N\setminus B_{2R})}$. But
\begin{equation*}
\begin{split}
I(x)&=\int_{B_{R/2}} \frac{\phi(\tilde x)}{|x-\tilde x|^{N+2\gamma}}\,d\tilde x+\int_{B_{2R}\setminus B_{R/2}} \frac{\chi(x)-\chi(\tilde x)}{|x-\tilde x|^{N+2\gamma}}\phi(\tilde x)\,d\tilde x\\
&\leq C\|\phi\|_{L_{\delta+2\gamma,\vartheta+2\gamma}^2}R^{\frac{2\vartheta+
6\gamma+N}{2}}|x|^{-(N+2\gamma)} \mbox{ if } \delta>-\tfrac{N+2\gamma}{2}-2\gamma.
\end{split}
\end{equation*}
One has that $\|I\|_{L^2_{\vartheta}(\R^N\setminus B_R)}=o(1)\|\phi\|_{L^2_{\delta+2\gamma,\vartheta+2\gamma}}$ if $\vartheta>-\frac{N+2\gamma}{2}-2\gamma$ .

Now let $1\leq|x|< 2R$, and calculate $\|I\|_{L^2(B_{2R}\setminus B_{1})}$. Again, we split
\begin{equation*}
\begin{split}
I(x)&=\int_{B_{R/2}} \frac{\chi(x)\phi(\tilde x)}{|x-\tilde x|^{N+2\gamma}}\,d\tilde x+\int_{B_{2R}\setminus B_{R/2}} \frac{\chi(x)-\chi(\tilde x)}{|x-\tilde x|^{N+2\gamma}}\phi(\tilde x)\,d\tilde x+\int_{\mathbb R^N\setminus B_{2R}} \frac{(\chi(x)-1)\phi(\tilde x)}{|x-\tilde x|^{N+2\gamma}}\,d\tilde x\\
&=:I_{21}+I_{22}+I_{23}.
\end{split}
\end{equation*}
Similar to the above estimates, we can get that the $L^2$ norm can be bounded by $$\|\phi\|_{L^2(\mathcal K)}+o(1)\|\phi\|_{L^2_{\delta+2\gamma,\vartheta+2\gamma}}$$ if $\vartheta<\frac{N-2\gamma}{2}$.
Thus we have shown that
\begin{equation*}
\|h_1\|_{L^2_{\delta,\vartheta}}\lesssim \|h\|_{L^2_{\delta,\vartheta}}+\|\phi\|_{L^2(\mathcal K)}+o(1)\|\phi\|_{L^2_{\delta+2\gamma,\vartheta+2\gamma}},
\end{equation*}
so localization does not worsen estimate \eqref{estimate11}.

In addition, the localization at $r=0$ is similar. One just needs to interchange the role of $r$ and $1/r$, and $\vartheta$ by $\delta$. Similar to the estimates in Step 5 below, one can get that the error caused by the localization can be bounded by  $\|\phi\|_{L^2(\mathcal K)}+o(1)\|\phi\|_{L^2_{\delta+2\gamma,\vartheta+2\gamma}} $ if $-\frac{N+2\gamma}{2}-2\gamma<\vartheta<\frac{N-2\gamma}{2}$, $-\frac{N+2\gamma}{2}-2\gamma<\delta<\frac{N-2\gamma}{2}$.

\bigskip

\textbf{Step 2:} (The model operator) After localization around one of the singular points, say $q_1=0$, the operator $L_1$ can be approximated by the model operator $\mathcal L_1$ from \eqref{model-linearization}, or by its conjugate given in \eqref{eq0}. Moreover, recalling the notation \eqref{potential} for the potential term and its asymptotics \eqref{asymptotics-potential}, it is enough to show that
\begin{equation}\label{inequality1}
\|w\|_{L^2_{\delta}}\lesssim \|\tilde h\|_{L^2_{\delta}},
\end{equation}
if  $w=w(t,\theta)$  is a solution of
\begin{equation}\label{equation10-2}
P_\gamma^{g_0} w-\kappa w=\tilde h,\quad t\in\mathbb R,\,\theta\in\mathbb S^{N-1},
\end{equation}
that has compact support in $t\in(0,\infty)$. Here we have denoted $\kappa=pA_{N,p,\gamma}$.

Now project over spherical harmonics, so that $w=\sum_m w_m(t) E_m(\theta)$, and $w_m$ satisfies
\begin{equation*}
P_\gamma^{m} w_m-\kappa w_m=h_m,\quad t\in\mathbb R.
\end{equation*}
Our choice of weights \eqref{choice-2} implies that there are no additional solutions to the homogeneous problem and that we can simply write our solution as
\eqref{w_m:Fourier}, in Fourier variables. Then
\begin{equation}\label{L2-estimate-Fourier}\begin{split}
\int_{\mathbb R} e^{2\delta t}|w_m(t)|^2\,dt&=\int_{\mathbb R} |w_m(\xi+\delta i)|^2\,d\xi
=\int_{\mathbb R} \frac{1}{|\Theta_\gamma^m(\xi+\delta i)-\kappa|^2} |\hat h_m(\xi+\delta i)|^2\,d\xi \\&\leq C\int_{\mathbb R} |\hat h_m(\xi+\delta i)|^2\,d\xi
=\int_{\mathbb R} e^{2\delta t}|h_m(t)|^2\,dt,
\end{split}\end{equation}
where we have used \eqref{symbol-limit}.
(note that there are no poles on the $\mathbb R+\delta i$ line).
Estimate \eqref{inequality1} follows after taking sum in $m$ and the fact that $\{E_m\}$ is an orthonormal basis.

For the estimate near infinity, we proceed in a similar manner, just approximating the potential by $\tau=0$.

\bigskip

\textbf{Step 3:} (Compactness)  Let $\{w_j\}$ be a sequence of solutions to $\tilde L_1 w_j=\tilde h_j$
with $\tilde h_j\in L^2_{\delta,\vartheta}$. Assume that we have the uniform bound $\|w_j\|_{L^2_{\delta,\vartheta}} \leq C$. Then there exists a subsequence, still denoted by $\{w_j\}$, that is convergent in $L^2_{\delta, \vartheta}$ norm. Indeed, by the regularity properties of the equation, $\|w_j\|_{W^{2\gamma,2}_{\delta,\vartheta}} \leq C$, which in particular, implies a uniform $W^{2\gamma,2}$ in every compact set $\mathcal K$. But this is enough to conclude that
$\{w_j\}$ has a convergent subsequence in $W^{2\gamma,2}(\mathcal K)$. Finally, estimate \eqref{estimate1} implies that this convergence is also  true in $L^2_{\delta,\vartheta}$, as we claimed.

\bigskip

\textbf{Step 4:} (Fredholm properties for $\tilde L_1$) This is a rather standard argument. First, assume that the kernel is infinite dimensional, and take an orthonormal basis $\{w_j\}$ for this kernel. Then, by the claim in Step 3, we can find a Cauchy subsequence. But, for this,
\begin{equation*}
\|w_j-w_{j'}\|^2=\|w_j\|^2+\|w_{j'}\|^2=2,
\end{equation*}
a contradiction.

Second, we show that the operator has closed range. Let $\{w_j\}$, $\{\tilde h_j\}$ be two sequences such that
\begin{equation}\label{equation20}
\tilde L_1 w_j=\tilde h_j\quad\text{and}\quad \tilde h_j\to h \text{ in }L^2_{\delta,\vartheta}.
\end{equation} Since $\Ker \tilde L_1$ is closed, we can use the projection theorem to write $w_j=w_j^0+w_j^1$ for $w_j^0\in\Ker \tilde L_1$ and $w_j^1\in (\Ker \tilde L_1)^\bot$. We have that $\tilde L_1 w_j^1=\tilde h_j$.

Now we claim that this sequence is uniformly bounded, i.e., $\|w_j^1\|_{L^2_{\delta,\vartheta}}\leq C$ for every $j$. By contradiction, assume that
$\|w_j^1\|_{L^2_{\delta,\vartheta}}\to \infty$ as $j\to\infty$, and rescale
\begin{equation*}
\tilde w_j=\frac{w_j^1}{\|w_j^1\|_{L^2_{\delta,\vartheta}}}
\end{equation*}
so that the new sequence has norm one in $L^2_{\delta,\vartheta}$. From the previous remark, there is a convergent subsequence, still denoted by $\{\tilde w_j\}$, i.e., $\tilde w_j\to\tilde w$ in $L^2_{\delta,\vartheta}$. Moreover, we know that $\tilde L_1\tilde w=0$. However, by assumption we have that $w_j^1\in(\Ker \tilde L_1)^\bot$, therefore so does $\tilde w$. We conclude that $\tilde w$ must vanish identically, which is a contradiction with the fact that $\|w_j^1\|_{L^2_{\delta,\vartheta}}=1$. The claim is proved.

Now, using the remark in Step 3 again, we know that there exists a convergent subsequence $w_j^1\to w^1$ in $L^2_{\delta,\vartheta}$. This $w^1$ must be regular, so we can pass to the limit in  \eqref{equation20} to conclude that $\tilde L_1(w^1)=h$, as desired.

\bigskip

\textbf{Step 5.}
Now we consider the case that $\Sigma$ is a sub-manifold of dimension $k$, and study the localization near a point in $z_0\in\Sigma$. In Fermi coordinates $z=(x,y)$, this is a similar estimate to that of \eqref{error2}.

First let $\chi$ be a cut-off function such that $\chi(r)=1$ for $r\leq d$ and $\chi(r)=0$ for $r\geq 2d$. Define $\tilde{\chi}(z)=\chi(\dist(z,\Sigma))$, and consider $\tilde{\phi}=\tilde{\chi}\phi$. Using Fermi coordinates near $\Sigma$ and around a point $z_0\in \Sigma$, that can be taken to be $z_0=(0,0)$ without loss of generality, then, for $z=(x,y)$ satisfying $|x|\ll1, |y|\ll1$, by checking the estimates in the proof of Lemma \ref{lemma:error2}, one can get that
\begin{equation}\label{equation60}
\begin{split}
(-\Delta_z)^\gamma \tilde{\phi}(z)&=(1+|x|^{\frac{1}{2}})(-\Delta_{\R^n\setminus \R^k})^\gamma \tilde{\phi}(x,y)+|x|^{-\gamma}\tilde{\phi}+\mathcal{R}_2
\end{split}
\end{equation}
where
\begin{equation*}
\begin{split}
\mathcal{R}_2&=\int_{\Sigma}\int_{\{|x|^\beta<|\tilde{x}|<2d\}}
\frac{\tilde{\phi}}{|\tilde{x}|^{N+2\gamma}}\,d\tilde{x}dy+|x|^{-\beta(N+2\gamma)}
\int_\Sigma\int_{\{|\tilde{x}|<|x|^\beta\}}\tilde{\phi}\,d\tilde{x}dy\\
&\quad+|x|^{\beta k}\int_\Sigma\int_{\{|x|^\beta<|\tilde{x}|<2d\}}\frac{\tilde{\phi}}{|\tilde{x}|^{n+2\gamma}}
\,d\tilde{x}dy\\
&\leq \|\tilde{\phi}\|_{L_{\delta+2\gamma}^2}|x|^{-\frac{1}{4}(N-2\gamma-2\delta)},
\end{split}
\end{equation*}
where we have used  H\"older inequality and that $\beta=\frac{1}{2}$. Here $\|\tilde{\phi}\|_{L^2_{\delta+2\gamma}}$ is the weighted norm near $\Sigma$.
One can easily check that the $L_{\delta}^2$ norm of $\mathcal{R}_2$ and $|x|^{-\gamma}\tilde{\phi}$ are bounded by $o(1)\|\phi\|_{L_{\delta+2\gamma,\vartheta+2\gamma}}$  for small $d$ if {$\delta<\frac{N-2\gamma}{2}$}.

Next we consider the effect of the localization. Let
\begin{equation*}
I_1(z)=[L_1,\tilde{\chi}]\phi=k_{n,\gamma}
\int_{\mathbb R^n}\frac{\tilde{\chi}(z)-\tilde{\chi}(\tilde{z})}{|z-\tilde{z}|^{n+2\gamma}}\phi(\tilde z)\,d\tilde{z}.\\
\end{equation*}
For $|z|\gg1$, one has
\begin{equation*}
\begin{split}
I_1(z)\lesssim |z|^{-(n+2\gamma)}\int_{\mathcal{T}_{2d}}\phi(\tilde{z})\,d\tilde{z}
\lesssim d^{\delta+\frac{N}{2}+3\gamma}\|\phi\|_{L_{\delta+2\gamma,\vartheta+2\gamma}^2}|z|^{-(n+2\gamma)}.
\end{split}
\end{equation*}
Adding the weight at infinity we get that $\|I_1\|_{L_{\vartheta}^2(\R^n\setminus B_R)}$ can be bounded by $o(1)\|\phi\|_{L_{\delta+2\gamma,\vartheta+2\gamma}^2}$ for $d$ small and $R$ large whenever {$\delta>-\frac{N+2\gamma}{2}-2\gamma, \ \vartheta>-\frac{n+2\gamma}{2}-2\gamma$}.

For $|x|\ll1$, one has
\begin{equation*}
\begin{split}
I_1(z)\lesssim \int_{\mathcal{T}_{2d}\setminus \mathcal{T}_d}\frac{\phi}{|z-\tilde{z}|^{n+2\gamma}}\,d\tilde{z}
+\int_{\mathcal{T}_{2d}^c}\frac{\phi}{|z-\tilde{z}|^{n+2\gamma}}\,d\tilde{z}.\\
\end{split}
\end{equation*}
One can check that the $L_{\delta}^2$ term can be bounded by
\begin{equation*}
\begin{split}
\|I_1\|_{L_{\delta}^2}&\leq C[\|\phi\|_{L^2(\mathcal{K})}
+R^{\frac{-n+2\gamma+2\vartheta}{2}}\|\phi\|_{L^2_{\vartheta+2\gamma}(B_R^c)}]\\
&\leq C[\|\phi\|_{L^2(\mathcal{K})}
+o(1)\|\phi\|_{L^2_{\delta+2\gamma,\vartheta+2\gamma}}]
\end{split}
\end{equation*}
for $d$ small and $R$ large if {$\delta<\frac{N-2\gamma}{2}, \ \vartheta<\frac{n-2\gamma}{2}$}.

For $z\in \mathcal{K}$, the estimate goes similarly for $\delta>-\frac{N+2\gamma}{2}-2\gamma$. In conclusion, we have
\begin{equation}
\|I_1\|_{L^2_{\delta,\vartheta}}\leq C\,\big[\|\phi\|_{L^2(\mathcal{K})}
+o(1)\|\phi\|_{L^2_{\delta+2\gamma,\vartheta+2\gamma}}\big].
\end{equation}
Note that this estimate only uses the values of the function $\phi$ when $|y|\ll1$. Indeed, by checking the arguments in Lemma 5.7, the main term of the expansion for the fractional Laplacian in \eqref{equation60} comes from $I_{11}$, i.e. for $|y|\ll1$. The contribution when $|y|>|x|^{\beta}$ is included in the remainder term $|x|^\gamma\tilde{\phi}+\mathcal{R}_2$. The localization around the point $z_0=(0,0)$ is now complete.

\bigskip

\textbf{Step 6.} Next, after localization, we  can replace \eqref{equation10-2} by
\begin{equation*}
P_\gamma^{g_k} w- \tau w=\tilde h, \quad\text{in }\mathbb S^{N-1}\times\mathbb H^{k+1},
\end{equation*}
and $w$ is supported only near a point $z_0\in\partial\mathbb H^{k+1}$, that can be taken arbitrarily. We first consider the spherical harmonic decomposition for $\mathbb S^{N-1}$ and recall the symbol for each projection from Theorem \ref{thm:symbol}.

The $L^2_\delta$ estimate follows similarly as in the case of points, but one uses the Fourier-Helgason transform on hyperbolic space instead of the usual Fourier transform as in Theorem \ref{thm:symbol}. Note, however,  that the hyperbolic metric in \eqref{metric-gk} is written in half-space coordinates as $\frac{dr^2+|dy|^2}{r^2}=dt^2+e^{2t}|dy|^2$, so in order to account for a weight of the form $r^{\delta}$ one would need to use this transform written in rectangular coordinates. This is well known and comes Kontorovich-Lebedev  formulas (\cite{Terras}). Nevertheless, for our purposes it is more suitable to use this transform in geodesic polar coordinates as it is described in Section \ref{subsection:transform}. To account for the weight, we just recall the following relation between two different models for hyperbolic space $\mathbb H^{k+1}$, the half space model with metrics $\frac{dr^2+|dy|^2}{r^2}$ and the hyperboloid model with metric $ds^2+\sinh s\, g_{\mathbb S^k}$ in geodesic polar coordinates:
\begin{equation*}
\cosh s=1+\frac{|y|^2+(r-2)^2}{4r}.
\end{equation*}
Since we are working locally near a point $z_0\in\partial\mathbb H^{k+1}$, we can choose $y=0$ in this relation, which yields that  $e^{-\delta t}=r^\delta=2^{\delta}e^{-\delta s}$. Thus we can use a weight of the form $e^{-\delta s}$ in replacement for $e^{-\delta t}$.

One could redo the theory of Section \ref{section:Hardy} using the Fourier-Helgason transform instead. Indeed, after projection over spherical harmonics, and following \eqref{inversion1}, we can write for $\zeta\in\mathbb H^{k+1}$,
\begin{equation*}
w_m(\zeta)=\int_{\mathbb H^{k+1}}\mathcal G(\zeta,\zeta')\bar h(\zeta')\,d\zeta',
\end{equation*}
where the Green's function is given by
\begin{equation*}
\mathcal G(\zeta,\zeta')=\int_{-\infty}^{+\infty} \frac{1}{\Theta_\gamma^m(\lambda)-\tau}k_\lambda(\zeta,\zeta')\,d\lambda.
\end{equation*}
The poles of $\frac{1}{\Theta_\gamma^m(\lambda)-\tau}$ are well characterized; in fact, they coincide with those in the point singularity case.

But instead, we can take one further reduction and consider the projection over spherical harmonics in $\mathbb S^k$. That is, in geodesic polar coordinates $\zeta=(s,\varsigma)$, $s>0$, $\varsigma\in\mathbb S^{k}$, we can write $w_m(s,\varsigma)=\sum_{j} w_{m,j}(s) E^{(k)}_j(\varsigma)$, where $E^{(k)}_j$ are the eigenfunctions for $-\Delta_{\mathbb S^k}$. Moreover, note that the symbol \eqref{symbol} is radial, so it commutes with this additional projection.

Now we can redo the estimate \eqref{L2-estimate-Fourier}, just by taking into account the following facts: first, one also has a simple Plancherel formula \eqref{Plancherel}. Second, for a radially symmetric function, the Fourier-Helgason transform takes the  form of a simple spherical transform \eqref{spherical-transform}. Third, the spherical function $\Phi_\lambda$ satisfies \eqref{spherical-estimate} and we are taking a weight of the form $e^{\delta s}$. Finally, the expression for the symbol \eqref{symbol} is the same as in the point singularity case \eqref{symbol-isolated}.

This yields estimate \eqref{estimate1} from which Fredholm properties follow immediately.
\end{proof}

\begin{remark}
We do not claim that our restrictions on $\delta,\vartheta$ in Proposition \ref{prop:Fredholm} are the sharpest possible (indeed, we chose them in the injectivity region for simplicity), but these are enough for our purposes.
\end{remark}

Gathering all restrictions on the weights we obtain:

\begin{corollary}\label{cor:surjectivity}
The operator in \eqref{operator2} is injective, both in
$\mathbb R^N\setminus \{q_1,\ldots,q_K\}$ and $\mathbb R^n\setminus \bigcup \Sigma_i$. As a consequence, its adjoint
$(\tilde L_\ve^*)^*=\tilde L_\ve$ given in \eqref{operator1} is surjective.
\end{corollary}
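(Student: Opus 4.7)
The plan is to deduce surjectivity of \eqref{operator1} from injectivity of its adjoint \eqref{operator2} via the Fredholm alternative, using Proposition \ref{prop:Fredholm}. Since the Fredholm property yields $\operatorname{Range}(\tilde L_\ve)=\operatorname{Ker}(\tilde L_\ve^*)^\perp$ relative to the pairing $\langle\cdot,\cdot\rangle_*$, it suffices to establish the claimed injectivity of $\tilde L_\ve:L^2_{\delta,\vartheta}\to L^2_{\delta,\vartheta}$ for $\delta,\vartheta$ satisfying \eqref{choice-2}.

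To prove injectivity, I would start from $w\in L^2_{\delta,\vartheta}$ solving $\tilde L_\ve w=0$ and undo the conjugation, setting $\phi=f_2w$, so that $L_\ve\phi=0$ on $\mathbb R^n\setminus\Sigma$ (respectively on $\mathbb R^N\setminus\{q_1,\ldots,q_K\}$). In view of \eqref{norm-conjugate} and the choice \eqref{choice-2}, the finiteness of $\|w\|_{L^2_{\delta,\vartheta}}$ translates into a quantitative decay statement for $\phi$: near each component of $\Sigma$ one has, in an $L^2$-averaged sense, $\phi=o(\varrho^{\mu})$ with $\mu\in(\Real(\gamma_0^+),0)$, and near infinity $\phi=o(\varrho^{\nu})$ with $\nu\in(-(n-2\gamma),0)$; this is exactly the range of weights for which Proposition \ref{injectivity1} (in the sub-manifold case) and Proposition \ref{injectivity} (in the isolated point case) establish triviality of the kernel.

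The one step that is not purely bookkeeping is upgrading this $L^2$-weighted control of $\phi$ to the pointwise/Hölder control required by the injectivity results. For that I would use standard interior regularity for the fractional Laplacian $(-\Delta)^\gamma-p\bar u_\ve^{p-1}$ together with a Moser/De Giorgi-type iteration on dyadic annuli: on each annulus $\{\varrho\sim 2^{-j}\}$ near $\Sigma$, rescale $\phi$ to a ball of unit size, apply $L^2\to L^\infty$ and Schauder estimates for the rescaled equation (which, after the homothety, has coefficients uniformly bounded because $u_1^{p-1}$ scales like $\varrho^{-2\gamma}$), and then read off pointwise bounds with the correct weight $\varrho^{\mu'}$ for any $\mu'$ slightly smaller than the $L^2$ weight; the same procedure near infinity yields the decay $\varrho^{\nu'}$. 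Choosing $\mu',\nu'$ inside the range \eqref{restriction-mu-nu-0} places $\phi$ in $\mathcal C^{2\gamma+\alpha}_{\mu',\nu'}$, at which point Proposition \ref{injectivity1} (or \ref{injectivity}) forces $\phi\equiv 0$, hence $w\equiv 0$.

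With injectivity of \eqref{operator2} in hand, the surjectivity of \eqref{operator1} follows abstractly. Indeed, Proposition \ref{prop:Fredholm} furnishes the a priori estimate \eqref{estimate1} both for $\tilde L_\ve$ on $L^2_{-\delta,-\vartheta}$ and for its adjoint $\tilde L_\ve$ on $L^2_{\delta,\vartheta}$ (the adjoint solves the same equation, only with dual weights), so both operators have closed range and finite-dimensional kernel/cokernel. The trivial kernel of \eqref{operator2} therefore implies a trivial cokernel of \eqref{operator1}, yielding surjectivity. I expect the main technical obstacle to lie precisely in the $L^2$-to-Hölder upgrade across the dyadic scales, since the non-local nature of $(-\Delta)^\gamma$ forces a careful treatment of tail contributions at each rescaling — but this can be handled exactly as in the localization estimates already performed in Step 1 and Step 5 of the proof of Proposition \ref{prop:Fredholm}, keeping the tails small by the weighted integrability of $w$.
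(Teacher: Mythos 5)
Your proposal matches the paper's argument: unconjugate to $\phi=f_2w$, upgrade the weighted $L^2$ control to weighted H\"older control via local elliptic regularity, invoke the injectivity result for the linearized operator, and then read off surjectivity of \eqref{operator1} from the Fredholm identity $\rango(\tilde L_\ve)=\Ker(\tilde L_\ve^*)^\perp$. One point to fix, though: after the regularity upgrade, the injectivity ingredient you should invoke is Lemma \ref{lemma:apriori-estimate} (the uniform \emph{a priori} estimate for $L_\ve$, built from the approximate solution $\bar u_\ve$ and the actual singular set $\Sigma$), not Propositions \ref{injectivity} and \ref{injectivity1} directly; the latter concern the model operators $\mathcal L_1$ and $\mathbb L_1$ built from the exact profile $u_1$ over a single point or a flat $\R^k$, and they enter only inside the blow-up step of the proof of Lemma \ref{lemma:apriori-estimate}, not as statements about $L_\ve\phi=0$ with several singular points or a compact curved $\Sigma$. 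Aside from this misattribution, the regularity-upgrade step you flag as the main technical obstacle is exactly the one the paper dispenses with in the phrase ``by regularity estimates,'' and your dyadic-annulus rescaling scheme with control of the non-local tails (patterned on Steps 1 and 5 of Proposition \ref{prop:Fredholm}) is a legitimate way to carry it out.
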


\begin{proof}
Lemma \ref{lemma:apriori-estimate} shows that, after conjugation, $\tilde L_\ve$ is injective in $\mathcal C^{2\gamma+\alpha}_{\tilde\mu+\frac{N-2\gamma}{2},\tilde\nu+\frac{n-2\gamma}{2}}$. By regularity estimates and our choice of $\delta,\vartheta$ from \eqref{choice-2}, we immediately obtain injectivity for \eqref{operator2}. Since, thanks to the Fredholm properties,
$$\Ker(\tilde L_\ve^*)^\bot=\rango(\tilde L_\ve),$$
the Corollary follows.
\end{proof}

\subsection{Uniform estimates}

Now we return to the operator $L_\ve$ defined in \eqref{linearized-points}, the adjoint of
\begin{equation*}
L_\ve: L_{-\delta,-\vartheta}^2\to L_{-\delta-2\gamma,-\vartheta-2\gamma}^2
\end{equation*}
is just
\begin{equation*}
L_\ve^*:  L_{\delta+2\gamma,\vartheta+2\gamma}^2\to L_{\delta,\vartheta}^2.
\end{equation*}
From the above results, one knows that $L_\ve^*$ is injective and  $L_\ve$ is surjective.

Fixing the isomorphisms
\begin{equation*}
\pi_{2\delta,2\vartheta}: L_{-\delta,-\vartheta}^2\to L_{\delta,\vartheta}^2,
\end{equation*}
we may identify the adjoint $L_\ve^*$ as
\begin{equation*}
L_\ve^*=\pi_{-2\delta,-2\vartheta}
\circ L_\ve\circ \pi_{2\delta,2\vartheta}: L_{-\delta+2\gamma,-\vartheta+2\gamma}^2\to L_{-\delta,-\vartheta}^2.
\end{equation*}
Now we have a new operator
\begin{equation*}
\mathbf{L}_\ve=L_\ve\circ L_\ve^*: L_\ve\circ \pi_{-2\delta,-2\vartheta}\circ L_\ve\circ \pi_{2\delta,2\vartheta}: L_{-\delta+2\gamma,-\vartheta+2\gamma}^2\to L_{-\delta-2\gamma,-\vartheta-2\gamma}^2.
\end{equation*}
This map is an isomorphism. Hence there exists a bounded two sided inverse
\begin{equation*}
\mathbf{G}_\ve:L_{-\delta-2\gamma,
-\vartheta-2\gamma}^2\to L_{-\delta+2\gamma,-\vartheta+2\gamma}^2.
\end{equation*}
Moreover, $G_\ve=L_\ve^*\circ \mathbf{G}_\ve$ is right inverse of $L_\ve$ which map into the range of $L_\ve^*$. We will fix our inverse to be this one.

From Corollary \ref{cor:surjectivity}
\begin{equation*}
\mathbf{G}_\ve: \mathcal C^{0,\alpha}_{\tilde{\mu}-2\gamma,\tilde{\nu}-2\gamma}\to \mathcal C^{4\gamma+\alpha}_{\tilde{\mu}+2\gamma, \tilde{\nu}+2\gamma}
\end{equation*}
and
\begin{equation*}
G_\ve:\mathcal C^{0,\alpha}_{\tilde{\mu}-2\gamma,\tilde{\nu}-2\gamma}\to \mathcal C^{2\gamma+\alpha}_{\tilde{\mu}, \tilde{\nu}}
\end{equation*}
are bounded.

\bigskip

We are now in the position to prove  uniform surjectivity. It is a consequence of the following two results:

\begin{lemma}
If $u\in \mathcal C^{2\gamma+\alpha}_{\tilde{\mu}, \tilde{\nu}}$ and $v\in \mathcal C^{4\gamma+\alpha}_{\tilde{\mu}+2\gamma, \tilde{\nu}+2\gamma}$ solve equations $L_\ve u=0, \ u=L_\ve^* v$, then $u\equiv v\equiv 0$.
\end{lemma}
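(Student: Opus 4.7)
The plan is to exploit the duality pairing $\langle\phi_1,\phi_2\rangle_*=\int\phi_1\phi_2\,dz$ between $L^2_{-\delta,-\vartheta}$ and $L^2_{\delta,\vartheta}$, with respect to which $L_\ve^*$ is by construction the adjoint of $L_\ve$. First I would check that, under the weight choices \eqref{choice-mu-tilde-mu}--\eqref{choice-2} and Lemma \ref{lemma:inclusions}, the H\"older assumptions place $u\in L^2_{-\delta,-\vartheta}$ and $v\in L^2_{\delta+2\gamma,\vartheta+2\gamma}$; moreover, the identity $u=L_\ve^*v$ together with the mapping properties of $L_\ve^*$ additionally yields $u\in L^2_{\delta,\vartheta}$. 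Both duality brackets in
\begin{equation*}
0=\langle L_\ve u,v\rangle_*=\langle u,L_\ve^*v\rangle_*=\int_{\mathbb R^n\setminus\Sigma}u^2\,dz
\end{equation*}
are therefore finite (the first vanishes by the assumption $L_\ve u=0$, and the second is the adjoint identity), and since the integrand is non-negative I conclude $u\equiv 0$.

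With $u=0$ in hand, the equation $u=L_\ve^*v$ reduces to $L_\ve^*v=0$, and the injectivity of $L_\ve^*:L^2_{\delta+2\gamma,\vartheta+2\gamma}\to L^2_{\delta,\vartheta}$ established in Corollary \ref{cor:surjectivity} forces $v\equiv 0$. This completes the lemma modulo the verification of the adjoint identity.

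The main obstacle is precisely this last step on the punctured space $\mathbb R^n\setminus\Sigma$. Formally it reduces to the symmetry $\int u(-\Delta_{\mathbb R^n})^\gamma v\,dz=\int v(-\Delta_{\mathbb R^n})^\gamma u\,dz$ coming from the symmetric singular kernel, but because $u$ and $v$ may blow up on $\Sigma$ and only decay polynomially at infinity, the integrals have to be regularized. I would multiply both $u$ and $v$ by smooth cut-offs of the type used in Proposition \ref{prop:Fredholm}, supported in $\mathcal T_\sigma^c\cap B_R$, apply the symmetry to the truncated pair, and then send $\sigma\to 0$ and $R\to\infty$. The commutator terms generated by the cut-offs can be controlled by the very same tail estimates carried out in the localization step of that proposition, and they vanish in the limit thanks to the decay rates provided by the H\"older weights in \eqref{choice-mu-tilde-mu} and the Lebesgue weights in \eqref{choice-2}.
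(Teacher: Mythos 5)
Your overall strategy --- an integration-by-parts/duality identity to show a positive quadratic form vanishes (killing $u$), followed by an injectivity result (killing $v$) --- is exactly the paper's, but the weighted-$L^2$ memberships you propose to verify are in fact false, and this is where the argument breaks. With $-\delta<\tilde\mu+\frac{N-2\gamma}{2}<0<\delta$ as in \eqref{choice-2}, the H\"older bound $|v|\leq C\varrho^{\tilde\mu+2\gamma}$ near $\Sigma$ only places $v$ in $L^2_{-\delta',-\vartheta'}$ with $-\delta'<\tilde\mu+2\gamma+\frac{N-2\gamma}{2}$; asking for $-\delta'=\delta+2\gamma$ would force $\delta<\tilde\mu+\frac{N-2\gamma}{2}$, which is the \emph{opposite} of what \eqref{choice-2} gives. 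So $v\notin L^2_{\delta+2\gamma,\vartheta+2\gamma}$, the bracket $\langle L_\ve u,v\rangle_*$ is not the duality pairing between the spaces where $L_\ve u$ and $v$ actually live, and in particular $u=L_\ve^*v$ does not place $u\in L^2_{\delta,\vartheta}$ by this route. Relatedly, the unweighted integral $\int u^2\,dz$ need not even converge near $\Sigma$: with $|u|\leq C\varrho^{\tilde\mu}$ it would require $\tilde\mu>-N/2$, which \eqref{choice-mu-tilde-mu} does not guarantee.

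The paper sidesteps both problems by conjugating: it sets $w=\pi_{2\delta,2\vartheta}v$, multiplies $0=L_\ve L_\ve^*v=L_\ve\pi_{-2\delta,-2\vartheta}L_\ve w$ by $w$, integrates against $dz$, and uses the formal self-adjointness of $L_\ve$ to land on $\int\pi_{-2\delta,-2\vartheta}|L_\ve w|^2\,dz=\int\pi_{2\delta,2\vartheta}u^2\,dz$. The extra factor $\pi_{2\delta,2\vartheta}$ is precisely what gives convergence near $\Sigma$ under \eqref{choice-2}, so $L_\ve w=0$ and hence $u\equiv 0$ (since $L_\ve w=\pi_{2\delta,2\vartheta}u$). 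For the second half the paper does \emph{not} invoke the $L^2$ injectivity of Corollary \ref{cor:surjectivity} on $v$; it checks $w\in\mathcal C^{2\gamma+\alpha}_{\tilde\mu+2\gamma+2\delta,\,\tilde\nu+2\gamma+2\vartheta}\hookrightarrow\mathcal C^{2\gamma+\alpha}_{\mu',\nu'}$ with $\mu'>\Real(\gamma_0^+)$, $\nu'>-(n-2\gamma)$, and applies the \emph{H\"older}-space injectivity of Lemma \ref{lemma:apriori-estimate} to $w$. Your appeal to Corollary \ref{cor:surjectivity} could only be brought to bear after passing to $w$ (or the conjugate $f_2^{-1}v$) and re-verifying the weight restrictions, so that step too would need to be re-routed along the paper's lines.
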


\begin{proof}
Suppose $u,v$ satisfy the given system, then one has $L_\ve L_\ve^* v=0$. Consider $\tilde w=\pi_{2\delta,2\vartheta}v$. Multiply the equation by $w$; integration by parts in $\R^n$ yields
\begin{equation*}
0=\int w L_\ve\circ \pi_{-2\delta, -2\vartheta}\circ L_\ve w =\int \pi_{-2\delta, -2\vartheta}|L_\ve w|^2.
\end{equation*}
Thus  $L_\ve w=0$. Moreover, since $v\in \mathcal C^{4\gamma+\alpha}_{\tilde{\mu}+2\gamma, \tilde{\nu}+2\gamma}$, one has $w\in \mathcal C^{2\gamma+\alpha}_{\tilde{\mu}+2\gamma+2\delta_{\tilde{\mu}}, \tilde{\nu}+2\gamma+2\delta_{\tilde{\nu}}}\hookrightarrow \mathcal C^{2\gamma+\alpha}_{\mu',\nu'}$ for some $\mu'>\Real(\gamma_0^+),  \nu'>-(n-2\gamma)$ , thus by the injectivity property, one has $w\equiv 0$. We conclude then that $u\equiv v\equiv 0$.
\end{proof}

\begin{lemma}\label{lemma-uniform-surjectivity}
Let $G_\ve$ be the bounded inverse of $L_\ve$ introduced above, then for $\ve$ small, $G_\ve$ is uniformly bounded, i.e. for $h\in \mathcal C^{0,\alpha}_{\tilde{\mu}-2\gamma,\tilde{\nu}-2\gamma}$, if $u\in \mathcal C^{2\gamma+\alpha}_{\tilde{\mu}, \tilde{\nu}},v\in \mathcal C^{4\gamma+\alpha}_{\tilde{\mu}+2\gamma,\tilde{\nu}+2\gamma}$ solve the system $L_\ve u=h$ and $L_\ve^* v=u$, then one has
\begin{equation*}
\|u\|_{\mathcal C^{2\gamma+\alpha}_{\tilde{\mu}, \tilde{\nu}}(\R^n\setminus \Sigma)}\leq C\|h\|_{\mathcal C^{0,\alpha}_{\tilde{\mu}-2\gamma,\tilde{\nu}-2\gamma}(\R^n \setminus \Sigma)}
\end{equation*}
for some $C>0$ independent of $\ve$ small.
\end{lemma}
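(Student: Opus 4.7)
The plan is to argue by contradiction with a blow-up/rescaling argument, in the spirit of Lemma \ref{lemma:apriori-estimate}. The key extra input is that the condition $u = L_\ve^\ast v$ constrains $u$ to lie in the range of $L_\ve^\ast$, which allows us to upgrade the injectivity from the ``good'' weighted space $\mathcal C^{2\gamma+\alpha}_{\mu,\nu}$ (where Propositions \ref{injectivity} and \ref{injectivity1} apply) to the dual ``bad'' space $\mathcal C^{2\gamma+\alpha}_{\tilde\mu,\tilde\nu}$. Suppose the estimate fails. Then there exist $\ve_j\to 0$ and triples $(h_j,u_j,v_j)$ satisfying the coupled system with $\|u_j\|_\ast = 1$ and $\|h_j\|_{\ast\ast} \to 0$.

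The first step is to localize outside a tubular neighborhood $\mathcal T_\sigma$ of $\Sigma$: applying the Green's function representation for $(-\Delta)^\gamma$ as in Step 1 of the proof of Lemma \ref{lemma:apriori-estimate} yields
\[
\sup_{\dist(x,\Sigma)>\sigma} \varrho^{-\tilde\nu}|u_j(x)| \;\le\; C\|h_j\|_{\ast\ast} + o(1)\|u_j\|_\ast = o(1),
\]
so $\|u_j\|_\ast = 1$ must be saturated near $\Sigma$. After passing to a subsequence we may assume the saturation occurs near a fixed $q_1\in\Sigma$, taken as the origin. Rescale
\[
\bar u_j(x) := \ve_j^{-\tilde\mu} u_j(\ve_j x), \qquad \bar v_j(x) := \ve_j^{-\tilde\mu-2\gamma} v_j(\ve_j x),
\]
together with a matched rescaling of $h_j$. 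Using that $\bar u_{\ve_j}(\ve_j\,\cdot\,)\to u_1$ and Lemma \ref{lemma5.6} (resp.\ its Fermi-coordinate analogue when $k\geq 1$), the pair $(\bar u_j,\bar v_j)$ satisfies on $\R^N\setminus\{0\}$ (resp.\ $\R^n\setminus\R^k$)
\[
\mathcal L_1 \bar u_j = \bar h_j + o(1), \qquad \mathcal L_1^\ast \bar v_j = \bar u_j + o(1),
\]
with $\|\bar h_j\|_{\ast\ast}\to 0$ and $\|\bar u_j\|_\ast,\|\bar v_j\|_\ast \lesssim 1$.

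Interior Schauder theory for the fractional Laplacian together with these uniform weighted bounds gives, up to subsequence, locally uniform convergence $\bar u_j\to u_\infty$ and $\bar v_j\to v_\infty$, where
\[
u_\infty\in\mathcal C^{2\gamma+\alpha}_{\tilde\mu,\tilde\mu},\qquad v_\infty\in\mathcal C^{4\gamma+\alpha}_{\tilde\mu+2\gamma,\tilde\mu+2\gamma},
\]
and the limits solve the coupled system
\[
\mathcal L_1 u_\infty = 0, \qquad \mathcal L_1^\ast v_\infty = u_\infty.
\]
Now apply the unnumbered lemma preceding the one being proved, in its model-operator form: setting $\tilde w_\infty := \pi_{2\delta,2\vartheta}v_\infty$, one finds $\mathcal L_1 \tilde w_\infty = 0$ with $\tilde w_\infty$ lying in $\mathcal C^{2\gamma+\alpha}_{\mu',\nu'}$ for some $\mu'>\Real(\gamma_0^+)$, so the injectivity Proposition \ref{injectivity} (or \ref{injectivity1}) forces $\tilde w_\infty \equiv 0$, hence $v_\infty\equiv 0$ and $u_\infty\equiv 0$. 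Combined with Step 1, this contradicts $\|u_j\|_\ast = 1$.

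The main obstacle is in the passage to the limit. Because $L_\ve$ is non-local, the rescaling $x\mapsto\ve_j x$ transports information from all of $\R^n$ into the equation for $\bar u_j$, and the tail integrals over $\{|\tilde x|\gtrsim \sigma/\ve_j\}$ must be shown to vanish uniformly; this is handled by the same decomposition used in Step 2 of the proof of Lemma \ref{lemma:apriori-estimate}, now applied simultaneously to $(\bar u_j,\bar v_j)$ so that the weighted bounds on both can be exploited. A subsidiary technical point is verifying that the rescaled zeroth-order coefficient $p A_{N,p,\gamma}\bar u_{\ve_j}(\ve_j\,\cdot\,)^{p-1}$ converges to $p A_{N,p,\gamma}u_1^{p-1}$ in a topology strong enough to pass to the limit inside the non-local integrand, which rests on the precise asymptotics of $u_1$ from Proposition \ref{existence} and the localization established in Step 1.
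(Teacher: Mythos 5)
Your argument follows the same contradiction/blow-up scheme as the paper's, and the localization, rescaling, limit identification, and final use of the auxiliary lemma ($\mathcal L_1 u_\infty = 0$, $\mathcal L_1^\ast v_\infty = u_\infty \Rightarrow u_\infty = v_\infty = 0$) all match. However, there is a genuine gap at the moment you assert $\|\bar v_j\|_\ast \lesssim 1$: nothing in your argument justifies a uniform-in-$\ve$ weighted H\"older bound on $v_j$. The bound on $u_j$ comes for free from the normalization $\|u_j\|_\ast = 1$, but $v_j = \mathbf G_{\ve_j} h_j$ is only known to satisfy an \emph{$\ve$-dependent} bound $\|v_j\| \le C_{\ve_j}\|h_j\|$, and $C_{\ve_j}$ could in principle degenerate as $\ve_j \to 0$. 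Without a uniform bound on $\bar v_j$, you cannot extract a convergent subsequence nor pass to the limit in the second equation of the coupled system, and the whole argument stalls.

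The paper fills this hole with a nested contradiction: one first proves the intermediate estimate $\|v\|_{\mathcal C^{4\gamma+\alpha}_{\tilde\mu+2\gamma,\tilde\nu+2\gamma}} \le C_0 \|u\|_{\mathcal C^{2\gamma+\alpha}_{\tilde\mu,\tilde\nu}}$ uniformly in $\ve$. Supposing this fails, one renormalizes so $\|v_j\| = 1$; then $L_{\ve_j}^\ast v_j = u_j / \|v_j\| \to 0$, the same blow-up near a concentration point produces a nontrivial limit $v_\infty$ solving $\mathcal L_1^\ast v_\infty = 0$, and the injectivity of $\mathcal L_1^\ast$ (equivalently, the injectivity of $\mathcal L_1$ in the dual-conjugated weighted space) forces $v_\infty \equiv 0$, a contradiction. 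This secondary blow-up is not optional decoration; it is the step that turns your asserted bound into a theorem, and you should incorporate it before declaring the uniform bound on $\bar v_j$.
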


\begin{proof}
The proof is similar to the proof of Lemma
\ref{lemma:apriori-estimate}. So we just sketch the proof here and point out the differences. It is by contradiction argument. Assume that there exists $\{\ve^{(n)}\}\to 0$ and a sequence of functions $\{h^{(n)}\}$ and solutions $\{u^{(n)}\}$, $\{v^{(n)}\}$ such that
\begin{equation*}
\|u\|_{\mathcal C^{2\gamma+\alpha}_{\tilde{\mu}, \tilde{\nu}}(\R^n\setminus \Sigma)}=1, \quad \|h\|_{\mathcal C^{0,\alpha}_{\tilde{\mu}-2\gamma,\tilde{\nu}-2\gamma}(\R^n \setminus \Sigma)}\to 0,
\end{equation*}
and solve the equation
\begin{equation*}
L_\ve u=h, \ L_\ve^* v=u.
\end{equation*}
Here note that, for simplicity, we have dropped the superindex $(n)$. Then using the Green's representation formula, following the argument in Proposition \ref{lemma:apriori-estimate}, one can show that
\begin{equation*}
\sup_{\{\dist(x,\Sigma)>\sigma\}}\{\varrho(x)^{-\tilde{\nu}}
|u|\}\leq C(\|h\|_{\mathcal C^{0,\alpha}_{\tilde{\mu}-2\gamma, \tilde{\nu}-2\gamma}}
+o(1)\|u\|_{\mathcal C^{2\gamma+\alpha}_{\tilde{\mu},\tilde{\nu}}}),
\end{equation*}
which implies that there exists $q_i$ such that
\begin{equation}\label{contradiction2}
\sup_{\{|x-q_i|<\sigma\}}|x-q_i|^{-\tilde{\mu}}|u|\geq \frac{1}{2}.
\end{equation}

In the second step we study the region $\{|x-q_i|<\sigma\}$. Without loss of generality, assume $q_i=0$. Define the rescaled function as $\bar{u}=\ve^{-\tilde{\mu}}u(\ve x)$ and similarly for $\bar{v}$ and $\bar{h}$. Similarly to the argument in \ref{lemma:apriori-estimate}, $\bar{u}$ will tend to a limit $u_\infty$ that solves
\begin{equation*}
(-\Delta)^\gamma u_\infty-pA_{N,p,\gamma}u_1^{p-1}u_\infty=0 \quad \mbox{ in }\R^N.
\end{equation*}
If we show that this limit vanishes identically, $u_\infty\equiv 0$, then we will reach a contradiction with \eqref{contradiction2}.

For this, we wish to show that $\bar{v}$ also tends to a limit. If
\begin{equation}\label{estimate50}
\|v\|_{\mathcal C^{4\gamma+\alpha}_{\tilde{\mu}+2\gamma, \tilde{\nu}+2\gamma}}\leq C_0\|u\|_{\mathcal C^{2\gamma+\alpha}_{\tilde{\mu},\tilde{\nu}}},
\end{equation}
then it is true that the limit exists. If not, we can use the same contradiction argument to show that after some scaling, $\bar{v}$ will tend to a limit $v_\infty\in \mathcal C^{4\gamma+\alpha}_{\tilde{\mu}+2\gamma, \tilde{\mu}+2\gamma}$ which solves
\begin{equation*}
L_1^* v_\infty=0.
\end{equation*}
This implies that $v\equiv 0$. This will give a contradiction and yield that \eqref{estimate50} holds for some constant $C_0$.

By the above analysis we arrive at the limit problem, in which $u_\infty, v_\infty$ solve
\begin{equation*}
L_1 u_\infty=0, \ L_1^* v_\infty=u_\infty \quad \mbox{ in }\R^N.
\end{equation*}
Thus $L_1L_1^* v_\infty=0$. Multiply the equation by $v_\infty$ and integrate, one has $L_1^* v_\infty=0$, which implies that $v_\infty\equiv 0$. So also $u_\infty\equiv 0$. Then, following the argument in Lemma \ref{lemma:apriori-estimate}, one can get a contradiction. So the uniform surjectivity holds for all $\ve$ small.
\end{proof}

%
%
%

\section{Conclusion of the proof}\label{section:conclusion}

If $\phi$ is a solution to
\begin{equation*}
(-\Delta_{\mathbb R^n})^\gamma (\bar{u}_\ve+\phi)=|\bar{u}_\ve+\phi|^p \quad\mbox{ in }\R^n \setminus \Sigma,
\end{equation*}
we first show that $\bar{u}_\ve+\phi$ is positive in $\R^n \setminus \Sigma$.

Indeed, for $z$ near $\Sigma$, there exists $R>0$ such that if $\varrho(z)<R\ve$, then
\begin{equation*}
c_1\varrho(z)^{-\frac{2\gamma}{p-1}}<\bar{u}_\ve<c_2\varrho(z)^{-\frac{2\gamma}{p-1}}
\end{equation*}
for some $c_1,c_2>0$. Since $\phi\in \mathcal C^{2,\alpha}_{\tilde\mu,\tilde\nu}$, we have $|\phi|\leq c\varrho(z)^{\tilde\mu}$. But $\tilde\mu>-\frac{2\gamma}{p-1}$, so it follows that $\bar{u}_\ve+\phi>0$ near $\Sigma$. Since $\bar{u}_\ve+\phi \to 0$ as $|z|\to \infty$, by the maximum principle (see for example Lemma 4.13 of \cite{Cabre-Sire1}), we see that $\bar{u}_\ve+\phi>0$ in $\R^n \setminus \Sigma$, so it is a positive solution and it is singular at all points of $\Sigma$.

Next we will prove the existence of such $\phi$. For this, we take an additional restriction on $\tilde\nu$
$$-(n-2\gamma)<\tilde\nu<-\frac{2\gamma}{p-1}.$$

\subsection{Solution with isolated singularities ($\mathbb R^N\setminus \{q_1,\ldots,q_K\}$)}

We first treat the case where $\Sigma$ is a finite number of points.
Recall that equation
\begin{equation*}
(-\Delta_{\mathbb R^N})^\gamma (\bar{u}_\ve+\phi)=A_{N,p,\gamma}|\bar{u}_\ve+\phi|^p \mbox{ in }\R^N \setminus \{q_1,\ldots,q_K\}
\end{equation*}
is equivalent to the following:
\begin{equation}\label{nonlinear}
L_{\ve}(\phi)+Q_{\ve}(\phi)+f_{\ve}=0,
\end{equation}
where $f_\ve$ is defined in \eqref{f-epsilon}, $L_\ve$ is the linearized operator from (\ref{linearized-general}) and $Q_\ve$ contains the remaining higher order terms.
Because of Lemma \ref{lemma-uniform-surjectivity}, it is possible to construct a right inverse for $L_\ve$ with norm bounded independently of $\ve$. Define
\begin{equation}\label{F}
F(\phi):=G_{\ve}[-Q_{\ve}(\phi)-f_{\ve}],
\end{equation}
then equation \eqref{nonlinear} is reduced to
\begin{equation*}\label{nonlinear1}
\phi=F(\phi).
\end{equation*}
Our objective is to show that $F(\phi)$ is a contraction mapping from $\mathcal{B}$ to $\mathcal{B}$, where
\begin{equation*}
\mathcal{B}=\{\phi\in \mathcal C^{2\gamma+\alpha}_{\tilde\mu,\tilde\nu}(\R^n \setminus \Sigma):  \|\phi\|_*\leq \beta \ve^{N-\frac{2p\gamma}{p-1}}\}
\end{equation*}
for some large positive $\beta$.

In this section, $\|\cdot\|_*$ is the $\mathcal C^{2\gamma+\alpha}_{\tilde{\mu}, \tilde{\nu}}$ norm, and $\|\cdot\|_{**}$ is the $\mathcal C^{0,\alpha}_{\tilde{\mu}-2\gamma, \tilde{\nu}-2\gamma}$ norm where $\tilde{\mu}, \tilde{\nu}$ are taken as in the surjectivity section.

First we have the following lemma:
\begin{lemma}
We have that, independently of $\ve$ small,
\begin{equation*}
\|Q_{\ve}(\phi_1)-Q_{\ve}(\phi_2)\|_{**}\leq \frac{1}{2l_0}\|\phi_1-\phi_2\|_*
\end{equation*}
for all $\phi_1, \ \phi_2 \in \mathcal{B}$, where $l_0=\sup\|G_\ve\|$.
\end{lemma}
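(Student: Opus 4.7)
The plan is to identify $Q_\ve$ explicitly from the decomposition \eqref{nonlinear}, expand the difference $Q_\ve(\phi_1)-Q_\ve(\phi_2)$ via the integral form of Taylor's theorem, and then do a region-by-region weighted pointwise estimate using that, on $\mathcal{B}$, the perturbation $\phi_i$ is negligible compared with the background $\bar u_\ve$.

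First I would note that, from the definitions of $L_\ve$ and $f_\ve$, the nonlinear remainder is
\begin{equation*}
Q_\ve(\phi)=-A_{N,p,\gamma}\bigl[(\bar u_\ve+\phi)^p-\bar u_\ve^{\,p}-p\,\bar u_\ve^{\,p-1}\phi\bigr],
\end{equation*}
which is well defined on $\mathcal B$ since, as shown at the start of Section 10, $\bar u_\ve+\phi>0$ whenever $\phi\in\mathcal B$ and $\ve$ is small. Applying the fundamental theorem of calculus twice gives
\begin{equation*}
Q_\ve(\phi_1)-Q_\ve(\phi_2)=-p(p-1)A_{N,p,\gamma}\,(\phi_1-\phi_2)\int_0^1\!\!\int_0^1 \bigl(\bar u_\ve+s\,t\,w_s\bigr)^{p-2}\,w_s\,t\,dt\,ds,
\end{equation*}
with $w_s:=s\phi_1+(1-s)\phi_2$. (If $p<2$ the exponent $p-2$ is negative, but the base $\bar u_\ve+s\,t\,w_s$ stays strictly positive on $\mathcal B$, so the integrand is still integrable.)

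Next I would turn this identity into a pointwise weighted bound. The key smallness input is that for $\phi\in\mathcal B$ one has $|\phi|\lesssim \beta\,\ve^{N-\frac{2p\gamma}{p-1}}\varrho^{\tilde\mu}$ near $\Sigma$ and $|\phi|\lesssim\beta\,\ve^{N-\frac{2p\gamma}{p-1}}\varrho^{\tilde\nu}$ near infinity, while $\bar u_\ve$ satisfies $\bar u_\ve\sim \varrho^{-\frac{2\gamma}{p-1}}$ in $\{\varrho\lesssim\ve\}$ and $\bar u_\ve\sim \ve^{N-2\gamma-\frac{2\gamma}{p-1}}\varrho^{-(N-2\gamma)}$ in $\{\ve\lesssim\varrho\lesssim d\}$. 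A direct computation using $\tilde\mu>-\frac{2\gamma}{p-1}$ and $p>\frac{N}{N-2\gamma}$ shows that in each of these regions the ratio $|\phi_i|/\bar u_\ve$ is of size $O(\ve^{\tau_0})$ for some $\tau_0>0$, so
\begin{equation*}
\bigl(\bar u_\ve+s t w_s\bigr)^{p-2}\sim \bar u_\ve^{\,p-2}
\end{equation*}
uniformly on the support of $\bar u_\ve$. Combining with the identity above,
\begin{equation*}
|Q_\ve(\phi_1)-Q_\ve(\phi_2)|\lesssim \bar u_\ve^{\,p-2}\bigl(|\phi_1|+|\phi_2|\bigr)|\phi_1-\phi_2|+\bigl(|\phi_1|+|\phi_2|\bigr)^{p-1}|\phi_1-\phi_2|,
\end{equation*}
the second term dominating in the region away from the singular set where $\bar u_\ve$ is small. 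Plugging the weighted bounds for $\phi_i$ into the right-hand side yields, after checking the exponents of $\varrho$ in each region, the pointwise inequality
\begin{equation*}
\varrho^{-(\tilde\mu-2\gamma)}|Q_\ve(\phi_1)-Q_\ve(\phi_2)|\lesssim \ve^{\tau}\,\|\phi_1-\phi_2\|_\ast
\end{equation*}
near $\Sigma$, and the analogous bound with $\tilde\nu$ at infinity, where $\tau>0$ is the minimum of several positive powers of $\ve$ arising from the region analysis.

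The remaining step is to promote this to the H\"older seminorm part of $\|\cdot\|_{**}$. This is routine: I would write the integrand of the Taylor formula as a product of a smooth weight and the factor $\phi_1-\phi_2$, then use the standard product rule $\|fg\|_{0,\alpha}\leq \|f\|_{0,\alpha}\|g\|_\infty+\|f\|_\infty\|g\|_{0,\alpha}$ in each local Fermi chart, controlling the H\"older seminorms of $\bar u_\ve^{\,p-2}$ and $(\bar u_\ve+stw_s)^{p-2}$ by the scaling-invariant definition of $\|\cdot\|_{0,\alpha,0}$. The outcome is
\begin{equation*}
\|Q_\ve(\phi_1)-Q_\ve(\phi_2)\|_{**}\leq C\ve^{\tau}\|\phi_1-\phi_2\|_\ast,
\end{equation*}
and choosing $\ve$ small enough that $C\ve^\tau\leq \frac{1}{2l_0}$ completes the proof. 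The main obstacle I anticipate is the case $p<2$, where $(\bar u_\ve+stw_s)^{p-2}$ is large where $\bar u_\ve$ is small: this forces the careful region split above (rather than a simple global bound by $\bar u_\ve^{\,p-2}$) and is the reason for the extra $(|\phi_1|+|\phi_2|)^{p-1}|\phi_1-\phi_2|$ term, which must be absorbed using $\|\phi_i\|_\ast\leq \beta\ve^{N-\frac{2p\gamma}{p-1}}$ together with the hypothesis $N-\frac{2p\gamma}{p-1}>0$ guaranteed by $p<\frac{N+2\gamma}{N-2\gamma}$.
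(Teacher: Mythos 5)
Your proof takes essentially the same route as the paper's: Taylor expansion with integral remainder, a region-by-region analysis exploiting $|\phi_i|\ll\bar u_\ve$ near $\Sigma$ and the smallness of $\bar u_\ve$ away from $\Sigma$, and a final promotion to the H\"older seminorm (the paper bounds $\nabla Q_\ve$ while you invoke the H\"older product rule, but these are equivalent in the scale-invariant weighted spaces). One small imprecision worth noting: in the transition region $R\ve\lesssim\varrho\lesssim d$ the ratio $|\phi_i|/\bar u_\ve$ is $O(\varrho^{\tilde\mu+N-2\gamma})\lesssim d^{\tilde\mu+N-2\gamma}$, which is small because $d$ is fixed small rather than by any positive power of $\ve$, so the claimed uniform $O(\ve^{\tau_0})$ bound is not literally correct there — but since all you actually need is $|\phi_i|\leq\frac14\bar u_\ve$ on the support of $\bar u_\ve$, the argument goes through unchanged.
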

\begin{proof}
The estimates here are similar to Lemma 9 in \cite{mp}. For completeness, we give here the proof.

With some abuse of notation, in the following paragraphs the notation $\|\cdot\|_*$ and $\|\cdot\|_{**}$ will denote the weighted $\mathcal C^0$ norms and not the weighted $\mathcal C^{\alpha}$ norms (for the same weights) that was defined in \eqref{norm}.

First we show that there exists $\tau>0$ such that for $\phi\in \mathcal{B}$, we have
\begin{equation*}
 |\phi(x)|\leq \frac{1}{4}\bar{u}_\ve(x)\quad\text{for all}\quad x\in \bigcup_{i=1}^k B(q_i,\tau).
\end{equation*}
Indeed, from the asymptotic behaviour of $u_1$ in Proposition \ref{existence}
we know that
\begin{equation*}
\begin{split}
& c_1|x|^{-\frac{2\gamma}{p-1}}<u_{\ve_i}(x)<c_2|x|^{-\frac{2\gamma}{p-1}} \mbox{ if } |x|<R\ve_i,\\
&c_1\ve_i^{N-\frac{2p\gamma}{p-1}}|x|^{-(N-2\gamma)}<u_{\ve_i}(x)<c_2\ve_i^{N-\frac{2p\gamma}{p-1}}|x|^{-(N-2\gamma)} \mbox{ if }R\ve_i\leq |x|<\tau.
\end{split}
\end{equation*}
The claim follows because $\phi\in\mathcal B$ implies that
\begin{equation*}
|\phi(x)|<c\beta \ve^{N-\frac{2p\gamma}{p-1}}\varrho(x)^{\tilde\mu}.
\end{equation*}

Next, since $|\frac{\phi}{\bar{u}_\ve}|\leq \frac{1}{4}$ in $B(q_i,\tau)$, by Taylor' expansion,
\begin{equation*}
|Q_{\ve}(\phi_1)-Q_{\ve}(\phi_2)|\leq c|\bar{u}_{\ve}|^{p-2}(|\phi_1|+|\phi_2|)|\phi_1-\phi_2|.
\end{equation*}
Thus for $x\in B(q_i,\tau)$, we have
\begin{equation*}
\begin{split}
\varrho(x)^{2\gamma-\tilde\mu}|Q_{\ve}(\phi_1)-Q_{\ve}(\phi_2)|
&\leq c\varrho(x)^{\tilde\mu+\frac{2\gamma}{p-1}}(\|\phi_1\|_*+\|\phi_2\|_*)\|\phi_1-\phi_2\|_*\\
&\leq c\tau^{\tilde\mu+\frac{2\gamma}{p-1}}\beta\ve^{N-\frac{2\gamma}{p-1}}\|\phi_1-\phi_2\|_*.
\end{split}
\end{equation*}
The coefficient in front can be taken as small as desired by choosing $\ve$ small. Outside the union of the balls $B(q_i,\tau)$ we use the estimates
\begin{equation*}
\bar{u}_{\ve}(x)\leq c\ve^{N-\frac{2p\gamma}{p-1}}|x|^{-(N-2\gamma)}\quad \mbox{and}\quad|\phi|\leq c\ve^{N-\frac{2p\gamma}{p-1}}|x|^{\tilde\nu},
\end{equation*}
where $c$ depends on $\tau$ but not on $\ve$ nor $\phi$.

For $\varrho\geq \tau  $ and $|x|\leq R$, we can neglect all factors involving $\varrho(x)$, so
\begin{equation*}
\begin{split}
|Q_{\ve}(\phi_1)-Q_{\ve}(\phi_2)|&\leq c(|\bar{u}_{\ve}|^{p-1}+|\phi|^{p-1})|\phi_1-\phi_2|
\leq c\ve^{(p-1)(N-\frac{2p\gamma}{p-1})}|\phi_1-\phi_2|\\
&\leq c\ve^{p(N-2\gamma)-N}\|\phi_1-\phi_2\|_*,
\end{split}
\end{equation*}
for which the coefficient can be as small as desired since $p>\frac{N}{N-2\gamma}$.

Lastly, for $|x|\geq R$, in this region $\bar{u}_\ve=0$, so
\begin{equation*}
\begin{split}
\varrho(x)^{2\gamma-\tilde\nu}|Q_{\ve}(\phi_1)-Q_{\ve}(\phi_2)|&\leq c\varrho(x)^{2\gamma-\tilde\nu}(\phi_1^{p-1}+\phi_2^{p-1})|\phi_1-\phi_2|\\
&\leq c\varrho(x)^{2\gamma-\tilde\nu+p\tilde\nu}\ve^{N(p-1)-2p\gamma}\|\phi_1-\phi_2\|_*,
\end{split}
\end{equation*}
and here the coefficient can be also chosen as small as we wish because  $\tilde\nu<-\frac{2\gamma}{p-1}$  implies that $2\gamma-\tilde\nu+p\tilde\nu<0$.

Combining all the above estimates, one has
\begin{equation*}
\|Q_{\ve}(\phi_1)-Q_{\ve}(\phi_2)\|_{**}\leq \frac{1}{2l_0}\|\phi_1-\phi_2\|_*
\end{equation*}
as desired.

\bigskip

Now we go back to the original definition of the norms $\|\cdot\|_{*}$, $\|\cdot\|_{**}$ from \eqref{norm}.
For this, we need to estimate the H\"older norm of $Q_{\ve}(\phi_1)-Q_{\ve}(\phi_2)$. First in each $B(q_i,\tau)$,
\begin{equation*}
\begin{split}
\nabla Q_{\ve}(\phi)&=p\Big((\bar{u}_\ve+\phi)^{p-1}-\bar{u}_\ve^{p-1}-(p-1)\bar{u}_\ve^{p-1}\phi \Big)\nabla \bar{u}_\ve
+p((\bar{u}_\ve+\phi)^{p-1}-\bar{u}_\ve^{p-1})\nabla \phi,
\end{split}
\end{equation*}
and similarly as before, we can get that
\begin{equation*}
\varrho(x)^{2\gamma+1-\tilde\mu}|\nabla (Q_{\ve}(\phi_1)-Q_{\ve}(\phi_2))|\leq c\ve^{N-\frac{2p\gamma}{p-1}}\|\phi_1-\phi_2\|_*.
\end{equation*}
Moreover, for $\tau<\varrho(x)<R$,
\begin{equation*}
|\nabla (Q_{\ve}(\phi_1)-Q_{\ve}(\phi_2))|\leq c\ve^{p(N-2\gamma)-N}\|\phi_1-\phi_2\|_*.
\end{equation*}
Lastly, for $\varrho(x)>R$,
\begin{equation*}
\nabla Q_{\ve}(\phi_1-\phi_2)=p\phi_1^{p-1}\nabla(\phi_1-\phi_2)+p\nabla \phi_2(\phi_1^{p-1}-\phi_2^{p-1}),
\end{equation*}
which yields
\begin{equation*}
\begin{split}
&\varrho^{-\tilde\nu+2\gamma+1}|\nabla Q_{\ve}(\phi_1-\phi_2)|\\
&\leq \varrho^{2\gamma+1-\tilde\nu}\Big[(\|\phi_1\|_*+\|\phi_2\|_*)^{p-1}
\varrho^{(p-1)(\tilde\nu-2\gamma)}\|\phi_1-\phi_2\|_*\varrho^{\tilde\nu-2\gamma-1}
+\|\phi_2\|_*\varrho^{\tilde\nu-2\gamma-1}  \|\phi_1^{p-1}-\phi_2^{p-1}\|_*
\Big]\\
&\leq c\ve^{N-\frac{2p\gamma}{p-1}}\|\phi_1-\phi_2\|_*.
\end{split}
\end{equation*}
This completes the desired  estimate for $Q_{\ve}(\phi_1)-Q_{\ve}(\phi_2)$ and concludes the proof of the lemma.
\end{proof}

Recall that $\|f_\ve\|_{**}\leq C_0\ve^{N-\frac{2p\gamma}{p-1}}$ for some $C_0>0$ from \eqref{error1}. Then the lemma above gives an estimate for the map  \eqref{F}. Indeed,
\begin{equation*}
\begin{split}
\|F(\phi)\|_*&\leq l_0[\|Q_{\ve}(\phi)\|_{**}+\|f_\ve\|_{**}]
\leq l_0\|Q_{\ve}(\phi)\|_{**}+l_0C_0\ve^{N-\frac{2p\gamma}{p-1}}\\
&\leq \frac{1}{2}\|\phi\|_{*}+l_0C_0\ve^{N-\frac{2p\gamma}{p-1}}
\leq \beta \ve^{N-\frac{2p\gamma}{p-1}},
\end{split}
\end{equation*}
and
\begin{equation*}
\begin{split}
\|F(\phi_1)-F(\phi_2)\|_{*}&\leq l_0\|Q_{\ve}(\phi_1)-Q_{\ve}(\phi_2)\|_{**}\leq \frac{1}{2}\|\phi_1-\phi_2\|_{*}
\end{split}
\end{equation*}
if we choose $\beta>2l_0C_0$.  So $F(\phi)$ is a contraction mapping from $\mathcal{B}$ to $\mathcal{B}$. This implies the existence of a solution $\phi$ to \eqref{nonlinear}.

\subsection{The general case $\mathbb R^n\setminus \Sigma$, $\Sigma$ a sub-manifold of dimension $k$}
For the more general case, only minor changes need to be made in the above argument. The most important one comes from Lemma \ref{lemma:error2} and it says that the weight parameter $\mu$ must now lie in the smaller interval:
\begin{equation}
\label{choice-tilde-mu-final}-\frac{2\gamma}{p-1}<\tilde\mu<\min\left\{\gamma-\tfrac{2\gamma}{p-1}, \tfrac{1}{2}-\tfrac{2\gamma}{p-1}, \Real(\gamma_0^-)\right\}.
\end{equation}

In this case, we only need to replace the exponent $N-\frac{2p\gamma}{p-1}$ in the above argument by $q=\min\{\frac{(p-3)\gamma}{p-1}-\tilde\mu, \frac{1}{2}-\gamma+\frac{(p-3)\gamma}{p-1}-\tilde\mu\}$, then $q>0$ if $\tilde\mu$ is chosen to satisfy \eqref{choice-tilde-mu-final}. We get a solution to \eqref{nonlinear}, and this concludes the proof of Theorem \ref{teo}.

\section{Appendix}

\subsection{Some known results on special functions}

\begin{lemma}\label{propiedadeshiperg}
 \textnormal{ \cite{Abramowitz,SlavyanovWolfganglay}} Let $z\in\mathbb C$. The hypergeometric function is defined for $|z| < 1$ by the power series
$$ \Hyperg(a,b;c;z) = \sum_{n=0}^\infty \frac{(a)_n (b)_n}{(c)_n} \frac{z^n}{n!}=\frac{\Gamma(c)}{\Gamma(a)\Gamma(b)}\sum_{n=0}^\infty \frac{\Gamma(a+n)\Gamma(b+n)}{\Gamma(c+n)} \frac{z^n}{n!}.$$\label{hypergeo}
It is undefined (or infinite) if $c$ equals a non-positive integer.
Some properties are
\begin{itemize}
  \item[i.] The hypergeometric function evaluated at $z=0$ satisfies
  \begin{equation}\label{prop2}
  \Hyperg(a+j,b-j;c;0)=1; \  j=\pm1,\pm2,...
  \end{equation}
     \item[ii.] If $|arg(1-z)|<\pi$, then
\begin{equation}\label{prop4}
  \begin{split}
  \Hyperg&(a,b;c;z)=
                     \frac{\Gamma(c)\Gamma(c-a-b)}{\Gamma(c-a)\Gamma(c-b)}
                     \Hyperg\left(a,b;a+b-c+1;1-z\right)
                      \\
                     +&(1-z)^{c-a-b}\frac{\Gamma(c)\Gamma(a+b-c)}
                     {\Gamma(a)\Gamma(b)}\Hyperg(c-a,c-b;c-a-b+1;1-z).
     \end{split} \end{equation}
\item[iii.] The hypergeometric function is symmetric with respect to first and second arguments, i.e
\begin{equation}\label{prop5}
  \Hyperg(a,b;c;z)= \Hyperg(b,a;c;z).
  \end{equation}
\item[iv.] Let $m\in \n$. The $m$-derivative of the hypergeometric function is given by
\begin{equation}\label{prop6}
 \tfrac{d^m}{dz^m} \left[(1-z)^{a+m-1} \Hyperg(a,b;c;z)\right]= \tfrac{(-1)^m(a)_m(c-b)_m}{(c)_m} (1-z)^{a-1}\Hyperg(a+m,b;c+m;z).
  \end{equation}
\end{itemize}
\end{lemma}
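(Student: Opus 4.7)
The statement collects four classical identities for the Gauss hypergeometric function $\Hyperg(a,b;c;z)$. Since all four are textbook facts (see Chapter 15 of \cite{Abramowitz} and the monograph \cite{SlavyanovWolfganglay}), the plan is to give a brief, reference-based verification rather than to reprove them from scratch. None of the four should be conceptually difficult; the only bookkeeping concern is matching the normalization conventions used elsewhere in the paper (in particular, the one used in \eqref{varphi0} and \eqref{varphi1} to solve the scattering ODE).

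First, properties (i) and (iii) are immediate from the defining power series. For (i), setting $z=0$ in the series $\sum_{n\geq 0}\frac{(a)_n(b)_n}{(c)_n}\frac{z^n}{n!}$ leaves only the $n=0$ term, which equals $1$, and the argument is insensitive to the integer shift $j$ provided $c\notin -\mathbb Z_{\geq 0}$ (which is where we apply it). For (iii), the Pochhammer symbols satisfy $(a)_n(b)_n=(b)_n(a)_n$ termwise, so the symmetry is tautological.

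Second, for (ii), I would cite the standard connection formula at $z=1$ (Abramowitz--Stegun 15.3.6). One can derive it by observing that the hypergeometric ODE
\[
z(1-z)w''+[c-(a+b+1)z]w'-ab\,w=0
\]
has regular singular points at $0,1,\infty$, so near $z=1$ it admits the two Frobenius solutions $\Hyperg(a,b;a+b-c+1;1-z)$ and $(1-z)^{c-a-b}\Hyperg(c-a,c-b;c-a-b+1;1-z)$; the stated identity is just the unique linear combination giving $\Hyperg(a,b;c;z)$ on the principal branch $|\arg(1-z)|<\pi$, with coefficients pinned down by comparison of the Mellin--Barnes integral representations (or equivalently by the Gauss summation $\Hyperg(a,b;c;1)=\frac{\Gamma(c)\Gamma(c-a-b)}{\Gamma(c-a)\Gamma(c-b)}$ and analytic continuation). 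Since the formula is quoted verbatim from standard tables, the proof reduces to a citation.

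Third, for (iv), the plan is to proceed by induction on $m$. The base case $m=1$ is the classical identity
\[
\tfrac{d}{dz}\bigl[(1-z)^{a}\Hyperg(a,b;c;z)\bigr]=-\tfrac{a(c-b)}{c}(1-z)^{a-1}\Hyperg(a+1,b;c+1;z),
\]
which follows by combining the elementary differentiation formula $\tfrac{d}{dz}\Hyperg(a,b;c;z)=\tfrac{ab}{c}\Hyperg(a+1,b+1;c+1;z)$ with one of Gauss's contiguous relations (equivalently, by termwise differentiation of the defining series and an index shift). The inductive step then applies $\tfrac{d}{dz}$ once more to the right-hand side, uses the identity again with $a\rightsquigarrow a+1$, $c\rightsquigarrow c+1$, and telescopes the Pochhammer factors $(a)_{m+1}=(a)_m(a+m)$ and $(c-b)_{m+1}=(c-b)_m(c-b+m)$ against $(c)_{m+1}=(c)_m(c+m)$. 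There is no real obstacle beyond keeping track of these shifts, and the whole lemma is essentially a recording of well-known identities used earlier in Sections \ref{section:isolated-singularity} and \ref{subsection:full-symbol}.
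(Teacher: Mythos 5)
The paper itself gives no proof of this lemma --- it is stated in the Appendix as a list of classical identities, with the citations to \cite{Abramowitz,SlavyanovWolfganglay} standing in for proofs --- so your ``verify by citation'' strategy matches the paper's treatment, and your brief justifications for (i), (ii) and (iii) are all fine.

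However, the induction you sketch for (iv) has a concrete gap and does not close as described. The left-hand side of \eqref{prop6} carries the exponent $(1-z)^{a+m-1}$, which \emph{changes with $m$}. Differentiating the $m$-th identity once more yields
\[
\tfrac{d^{m+1}}{dz^{m+1}}\bigl[(1-z)^{a+m-1}\Hyperg(a,b;c;z)\bigr]
= \tfrac{(-1)^m(a)_m(c-b)_m}{(c)_m}\,\tfrac{d}{dz}\bigl[(1-z)^{a-1}\Hyperg(a+m,b;c+m;z)\bigr],
\]
which is the $(m+1)$-fold derivative of $(1-z)^{a+m-1}\Hyperg(a,b;c;z)$, \emph{not} of $(1-z)^{a+m}\Hyperg(a,b;c;z)$. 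Moreover, the right-hand factor $(1-z)^{a-1}\Hyperg(a+m,b;c+m;z)$ does not have the shape $(1-z)^{a'}\Hyperg(a',\cdot;\cdot;z)$ with $a'$ matching the first parameter, so your $m=1$ case cannot be reapplied directly; and rewriting $(1-z)^{a+m}\Hyperg(a,b;c;z)=(1-z)\cdot(1-z)^{a+m-1}\Hyperg(a,b;c;z)$ introduces an extra term by the product rule that the advertised Pochhammer telescope does not absorb. A clean elementary derivation either applies the Pfaff transformation $(1-z)^{a}\Hyperg(a,b;c;z)=\Hyperg(a,c-b;c;\tfrac{z}{z-1})$ and differentiates $m$ times under the substitution $w=z/(z-1)$ all at once, or works through the Euler integral representation and Leibniz's rule; alternatively, just treat (iv) like the other items and defer entirely to the cited references, which is what the paper does.
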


\begin{lemma}\label{propiedadesgamma}
 \textnormal{ \cite{Abramowitz,SlavyanovWolfganglay}} Let $z\in\mathbb C$. Some well known properties of the Gamma function $\Gamma(z)$ are
\begin{align}
      &\Gamma(\bar{z})=\overline{\Gamma(z)},\label{prop1g}\\
  &\Gamma(z+1)=z\Gamma(z), \label{prop2g}\\
   &\Gamma(z)\Gamma\left(z+\tfrac{1}{2}\right)=2^{1-2z}\sqrt{\pi}\,\Gamma(2z).  \label{prop3g}
   \end{align}
It is a meromorphic function in $z\in\mathbb C$ and its residue at each poles is given by
\begin{equation}\label{residue-Gamma}
\Res(\Gamma(z),-j)=\frac{(-1)^n}{j!}, \quad j=0,1,\ldots.
\end{equation}

 Let $\psi(z)$ denote the Digamma function defined by $$\psi(z)=\frac{d\ln\Gamma (z)}{dz}=\frac{\Gamma'(z)}{\Gamma(z)}.$$
 This function has the expansion
\begin{equation}\label{expansion-digamma}
\psi(z)=\psi(1)+\sum_{l=0}^{\infty}\left(\frac{1}{l+1}-\frac{1}{l+z}\right).
\end{equation}

Let $B(z_1,z_2)$ denote the Beta function defined by
\begin{equation*}
B(z_1,z_2)=\frac{\Gamma(z_1)\Gamma(z_2)}{\Gamma(z_1+z_2)}.
\end{equation*}
If $z_2$ is a fixed number and $z_1>0$ is big enough, then this function behaves
\begin{equation*}\label{propbeta}
B(z_1,z_2)\sim \Gamma(z_2)(z_1)^{-z_2}.
\end{equation*}
\end{lemma}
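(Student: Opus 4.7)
The statement collects classical identities, so the proof is essentially a matter of assembling standard arguments from the theory of the Gamma function rather than producing new mathematics. The plan is to start from the integral representation $\Gamma(z)=\int_0^\infty t^{z-1}e^{-t}\,dt$ valid for $\Real(z)>0$, and then extend meromorphically by the functional equation.

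First, for \eqref{prop1g} I would simply observe that for $\Real(z)>0$ the integrand $t^{z-1}e^{-t}=e^{(z-1)\log t}e^{-t}$ satisfies $\overline{t^{z-1}e^{-t}}=t^{\bar z -1}e^{-t}$ since $t>0$ is real, so the integral identity $\Gamma(\bar z)=\overline{\Gamma(z)}$ follows by pulling complex conjugation under the integral. The identity then extends to all non-pole $z$ by the identity principle for meromorphic functions. For \eqref{prop2g} I would integrate by parts in $\int_0^\infty t^z e^{-t}\,dt$, and again extend meromorphically. For the duplication formula \eqref{prop3g}, the cleanest route is to use the Beta integral: setting $B(z,z)=\Gamma(z)^2/\Gamma(2z)$ and evaluating $B(z,z)=\int_0^1 t^{z-1}(1-t)^{z-1}\,dt$ via the substitution $t=(1+u)/2$ to get $2^{1-2z}\int_{-1}^1(1-u^2)^{z-1}\,du=2^{2-2z}\int_0^1 v^{-1/2}(1-v)^{z-1}\,dv=2^{2-2z}B(1/2,z)=2^{2-2z}\sqrt\pi\,\Gamma(z)/\Gamma(z+1/2)$, which rearranges to the desired form.

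For the residue formula \eqref{residue-Gamma}, I would iterate the functional equation: $\Gamma(z)=\Gamma(z+j+1)/[z(z+1)\cdots(z+j)]$, and read off $\lim_{z\to -j}(z+j)\Gamma(z)=\Gamma(1)/[(-j)(-j+1)\cdots(-1)]=(-1)^j/j!$. The Digamma expansion \eqref{expansion-digamma} follows from the Weierstrass product $\frac{1}{\Gamma(z)}=z e^{\gamma z}\prod_{l=1}^\infty (1+z/l)e^{-z/l}$: taking logarithmic derivative gives $\psi(z)=-1/z-\gamma+\sum_{l=1}^\infty\bigl(1/l-1/(l+z)\bigr)$, and then evaluating at $z=1$ to isolate $\psi(1)=-\gamma$ rearranges this into the stated form $\psi(z)=\psi(1)+\sum_{l=0}^\infty(1/(l+1)-1/(l+z))$.

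The Beta identity is a direct consequence of the Gamma integrals: $\Gamma(z_1)\Gamma(z_2)=\int_0^\infty\int_0^\infty s^{z_1-1}t^{z_2-1}e^{-(s+t)}\,ds\,dt$, and the substitution $s=r\tau$, $t=r(1-\tau)$ with Jacobian $r$ separates it into $\Gamma(z_1+z_2)B(z_1,z_2)$. Finally, the asymptotic $B(z_1,z_2)\sim \Gamma(z_2)z_1^{-z_2}$ as $z_1\to\infty$ (with $z_2$ fixed) is the only non-purely-algebraic piece; the natural approach is Stirling's formula \eqref{Stirling}, which gives
\[
\frac{\Gamma(z_1)}{\Gamma(z_1+z_2)}\sim z_1^{-z_2}\qquad\text{as } z_1\to\infty,
\]
uniformly on compact $z_2$-sets with $|\arg z_1|<\pi$, and multiplying by $\Gamma(z_2)$ yields the claim. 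The only mildly delicate point is ensuring the Stirling ratio is applied in a sector that excludes the poles of $\Gamma$; this is automatic under the hypothesis that $z_1>0$ is large.

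Since none of these steps presents a genuine obstacle — they are standard textbook derivations — the role of this lemma in the paper is purely to fix notation and collect the identities that will be invoked (often implicitly) in the Fourier-symbol computations of Sections \ref{section:isolated-singularity}, \ref{subsection:full-symbol} and \ref{section:Hardy}. Accordingly, the proof in the paper can reasonably just cite a standard reference such as \cite{Abramowitz,SlavyanovWolfganglay} rather than reproduce the classical derivations.
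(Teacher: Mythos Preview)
Your proposal is correct, and you accurately anticipated the paper's treatment: the lemma is stated in the Appendix purely as a collection of classical facts with citations to \cite{Abramowitz,SlavyanovWolfganglay}, and no proof is given. Your sketched derivations are all standard and would serve if a proof were required, but here the paper simply records the identities for later use.
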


\subsection{A review of the Fourier-Helgason transform on Hyperbolic space}\label{subsection:transform}

Consider hyperbolic space $\mathbb H^{k+1}$, parameterized with coordinates $\zeta$. It can be written as a symmetric space of rank one as the quotient $\mathbb H^{k+1}\approx \frac{SO(1,k+1)}{SO(k+1)}.$ Fourier transform on hyperbolic space is a particular case of the Helgason-Fourier transform on symmetric spaces. Some standard references are \cite{Bray,H,Terras:book1}; we mostly follow the exposition of Chapter 8 in \cite{Georgiev}.

Hyperbolic space $\mathbb H^{k+1}$ may be defined as the upper branch of a hyperboloid in $\mathbb R^{k+2}$ with the metric induced by the Lorentzian metric in $\mathbb R^{k+2}$ given by $-d\zeta_0^2+d\zeta_1^2+\ldots+d\zeta_{k+1}^2$, i.e., $\mathbb H^{k+1}  =\{(\zeta_0,\ldots,\zeta_{k+1})\in \mathbb R^{k+2}: \zeta_0^2-\zeta_1^2-\ldots-\zeta_{k+1}^2=1, \; \zeta_0>0\}$, which in polar coordinates may be parameterized as
$$\mathbb H^{k+1}= \{\zeta\in \mathbb R^{k+2}: \zeta= (\cosh s, \varsigma \sinh s ), \; s\geq 0, \; \varsigma\in \mathbb S^{k}\},$$
with the metric $g_{\mathbb H^{k+1}}=ds^2+\sinh^2 s\, g_{\mathbb S^{k}}$.
Under these definitions the Laplace-Beltrami operator is given by
$$\Delta_{\mathbb H^{k+1}}=\partial_{ss}+k \frac{\cosh s}{\sinh s}\, \partial_s+\frac{1}{\sinh^2 s}\,\Delta_{\mathbb S^{k}},$$
and the volume element is $$d\mu_\zeta=\sinh^{k}s\;ds \, d\varsigma.$$
We denote by $[\cdot, \cdot]$ the internal product induced by the Lorentzian metric, i.e.,
$$[\zeta,\zeta']=\zeta_0\zeta_0'-\zeta_1\zeta_1'-\ldots -\zeta_{k+1}\zeta_{k+1}'.$$
The hyperbolic distance between two arbitrary points is given by
$\dist(\zeta,\zeta')=\cosh^{-1}([\zeta,\zeta']),$
and in the particular case that $\zeta=(\cosh s, \varsigma\sinh s )$, $\zeta'=O$,
 $$\dist(\zeta,O)=s.$$
 The unit sphere $\mathbb S^{N-1}$ is identified with the subset $\{\zeta\in\mathbb R^{k+2} \,:\,[\zeta,\zeta]=0, \zeta_0=1\}$ via the map $b(\varsigma)=(1,\varsigma)$ for $\varsigma\in\mathbb S^{k}$.

Given $\lambda\in \mathbb R$ and $\omega \in \mathbb S^{k}$, let $h_{\lambda,\omega}(\zeta)$ be the generalized eigenfunctions of the Laplace-Beltrami operator. This is,
$$\Delta_{\mathbb H^{k+1}} h_{\lambda,\omega}=-\left(\lambda^2+\tfrac{k^2}{4}\right)h_{\lambda,\omega}.$$
These may be explicitly written as
$$h_{\lambda,\omega}(\zeta)=[\zeta,b(\omega)]^{i\lambda-\frac{k}{2}}=(\cosh s-\sinh s\langle \varsigma,\omega\rangle)^{i\lambda-\frac{k}{2}}, \quad \zeta\in \mathbb H^{k+1}.$$
In analogy to the Euclidean space, the Fourier transform on $\mathbb H^{k+1}$ is defined by
\begin{equation*}\hat{u}(\lambda, \omega)=\int_{\mathbb H^{k+1}} u(\zeta)\,h_{\lambda,\omega}(\zeta)\,d\mu_{\zeta}.
\end{equation*}
Moreover, the following inversion formula holds:
\begin{equation}\label{inversion}
u(\zeta)=\int_{-\infty}^{\infty}\int_{\mathbb S^{k}}\bar{h}_{\lambda,\omega}(\zeta)\hat{u}(\lambda,\omega)
\,\frac{d\omega \, d\lambda }{|c(\lambda)|^2},\end{equation}
where $c(\lambda)$ is the Harish-Chandra coefficient:
$$\frac{1}{|c(\lambda)|^2}=\frac{1}{2(2\pi)^{k+1}}
\frac{|\Gamma(i\lambda+(\frac{k}{2})|^2}{|\Gamma(i\lambda)|^2}.$$
There is also a Plancherel formula:
\begin{equation}\label{Plancherel}
\int_{\mathbb H^{k+1}}|u(\zeta)|^2\,d\mu_\zeta=\int_{\mathbb R\times \mathbb S^{N-1}}|\hat{u}(\lambda,\omega)|^2\frac{d\omega \; d\lambda }{|c(\lambda)|^2},
\end{equation}
which implies that the Fourier transform extends to an isometry between the Hilbert spaces $L^2(\mathbb H^{k+1})$ and $L^2(\mathbb R_+\times\mathbb S^{k},|c(\lambda)|^{-2}d\lambda \,d\omega)$.

If $u$ is a radial function, then $\hat u$ is also radial, and the above formulas simplify. In this setting, it is customary to normalize the measure of $\mathbb S^{k}$ to one in order not to account for multiplicative constants. Thus one defines the spherical Fourier transform as
\begin{equation}\label{spherical-transform}
\hat u(\lambda)=\int_{\mathbb H^{k+1}} u(\zeta)\Phi_{-\lambda}(\zeta)\,d\mu_\zeta,
\end{equation}
where
\begin{equation*}
\Phi_\lambda(\zeta)=\int_{\mathbb S^{k}} h_{-\lambda,\omega}(\zeta)\,d\omega
\end{equation*}
is known as the elementary spherical function. In addition, \eqref{inversion} reduces to
\begin{equation*}
u(\zeta)=\int_{-\infty}^{\infty}\hat{u}(\lambda)\Phi_\lambda(\zeta)
\,\frac{d\lambda }{|c(\lambda)|^2}.\end{equation*}
There are many explicit formulas for $\Phi_\lambda(\zeta)$ (we also write $\Phi_\lambda(s)$, since it is a radial function). In particular, $\Phi_{-\lambda}(s)=\Phi_\lambda(s)=\Phi_\lambda(-s),$ which yields regularity at the origin $s=0$.
Here we are interested in its asymptotic behavior. Indeed,
\begin{equation}\label{spherical-estimate}
\Phi_\lambda(s)\sim e^{(i\lambda-\frac{k}{2})s}\quad\text{as}\quad s\to +\infty.
\end{equation}
It is also interesting to observe that
$$\widehat{\Delta_{\mathbb H^{k+1}} u}=-\Big( \lambda^2+\tfrac{k^2}{4}\Big) \hat u.$$

We define the convolution operator as
$$u*v(\zeta)=\int_{\mathbb H^{k+1}} u(\zeta')v(\tau_\zeta^{-1}\zeta')\,d\mu_{\zeta'},$$
where $\tau_\zeta:\mathbb H^{k+1}\to \mathbb H^{k+1}$ is an isometry that takes $\zeta$ into $O$. If $v$ is a  radial function, then the convolution may be written as
$$u*v(\zeta)=\int_{\mathbb H^{k+1}} u(\zeta')v(\dist(\zeta, \zeta'))\,d\mu_{\zeta'},$$
and we have the property
\begin{equation*}\label{product-convolution}\widehat{u*v}=\hat u \,\hat v,\end{equation*}
in analogy to the usual Fourier transform.

On hyperbolic space there is a well developed theory of Fourier multipliers. In $L^2$ spaces everything may be written out explicitly. For instance, let $m(\lambda)$ be a multiplier in Fourier variables. A function $\hat u(\lambda,\omega)=\hat m(\lambda) u_0(\lambda,\omega)$, by the inversion formula for the Fourier transform \eqref{inversion} and expression \eqref{k}, may be written as
\begin{equation}\label{inversion1}\begin{split}u(x)=&
\int_{-\infty}^{\infty}\int_{\mathbb S^{k}}m(\lambda) \hat{u}_0(\lambda,\omega)\bar{h}_{\lambda,\omega}(\zeta)
\,\frac{d\omega \, d\lambda }{|c(\lambda)|^2}\\
=&\int_{-\infty}^{\infty}\int_{\mathbb H^{k+1}}m(\lambda) u_0(\zeta') k_\lambda(\zeta,\zeta')\,
d\mu_{\zeta'} \; d\lambda,
\end{split}\end{equation}
where we have denoted
\begin{equation*}\label{k}
k_\lambda(\zeta, \zeta') =\frac{1}{|c(\lambda)|^2}\int_{\mathbb S^{k}} \bar{h}_{\lambda,\omega}(\zeta) \,h_{\lambda,\omega}(\zeta')\,d\omega.
\end{equation*}
It is known that $k_\lambda$ is invariant under isometries, i.e.,
\begin{equation*}k_\lambda(\zeta, \zeta') =k_\lambda(\tau \zeta, \tau \zeta'),\end{equation*}
for all $\tau\in SO(1,k+1)$,
and in particular,
$$k_\lambda(\zeta, \zeta') =k_\lambda(\dist(\zeta,\zeta')),$$
so many times we will simply write $k_\lambda(\rho)$ for $\rho=\dist(\zeta,\zeta')$.
We recall the following formulas for $k_\lambda$:

\begin{lemma} \cite[Lemmas 8.5.2 y 8.5.3]{Georgiev}\label{lemma-k-lambda} For $k+1\geq 3$ odd,
\begin{equation*} k_\lambda(\rho)=c_k
\left(\frac{\partial_\rho}{\sinh \rho}\right)^\frac{k}{2}(\cos\lambda \rho),
\label{Kodd}\end{equation*}
and for $k+1\geq 2$ even,
\begin{equation*}\label{Keven}
k_\lambda(\rho)=c_k
\int_\rho^\infty\frac{\sinh \tilde\rho}{\sqrt{\cosh \tilde\rho-\cosh \rho}}
\left(\frac{\partial_{\tilde\rho}}{\sinh \tilde\rho}\right)^\frac{k+1}{2}(\cos\lambda \tilde\rho)\, d\tilde\rho.
\end{equation*}
\end{lemma}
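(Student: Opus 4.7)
The plan is to reduce the statement about $k_\lambda(\rho)$ to a computation of the elementary spherical function $\Phi_\lambda(\rho)$, and then to derive the two displayed formulas by exploiting the way the radial Laplacian on $\mathbb H^{k+1}$ intertwines under the substitution $\partial_\rho/\sinh\rho$. First, using the invariance of $k_\lambda$ under isometries, I would pick $\zeta=O$ so that $\bar h_{\lambda,\omega}(O)=1$, obtaining
\begin{equation*}
k_\lambda(\rho)=\frac{1}{|c(\lambda)|^{2}}\int_{\mathbb S^{k}} h_{\lambda,\omega}(\zeta')\,d\omega=\frac{\Phi_{-\lambda}(\rho)}{|c(\lambda)|^{2}}=\frac{\Phi_{\lambda}(\rho)}{|c(\lambda)|^{2}}.
\end{equation*}
Therefore everything reduces to identifying $\Phi_\lambda(\rho)$ explicitly, up to the Plancherel density factor $|c(\lambda)|^{-2}$, which is absorbed into the constant $c_k$ after normalization.

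Second, since $\Delta_{\mathbb H^{k+1}}h_{\lambda,\omega}=-(\lambda^{2}+k^{2}/4)h_{\lambda,\omega}$ and the radial Laplacian acts as $\partial_\rho^{2}+k\coth\rho\,\partial_\rho$, the function $\Phi_\lambda(\rho)$ is the unique solution of the Jacobi-type ODE
\begin{equation*}
\Phi_\lambda''+k\coth\rho\,\Phi_\lambda'+\big(\lambda^{2}+\tfrac{k^{2}}{4}\big)\Phi_\lambda=0,\quad \Phi_\lambda(0)=1,\quad \Phi_\lambda'(0)=0.
\end{equation*}
For the odd case $k+1\ge 3$ (so $k=2m$ even), I would prove by induction on $m$ that $\Psi_\lambda^{(m)}(\rho):=c_m\bigl(\partial_\rho/\sinh\rho\bigr)^{m}\!\cos(\lambda\rho)$ solves this ODE. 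The base case $m=0$ is $\Phi_\lambda=\cos(\lambda\rho)$, which is exactly the spherical function on $\mathbb H^{1}$. For the inductive step one verifies the intertwining identity
\begin{equation*}
\Bigl[\partial_\rho^{2}+(2m+1)\coth\rho\,\partial_\rho+\mu\Bigr]\!\Bigl(\tfrac{1}{\sinh\rho}\partial_\rho f\Bigr)=\tfrac{1}{\sinh\rho}\,\partial_\rho\Bigl[\partial_\rho^{2}+(2m-1)\coth\rho\,\partial_\rho+\mu\Bigr]f,
\end{equation*}
valid with $\mu=\lambda^{2}+(m+1/2)^{2}-\tfrac14$; the shift of the spectral constant by $\tfrac14$ at each step precisely corresponds to the passage from $k^{2}/4$ to $(k+2)^{2}/4$ in the Jacobi equation. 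A single differentiation under the operator $\partial_\rho/\sinh\rho$ lowers the dimension of the indicial term by two, and the constant $c_m$ is fixed by the normalization $\Phi_\lambda(0)=1$.

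Third, for the even case $k+1\ge 2$ (so $k$ odd), I would use the classical Riemann--Liouville/Abel-type descent representation to pass from dimension $k+2$ (odd) back to dimension $k+1$ (even). Concretely, if $\Psi_\lambda^{(k+1)}$ denotes the known odd-dimensional formula, one verifies by direct computation that
\begin{equation*}
\Phi_\lambda^{(\text{even})}(\rho)=c_k\int_\rho^{\infty}\frac{\sinh\tilde\rho}{\sqrt{\cosh\tilde\rho-\cosh\rho}}\,\Psi_\lambda^{(k+1)}(\tilde\rho)\,d\tilde\rho
\end{equation*}
solves the $k+1$ even-dimensional Jacobi ODE. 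This follows by differentiating under the integral, noting that $\partial_\rho[\cosh\tilde\rho-\cosh\rho]^{-1/2}$ produces an extra factor of $\sinh\rho/(2[\cosh\tilde\rho-\cosh\rho]^{3/2})$, and integrating by parts once to convert the half-power singularity at $\tilde\rho=\rho$ into a first-order derivative acting on $\Psi_\lambda^{(k+1)}$, after which the dimensional intertwining identity of the second step closes the argument. Boundary contributions at $\tilde\rho=\infty$ vanish because of the exponential decay $\Phi_\lambda(\rho)\sim e^{(i\lambda-k/2)\rho}$ recorded in \eqref{spherical-estimate}.

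The main obstacle, as usual in this circle of ideas, is the bookkeeping of constants and the careful analysis of the Abel singularity in the even-dimensional representation; once the intertwining identity is verified and the boundary terms at $\tilde\rho=\rho$ and $\tilde\rho=\infty$ are shown to vanish or to produce the normalization $\Phi_\lambda(0)=1$, the two formulas and the absorption of the Harish--Chandra $c$-function into the dimension-dependent constant $c_k$ follow without further difficulty.
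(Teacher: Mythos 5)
The paper does not prove this lemma; it is cited directly from \cite[Lemmas 8.5.2 and 8.5.3]{Georgiev}, so there is nothing in the text to compare against. Your overall strategy --- reduce $k_\lambda$ to the spherical function, climb odd dimensions via the intertwining operator $D:=\partial_\rho/\sinh\rho$, and descend to even dimensions by an Abel-type transform --- is the standard route and is sound in outline.

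There is, however, a genuine gap in your handling of the Harish--Chandra density. You claim that $|c(\lambda)|^{-2}$ is ``absorbed into the constant $c_k$ after normalization'' and that ``the constant $c_m$ is fixed by the normalization $\Phi_\lambda(0)=1$''. Neither can be right: $|c(\lambda)|^{-2}$ is a nonconstant function of $\lambda$ (for $k$ even it is proportional to $\prod_{j=0}^{k/2-1}(\lambda^2+j^2)$), and $\Psi_\lambda^{(m)}(0)=c_m\bigl(D^m\cos\lambda\rho\bigr)\big|_{\rho=0}$ is likewise $\lambda$-dependent --- for instance $D\cos\lambda\rho|_{\rho=0}=-\lambda^2$ and $D^2\cos\lambda\rho|_{\rho=0}=\tfrac13\lambda^2(\lambda^2+1)$ --- so no $\lambda$-independent $c_m$ can achieve $\Psi_\lambda^{(m)}(0)=1$. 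The actual content of the lemma is that $k_\lambda=|\mathbb S^k|\,|c(\lambda)|^{-2}\Phi_\lambda$ while $D^{k/2}\cos\lambda\rho$ evaluated near $\rho=0$ produces exactly the polynomial prefactor that matches $|c(\lambda)|^{-2}$ up to a genuine $\lambda$-independent constant; this matching is what has to be verified (by Taylor expansion at the origin, or along with your induction), and it cannot be obtained by renormalizing. Relatedly, your stated spectral shift in the intertwining identity ($\mu=\lambda^2+(m+\tfrac12)^2-\tfrac14$, a ``shift by $\tfrac14$'') is misquoted; the correct statement is that $f''+(k-2)\coth\rho\,f'+\nu f=0$ implies $(Df)''+k\coth\rho\,(Df)'+(\nu+k-1)(Df)=0$, which is consistent with $k^2/4-(k-2)^2/4=k-1$ and does carry out the dimension shift by two that you intended.
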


\bigskip

\noindent\textbf{Acknowledgements.} H. Chan would like to thank Ali Hyder for useful discussions. M.d.M. Gonz\'alez is grateful to Rafe Mazzeo and Frank Pacard for many insightful discussions years ago. Part of the work was done when W. Ao was visiting the math department of Universidad Aut\'onoma de Madrid in July 2017, she thanks the hospitality and the financial support of the department. A. DelaTorre's research is  supported by Swiss National Science Foundation, projects nr. PP00P2-144669 and PP00P2-170588/1 while she was a postdoc under the supervision of Luca Martinazzi, and by the Spanish government grant MTM2014-52402-C3-1-P. M. Fontelos is supported by the Spanish government grant MTM2017-89423-P.  M.d.M. Gonz\'alez is supported by Spanish government grants MTM2014-52402-C3-1-P and MTM2017-85757-P, and the BBVA foundation grant for  Researchers and Cultural Creators, 2016. The research of H. Chan and J. Wei is supported by NSERC of Canada.

\end{document}